\newcounter{item}[section]
\newcounter{kirshr}
\newcounter{kirsha}
\newcounter{kirshb}
\newenvironment{enumroman}{\setcounter{kirshr}{1}
\begin{list}{(\roman{kirshr})}{\usecounter{kirshr}} }{\end{list}}
\newenvironment{enumarab}{\setcounter{kirshb}{1}
\begin{list}{(\arabic{kirshb})}{\usecounter{kirshb}} }{\end{list}}
\newenvironment{athm}[1]{\vskip3mm\par\noindent%\stepcounter{item}
{\bf #1 }. \slshape }
{\upshape\par\vskip10pt minus3pt}
\newtheorem{theorem}{Theorem}
\newtheorem{lemma}[theorem]{Lemma}
\newtheorem{corollary}[theorem]{Corollary}
\newenvironment{demo}[1]{\noindent{\bf #1.}\upshape\mdseries}
{\nopagebreak{\hfill\rule{2mm}{2mm}\nopagebreak}\par\normalfont}
\theoremstyle{definition}
\newtheorem{example}[theorem]{Example}
\newtheorem{definition}[theorem]{Definition}
\def\C{{\mathfrak{C}}}
\def\At{{\sf At}}
\def\N{{\cal N}}
\def\Nr{{\mathfrak{Nr}}}
\def\Fr{{\mathfrak{Fr}}}
\def\Sg{{\mathfrak{Sg}}}
\def\A{{\mathfrak{A}}}
\def\B{{\mathfrak{B}}}
\def\C{{\mathfrak{C}}}
\def\D{{\mathfrak{D}}}
\def\F{{\mathfark{F}}}
\def\M{{\mathfrak{M}}}
\def\Ig{{\mathfrak{Ig}}}
\def\CA{{\bf CA}}
\def\QA{{\bf QA}}
\def\SC{{\bf SC}}
\def\QEA{{\bf QEA}}
\def\RQA{{\bf RQA}}
\def\Lf{{\bf Lf}}
\def\Dc{{\bf Dc}}
\def\K{{\bf K}}
\def\K{{\bf K}}
\def\RK{{\bf RK}}
\def\RCA{{\bf RCA}}
\def\Rd{{\mathfrak{Rd}}}
\def\(R)RA{{\bf (R)RA}}
\def\RA{{\bf RA}}
\def\Dc{{\bf Dc}}
\def\R{\mathbb{R}}
\def\Dc{{\bf Dc}}
\def\QRA{{\bf QRA}}
\def\Sc{{\bf Sc}}
\def\c #1{{\cal #1}}
 \def\CA{{\sf CA}}
\def\B{{\sf B}}
\def\G{{\sf G}}
\def\K{{\sf K}}
 \def\Cm{{\mathfrak{Cm}}}
\def\Nr{{\mathfrak{Nr}}}
\def\SNr{{\bf S}{\mathfrak{Nr}}}
\def\Ra{{\mathfrak{Ra}}}
\def\Ca{{\mathfrak{Ca}}}
\def\set#1{\{#1\} }
\def\Ra{{\mathfrak{Ra}}}
\def\Nr{{\mathfrak{Nr}}}
\def\A{{\mathfrak{A}}}
\def\B{{\mathfrak{B}}}
\def\C{{\mathfrak{C}}}
\def\D{{\mathfrak{D}}}
\def\CA{{\bf CA}}
\def\RA{{\bf RA}}
\def\RCA{{\bf RCA}}
\def\Kn{{\bf Kn}}
\def\G{{\bf G}}
\def\R{\cal{R}}
\def\PEA{{\bf PEA}}
\def\PA{{\bf PA}}
\def\Los{\L{o}\'{s}}
\def\RA{{\bf RA}}
\def\At{{\mathfrak{At}}}
\def\G{{\mathfrak{G}}}
\def\SC{{\bf SC}}
\def\K{{\bf K}}
\def\RQEA{{\bf RQEA}}
\def\tr{{\bf tr}}
\def\c #1{{\cal #1}}
\def\Ca{{\mathfrak{Ca}}}
\def\Rl{{\mathfrak{Rl}}}
\def\A{{\cal{A}}}
\def\B{{\mathfrak{B}}}
\def\C{{\mathfrak{C}}}
\def\D{{\mathfrak{D}}}
\def\P{{\mathfrak{P}}}
\def\Bl{{\mathfrak{Bl}}}
\def\Ra{{\mathfrak{Ra}}}
\def\Nr{{\mathfrak{Nr}}}
\def\F{{\mathfrak{F}}}
\def\CA{{\bf CA}}
\def\RCA{{\bf RCA}}
\def\FPA{{\bf FPA}}
\def\c#1{{\mathcal #1}}
\def\Ca{{\mathfrak Ca}}
\def\Cm{{\mathfrak Cm}}
\def\Sg{{\mathfrak Sg}}
\def\H{{\mathfrak H}}
\def\CPA{{\sf CPA}}
\def\A{{\mathfrak A}}
\def\Str{{\mathfrak Str}}
\def\Los{\L{o}\'{s}}
\def\At{{\sf At}}
\def\WFPA{{\sf WFPA}}
\def\rng{{\sf rng}}
\def\dim{{\sf dim}}
\def\Los{\L{o}\'{s}}
\title{What is the spirit of the cylindric paradigm, as opposed to that of the polyadic one?}
\author{Tarek Sayed Ahmed}
\begin{document}
\maketitle
 
%\begin{abstract} 
\begin{abstract} The question posed in the title is more of a philosophical nature. 
Even more,  departing from the materialistic point of view that spirits do not exist, it is 
meta-physical, and so from the positivist point of view it is meaningless.

However, we approach the problem in a completely rigorous mathematical way; 
and we give an answer that to our mind is quite satisfactory using deep concepts in algebraic logic 
wrapped in the language of arrows, better known as category theory.

For a start, the following two questions are investigated for cylindric -like algebras:
\begin{enumarab}
\item Given ordinals $\alpha<\beta$ and an algebra of dimension $\alpha$, 
does it (neatly) embed into the $\alpha$ reduct of a $\beta$ dimensional algebra? 
And if it does, does it neatly embed into the $\alpha$ reduct of a $\beta+k$
dimensional algebra for some $k\geq 1$.

\item Suppose that $\A$ has the neat embedding property, so that $\A$ actually embeds into the neat $\alpha$ reduct
of an algebra $\A$ in $\omega$ extra dimensions, is this last algebra, called a dilation, uniquely determined by $\A$ in some sense? 
\end{enumarab}
For the first question we show that the answer is no for many cylindric like algebras of relations (like quasi-polyadic algebras), 
for both finite and infinite 
dimensions.
We give a categorial answer to the question in the title, encompassing an answer to the second question.
We show that the uniqueness of the minimal dilation obtained when the small algebra generates the dilation, 
depends on the adjointness of the neat embedding 
operator, viewed as a functor. For polyadic algebras the neat reduct functor (that has to do with compressing dimensions) 
is strongly invertible, while for cylindric algebras, and its likes,  it does not 
even have a right adjoint (a functor that stretches dimensions.)
\footnote{ 2000 {\it Mathematics Subject Classification.} Primary 03G15.
{\it Key words}: algebraic logic, cylindric algebras, neat embeddings, adjoint situations, amalgamation}
\end{abstract}

\section{Introduction}

In our treatment of (notation and concepts on) neat reducts we follow \cite{Sayed} whose notation is consistent with \cite{HMT1}.
We have only strayed from this principle only when we felt there was a compelling reason, and this happens only once.
We denote the restriction of a function $f$ to a set $X$ by $f\upharpoonright X$ and not the other way round as done in \cite{HMT1}. 
Let $\K$ be a cylindric -like class (like for instance quasi-polyadic algebras), so that for all $\alpha$, $\K_{\alpha}$ is a 
variety consisting of $\alpha$ dimensional algebras, and for
$\alpha<\beta$, $\Nr_{\alpha}\B$ and $\Rd_{\alpha}\B$ are defined, so that the former, the $\alpha$ neat reduct of $\B$,
is a subalgebra of the latter, the $\alpha$ reduct of $\B$ and the latter  is in $\K_{\alpha}$.

We address the following two questions on neat embeddings:
\begin{enumarab}

\item Given ordinals $\alpha<\beta$ and $\A\in \K_{\alpha}$ is there a $\B\in \K_{\beta}$ such that 

(i) $\A$  embeds into the $\alpha$ reduct of $\B$? 

(ii) $\A$ embeds into the $\alpha$ {\it neat} reduct of $\B$?

\item Assume that it does neatly embed into $\B$, and assume further that $\A$ (as a set) generates $\B$, is then $\B$ unique up to isomorphisms 
that fix $\A$ pointwise?
 \end{enumarab}

%Our second question is motivated by the following quote of Henkin Monk and Tarski:

If $\B$ and $\A$ are like in the second item, then $\B$ is called a {\it minimal dilation} of $\A$.
%We will investigate these questions, for quasipolyadic algebras with and without equality and for polyadic algebras.

Our second question is motivated by the following quote of Henkin Monk and Tarski \cite{HMT1}:

%We follow the notation and terminology of the monograph \cite{HMT1}. 

Unless specified to the contrary ordinals considered 
are always infinite. 
{\bf It will be shown in Part II that for each $\alpha$, $\beta$
such that $\beta\geq \alpha\geq \omega$ there is a $\CA_{\alpha}$ $\A$ and a $\CA_{\beta}$ $\B$ such that 
$\A$ is a generating subreduct of $\B$ different from $\Nr_{\alpha}\B$; in fact, both $\A$ and $\B$ can be taken to
be representable. Thus $\Dc_{\alpha}$ cannot be replaced by $\CA_{\alpha}$ in Theorem 2.6.67 (ii); 
it is known that this replacement
also cannot be made in certain consequences of 2.6.67, namely 2.6.71 and 2.6.72.}

This result was not proved in \cite{HMT2} as promised, 
but it was proved by the present author with a precursor; a joint  publication with 
Istvan N\'emeti \cite{SL}. The solution is announced in \cite{neat} and presented briefly in \cite{Sayed}.

The main result in \cite{neat} is that minimal dilations for representable algebras that are not dimension complemented
are not unique up to isomorphisms that fix the base algebra pointwise. We will see that this question has an elegant categorial formulation and answer.

Much of the beauty of mathematics, and in particular, category theory,
is that it affords abstraction. Not only does it allow one to see the forest rather than the trees, 
but it also offers the possibility for study of the structure of the entire forest in preparation for the next stage of abstraction 
comparing forests and then, perhaps even,  comparing forests of forests.

Category theory provides an entirely new language, 
a language that provides economy of thought and expression as well as allowing 
easior communication among investigators in different areas, it 
is a language that brings to the forefront the common basic ideas underlying ostensibly unrelated theorems 
and hence a language that gives a new context in which to 
view old problems. In a wider perspective, this new context allows vieweing the old problem in a wider framework, with new insight.

As category theory was not mature enough, at the time part one of the monograph cylindric algebras was published,
at the present time the question raised by Henkin et all can be formulated more succintly using categorial jargon. 
The question, in retrospect, is essentially equivalent to the more elegant and concise question 
as to whether the neat reduct operator viewed (in a  natural way) 
as functor, that {\it compresses} dimensions, has - using categorial jargon - a right adjoint, or using polyadic algebra jargon, if you like, 
a dilation. The answer is no.

Here we investigate the analogous  question for many cylindric -like algebras. We will discover that the 
answer depends essentially on the {\it invertibility} of the neat reduct operator, viewed as a functor.
For cylindric-like algebras this functor is not even weakly invertible but for polyadic-like ones it is, and strongly so. 

The paper intends to replace trips into algebraic territory by the use of 
category theory. Even so, the trade between algebraic logic, logic, and category theory remains interesting, 
even when it is not a matter of applying concrete theorems, but exporting more universal ideas.

Category theory has also the supreme advantage of putting 
many existing results scattered in the literature, in their proper perspectives highlighting interconnections, illuminating differences and similarities, 
despite the increasing tendencies toward fragmentation and specializtion, in mathematical logic  in general, 
and in even more specialized fields like algebraic logic. 

Throughout this article, the high level of abstraction embodied in category theory and  in dealing with highly abstract notions, like 
systems of varieties definable by a schema, is motivated and exemplified by well known concrete examples, 
so that this level of abstraction can be kept from becoming a high 
level of obfuscation.

The categorial approach adopted here is not merely a formal wrapping, on the contrary, 
it is an emphasis that general mathematical sophistication, or essayistic common sense, is the more appropriate road towards insight
than elaborate logical formal systems.
Insights found in category theory really live at some higher generic abstraction level that can often be brought 
out better in an approach originating from algebraic logic, and indeed from 
most concrete techniques available in connection to the notion of representability.

The notion of neat reducts \cite{Sayed} are intimately related to representability (the most important notion in algebraic logic).
Henkin's completeness proof, has radically influenced model theory and for that matter algebraic logic, and more.
This technique which exists almost everywhere, whenever we encounter  completeness or interpolation, or an omitting types setting,
for various predicate logics,
is now called simply {\it a Henkin construction}. It has since unfolded into a 
sophisticated and versatile proof technique, in many branches in (algebraic) logic, and beyond.
Its re-incarnation in algebraic logic has come under the name of the {\it Neat Embedding Theorem}, 
which is an algebraic version of a completeness theorem for certain fairly standard extensions for first order logic that are in some sense more 
basic.
 
%branches of discrete mathematics.
%This method was used first in algebraic logic by Hirsch and Hodkinson to show that the class of strongly representable atom structures of cylindric 
%and relation algebras is not elementary

On the border line, there are cylindric-like algebras for which the neat reduct  functor is strongly invertible, too.
Some of these are classical in the sense that, like cylindric algebras, their cylindrfiers commute, 
others are obtained by removing the 'Rosser condition' of commutativity
of cylindrifiers, a theme that can be traced back to the Andr\'eka-Resek- Thompson result, 
inspired by Leon Henkin (in analogy to considering two sorted first order logic to provide semantics for second 
order logic) and severly boosted by
Ferenczi, in his recent inspiring  work on 
neat embeddings of non -commutative algebras that are representable only by relativized set algebras, cf. \cite{Fer1},\cite{Fer2}, \cite{Fer3}, 
\cite{Fer4}, \cite{Fer}. 
Such results can be also viewed as a fruitful contact between neat embedding theorems
and relativized representations. The algebraisation process is a powerful strategy, and it works modulo 
modest requirements on the base logic. But as with general models, the conspicuous possibilities lie in between. This typically involves varying the 
'semantic parameter'; this was started by Leon Henkin and his student Resek, and has culminated in incredibly sophisticated representation theorems
\cite{Fer}.

Let us start from the very beginning. The first natural question that can cross one's mind is: 
Is it true that every algebra neatly embeds into another algebra having only one extra dimension?, 
having $k$ extra dimension, $k>1$ ($k$ could be infinite) ? 
And could it possibly happen that an $\alpha$ dimensional algebra 
neatly embeds into $\alpha+k$ dimensions but does not neatly
embed into $\alpha+k+1$ extra dimension? These are all fair questions, and indeed difficult to answer. Such questions have provoked
extensive research that  
have engaged algebraic logicians for years,
and they were all (with the exception of the infinite dimensional case solved here for cylindric algebras using existing finite dimensional constructions) 
settled by the turn of the millenuim after thorough 
dedicated trials, and dozens of publications providing 
partial answers.
%It is unlikely that this does not happen, but surprisingly it does not.
We will show that this is indeed the case for finite dimensions $\geq 3$, this is a known result for cylindric algebras due to
Hirsch, Hodkinson and Maddux, as well as for infinite dimensions, 
which will follow from the finite dimensional case using an ingenious lifting argument of Monk's.
The infinite dimensional case was also proved by Robin Hirsch and Sayed Ahmed 
for other algebras (Like Pinter;s substitution algebras) for all dimensions, using the same lifting argument here to pass from the finite
to the transfinite.

To make our argument as general as much as possible, we introduce the new notion of a system of varieties 
of {\it Boolean algebras} with operators definable by a schema. 
It is like the definition of a Monk's schema, except that we integrate finite dimensions, in such a way that
the $\omega$ dimensional case, uniquely determining higher dimensions, is a natural limit of all $n$ dimensional varieties for finite $n$. 
This is crucial for our later investigations.
The definition is general enough to handle our algebras, and narrow enough to prove what we 
need. At the final section we will define a system definable by a schema that covers also polyadic algebras. 

But for the time being as we are only dealing with the cylindric paradigm, 
we slightly generalized Monk's schemas allowing finite dimensions, but not necessarily all,
so that our systems are indexed by all ordinals $\geq m$ and $m$ could be finite. 
(We can allow also proper infinite subsets of $\omega$, but we do not need that much.)
The main advantage in this approcah is that it shows that a lot of results proved for infinite 
dimensions (like non-finite schema axiomatizability of the representable algebras)
really depend on the analogous result proved for every finite dimension starting at a certain finite $n$ which is usually $3$. 
The hard work is done for the finite dimensional case. The rest is a purely syntactical ingenious lifting process invented by Monk.

\begin{definition}
\begin{enumroman}
\item Let $2\leq m\in \omega.$ A finite $m$ type schema is a quadruple $t=(T, \delta, \rho,c)$ such that
$T$ is a set, $\delta$ and $\rho$ maps $T$ into $\omega$, $c\in T$, and $\delta c=\rho c=1$ and $\delta f\leq m$ for all $f\in T$.
\item A type schema as in (i) defines a similarity type $t_{n}$ for each $n\geq m$ as follows. The domain $T_{n}$ of $t_{n}$ is
$$T_{n}=\{(f, k_0,\ldots k_{\delta f-1}): f\in T, k\in {}^{\delta f}n\}.$$
For each $(f, k_0,\ldots k_{\delta f-1})\in T_{n}$ we set $t_{n}(f, k_0\ldots k_{\delta f-1})=\rho f$.
\item A system $(\K_{n}: n\geq m)$ of classes 
of algebras is of type schema $t$ if for each $n\geq m$ $\K_{n}$ is a class of algebras of type 
$t_{n}$.
\end{enumroman}
\end{definition}

\begin{definition} Let $t$ be  a finite $m$ type schema. 
\begin{enumroman}
\item With each $m\leq n\leq \beta$ we associate a language $L_{n}^t$ of type $t_{n}$: for each $f\in T$ and 
$k\in {}^{\delta f}n,$ we have a function symbol $f_{k0,\ldots k(\delta f-1)}$ of rank $\rho f$ 
\item Let $m\leq \beta\leq n$, and let $\eta\in {}^{\beta}n$ be an injection. 
We associate with each term $\tau$ of $L_{\beta}^t$ a term $\eta^+\tau$ of $L_{n}^t$.
For each $\kappa,\omega, \eta^+ v_k=v_k$. if $f\in T, k\in {}^{\delta f}\alpha$, and $\sigma_1\ldots \sigma_{\rho f-1}$ are terms of $L_{\beta}^t$, then
$$\eta^+f_{k(0),\ldots k(\delta f-1)}\sigma_0\ldots \sigma_{\rho f-1}=f_{\eta(k(0)),\ldots \eta(k(\delta f-1))}\eta^+\sigma_0\ldots \eta^+\sigma_{\rho f-1}.$$
Then we associate with each equation $e=\sigma=\tau$ of $L_{\beta}^t$ the equation $\eta^+\sigma=\eta^+\tau$ of $L_{\alpha}^t$, which we denote by 
$\eta^+(e)$.

\item A system $\K=(\K_n: n\geq m)$ of finite $m$ type schema $t$ 
is a complete system of varieties definable by a schema, if there is a system $(\Sigma_n: n\geq m)$ of equations such that $Mod(\Sigma_n)=\K_n$, and 
for $n\leq m<\omega$ if $e\in \Sigma_n$ and
$\rho: n\to m$ is an injection, then $\rho^+e\in \Sigma_m$; $(\K_{\alpha}: \alpha\geq \omega)$ is a system of varieties definable by schemes
and $\Sigma_{\omega}=\bigcup_{n\geq m}\Sigma_n$.
\end{enumroman}
\end{definition}
\begin{definition} 
\begin{enumarab}
\item Let $\alpha, \beta$ be ordinals, $\A\in \K_{\beta}$ and $\rho:\alpha\to \beta$ be an injection. 
We assume for simplity of notation that in addition to cylindrfiers, we have only one unary function symbol $f$ such that $\rho(f)=\delta(f)=1.$ 
(The arity is one, and $f$ has only one index.)
Then $\Rd_{\alpha}^{\rho}\A$ is the $\alpha$ dimensional 
algebra obtained for $\A$ by definining for $i\in \alpha$, $f_i$ by $f^{\B}_{\rho(i)}$. 
$\Rd_{\alpha}\A$ is $\Rd_{\alpha}^{\rho}\A$ when $\rho$ is the inclusion. 

\item As in the first part we assume only the existence of one unary operator with one index.
Let $\A\in \K_{\beta}$, and $x\in A$. The dimension set of $x$, denoted by $\Delta x$, is the set $\Delta x=\{i\in \alpha: c_ix\neq x\}$.
We assume that if $\Delta x\subseteq \alpha$, then $\Delta f(x)\leq \alpha$. 
Then $\Nr_{\alpha}\B$ is the subuniverse of $\Rd_{\alpha}\B$ consisting
only of $\alpha$ dimensional elements.

\item For $K\subseteq \K_{\beta}$ and an injection $\rho:\alpha\to \beta$, 
then $\Rd_{\alpha}^{\rho}K=\{\Rd^{\rho}_{\alpha}\A: \A\in K\}$ and $\Nr_{\alpha}K=\{\Nr_{\alpha}\A: \A\in K\}$
\end{enumarab}
\end{definition}
The class $S\Nr_{\alpha}\K_{\alpha+\omega}$ has special significance since it cincides in the most known cases to 
the class of representable algebras.
In the next theorem, we show how properties that hold for all finite reducts of an infinite dimensional algebra forces it 
to have the neat embedding property.
The proof does not use any properties not formalizable in systems of varieties definable by a schema; 
it consists of non-trivial manipulation of reducts and neat reducts
via ultraproduct constructions, used to `stretch' dimensions.

In the following theorem, we use a very similar argument of lifting to solve problem 2.12 in \cite{HMT1} for infinite dimensions.
So let us warm up by the first lifting argument; for the second, though in essence very similar, will be more involved technically.

\begin{theorem} Let $\A\in \K_{\alpha}$ such that for every finite injective map $\rho$ into $\alpha$, 
and for every  $x,y\in A$, $x\neq y$, there is a function $h$ and $k<\alpha$ such
that $h$ is an endomorphism of $\Rd^{\rho}\A$, $k\in \alpha\sim Rng(\rho)$, $c_k\circ h=h$ and $h(x)\neq h(y)$.
then $\A\in UpS\Nr_{\alpha}\K_{\alpha+\omega}$. 
\end{theorem}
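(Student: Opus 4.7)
The plan is a Monk-style lifting argument: first show the ``finite case'' that $\Rd^\rho \A \in S\Nr_n \K_{n+m}$ for every finite injection $\rho : n \to \alpha$ and every $m \in \omega$, by iterating the endomorphism construction provided by the hypothesis; then deduce the full statement by stretching dimensions via an ultraproduct indexed by such pairs $(\rho, m)$.

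For the finite case, fix $\rho : n \to \alpha$ and $m \in \omega$. For each pair $x \neq y$ in $A$, iterate the hypothesis as follows. Set $\rho_0 = \rho$ and $H_0 = \mathrm{id}$. At stage $1 \leq j \leq m$, apply the hypothesis to the injection $\rho_{j-1}$ and the (distinct) pair $(H_{j-1}(x), H_{j-1}(y))$ to obtain an endomorphism $h_j$ of $\Rd^{\rho_{j-1}}\A$ and an index $k_j \in \alpha \setminus Rng(\rho_{j-1})$ with $c_{k_j} \circ h_j = h_j$ and $h_j(H_{j-1}(x)) \neq h_j(H_{j-1}(y))$; extend to $\rho_j = \rho_{j-1} \cup \{(n+j-1, k_j)\}$ and set $H_j = h_j \circ H_{j-1}$. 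The composite $H_{x,y} := H_m$ is an endomorphism of $\Rd^\rho \A$ that separates $x$ and $y$. An induction on $j$ shows that $H_j(\A)$ is fixed by each of $c_{k_1}, \dots, c_{k_j}$: the inductive step uses that $h_{j+1}$, being an endomorphism of $\Rd^{\rho_j}\A$, commutes with $c_{k_i}$ for $i \leq j$, combined with the fact that $H_j(\A)$ is already $c_{k_i}$-closed. Hence the image $H_{x,y}(\A)$ sits inside $\Nr_n \C_{x,y}$, where $\C_{x,y} \in \K_{n+m}$ is the subalgebra of $\Rd^{\rho_m}\A$ generated by the image. The subdirect embedding $a \mapsto (H_{x,y}(a))_{x \neq y}$ and the fact that neat reducts commute with products then yield $\Rd^\rho \A \in S\Nr_n \K_{n+m}$.

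For the ultraproduct lifting, let $I$ be the directed set of pairs $(\rho, m)$ with $\rho : n \to \alpha$ a finite injection and $m \in \omega$, ordered by $(\rho,m) \leq (\rho',m')$ iff $\rho \subseteq \rho'$ and $m \leq m'$, and let $U$ be any ultrafilter on $I$ containing the cofinal sets $\{(\rho',m') : (\rho,m) \leq (\rho',m')\}$. For each $i = (\rho, m) \in I$, choose $\C_i \in \K_{n+m}$ and an embedding $e_i : \Rd^\rho \A \hookrightarrow \Nr_n \C_i$ from the finite case. Relabel $\C_i$ via an injection $\tau_i : n + m \to \alpha + \omega$ extending $\rho$ so that its operations are indexed inside $\alpha + \omega$, and form the ultraproduct $\C = \prod_U \C_i$. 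Scheme-definability of $\K$ together with \Los's theorem forces $\C \in \K_{\alpha+\omega}$: every equation in $\Sigma_{\alpha+\omega}$ has finite support and, for $(\rho, m)$ eventually large, pulls back through $\tau_i$ to an equation of $\Sigma_{n+m}$ which holds in $\C_i$. The family $(e_i)_{i \in I}$ induces a homomorphism $\A^I/U \to \C$ which is injective (for each $x \neq y$ the set $\{i : e_i(x) \neq e_i(y)\}$ is cofinal, hence in $U$) and whose image lies inside $\Nr_\alpha \C$, because for each fixed $x \in A$ only finitely many ``new'' cylindrifications can act non-trivially on $e_i(x)$ on a $U$-large set. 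Therefore $\A \in UpS\Nr_\alpha \K_{\alpha+\omega}$.

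The principal obstacle is the bookkeeping in the ultraproduct step: arranging the relabellings $\tau_i$ coherently so that the defining equations of the system $\K$ transfer correctly under \Los's theorem, and so that the ultralimit of the partial embeddings $e_i$ genuinely lands inside the $\alpha$-neat reduct of $\C$ rather than some larger subreduct. The scheme-definability hypothesis on $\K$ is exactly what makes this coherence possible; without it one could not pull the $(\alpha+\omega)$-dimensional equations back through the dimension-relabelling injections $\tau_i$ into the finite-dimensional equations satisfied by each $\C_i$.
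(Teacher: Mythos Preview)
Your argument is correct and follows the same Monk-style lifting as the paper: iterate the hypothesis to gain $m$ extra ``free'' indices for each finite reduct, pass to a subdirect product over pairs $(x,y)$ to turn the separating endomorphisms into a genuine embedding $\Rd^\rho\A\hookrightarrow\Nr_n\K_{n+m}$, and then use an ultraproduct to stretch to $\K_{\alpha+\omega}$.  The difference is organizational.  The paper stages the stretching in three passes --- an ultraproduct over $l$ to reach $\K_\omega$, a transfinite induction (with further ultraproducts at limits) to reach $\K_{\alpha+\omega}$, and then a second ultraproduct over finite injections $\rho$ --- whereas you collapse everything into a single ultraproduct indexed by pairs $(\rho,m)$.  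Your version is more compact and your explicit treatment of the subdirect product over $(x,y)$ is actually cleaner than the paper's, which asserts ``let $\B_l\in\K_{k+l}$ such that $\D\subseteq\Nr_k\B_l$'' without writing down how.  The cost, as you note, is that all the relabelling bookkeeping is concentrated in one place; with the natural choice $\tau_i(n+l)=\alpha+l$ the cofinality arguments you need (covering any finite index set $F\subseteq\alpha+\omega$ on a $U$-large set) go through directly.  One small point: your final line infers $\A\in UpS\Nr_\alpha\K_{\alpha+\omega}$ from $\A^I/U\in S\Nr_\alpha\K_{\alpha+\omega}$; to make this precise compose with the diagonal $\A\hookrightarrow\A^I/U$, which in fact gives the stronger $\A\in S\Nr_\alpha\K_{\alpha+\omega}$.
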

\begin{demo}{Proof}  We first prove that the following holds for any $l<\omega$.
For every $k<\omega$, for every injection $\rho:k\to \alpha$, and every $x, y\in A$, $x\neq y$, 
there exists $\sigma,h$ such that $\sigma:k+l\to \alpha$ is an injection, $\rho\subseteq \sigma$,
$h$ is an endomorphism of $\Rd_k^{\rho}\A$, $c_{\sigma_u}\circ h=h$, whenever $k\leq u\leq k+l$, and $h(x)\neq h(y)$.

We proceed by induction on $l$. This holds trivially for $l=0$, and it is easy to see that it is true for $l=1$
Suppose now that it holds for given $l\geq 1$. Consider $k, \rho$, and $x,y$ satisfying the premisses. By the induction hypothesis there are 
$\sigma, h$, $\sigma:\alpha+l\to \alpha$ an injection, $\rho\subseteq \sigma$, $h$ is an endomorphism of $\Rd_k^{\rho}\A$, $c_{\sigma_u}\circ h=h$ 
whenever  $k\leq u< k+l$, and $h(x)\neq h(y).$ But then there exist $k,v$ such that $k$ is an endomorphism of 
$\Rd_{k+l}^{\sigma}\A$, $v\in \alpha\sim Rg\sigma$, $c_v\circ k=k$ and $k\circ h(x)\neq k\circ h(y)$. 
Let $\sigma'$ be defined by $\sigma'\upharpoonright \alpha+(l+k)=\sigma$ and $\sigma'(k+l)=v$, and let $h'-=\circ h$. It is easy to check that
$\sigma'$ and $h'$ complete the induction step.

We have $h\in Hom(\Rd_k^{\rho}\A, \Nr_{k}\B)$ where $\B=\Rd_{k+l}^{\sigma}\A$. Then $\Rd_k^{\rho}\A\in S\Nr_k\K_{\alpha+\omega}.$
For brevity let $\D=\Rd_k^{\rho}\A$. For each $l<\omega,$ let $\B_l\in \K_{k+l}$ such that $\D\subseteq \Nr_k\B_l$. 
For all such $l$, let $C_l$ be an algebra have the same similarity type as of $\K_{\omega}$ be such $\B_l=\Rd_{k+l}\C_l$.
Let $F$ be a non-principal ultrafilter on $\omega$, and let $\G=\Pi_{\l<\omega}\C_l/F$. Let 
$$G_n=\{\Gamma\cap (\omega\sim n):\Gamma\in F\}.$$
Then for all $\mu<\omega$, we have
\begin{align*}
\Rd_{k+\mu}\G
&=\Pi_{\eta<\omega} \Rd_{k+\mu}\C_{\eta}/F\\
&\cong \Pi_{\mu\leq \eta<\omega}\Rd_{k+\mu}\C_{\eta}/G_{\mu}\\
&=\Pi_{\mu\leq \eta<\omega}\Rd_{\beta+\mu}\B_{\eta}/G_{\mu}.
\end{align*}
We have shown that $\G\in \K_{\omega}$. 
Define $h$ from $\D$ to $\G$, via
$$x\to (x: \eta<\omega)/F$$
Then $h$ is an injective homomorphism from $\D$ into $\Nr_k\G$.
We have $\G\in \K_{\omega}$ We now show that there exists $\B$ in $\K_{\alpha+\omega}$ such that $\D\subseteq \Nr_k\B$.
(This is a typical instance where reducts are used to 'stretch dimensions', not to compress them).
One proceeds inductively, at successor ordinals (like $\omega+1$) as follows. Let $\rho:\omega+1\to \omega$ be an injection such that  $\rho(i)=i$, for each $i\in \alpha$. Then 
$\Rd^{\rho}\G\in \K_{\omega+1}$ and $\G=\Nr_{\omega}{\Rd^{\rho}}\G$. At limits one uses ultraproducts like above.

Thus $\Rd_k^{\rho}\A\subseteq \Nr_k\B$ for some $\B\in \K_{\alpha+\omega}$. Let $\sigma$ be a permutation of 
$\alpha+\omega$ such that $\sigma\upharpoonright k=\rho$ and $\sigma(j)=j$ for all $j\geq \omega$. 
Then
$$\Rd_k^{\rho}\A\subseteq \Nr_k\B=\Nr_k\Rd_{\alpha+\omega}^{\sigma}\Rd_{\alpha+\omega}^{\sigma^{-1}}\B.$$
Then for any $u$ such that $\sigma[k]\subseteq u\subseteq \alpha+\omega$, we have
%Thus $\Rd_k^{\rho}\in S\Rd^{\rho}\Nr_{\alpha}\K_{\alpha+\omega}$
$$\Nr_k\Rd_{\alpha+\omega}^{\sigma}\Rd_{\alpha+\omega}^{\sigma^{-1}}\B\subseteq \Rd_k^{\sigma|k}\Nr_{uu}\Rd_{\alpha+\omega}^{\sigma^{-1}}\B.$$
Thus $\Rd_k^{\rho}\A\in S\Rd^{\rho}\Nr_{\alpha}\K_{\alpha+\omega},$ and this holds for any injective finite sequence $\rho$.

Let $I$ be the set of all finite one to one sequences with range in $\alpha$.
For $\rho\in I$, let $M_{\rho}=\{\sigma\in I:\rho\subseteq \sigma\}$.
Let $U$ be an ultrafilter of $I$ such that $M_{\rho}\in U$ for every $\rho\in I$. Exists, since $M_{\rho}\cap M_{\sigma}=M_{\rho\cup \sigma}.$
Then for $\rho\in I$, there is $\B_{\rho}\in \K_{\alpha+\omega}$ such that
$\Rd^{\rho}\A\subseteq \Rd^{\rho}\B_{\rho}$. Let $\C=\Pi\B_{\rho}/U$; it is in ${Up}\K_{\alpha+\omega}$.
Define $f:\A\to \Pi\B_{\rho}$ by $f(a)_{\rho}=a$, and finally define $g:\A\to \C$ by $g(a)=f(a)/U$.
Then $g$ is an embedding, and we are done.
\end{demo}

\section{The first question}

Our first question addresses reducts and neat reducts. Handling reducts are usually easier. Problems concerning 
neat reducts tend to be messy, in the positive sense. 

So lets get over with the easy part of reducts. 
The infinite dimensional case follows from the definition of a system of varieties definable by schema, namely, for any such system 
we have $\K_{\alpha}= HSP\Rd^{\rho}\K_{\beta}$
for any pair of infinite ordinals $\alpha<\beta$ and any injection $\rho:\alpha\to \beta$. 
But for all algebras considered $S\Rd_{\alpha}\K_{\beta}$ is a variety, hence the
desired conclusion; which is that every algebra is a subreduct of an algebra in any preassigned higher dimension.
We can strengthen this to:

\begin{theorem} For any pair of infinite ordinals $\alpha<\beta$, we have $\K_{\alpha}=El\Rd_{\alpha}\K_{\beta}$
\end{theorem}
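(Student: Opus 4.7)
The inclusion $El\Rd_{\alpha}\K_{\beta}\subseteq \K_{\alpha}$ is immediate from what precedes: the discussion above gives $\Rd_{\alpha}\K_{\beta}\subseteq \K_{\alpha}$, and since $\K_{\alpha}$ is a variety (axiomatized by the equations of the schema), it is closed under elementary equivalence. Hence $El\Rd_{\alpha}\K_{\beta}\subseteq El\K_{\alpha}=\K_{\alpha}$.

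For the reverse inclusion, fix $\A\in \K_{\alpha}$; the plan is to produce $\B\in \K_{\beta}$ with $\A\preceq \Rd_{\alpha}\B$, which forces $\A\equiv \Rd_{\alpha}\B$ and places $\A$ in $El\Rd_{\alpha}\K_{\beta}$. The starting point is the already-established equality $\K_{\alpha}=S\Rd_{\alpha}\K_{\beta}$, which furnishes an embedding $\A\subseteq \Rd_{\alpha}\C_{0}$ for some $\C_{0}\in \K_{\beta}$. I would upgrade this subalgebra inclusion to an elementary extension via an $\omega$-chain argument. Construct inductively two chains $\A=\A_{0}\preceq \A_{1}\preceq \A_{2}\preceq \cdots$ in $\K_{\alpha}$ and $\C_{0}\subseteq \C_{1}\subseteq \C_{2}\subseteq \cdots$ in $\K_{\beta}$, interlocked by $\A_{n}\subseteq \Rd_{\alpha}\C_{n}\subseteq \A_{n+1}$ for every $n<\omega$. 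Setting $\A_{\omega}=\bigcup_{n}\A_{n}$ and $\C_{\omega}=\bigcup_{n}\C_{n}$, the reduct operation commutes with directed unions, so $\A_{\omega}=\Rd_{\alpha}\C_{\omega}$, and $\C_{\omega}\in \K_{\beta}$ because varieties are closed under directed unions. Tarski's elementary chain lemma gives $\A\preceq \A_{\omega}=\Rd_{\alpha}\C_{\omega}$, yielding $\A\equiv \Rd_{\alpha}\C_{\omega}$ as required.

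The hard part will be the amalgamation inside the induction: given $\A_{n}\subseteq \Rd_{\alpha}\C_{n}$, produce an elementary extension $\A_{n}\preceq \A_{n+1}$ containing $\Rd_{\alpha}\C_{n}$ as a subalgebra. The given inclusion $\A_{n}\hookrightarrow \Rd_{\alpha}\C_{n}$ is not elementary, so the existence of $\A_{n+1}$ is equivalent to the consistency of the complete diagram of $\A_{n}$ with the atomic diagram of $\Rd_{\alpha}\C_{n}$ over $\A_{n}$. I would secure this by first replacing $\A_{n}$ with a sufficiently $|\C_{n}|^{+}$-saturated ultrapower of itself---still in $\K_{\alpha}$ by \Los's theorem and closure of the variety under ultraproducts---so that saturation realizes the relevant type over $\A_{n}$ and supplies a copy of $\Rd_{\alpha}\C_{n}$ inside $\A_{n+1}$. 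The companion step---locating $\C_{n+1}\in \K_{\beta}$ with $\C_{n}\subseteq \C_{n+1}$ and $\A_{n+1}\subseteq \Rd_{\alpha}\C_{n+1}$---applies $\K_{\alpha}=S\Rd_{\alpha}\K_{\beta}$ to $\A_{n+1}$ and then amalgamates over $\C_{n}$ inside $\K_{\beta}$; this is the one place where the concrete algebraic structure of the variety $\K_{\beta}$ must be invoked beyond the abstract schema formalism.
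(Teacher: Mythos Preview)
Your chain argument has two genuine gaps, and neither is a matter of missing detail.

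\textbf{The saturation step.} Passing to a $|\C_n|^+$-saturated ultrapower $\A_{n+1}\succeq\A_n$ lets you realize any type that is \emph{finitely consistent with the elementary diagram of $\A_n$}; it does not create consistency. You need the atomic diagram of $\Rd_{\alpha}\C_n$ over $\A_n$ to be compatible with $\mathrm{Th}(\A_n,a)_{a\in A_n}$, and nothing in your setup guarantees this: a superalgebra can satisfy quantifier-free facts over $\A_n$ that the first-order theory of $\A_n$ forbids. Concretely, let $\A_n$ be the two-element $\CA_{\alpha}$ (all $d_{ij}=1$, all $c_i$ the identity). Every elementary extension of $\A_n$ still has two elements, so if $\C_n$ was chosen with $|\C_n|>2$ the embedding $\Rd_{\alpha}\C_n\hookrightarrow\A_{n+1}$ is impossible. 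You might reply that one should choose $\C_0$ more carefully, but specifying the right choice is precisely the content of the theorem.

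\textbf{The amalgamation step.} You explicitly defer the existence of $\C_{n+1}\in\K_{\beta}$ extending $\C_n$ with $\A_{n+1}\subseteq\Rd_{\alpha}\C_{n+1}$ to ``the concrete algebraic structure of $\K_{\beta}$''. But the theorem is stated for arbitrary schema-defined systems, and the very paper you are contributing to spends a section establishing that $\CA_{\beta}$, $\QEA_{\beta}$, $\RCA_{\beta}$ and their kin \emph{fail} the amalgamation property. There is no structure to invoke.

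The paper avoids both problems with a single ultraproduct, exploiting only that $\alpha$ and $\beta$ are infinite. Every finite $\Gamma\subseteq\beta$ admits an injection $\rho_{\Gamma}:\Gamma\to\alpha$ fixing $\Gamma\cap\alpha$; one then defines a $t_{\beta}$-algebra $\B_{\Gamma}$ on the universe of $\A$ by reindexing, $f_k^{\B_{\Gamma}}=f_{\rho_{\Gamma}(k)}^{\A}$ for $k\in\Gamma$. With $F$ an ultrafilter containing every $\{\Delta:\Gamma\subseteq\Delta\}$, the ultraproduct $\D=\prod_{\Gamma}\B_{\Gamma}/F$ lies in $\K_{\beta}$ (each schema instance uses finitely many indices, hence holds on a large set), and the diagonal $a\mapsto(a:\Gamma)/F$ is elementary into $\Rd_{\alpha}\D$ by \Los, since any $t_{\alpha}$-formula mentions only finitely many indices in $\alpha$, all fixed by $\rho_{\Gamma}$ once $\Gamma$ is large enough. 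No chain, no saturation, no amalgamation.
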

\begin{proof} For simplicity we asume that we have one unary opeartion $f$, with $\rho(f)=1$.
The general case is the same.
Let $\A\in \K_{\alpha}$. Let $I=\{\Gamma: \Gamma\subseteq \beta, |\Gamma|<\omega\}$. 
Let $I_{\Gamma}=\{\Delta \subseteq I, \Gamma\subseteq \Delta\}$, and let $F$ be an ultrafilter such that
$I_{\Gamma}\in F$ for all $\Gamma\in I$. Notice that $I_{\Gamma_1}\cap I_{\Gamma_2}=I_{\Gamma_1\cup \Gamma_2}$ 
so this ultrafilter exists.
%Let $F$ be an ultrafilter on $I4 such that 4\{\Gama\subset \Delta\in I\}\in F$
For each $\Gamma\in I$, let $\rho(\Gamma)$ be an injection from $\Gamma$ into $\alpha$ 
such that $Id\upharpoonright \Gamma\cap \alpha\subseteq \rho(\Gamma)$, and let $\B_{\Gamma}$ be an algebra having same similarity type 
as $\K_{\beta}$such that for $k\in \Gamma$
$f_k^{\B_{\Gamma}}= f_{\rho(\Gamma)[k]}^{\A}$. Then 
$\D=\prod \B_{\Gamma}/F\in \K_{\beta}$ and 
$f:\A\to \Rd_{\alpha}\D$ defined via $a\mapsto (a: \Gamma\in I)/F$
is an elementary embedding.
\end{proof}

Things are different for finite dimensions. Here we give an example for quasi-polyadic equality algebras, modelled 
on a construction of Henkin for cylindric algebras reported in \cite{HMT1}. The construction essentially depends on
the presence of diagonal elements. We do not know whether an analagous result hold for quasi-polyadic algebras.

\begin{example}\label{reduct}

Let $n\geq 2$ and $\A\in \QEA_n$. Let $e$ be the equation defined in lemma 2.6.10, in \cite{HMT2}.
Lemma 2.6.13, provides a cylindric algebr $\C$  in $\Rd_{\alpha}\CA_{\beta}$ for any finite $\beta$, 
such that $\C$ is generated by a set with cardinality $\beta$, and
$e$ fails in this algebra. 

Now this algebra, is based on the algebra constructed in lemma 2.6.12; so we need to define substitutions on this last algebra, 
which is a product of the Boolean  part two cylindric algebras.
One simply sets $p_{ij}(x,y)=(p_{ij}z, p_{ij}y)$; but then the algebra 
$\C$ is in $\Rd_{\alpha}PEA_{\beta}$. 

The proofs of theorems 2.6.14 and 2.6.16, work verbatim by replacing cylindric algebras with polyadic algebras.
And so we have the proper inclusions, for $2\leq \alpha<\beta\leq \omega$:
$$HSP\Rd_{\alpha}\QEA_{\beta+1}\subset HSP\Rd_{\alpha}\QEA_{\beta}\subset \QEA_{\alpha}$$
(The inclusion follows from the fact that a reduct of a reduct is a reduct).
\end{example}

The neat reduct part, as we shall see, is profoundly more involved. The following is our  main result in this section.
It is not the case that every algebra in $\CA_m$ is the neat reduct of an algebra in $\CA_n$, 
nor need it even be a subalgebra of a neat reduct of an algebra in $\CA_n$.  Furthermore, $S\Nr_m\CA_{m+k+1}\neq S\Nr_m\CA_m$, 
whenever $3\leq m<\omega$ and $k<\omega$. 

The hypothesis in the following theorem presupposes the existence of certain finite
dimensional algebras, not chosen haphazardly at all, but are  rather an abstraction of cylindric algebras existing in the literature witnessing the last 
proper inclusions.  The main idea, that leads to the conclusion of the theorem, 
is to use such finite dimensional algebras to obtain an an analogous result for the infinite dimensional case.
Accordingly, we streamline Monk's argument who did exactly that for cylindric algebras, but we do it in 
the wider context of systems of varieties definable by a schema. (Strictly speaking Monk's lifting argument is weaker, 
the infinite dimensional constructed algebras are merely non -representable, in our case they are not only non-representable, 
but are also subneat reducts of  algebras in a given pre 
asighned dimension; this is a technical difference, that needs some non-trivial fine tunning in the proof).
The inclusion of finite dimensions in our formulation, was therefore not a luxuary, nor was it motivated by 
aesthetic reasons, and nor was it merely an artefect of Monk's definition. 
It is motivated by the academic worthiness of the result 
(for infinite dimensions).

\begin{theorem}\label{2.12} Let $(\K_{\alpha}: \alpha\geq 2)$ be a complete system of varieties definable by a schema.
Assume that for $3\leq m<n<\omega$, 
there is $m$ dimensional  algebra $\C(m,n,r)$ such that
\begin{enumarab}
\item $\C(m,n,r)\in S\Nr_m\K_n$
\item $\C(m,n,r)\notin S\Nr_m\K_{n+1}$
\item $\prod_{r\in \omega} \C(m, n,r)\in S\Nr_m\K_n$
\item For $m<n$ and $k\geq 1$, there exists $x_n\in \C(n,n+k,r)$ such that $\C(m,m+k,r)\cong \Rl_{x}\C(n, n+k, r).$
\end{enumarab}
Then for any ordinal $\alpha\geq \omega$, $S\Nr_{\alpha}\K_{\alpha+k+1}$ is not axiomatizable by a finite schema over $S\Nr_{\alpha}\K_{\alpha+k}$
\end{theorem}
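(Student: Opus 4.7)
The plan is to argue by contradiction. Suppose $S\Nr_{\alpha}\K_{\alpha+k+1}$ were axiomatizable over $S\Nr_{\alpha}\K_{\alpha+k}$ by a finite schema $\Sigma$. Each schema unfolds to a first-order axiom set, so the former class would then be closed under ultraproducts relative to the latter. The strategy is to exhibit a family $\{\A_r : r\in\omega\}$ of $\alpha$-dimensional algebras which witness the failure of this closure: for each $r$, $\A_r \in S\Nr_{\alpha}\K_{\alpha+k}\setminus S\Nr_{\alpha}\K_{\alpha+k+1}$, yet for a non-principal ultrafilter $F$ on $\omega$, $\prod_{r<\omega}\A_r/F \in S\Nr_{\alpha}\K_{\alpha+k+1}$. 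This will yield the desired contradiction.

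To build $\A_r$, I would lift the finite-dimensional witnesses $\C(n,n+k,r)$ to dimension $\alpha$ via a Monk-style construction. The compatibility assumption (4), giving $\C(m,m+k,r)\cong \Rl_{x}\C(n,n+k,r)$ whenever $m<n<\omega$, provides a coherent tower indexed by $n$; take an ultraproduct along $n$ to obtain an $\omega$-dimensional algebra, then stretch to dimension $\alpha$ by the successor-plus-limit device of the preceding theorem (reduct along injections at successors, ultraproducts at limits). Property (1) ensures $\C(n,n+k,r)\in S\Nr_n\K_{n+k}$, and since reducts, ultraproducts, and neat reducts interact coherently under this construction, $\A_r \in S\Nr_{\alpha}\K_{\alpha+k}$ follows. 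Conversely, if $\A_r$ were in $S\Nr_{\alpha}\K_{\alpha+k+1}$, then inspecting a finite neat reduct of dimension $m$ and invoking (4) would embed $\C(m,m+k,r)$ into $\Nr_m \B$ for some $\B\in\K_{m+k+1}$, placing $\C(m,m+k,r) \in S\Nr_m\K_{m+k+1}$, contradicting property (2).

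The crux, and the main technical obstacle, is establishing $\prod_r \A_r/F \in S\Nr_{\alpha}\K_{\alpha+k+1}$. Because ultraproducts commute with reducts, neat reducts, and the relativization of (4), the algebra $\prod_r\A_r/F$ is isomorphic to the result of applying the same lifting to the sequence $\bigl\{\prod_r \C(n,n+k,r)/F\bigr\}_{n<\omega}$. Property (3), $\prod_r \C(n,n+k,r) \in S\Nr_n\K_{n+k}$ for each $n$, must then be sharpened under the quotient by $F$: the individual violations of the $(n+k+1)$-neat embedding lie at different ``locations'' in different factors $\C(n,n+k,r)$, and the non-principality of $F$ is designed to wash these local obstructions away so that the ultraproduct climbs one level higher, into $S\Nr_n\K_{n+k+1}$, and after lifting, into $S\Nr_{\alpha}\K_{\alpha+k+1}$. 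This is precisely the delicate step: carefully tracking how the relativizations from (4), the products from (3), and the ultrafilter $F$ cooperate to produce the extra dimension of neat embedding. Once this is in place, the ultraproduct-closure of $S\Nr_{\alpha}\K_{\alpha+k+1}$ granted by the finite-schema hypothesis collides with the fact that each $\A_r$ lies outside this class, yielding the contradiction.
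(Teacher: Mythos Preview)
Your overall contradiction strategy is right, but your construction of the witness differs from the paper's, and your use of hypothesis (4) is misplaced. The paper does \emph{not} build a tower from (4) and take an ultraproduct along $n$: condition (4) gives relativizations $\C(m,m+k,r)\cong\Rl_x\C(n,n+k,r)$, not embeddings, so no directed system arises that way. Instead the paper forms a single ultraproduct indexed by the finite subsets $\Gamma\subseteq\alpha$: fixing bijections $\rho_\Gamma\colon|\Gamma|\to\Gamma$, one chooses $\K_\alpha$-type algebras $\C_\Gamma^r$ with $\Rd^{\rho_\Gamma}\C_\Gamma^r=\C(|\Gamma|,|\Gamma|+k,r)$ and sets $\B^r=\prod_\Gamma\C_\Gamma^r/F$ for an ultrafilter containing every $\{\Delta:\Gamma\subseteq\Delta\}$. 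A \Los\ argument puts $\B^r$ in $\K_\alpha$ directly, with no separate stretch from $\omega$ to $\alpha$. Membership in $S\Nr_\alpha\K_{\alpha+k}$ is obtained by running the same construction one type-level up (with algebras $\A_\Gamma$ satisfying $\Rd^{\sigma_\Gamma}\A_\Gamma=\C(|\Gamma|+k,|\Gamma|+k,r)$) and checking that $\B^r\subseteq\Nr_\alpha\prod_\Gamma\A_\Gamma/F$. Condition (4) enters only in the \emph{non}-membership argument: if $\B^r\subseteq\Nr_\alpha\C$ with $\C\in\K_{\alpha+k+1}$, pass to an $m$-dimensional reduct via an injection $\lambda\colon m{+}k{+}1\to\alpha{+}k{+}1$; the relativization in (4) then realizes $\C(m,m+k,r)$ as $\Rl_x\Rd_m\B^r$, forcing $\C(m,m+k,r)\in S\Nr_m\K_{m+k+1}$, contradicting (2).

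Your ultraproduct-over-$r$ step is where the genuine gap lies. Hypothesis (3) as stated places $\prod_r\C(m,n,r)$ only in $S\Nr_m\K_n$ --- the same level as (1) --- so nothing in the listed hypotheses lets the ultraproduct ``climb one level higher'' to $S\Nr_m\K_{n+1}$, and invoking non-principality of $F$ is not an argument. In the concrete instances (Monk, Hirsch--Hodkinson) the ultraproduct over $r$ is in fact representable, hence in $S\Nr_m\K_{m+\omega}$, and that is what makes the step go through; but it is an additional input, not something you can squeeze out of (3) as written. The paper's own proof, for what it is worth, concentrates on exhibiting $\B^r$ in the difference $S\Nr_\alpha\K_{\alpha+k}\setminus S\Nr_\alpha\K_{\alpha+k+1}$ and does not spell out the ultraproduct over $r$ either.
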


\begin{proof} The proof is a lifting argument essentially due to Monk, by 'stretching' dimensions  using only properties of 
reducts  and ultraproducts, formalizable in the context of a system of varieties definable by a schema.

It is divided into 3 parts:

\begin{enumarab}

\item  Let $\alpha$ be an infinite ordinal, 
let $X$ be any finite subset of $\alpha$, let $I=\set{\Gamma:X\subseteq\Gamma\subseteq\alpha,\; |\Gamma|<\omega}$.  
For each $\Gamma\in I$ let $M_\Gamma=\set{\Delta\in I:\Delta\supseteq\Gamma}$ and let $F$ be any ultrafilter over $I$ 
such that for all $\Gamma\in I$ we have $M_\Gamma\in F$ 
(such an ultrafilter exists because $M_{\Gamma_1}\cap M_{\Gamma_2} = M_{\Gamma_1\cup\Gamma_2}$).  
For each $\Gamma\in I$ let $\rho_\Gamma$ be a bijection from $|\Gamma|$ onto $\Gamma$.   
For each $\Gamma\in I$ let $\c A_\Gamma, \c B_\Gamma$ be $\K_\alpha$-type algebras.  
If for each $\Gamma\in I$ we have 
$\Rd^{\rho_\Gamma}\c A_\Gamma=\Rd^{\rho_\Gamma}\c B_\Gamma$ then $\Pi_{\Gamma/F}\c A_\Gamma=\Pi_{\Gamma/F}\c B_\Gamma$.  
Standard proof, by \Los' theorem.  
Note that the base of $\Pi_{\Gamma/F}\c A_\Gamma$ is 
identical with the base of $\Pi_{\Gamma/F}\Rd^{\rho_\Gamma}\c A_\rho$ 
which is identical with the base of $\Pi_{\Gamma/F}\c B_\Gamma$, by the assumption in the lemma.  
Each operator $o$ of $\K_\alpha$ is the same for both ultraproducts because $\set{\Gamma\in I:\dim(o)\subseteq\rng(\rho_\Gamma)} \in F$.  

Furthermore, if $\Rd^{\rho_\Gamma}\c A_\Gamma \in \K_{|\Gamma|}$, for each $\Gamma\in I$  then $\Pi_{\Gamma/F}\c A_\Gamma\in \K_\alpha$.
For this, it suffices to prove that each of the defining axioms for $\K_\alpha$ holds for $\Pi_{\Gamma/F}\c A_\Gamma$.  
Let $\sigma=\tau$ be one of the defining equations for $\K_{\alpha}$, the number of dimension variables is finite, say $n$. 
Take any $i_0, i_1,\ldots  i_{n-1}\in\alpha$, we must prove that 
$\Pi_{\Gamma/F}\c A_\Gamma\models \sigma(i_0,\ldots i_{n-1})=\tau(i_0\ldots  i_{n-1})$.  
If they are all in $\rng(\rho_\Gamma)$, say $i_0=\rho_\Gamma(j_0), \; i_1=\rho_\Gamma(j_1), \;\ldots i_{n-1}=\rho_\Gamma(j_{n-1})$,  
then $\Rd^{\rho_\Gamma}\c A_\Gamma\models \sigma(j_0, \ldots ,j_{n-1})=\tau(j_0, \ldots j_{n-1})$, 
since $\Rd^{\rho_\Gamma}\c A_\Gamma\in\K_{|\Gamma|}$, so $\c A_\Gamma\models\sigma(i_0\ldots , i_{n-1})=\tau(i_0\ldots i_{n-1}$.  
Hence $\set{\Gamma\in I:\c A_\Gamma\models\sigma(i_0, \ldots, i_{n-1}l)=\tau(i_0, \ldots,  i_{n-1})}\supseteq\set{\Gamma\in I:i_0,\ldots,  i_{n-1}
\in\rng(\rho_\Gamma}\in F$, 
hence $\Pi_{\Gamma/F}\c A_\Gamma\models\sigma(i_0,\ldots  i_{n-1})=\tau(i_0, \ldots,  i_{n-1})$.  
Thus $\Pi_{\Gamma/F}\c A_\Gamma\in\K_\alpha$.

\item Let $k\in \omega$. Let $\alpha$ be an infinite ordinal. 
Then $S\Nr_{\alpha}\K_{\alpha+k+1}\subset S\Nr_{\alpha}\K_{\alpha+k}.$
Let $r\in \omega$. 
Let $I=\{\Gamma: \Gamma\subseteq \alpha,  |\Gamma|<\omega\}$. 
For each $\Gamma\in I$, let $M_{\Gamma}=\{\Delta\in I: \Gamma\subseteq \Delta\}$, 
and let $F$ be an ultrafilter on $I$ such that $\forall\Gamma\in I,\; M_{\Gamma}\in F$. 
For each $\Gamma\in I$, let $\rho_{\Gamma}$ 
be a one to one function from $|\Gamma|$ onto $\Gamma.$
%(Note that if $\alpha=\omega$, then the $\rho_{\Gamma}$'s can be the identity maps). 
Let ${\c C}_{\Gamma}^r$ be an algebra similar to $\K_{\alpha}$ such that 
\[\Rd^{\rho_\Gamma}{\c C}_{\Gamma}^r={\c C}(|\Gamma|, |\Gamma|+k,r).\]
Let  
\[\B^r=\prod_{\Gamma/F\in I}\c C_{\Gamma}^r.\]
We will prove that 
\begin{enumerate}
\item\label{en:1} $\B^r\in S\Nr_\alpha\K_{\alpha+k}$ and 
\item\label{en:2} $\B^r\not\in S\Nr_\alpha\K_{\alpha+k+1}$.  \end{enumerate}

The theorem will follow, since $\Rd_\K\B^r\in S\Nr_\alpha \K_{\alpha+k} \setminus S\Nr_\alpha\K_{\alpha+k+1}$.

For the first part, for each $\Gamma\in I$ we know that $\c C(|\Gamma|+k, |\Gamma|+k, r) \in\K_{|\Gamma|+k}$ and 
$\Nr_{|\Gamma|}\c C(|\Gamma|+k, |\Gamma|+k, r)\cong\c C(|\Gamma|, |\Gamma|+k, r)$.
Let $\sigma_{\Gamma}$ be a one to one function 
 $(|\Gamma|+k)\rightarrow(\alpha+k)$ such that $\rho_{\Gamma}\subseteq \sigma_{\Gamma}$
and $\sigma_{\Gamma}(|\Gamma|+i)=\alpha+i$ for every $i<k$. Let $\c A_{\Gamma}$ be an algebra similar to a 
$\K_{\alpha+k}$ such that 
$\Rd^{\sigma_\Gamma}\c A_{\Gamma}=\c C(|\Gamma|+k, |\Gamma|+k, r)$.  By the second part   
with  $\alpha+k$ in place of $\alpha$,\/ $m\cup \set{\alpha+i:i<k}$ 
in place of $X$,\/ $\set{\Gamma\subseteq \alpha+k: |\Gamma|<\omega,\;  X\subseteq\Gamma}$ 
in place of $I$, and with $\sigma_\Gamma$ in place of $\rho_\Gamma$, we know that  $\Pi_{\Gamma/F}\A_{\Gamma}\in \K_{\alpha+k}$.

We prove that $\B^r\subseteq \Nr_\alpha\Pi_{\Gamma/F}\c A_\Gamma$.  Recall that $\B^r=\Pi_{\Gamma/F}\c C^r_\Gamma$ and note 
that $C^r_{\Gamma}\subseteq A_{\Gamma}$ 
(the base of $C^r_\Gamma$ is $\c C(|\Gamma|, |\Gamma|+k, r)$, the base of $A_\Gamma$ is $\c C(|\Gamma|+k, |\Gamma|+k, r)$).
 So, for each $\Gamma\in I$,
\begin{align*}
\Rd^{\rho_{\Gamma}}\C_{\Gamma}^r&=\c C((|\Gamma|, |\Gamma|+k, r)\\
&\cong\Nr_{|\Gamma|}\c C(|\Gamma|+k, |\Gamma|+k, r)\\
&=\Nr_{|\Gamma|}\Rd^{\sigma_{\Gamma}}\A_{\Gamma}\\
&=\Rd^{\sigma_\Gamma}\Nr_\Gamma\A_\Gamma\\
&=\Rd^{\rho_\Gamma}\Nr_\Gamma\A_\Gamma
\end{align*}
By the first part of the first part we deduce that 
$\Pi_{\Gamma/F}\C^r_\Gamma\cong\Pi_{\Gamma/F}\Nr_\Gamma\A_\Gamma\subseteq\Nr_\alpha\Pi_{\Gamma/F}\A_\Gamma$,
proving \eqref{en:1}.

Now we prove \eqref{en:2}.
For this assume, seeking a contradiction, that $\B^r\in S\Nr_{\alpha}\K_{\alpha+k+1}$, 
$\B^r\subseteq \Nr_{\alpha}\c C$, where  $\c C\in \K_{\alpha+k+1}$.  
Let $3\leq m<\omega$ and  $\lambda:m+k+1\rightarrow \alpha +k+1$ be the function defined by $\lambda(i)=i$ for $i<m$ 
and $\lambda(m+i)=\alpha+i$ for $i<k+1$.
Then $\Rd^\lambda(\c C)\in \K_{m+k+1}$ and $\Rd_m\B^r\subseteq \Nr_m\Rd^\lambda(\c C)$.
%For $m<n$, let $$x_n=\{f\in F(n,n+k,r): m\leq j<n\to \exists i<m f(i,j)=Id\}.$$ 
%Then $x_n\in \c C(n,n+k,r)$ and ${\sf c}_ix_n\cdot {\sf c}_jx_n=x_n$ for distinct $i, j<m$.
%Futhermore 
%\[{I_n:\c C}(m,m+k,r)\cong \Rl_{x_n}\Rd_m {\c C}(n,n+k, r).\]
%via
%\[ I_n(S)=\{f\in F(n, n+k, r): f\upharpoonright m\times m\in S, \forall j(m\leq j<n\to  \exists i<m\; f(i,j)=Id)\}.\]
For each $\Gamma\in I$,\/  let $I_{|\Gamma|}$ be an isomorphism 
\[{\c C}(m,m+k,r)\cong \Rl_{x_{|\Gamma|}}\Rd_m {\c C}(|\Gamma|, |\Gamma+k|,r).\]
Let $x=(x_{|\Gamma|}:\Gamma)/F$ and let $\iota( b)=(I_{|\Gamma|}b: \Gamma)/F$ for  $b\in \c C(m,m+k,r)$. 
Then $\iota$ is an isomorphism from $\c C(m, m+k,r)$ into $\Rl_x\Rd_m\B^r$. 
%Now $\Rd_{m}\Rd_{\Sc}\B^r\in S\Nr_{m}\Sc_{m+k+1}$, and ${\sf c}_ix\cdot {\sf c}_jx=x\leq {\sf s}_{[j/i]}x,$
%so that ${\sf s}_{[j/i]}x\cdot {\sf s}_{[i/j]}x=x$ for any distinct $i,j<m$. 
Then $\Rl_x\Rd_{m}\B^r\in S\Nr_m\K_{m+k+1}$. 
It follows that  $\c C (m,m+k,r)\in S\Nr_{m}\K_{m+k+1}$ which is a contradiction and we are done.
\end{enumarab}
\end{proof}
 
\subsubsection{Monk's algebras}

Monk's seminal result proved in 1969, showing that the class of representable cylindric algebras is not finitely axiomtizable had a 
shatterring effect on  algebraic logic, in many respects. The conclusions drawn from this result, were that either the 
extra non-Boolean basic operations of cylindrifiers and diagonal elements were not properly chosen, or that the notion of 
representability was inappropriate; for sure it was concrete enough, but perhaps this is precisely the reason, it is far {\it too concrete.}

Research following both paths, either by changing the signature or/and altering the notion of concrete representability have been pursued 
ever since, with amazing success.  Indeed there are  two conflicting but complementary facets 
of such already extensive  research referred to in the literature, as 'attacking the representation problem'.
One is to delve deeply in investigating the complexity of potential axiomatizations for existing varieties 
of representable algebras, the other is to try to sidestep 
such wild unruly complex axiomatizations, often referred to as {\it taming methods}. 

Those taming methods can either involve passing to (better behaved) expansions of the algebras considered, 
or even completely change the signature  bearing in mind that the essential operations like cylindrifiers are 
term definable or else change the very  notion of representatiblity involved, as long as it remains concrete enough.

The borderlines are difficult to draw, we might not know what is {\it not} concrete enough, but we can
judge that a given representability notion is satisfactory, once we have one. 
%(This is analogous to  undecidability issues, with the main difference that we do know what we mean 
%by {\it not decidable}. We do not have an analogue of a 'recursive representability notion'). 

One can find well motivated appropriate notions of semantics by first locating them while giving up classical semantical prejudices. 
It is hard to give a precise mathematical underpinning to such intuitions. What really counts at the end of the day
is a completeness theorem stating a natural fit between chosen intuitive concrete-enough, but not too concrete, 
semantics and well behaved axiomatizations. 
The move of altering semantics has radical phiosophical repercussions, taking us away from the conventional 
Tarskian semantics captured by Fregean-Godel-like axiomatization; the latter completeness proof is effective 
but highly undecidable; and this property is inherited by finite varibale fragments of first order logic as long as we insist on Tarskian semantics.

Monk defined the required algebras, witnessing the non finite axiomtizability of $\RCA_n$ $n\geq 3$,
via their atom structure. An $n$ dimensional atom structure is a triple 
$\G=(G, T_i, E_{ij})_{i,j\in n}$
such that $T_i\subseteq G\times G$ and $E_{ij}\subseteq G$, for all $i, j\in n$. An atom structure so defined, is a cylindric atom structure if 
its complex algebra $\Ca\G\in \CA_n$. $\Ca\C$ is the algebra 
$$(\wp(G), \cap, \sim T_i^*, E_{ij}^*)_{i,j\in n},$$ where
$$T_i^*(X)=\{a\in G: \exists b\in X: (a,b)\in T_i\}$$
and
$$E_{i,j}^*=E_{i,j}.$$
Cylindric algebras are axiomatized by so-called Sahlqvist equations, and therefore it is easy to spell out first order correspondants
to such equations characterizing 
atom structures of cylindric algebras.

\begin{definition}
For $3 \leq m\leq n < \omega$, ${\G}_{m, n}$ denotes
the cylindric atom structure such that ${\G}_{m, n} = (G_{m, n},
T_i, E_{i,j})_{i, j < m} $ of dimension
$m$ which is defined as follows:
$G_{m, n}$ consists of all pairs $(R, f)$ satisfying
the following conditions:
\begin{enumarab}
\item $R$ is equivalence relation on $m$,
\item $f$ maps $\{ (\kappa, \lambda) : \kappa, \lambda < n,
\kappa  \not{R} \lambda\}$ into $n$,
\item for all $\kappa, \lambda < m$, if $\kappa \not{R} \lambda$
then $f_{\kappa \lambda } = f_{\lambda \kappa}$,
\item for all $\kappa, \lambda, \mu < m$, if $\kappa \not{R}
\lambda R \mu$ then $f_{\kappa \lambda } = f_{\kappa \mu}$,
\item for all $\kappa, \lambda, \mu < n$, if $\kappa \not{R}
\lambda \not{R} \mu \not{R} \kappa$ then $|f_{\kappa \lambda },
f_{\kappa \mu}, f_{\lambda \mu}| \neq 1.$
\end{enumarab}
For $\kappa < m$ and $(R, f), (S, g) \in G(m,n)$ we define
\begin{eqnarray*}
&(R, f) T_\kappa (S, g) ~~ \textrm{iff} ~~ R \cap {}^2(n
\smallsetminus
\{\kappa\}) = S \cap {}^2(m \smallsetminus \{\kappa\}) \\
& \textrm{and for all} ~~ \lambda, \mu \in m \smallsetminus
\{\kappa\}, ~~ \textrm{if} ~~ \lambda \not{R} \mu~~ \textrm{then} ~~
f_{\lambda \mu } = g_{\lambda \mu}.
\end{eqnarray*}
For any $ \kappa, \lambda <m$, set
$$ E_{\kappa \lambda} = \{ ( R, f) \in G(m,n) : \kappa R \lambda \}.$$
\end{definition}
Monk proves that this indeed defines a cylindric atom structure, he defines
the $m$ dimensional cylindric algebra $\C(m,n)=\Ca(\G(m,n),$ then he proves: 

\begin{theorem} 
\begin{enumarab}
\item For $3\leq m\leq n<\omega$ and $n-1\leq \mu< \omega$, $\Nr_m\C(n,\mu)\cong \C(m,\mu)$.
In particular, $\C(m, m+k)\cong \Nr_m(\C(n, n+k)$. 
\item Let $x_n=\{(R,f)\in G_{n, n+k}; R=(R\cap ^2n) \cup (Id\upharpoonright {}^2(n\sim m))\\
\text { for all $u, v$,} uRv, f(u,v)\in n+k,
\text { and 
for all } \mu\in n\sim m, v<\mu,\\ f(\mu, v)=\mu+k\}.$ 

Then 
$\C(n, n+k)\cong \Rl_x\Rd_n\C(m, m+k).$
\end{enumarab}
\end{theorem}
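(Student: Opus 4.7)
Both parts are purely combinatorial statements about atom structures, so the strategy is to produce explicit maps at the atom level and invoke the general fact that a bijection between atoms respecting the accessibility and diagonal relations lifts to a cylindric algebra isomorphism between the corresponding complex algebras.

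For part (1), the guiding idea is the \emph{restriction map}
\[
\pi\colon G(n,\mu)\to G(m,\mu),\qquad
\pi(R,f)=\bigl(R\cap{}^2m,\ f\!\upharpoonright\!\{(\kappa,\lambda)\in{}^2m:\kappa\not R\lambda\}\bigr).
\]
First I would verify that $\pi$ lands in $G(m,\mu)$: the five defining clauses are universally quantified and restrict cleanly to ${}^2m$. Next I would show that two atoms $(R,f),(S,g)\in G(n,\mu)$ have $\pi(R,f)=\pi(S,g)$ iff they are connected by the equivalence relation generated by $T_i$ for $m\leq i<n$; this is immediate from the definitions of $T_\kappa$ and $E_{\kappa\lambda}$. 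The pullback $X\mapsto\pi^{-1}[X]$ is then a Boolean embedding $\C(m,\mu)\hookrightarrow\C(n,\mu)$ whose image is exactly $\{Y\subseteq G(n,\mu):\cyl{i}Y=Y\ \text{for all}\ i\geq m\}=\Nr_m\C(n,\mu)$. The remaining item is to check that $\pi^{-1}$ intertwines $\cyl{i}$ and $\diag{i}{j}$ for $i,j<m$: diagonals are automatic since $E_{ij}$ is defined identically in both dimensions, and the cylindrifier case reduces to a surjectivity claim, namely that every atom of $G(m,\mu)$ that is $T_i$-related to $\pi(R,f)$ lifts to some atom of $G(n,\mu)$ that is $T_i$-related to $(R,f)$.

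This lifting is where I expect the main obstacle, and where the hypothesis $\mu\geq n-1$ is used decisively: extending the restricted $f$-values back to the larger domain while maintaining clause (v), which forbids triples of values collapsing on any $\not R$-triangle, is exactly a colouring problem in at most $n-1$ forbidden colours, and having $\mu\geq n-1$ available colours in the range of $f$ guarantees a legal extension. I would spell this out as a coordinate-by-coordinate inductive construction. The ``in particular'' clause $\C(m,m+k)\cong\Nr_m\C(n,n+k)$ is then the special case $\mu=n+k$, which trivially satisfies $\mu\geq n-1$ for $k\geq 0$.

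For part (2), I would first observe that the side conditions defining $x_n$ ($R$ is the identity off ${}^2m$, and $f(\mu,v)=\mu+k$ whenever $\mu\in n\setminus m$) completely pin down the ``new'' coordinates of any $(R,f)\in x_n$ in terms of the restriction to ${}^2m$. Hence the map $(R,f)\mapsto(R\cap{}^2m,\ f\!\upharpoonright\!{}^2m)$ is a bijection from $x_n$ onto $G(m,m+k)$. It then remains to check that this bijection carries the relations $T_i$ ($i<m$) and the sets $E_{ij}\cap x_n$ ($i,j<m$) inherited from $G(n,n+k)$ to those of $G(m,m+k)$, which is immediate since all the coordinates being moved or compared stay inside $m$ and the extra coordinates are frozen by the definition of $x_n$. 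Passing to complex algebras and relativising to $x_n$ yields the asserted isomorphism.
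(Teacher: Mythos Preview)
The paper does not give a proof here; it simply cites \cite{HMT2}, Theorems 3.2.77 and 3.2.86. Your atom-structure approach via the restriction map and pullback is exactly the standard argument found there, and the identification of the hypothesis $\mu\geq n-1$ with the colouring/extension step is correct.

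There is, however, a slip in your handling of the ``in particular'' clause. Substituting $\mu=n+k$ into $\Nr_m\C(n,\mu)\cong\C(m,\mu)$ yields $\Nr_m\C(n,n+k)\cong\C(m,n+k)$, not $\C(m,m+k)$. In fact the clause as printed cannot follow from the first sentence at all: combining the two would force $\C(m,m+k)\cong\C(m,n+k)$ for every $m<n$, which is false. The printed ``in particular'' is a typo; what Theorem~\ref{2.12} actually consumes (see the line $\Nr_{|\Gamma|}\C(|\Gamma|+k,|\Gamma|+k,r)\cong\C(|\Gamma|,|\Gamma|+k,r)$ in its proof) is precisely the instance $\mu=n+k$ giving $\C(m,n+k)$ on the left. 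You should flag this rather than manufacture a derivation.

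For part~(2) you have silently corrected the swapped dimensions in the displayed conclusion, which is fine, but two points in your bijection need to be made explicit. First, for the restriction $(R,f)\mapsto(R\cap{}^2m,\,f\upharpoonright{}^2m)$ to land in $G(m,m+k)$ rather than merely $G(m,n+k)$, the values $f(u,v)$ for $u,v<m$ must already lie in $m+k$; this is what the (garbled) middle clause ``$f(u,v)\in n+k$'' of the definition of $x_n$ is meant to say, and it should read $m+k$. Second, for surjectivity you must check that the canonical extension back to $G(n,n+k)$ prescribed by $x_n$ (identity on $n\setminus m$, $f(\mu,v)=\mu+k$) actually satisfies clause~(v) of the atom-structure definition, i.e.\ introduces no forbidden triple; this is where the choice $f(\mu,v)=\mu+k$ with distinct $\mu$'s giving distinct colours does real work.
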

\begin{demo}{Proof} \cite{HMT2}, theorems 3.2.77 and 3.2.86.
\end{demo}
\begin{theorem} The class $\RCA_{\alpha}$ is not axiomatized by a finite schema.
\end{theorem}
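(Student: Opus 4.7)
The plan is to deduce this as a direct application of Theorem \ref{2.12} to the system $(\CA_n)_{n \geq 2}$ of varieties of cylindric algebras, using the parametrized Monk algebras as witnesses. First I would check that $(\CA_n)_{n \geq 2}$ qualifies as a complete system of varieties definable by a schema in the sense of the earlier definition: the cylindrifiers $\cyl{i}$ and diagonals $\diag{i}{j}$ fit the Monk-style indexing by dimension, and the standard equational axioms of $\CA$ transfer canonically between dimensions under the injection maps $\eta^+$.

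Next I would produce the required family $\C(m,n,r)$ by refining the Monk construction $\C(m,n)$ recalled just above with an auxiliary combinatorial parameter $r \in \omega$ (for instance a chromatic-number bound on the underlying graph). Hypothesis (1), that $\C(m,n,r) \in S\Nr_m\CA_n$, is immediate from the first clause of the Monk theorem ($\C(m,n) \cong \Nr_m \C(n,n)$, so a fortiori in $S\Nr_m\CA_n$). Hypothesis (2), that $\C(m,n,r) \notin S\Nr_m\CA_{n+1}$, is the classical Monk-style non-representability in $n+1$ extra dimensions, provable by a Ramsey / colouring argument exploiting the parameter $r$. Hypothesis (3), that $\prod_{r \in \omega} \C(m,n,r) \in S\Nr_m\CA_n$, follows because in the ultralimit the finite chromatic obstruction disappears and the ultraproduct is fully representable, hence in $S\Nr_m\CA_n$. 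Hypothesis (4), $\C(m,m+k,r) \cong \Rl_{x_n}\Rd_m\C(n,n+k,r)$, is precisely the second clause of the Monk theorem recalled in the preceding display, with the relativizing element $x_n$ as exhibited there.

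Having verified the hypotheses, Theorem \ref{2.12} yields, for every infinite ordinal $\alpha$ and every $k < \omega$, that $S\Nr_\alpha\CA_{\alpha+k+1}$ is not axiomatizable by a finite schema over $S\Nr_\alpha\CA_{\alpha+k}$, and in particular the descending chain $S\Nr_\alpha\CA_{\alpha+k} \supsetneq S\Nr_\alpha\CA_{\alpha+k+1}$ is strict for every $k$. Combining this with the Henkin--Tarski neat embedding theorem $\RCA_\alpha = \bigcap_{k<\omega} S\Nr_\alpha\CA_{\alpha+k}$, the final statement follows by the standard compactness argument: any finite schema for $\RCA_\alpha$ would, at each finite dimension, reduce to a finite set of equations and hence be derivable in some $S\Nr_\alpha\CA_{\alpha+K}$ for a single $K$, forcing $\RCA_\alpha = S\Nr_\alpha\CA_{\alpha+K}$ and contradicting the strict chain above.

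The main obstacle, as I see it, is not the infinite-dimensional lifting (Theorem \ref{2.12} absorbs all the ultraproduct-and-reduct bookkeeping), but the concrete combinatorial engineering of the parametrized family $\C(m,n,r)$: one must simultaneously arrange that each $\C(m,n,r)$ fails to neatly embed into one extra dimension while the ultraproduct $\prod_r \C(m,n,r)$ recovers full representability, and that the relativization isomorphism of clause (4) still matches the explicit element $x_n$ inherited from Monk's theorem. This is precisely where the delicate choice of chromatic parameter and the tight fit with the Monk coordinatization must be verified hand in hand.
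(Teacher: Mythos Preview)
Your final compactness step is where the argument breaks down. You claim that a finite schema for $\RCA_\alpha$ would force $\RCA_\alpha = S\Nr_\alpha\CA_{\alpha+K}$ for a single $K$, but for infinite $\alpha$ a finite schema still has infinitely many $\alpha$-instances, and there is no compactness available in this infinite language to collapse the intersection $\bigcap_k S\Nr_\alpha\CA_{\alpha+k}$ to one term. Your phrase ``at each finite dimension'' hints at the right repair (pass to finite $m$, where the language is finite and genuine compactness applies, yielding $\RCA_m = S\Nr_m\CA_{m+K}$ and contradicting hypotheses (1)--(2) directly), but as written your conclusion is stated entirely in dimension $\alpha$, which does not follow.

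The paper takes a different and more direct route. It does not invoke the \emph{conclusion} of Theorem~\ref{2.12} at all; it reuses the infinite-dimensional algebras $\B^r$ built in that proof. Fixing any $r$, write $\B_k$ for the algebra constructed there with parameter $k$: one has $\B_k \in S\Nr_\alpha\CA_{\alpha+k}\setminus S\Nr_\alpha\CA_{\alpha+k+1}$, so in particular $\B_k\notin\RCA_\alpha$. Now take a non-principal ultrafilter $F$ on $\omega$ and form $\prod_k\B_k/F$. For every fixed $m$ the set $\{k:\B_k\in S\Nr_\alpha\CA_{\alpha+m}\}$ is cofinite, hence in $F$, and since $S\Nr_\alpha\CA_{\alpha+m}$ is a variety the ultraproduct lies in it; letting $m$ range over $\omega$ gives $\prod_k\B_k/F\in\bigcap_m S\Nr_\alpha\CA_{\alpha+m}=S\Nr_\alpha\CA_{\alpha+\omega}=\RCA_\alpha$. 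Thus the complement of $\RCA_\alpha$ is not closed under ultraproducts, which rules out a finite-schema axiomatization. The crucial ultraproduct here is over $k$, not over $r$.

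This also means your worry about engineering the parameter $r$ is misplaced for \emph{this} theorem. Hypothesis~(3) of Theorem~\ref{2.12} (the ultraproduct over $r$) is needed for the sharper relative statement that $S\Nr_\alpha\CA_{\alpha+k+1}$ is not finite-schema axiomatizable over $S\Nr_\alpha\CA_{\alpha+k}$, but for Monk's original non-finite-axiomatizability of $\RCA_\alpha$ one may take $r$ fixed (equivalently, use the unparametrized Monk algebras $\C(m,n)$ recorded just above); what matters is the lift to dimension $\alpha$ and then the ultraproduct as $k\to\omega$.
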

\begin{demo}{Proof} By $\RCA_{\alpha}=S\Nr_{\alpha}\CA_{\alpha+\omega}.$ Let $r\in \omega$. Then $\B^r$, call it $\B_k$ constructed above, 
from the finite dimensional algebras increasing in dimension, is in 
$S\Nr_{\alpha}\CA_{\alpha+k}$ but it is not in $S\Nr_{\alpha}\CA_{\alpha+k+1}$ least representable. 
Then the ultraproduct of the $\B_k$'s over a non-principal ultrafilter will be in 
$S\Nr_{\alpha}\CA_{\alpha+\omega},$ hence will be representable.
\end{demo}

Johnsson defined a polyadic atom structure based on the $\G_{m,n}$. First a helpful piece of notation: 
For relations $R$ and $G$, $R\circ G$ is the relation 
$$\{(a,b): \exists c (a,c)\in R, (c, b)\in S\}.$$
Now Johnson extended the atom structure $\G(m,n)$ by

$(R,f)\equiv_{ij}(S,g)$ iff $f(i,j)=g(j,i)$ and if $(i,j)\in R$, then $R=S$, if not, then $R=S\circ [i,j]$, as composition of relations. 
 
Strictly speaking, Johnsson did not define substitutions quite in this way; because he has all finite transformations, not only transpositions.
Then, quasipolyadic algebras was not formulated in schematizable form, a task accomplished by Sain and Thompson \cite{ST} much later.

\begin{theorem}(Sain-Thompson) $\RQA_{\alpha}$ and $\RQEA_{\alpha}$ is not finite schema axiomatizable
\end{theorem}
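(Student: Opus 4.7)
The plan is to apply Theorem \ref{2.12} to the systems $\QA = (\QA_\alpha : \alpha \geq 2)$ and $\QEA = (\QEA_\alpha : \alpha \geq 2)$; both are complete systems of varieties definable by a schema in the Sain--Thompson reformulation (transpositions for substitutions, cylindrifiers, and, in the $\QEA$ case, diagonals). The candidate witnessing finite-dimensional algebras are $\C(m,n,r)$, obtained from Monk's atom structures $\G(m,n)$ by taking the complex algebra $\Ca(\G(m,n))$ and enriching it with Johnson's accessibility relations $\equiv_{ij}$ (which induce the polyadic substitutions $\s_{[i,j]}$) together with the diagonal atoms $E_{ij}$ in the $\QEA$ case. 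The parameter $r$ is used in the style of Hirsch--Hodkinson--Maddux to split Monk's algebra into a family each of whose members is not representable in $n+1$ dimensions, but whose product over $r$ becomes representable in $n$ dimensions.

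The verification of the four hypotheses of Theorem \ref{2.12} runs as follows. Hypothesis (1), that $\C(m,n,r) \in S\Nr_m \K_n$, and hypothesis (4), the relativization isomorphism $\C(m,m+k,r) \cong \Rl_x \C(n,n+k,r)$, are direct lifts of Monk's own cylindric verifications (\cite{HMT2}, Theorems 3.2.77 and 3.2.86). The lift is essentially bookkeeping because Johnson's $\equiv_{ij}$ is defined atomwise on $\G(m,n)$ by the natural action of the transposition $[i,j]$ on pairs $(R,f)$; consequently $\s_{[i,j]}$ commutes with cylindrifiers and diagonals in exactly the manner required for neat-reducts and for passage to relativizations, and Monk's isomorphisms $\Nr_m\C(n, n+k,r) \cong \C(m,m+k,r)$ and $\Rl_{x_n}\Rd_n\C(m,m+k,r) \cong \C(n,n+k,r)$ automatically extend from the cylindric reducts to the full polyadic/equality enrichments. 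Hypothesis (3), $\prod_r \C(m,n,r) \in S\Nr_m \K_n$, is the straightforward adaptation of Monk's ultraproduct argument: by \Los' theorem and the transposition-based definition of $\equiv_{ij}$, every substitution in the product is realized by an actual transposition on the representing base.

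Hypothesis (2), the non-embedding $\C(m,n,r) \notin S\Nr_m \K_{n+1}$, is the main obstacle and the place where the polyadic case departs substantively from the cylindric one. In the cylindric argument the obstruction is a Ramsey-type coloring argument on $\G(m,n)$: an $(n+1)$-dimensional dilation would force a coloring of $K_{n+1}$ with fewer colors than Ramsey's theorem allows. For quasi-polyadic algebras one must further argue that the additional substitution operations of a hypothetical $(n+1)$-dimensional dilation cannot be used to escape this Ramsey--Monk obstruction. The cleanest route is to express non-embeddability via an atomic back-and-forth game on $\C(m,n,r)$ in which \pa{} plays coloured networks and \pe{} must respond with substitution-closed configurations; one then exploits the rigidity of $\equiv_{ij}$ (which acts on atoms precisely as the transposition $[i,j]$) to show that any polyadic dilation is already determined up to this transpositional action by its cylindric reduct, so \pe's cylindric losing strategy transfers verbatim.

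With the four hypotheses in place, Theorem \ref{2.12} gives, for every $k < \omega$, that $S\Nr_\alpha \QA_{\alpha+k+1}$ is not axiomatizable by a finite schema over $S\Nr_\alpha \QA_{\alpha+k}$, and similarly for $\QEA$. By the neat-embedding theorem in the Sain--Thompson setting, $\RQA_\alpha = S\Nr_\alpha \QA_{\alpha+\omega}$ and $\RQEA_\alpha = S\Nr_\alpha \QEA_{\alpha+\omega}$; taking a nonprincipal ultraproduct across $k$ of the witnessing algebras $\B^k$ delivers a member of the representable class that lies outside $S\Nr_\alpha \QA_{\alpha+k}$ for every fixed $k$. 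The chain of proper inclusions thus refuses to stabilize, and no finite schema of equations can axiomatize $\RQA_\alpha$ or $\RQEA_\alpha$, exactly as in the cylindric case.
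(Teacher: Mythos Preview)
Your route through Theorem~\ref{2.12} is one of the two approaches the paper sketches, but not the one it leads with. The paper's primary argument is more direct: it simply invokes $\RQA_\alpha=S\Nr_\alpha\QA_{\alpha+\omega}$ together with the fact that the diagonal-free reducts of Monk's finite-dimensional algebras (with Johnson's substitutions added) are already non-representable, so Monk's original lifting argument transfers verbatim. This bypasses Theorem~\ref{2.12} entirely---one does not need the level-by-level separation $S\Nr_\alpha\K_{\alpha+k+1}\subsetneq S\Nr_\alpha\K_{\alpha+k}$ to conclude mere non-finite-schema-axiomatizability of $\RQA_\alpha$ and $\RQEA_\alpha$. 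The paper does mention your route as a second proof, but there it cites finite-dimensional quasi-polyadic algebras constructed by Hirsch and Sayed~Ahmed that satisfy all four hypotheses of Theorem~\ref{2.12}; it does not build them from Monk--Johnson atom structures with an ad hoc $r$ parameter.

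Along this second route your proposal has two genuine gaps. First, you introduce the $r$ parameter only by gesture (``in the style of Hirsch--Hodkinson--Maddux'') without specifying the construction, and your treatment of hypothesis~(2)---the non-embedding $\C(m,n,r)\notin S\Nr_m\K_{n+1}$---is a sketch, not a proof. This matters: Monk's original $\C(m,n)$ are known to be non-representable, but \emph{not} known to fail $S\Nr_m\CA_{n+1}$; obtaining the latter is precisely why the Hirsch--Hodkinson modification was devised, and your claim that a polyadic dilation is ``determined by its cylindric reduct'' so that the cylindric losing strategy ``transfers verbatim'' needs an actual argument. Second, your final paragraph runs the ultraproduct backwards. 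The witnesses $\B^k$ are the \emph{non}-representable algebras (each $\B^k\in S\Nr_\alpha\K_{\alpha+k}\setminus S\Nr_\alpha\K_{\alpha+k+1}$), and their ultraproduct over $k$ lands \emph{inside} $S\Nr_\alpha\K_{\alpha+\omega}=\RK_\alpha$; any finite schema valid in all representables then holds in this ultraproduct, hence by \L o\'s in cofinitely many $\B^k$, contradicting their non-representability.
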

\begin{demo}{Proof} One proof uses the fact that $\RQA_{\alpha}=S\Nr_{\alpha}\QA_{\alpha+\omega}$, and that the diagonal free reduct
Monk's algebras (hence their infinite dilations) are not representable. Another proof uses a result of Robin Hirsch and Tarek Sayed 
Ahmed that there exists finite dimensional quasipolyadic algebras satisfying the hypothesis of theorem \ref{2.12}. 
A completely analogous result holds for Pinters algebras, using also finite dimensional Pinters algebras satisfying the hypothesis
of theorem \ref{2.12}.
\end{demo}

\section{The methods of splitting applied to quasi-polyadic equality algebras}

More severe negative results on potential universal axiomatizations of cylindric and quasi polyadic equality were obtained by Andr\'eka and 
Sayed Ahmed, we give one in what follows. Such results use a  different technique called splitting, although there are similarities with Monk's ideas.

The idea, traced back to Jonsson for relation algebras, consists of constructing for every finite $k\in \omega$ 
a non-representable algebra, all of whose $k$ -generated subalgebras are representable.

Andr\'eka ingeniously transferred such an idea to cylindric algebras, and to fully implement it, she invented
the nut cracker method of splitting. 
The subtle splitting technique invented by Andr\'eka can be summarized as follows. 
In the presence of only finitely many substitutions, we take a fairly simple representable algebra generated by an atom, and we break 
up or {\it split} the atom into enough (finitely many)  $k$ atoms,
forming a larger algebra, that is in fact non-representable; in fact, its cylindric reduct will not be representable, due to the  incompatibility 
between the number of atoms, and the number of elements in the domain of a representation. However, the ''small" subalgebras 
namely, those generated by $k$ elements of such an algebra will be 
representable.

This does have affinity to Monk's construction witnessing non finite axiomatizability for 
the class of representable cylindric algebras.
The key idea of the construction of a Monk's algebra is not so hard. 
Such algebras are finite, hence atomic, more precisely their Boolean reducts are atomic.
The atoms are given colours, and 
cylindrifications and diagonals are defined by stating that monochromatic triangles 
are inconsistent. If a Monk's algebra has many more atoms than colours, 
it follows from Ramsey's Theorem that any representation
of the algebra must contain a monochromatic traingle, so the algebra
is not representable.

For $\CA$'s, for each $k$ only one splitting into $k$ atoms are required, as done by Andr\'eka, 
For $\RQEA$, things are more complicated, one has to perform infinitely many finite splittings 
(that is into a pre assigned finite $k$), one for every reduct containing only finitely many 
substitutions.  (not just one which is done in \cite{ST}; relative though to infinitely many atoms which is much more than needed), 
increasing in number but always finite, 
constructing infinitely many algebras, whose similarity types contain only finitely many 
substitutions. Such constructed non-representable algebras, 
form a chain, and our desired algebra will be their 
directed union. The easy thing to do is to show that ``small" subalgebras of every non-representable 
algebra in the chain is representable; the hard thing to do is to show that ``small" subalgebras of the non-representable 
limit remain 
representable. (The error in Sain's Thompson paper is claiming that the small subalgebras 
of the non-representable algebra, obtained by performing only one splitting into infinitely many atoms, are representable; this is not necessarily 
true).

The cylindric reduct of the algebras forming the chain is of $\CA_{\omega}$ type; in particular, it contains infinitely many 
cylindrifications and diagonal elements.
The  combinatorial argument of counting depends essentially on the presence of infinitely many diagonal elements.
Indeed, it can be shown that the splitting 
technique adopted to prove complexity results concerning axiomatizations of $\RQEA_{\omega}$ 
simply does not work in the absence of diagonals. This can be easily destilled from our proof since our constructed 
non-representable  quasipolyadic equality algebras, in fact have a representable 
quasipolyadic reduct. 
An open problem here, that can be traced back to to Sain's and Thompson's paper \cite{ST}, 
is whether $\RQA_{\omega}$ {\it can be} 
axiomatized by a necessarily infinite) set of formulas using only finitely many 
variables. This seems to be a hard problem, and the author tends to believe 
that there are axiomatizations that contain only finitely many variables, but further research is needed in this area.

On the other hand, the algebra constructed by this method of splitting is `almost representable', in the sense that if we 
enlarge the potential domain of a representation, then various reducts of the algebra, obtained by discarding some of the operations 
(for example diagonal elements or infinitely many cylindrifications), turn out 
representable; and this gives {\it relative} non-finitizability results.
Here we are encountered by a situation where we cannot have our cake and eat. If we want a quasipolyadic equality 
algebras that is only barely representable,
then we cannot obtain non-representability of some of its strict reducts like its quasipolyadic reduct.
Throughout, we will be tacitly assuming that quasipolyadic (equality) algebras are not only 
term-definitionally with finitary polyadic (equality) algebras as proved in \cite{ST} p.546, but that they are actually
the same. This means that in certain places we consider only substitutions 
corresponding to transpositions rather then all substitutions corresponding to finite transformations which is perfectly 
legitimate. Also we understand representability of reducts of quasipolyadic equality algebras, when we discard some of the substitution operations, 
in the obvious sense. 

%\section{Main result and its proof}

If $\A$ has a cylindric reduct, then $\Rd_{ca}\A\in \RCA_{\alpha}$ denotes this reduct. 
Our next theorem corrects the error mentioned above in Sain's Thompson's seminal paper \cite{ST},
generalizes Theorem 6 in \cite{Andreka} p. 193 to infinitely many dimensions, 
and answers a question by Andre\'ka in op cit also on p. 193. 

\begin{theorem} The variety $\RQEA_{\omega}$ cannot be axiomatized with with a set $\Sigma$ of quantifier free
formulas containing finitely many variables. In fact, for any $k<\omega$, and any set of quantifier free formulas 
$\Sigma$ axiomatizing $\RQEA_{\omega}$, $\Sigma$ contains a formula with more than $k$ variables in which some diagonal 
element occurs.
\end{theorem}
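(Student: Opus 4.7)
The plan is to apply Andr\'eka's nut-cracker splitting method in the infinite-dimensional $\QEA_\omega$ signature, correcting the single-splitting strategy of \cite{ST} by performing infinitely many finite splittings (one per finite substitution signature). Fix $k<\omega$. The goal is to construct an algebra $\A\in \QEA_\omega\setminus \RQEA_\omega$ with two complementary robustness properties: (i) every subalgebra of $\A$ generated by at most $k$ elements lies in $\RQEA_\omega$, and (ii) the reduct of $\A$ obtained by discarding all diagonal elements $\diag{i}{j}$ is representable in its reduced signature. Granted such an $\A$, any quantifier-free axiomatization $\Sigma$ of $\RQEA_\omega$ must contain some equation $\varepsilon$ that fails in $\A$; property (i) forces $\varepsilon$ to involve more than $k$ variables, and property (ii) forces $\varepsilon$ to mention at least one diagonal symbol, which is exactly the content of the theorem.

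For the construction I would start from a simple representable $\QEA_\omega$-type algebra $\B$ based on a square set representation on an infinite base, generated by a single atom $a$, in the spirit of the algebras used by Andr\'eka for the finite-dimensional case. For each $n<\omega$, consider the reduct signature $\tau_n$ retaining all $\cyl{i}$ and $\diag{i}{j}$ for $i,j<\omega$ but only the finitely many transposition substitutions $\swap{i}{j}$ with $i,j<n$. Inside this reduct I split $a$ into $k+1$ pieces $a_0,\ldots,a_k$, extending the $\tau_n$-operations to the enlarged atom structure so that a Ramsey/pigeonhole counting involving diagonals, cylindrifications, and the $n$-bounded substitutions forbids any representation of the resulting algebra $\A_n$. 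Arranging the splittings coherently as $n$ grows produces a chain $\A_0\subseteq \A_1\subseteq \cdots$ in compatible signatures whose directed union $\A=\bigcup_n \A_n$ carries a full $\QEA_\omega$ structure; the counting that defeats representability persists in the limit once all substitutions are present together with all diagonals.

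The verification of (i) and (ii) is where the real work lies. For (ii), substitutions and cylindrifications alone do not generate the Ramsey-type obstruction without diagonals, so one writes down an explicit representation of the diagonal-free reduct by essentially forgetting the split, mapping the pieces $a_0,\ldots,a_k$ to disjoint translates of the original $a$ in a larger base. For (i), any $k$ elements of $\A$ already live in some $\A_n$, and the subalgebra they generate uses only finitely many substitutions; the key combinatorial point is that $k$ generators cannot distinguish all of $a_0,\ldots,a_k$, so at least two split atoms get identified inside the generated subalgebra, collapsing the obstruction and allowing a concrete representation to be written down by hand.

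The main obstacle, flagged explicitly in the discussion above as the place where \cite{ST} erred, is showing that representability of $k$-generated subalgebras survives the passage from each finite stage $\A_n$ to the directed limit $\A$. The delicate point is that a $k$-generated subalgebra of $\A$ carries the full $\QEA_\omega$ signature rather than any truncated $\tau_n$-signature in which the stagewise argument is natural, so one must construct a single representation compatible with arbitrarily many substitutions coming from later stages. Careful bookkeeping of which finitely many substitutions and cylindrifications actually appear in any term built from $k$ generators, combined with a coherent system of representations of the $\A_n$ that can be amalgamated as $n$ grows, is what pushes the argument through; this is where the non-trivial technical effort is concentrated.
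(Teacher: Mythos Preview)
Your overall architecture matches the paper's proof: a family of split algebras $\A_n$ indexed by the finite substitution signatures, assembled into a directed union, with properties (i) and (ii) doing exactly the work you describe, and you correctly isolate the limit step for (i) as the delicate point.

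There is, however, a concrete gap in the splitting parameter. You split the atom into a \emph{fixed} number $k+1$ of pieces, independent of the stage $n$, and justify (i) by the pigeonhole ``$k$ generators cannot distinguish all of $a_0,\ldots,a_k$''. This pigeonhole does not go through once substitutions are present. The equivalence one actually needs on the split atoms at stage $n$ is
\[
a_i\equiv a_j \quad\Longleftrightarrow\quad (\forall g\in G)(\forall \tau\in G_n)\bigl(\s_\tau a_i\leq g \ \Leftrightarrow\ \s_\tau a_j\leq g\bigr),
\]
because each of the $k$ generators may see the split atoms through any of the $n!$ available substitutions. This relation has up to $2^{k\cdot n!}$ classes, so to force a collapse (and hence produce a representable $k$-generated subalgebra) one needs more than $2^{k\cdot n!}$ pieces, not $k+1$; even in the substitution-free case $k+1$ atoms against $2^k$ possible types already fails for $k\geq 1$. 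Consequently the number of split atoms must grow with $n!$: the paper takes $m+1$ pieces with $m\geq 2^{k\cdot n!+1}$, and then has to construct explicit embeddings $split(\A',R,m_1,n_1)\hookrightarrow \Rd_{n_1}split(\A',R,m_2,n_2)$ between splittings of \emph{different} sizes before forming the union. This is precisely what the paper means by ``infinitely many finite splittings \ldots\ increasing in number but always finite''; with your fixed $k+1$ you could not verify (i) even at a single finite stage.
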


\begin{demo}{Proof}  The proof consists of two parts.  In the first part we construct algebras $\A_{k,n}$ with certain properties, for each 
$n,k\in \omega\sim \{0\}$. In the second part we form a limit of such algebras as $n$ tends to infinity, obtaining an algebra $\A_k$ that is not representable, though its 
$k$-generated subalgebras are representable. This algebra wil finish the proof.

{\bf Part I}

Let $k, n\in \omega\sim \{0\}$.
Let $G_n$ be the symmetric group on $n$.
$G_n$ is generated by the set of all transpositions $\{[i,j]: i,j\in n\}$ and for $n\leq m$, we can consider $G_n\subseteq G_m$.
We shall construct an algebra $\A_{k,n}=(A_{k,n}, +, \cdot ,-, {\sf c}_i, {\sf s}_{\tau}, {\sf d}_{ij})_{i,j\in \omega, \tau\in G_n}$ 
with the following properties.
\begin{enumroman}
\item $\Rd_{ca}\A_{k,n}\notin \RCA_{\omega}$. 
\item Every $k$-generated subalgebra of $\A_{k,n}$ is representable.
\item 
There is a one to one mapping $h:\A_{k,n}\to (\B(^{\omega}W), {\sf c}_i, {\sf s}_{\tau}, {\sf d}_{ij})_{i,j<\omega, \tau\in G_n}$ 
such that $h$ is a homomorphism with respect to all operations of $\A_{k,n}$ except for the 
diagonal elements. 
\end{enumroman}

Here $k$-generated means generated by $k$ elements.
The proof for finite reducts uses arguments very similar to the proof of Andr\'eka of Theorem 6 in \cite{Andreka}, and has affinity  with the proof 
of theorem 3.1 in \cite{c}. 
However, there are two major differences.
Our cylindric reducts are infinite dimensional, and our proof is more direct and, in fact, far easier to grasp. 
The proof of the above cited theorem of Andr\'eka's goes through the route of certain finite expansions by so-called 
permutation invariant unary operations that are also modalities (distributive over the boolean join), and these are more general 
than substitutions. Substitutions are more concrete, and therefore our proof is less abstract.
\begin{enumarab}
\item Let $m\geq 2^{k.n!+1}$, $m<\omega$ 
and let $\langle U_i:i<\omega\rangle$ be a system
of disjoint sets such that $|U_i|=m$ for $i\geq 0$ and $U_0=\{0,\ldots m-1\}$.
Let $$U=\bigcup\{U_i: i\in \omega\},$$
let $$R=\prod _{i<\omega}U_i=\{s\in {}^{\omega}U: s_i\in U_i\},$$
and let $\A'$ be the subalgebra of 
$\langle \B({}^{\omega}U), {\sf c}_i, {\sf d}_{ij}, {\sf s}_{\tau}\rangle_{\tau\in G_n}$
generated by $R$.
Then ${\sf s}_{\tau}R$ is an atom of $\A'$ for any $\tau\in G_n$. Indeed for any two sequences $s,z\in R$ there is a 
permutation $\sigma:U\to U$ of $U$ taking $s$ to $z$ and fixing $R$, i.e
$\sigma\circ s=z$ and $R=\{\sigma\circ p:p\in R\}$. 
$\sigma$ fixes all the elements generated by $R$ because the operations 
are permutation invariant. Thus if $a\in A'$ and $s\in a\cap R$ then $R\subseteq a$
showing that $R$ is an atom of $\A'$. Since $\tau$ is a bijection, it follows that ${\sf s}_{\tau}R$ is also an atom of $\A$
and, it is easy to see that  all these atoms are pairwise disjoint.
That is if $\tau_1\neq \tau_2$, then ${\sf s}_{\tau_1}R\cap {\sf s}_{\tau_2}R=\emptyset$.
We now split each ${\sf s}_{\tau}R$ into abstract atoms ${\sf s}_{\tau}R_j$, $j\leq m$ and $\tau\in G_n$.
Let $(R_j:j\leq m)$ be a set of $m+1$ distinct elements, and let $\A_{k,n}$ be an algebra
such that
\begin{enumerate}
\item $\A'\subseteq \A_{k,n},$ the Boolean part of $\A_{k,n}$ is a Boolean algebra,
\item $R=\sum\{R_j:j\leq m\},$
\item ${\sf s}_{\tau}R_j$ are pairwise distinct atoms of $\A_k$ for each $\tau\in G_n$ and $j\leq m$ and 
${\sf c}_i{\sf s}_{\tau}R_j={\sf c}_i{\sf s}_{\tau}R$ for all $i<\omega$ and all $\tau\in G_n,$
\item each element of $\A_{k,n}$ is a join of element of $\A'$ and of some ${\sf s}_{\tau}R_j$'s,
\item ${\sf c}_i$ distributes over joins,
\item The ${\sf s}_{\tau}$'s are Boolean endomorphisms such that ${\sf s}_{\tau}{\sf s}_{\sigma}a={\sf s}_{\tau\circ \sigma}a$.
\end{enumerate}

The existence of such algebra is easy to show; furthermore they are unique up to isomorphim, see \cite{Andreka}, the comment right 
after the 
definition on p.168. Now we show that $\Rd_{ca}\A_{k,n}$ cannot be representable. This part of the proof is identical to 
Andr\'eka's proof but we include it for the sake of 
completeness. The idea is that we split $R$ into $m+1$ distinct atoms but $U_0$ has only $m$ elements, and those two 
conditions are incompatible in case there is a 
representation. The substitutions have to do with permuting the atoms and they do not contribute to this part of the proof.
For $i,j<\omega$, $i\neq j,$ recall that ${\sf s}_j^ix={\sf c}_i({\sf d}_{ij}\cdot x)$. Let 
$$\tau(x)=\prod_{i\leq m} {\sf s}_i^0{\sf c}_1\ldots {\sf c}_mx\cdot \prod_{i<j\leq m} 
-{\sf d}_{ij}$$
Then $\A'\models \tau(R)=0.$
Indeed we have  
$${\sf c}_1\ldots {\sf c}_mR={}^mU\times U_{m+1}\times \ldots $$
$${\sf s}_i^0{\sf c}_1\ldots {\sf c}_mR=U\times \ldots U_0\times 
U\times U_{m+1}\ldots $$
$$\bigcap {\sf s}_i^0{\sf c}_1\ldots {\sf c}_mR={}^{m+1}U_0\times U_{m+1}\times .$$
Then by $|U_0|\leq m$ there is no repitition free sequence in $^{m+1}U_0$. Thus
as claimed $\A'\models \tau(R)=0$.
Then $\A_{k,n}\models \tau(R)=0$. Assume that $\A_{k,n}$ is represented somehow.
Then there is a homomorphism 
$h:\A_{k,n}\to \langle \B(^{\omega}W), {\sf c}_i, {\sf d}_{ij}\rangle_{i,j<\omega}$ 
for some set $W$ such that $h(R)\neq \emptyset$.

By $h(R)\neq \emptyset$ there is some $s\in h(R)$. By $R\leq {\sf c}_0R_i$ we have
$h(R)\subseteq h(R_i)$, so there is a $w_i$ such that $s(0|w_i)\in h(R_i)$ 
for all $i\leq m$.
These $w_i$'s are distinct since the $R_i$'s are pairwise disjoint (they are distinct atoms)
and so are the 
$h(R_i)$'s.
Consider the sequence
$$z=\langle w_0, w_1, \ldots w_m, s_{m+1}, \ldots \rangle.$$
We show that $z\in \tau(h(R))$. Indeed let $i, j\leq m$, $i\neq j$, then $z\in -{\sf d}_{ij}$
by $w_i\neq w_j$. Next we show that $z\in {\sf s}_i^0{\sf c}_1\ldots {\sf c}_mh(R)$. 
By definition, $\langle w_i, s_1\ldots \rangle\in h(R_i)\subseteq h(R)$
so $\langle w_i, w_1,\ldots w_m, s_{m+1}, \rangle\in {\sf c}_1\ldots {\sf c}_mh(R)$ 
and thus $z\in {\sf c}_0({\sf d}_{0i}\cap {\sf c}_1\ldots {\sf c}_mh(R))={\sf s}_i^0{\sf c}_1\ldots {\sf c}_mh(R)$.
This contradicts that $\A_{k,n}\models \tau(R)=0$.

Next we show that the $k$ generated subalgebras of $\A_{k,n}$ are representable.
Let $G$ be given such that $|G|\leq k$. The idea is to use $G$ and define a ``small" subalgebra of $\A_{k,n}$ 
that contains $G$
and is representable. 
Define $R_i\equiv R_j$ iff
$$(\forall g\in G)(\forall \tau\in G_n)({\sf s}_{\tau}R_i\leq g\Longleftrightarrow {\sf s}_{\tau}R_j\leq g).$$
This is similar to the equivalence relation defined by Andr\'eka \cite{Andreka} p. 157; 
the difference is that substitutions have to come to the picture \cite{Andreka}p.189.
Then $\equiv$ is an equivalence relation on $\{R_j: j\leq m\}$ which has $\leq 2^{k.n!}$ blocks by $|G|\leq k$ and $G_n=n!.$
Let $p$ denote the number of blocks of $\equiv$, that is $p=|\{R_j/\equiv:j\leq m\}|\leq 2^{k.n!}\leq m$.
Now that $R$ is split into $p< m+1$ atoms, the incompatibility condition above no longer holds. 
Indeed, let
$$B=\{a\in A_{k,n}: (\forall i,j\leq m)(\forall \tau\in G_n)(R_i\equiv R_j\text { and }{\sf s}_{\tau}R_i\leq a\implies {\sf s}_{\tau}R_j\leq a\}.$$

We first show that $B$ is closed under the operations of $\A_{k,n}$, then we show 
that, unlike $\A_{k,n}$,  $B$ is the universe of a representable algebra. Let $i<l<\omega$
Clearly $B$ is closed under the Boolean operations. The diagonal element
${\sf d}_{il}\in \B$ since ${\sf s}_{\tau}R_j\nleq {\sf d}_{il}$ for all $j\leq m$ and $\tau\in G_n$.
Also $A'\subseteq B$ since ${\sf s}_{\tau}R$ is an atom of $\A'$ and ${\sf c}_ia\in A'$ for all $a\in A_{k,n}$.
Thus ${\sf c}_ib\in B$ for all $b\in B$.
Assume that $a\in B$ and let $\tau\in G_n$. Suppose that $R_i\equiv R_j$ and ${\sf s}_{\sigma}R_i\leq {\sf s}_{\tau}a$. Then 
${\sf s}_{\tau}{\sf s}_{\sigma}R_i\leq a$, so ${\sf s}_{\tau\circ \sigma}R_i\leq a$. Since $a\in B$ we get
that ${\sf s}_{\tau\circ \sigma}R_j={\sf s}_{\tau}{\sf s}_{\sigma}R_j\leq a$, and so ${\sf s}_{\sigma}R_j\leq{\sf s}_{\tau}a$.
Thus $B$ is also closed under substitutions.
Let $\B\subseteq \A_{k,n}$ be the subalgebra of $\A_{k,n}$ with universe $B$. Since $G\subseteq B$
it suffices to show that $\B$ is representable.
Let $\{y_j;j<p\}=\{\sum(R_j/\equiv):j\leq m\}$. 
Then $\{y_j:j<p\}$ is a partition of $R$ in $\B,$ ${\sf c}_iy_j={\sf c}_iR$ for all $j<p$ and 
$i<\omega$ and every element of $\B$ is a join of some element of $\A'$ and of finitely many of 
${\sf s}_{\tau}y_j$'s. Recall that $p\leq m$.
We now split $R$ into $m$ `real' atoms, cf. \cite{Andreka} p.167, lemma 2. We define an equivalence relation on $R$. For any $s,z\in R$
$$s\sim z\Longleftrightarrow |\{i\in \omega: s_i\neq z_i\}|<\omega.$$
Let $S\subseteq R$ be a set of reprsentatives of $\sim$. 
Consider the group $Z_m$ of integers modulo $m$. 
(Any finite abelian group with $m$ elements 
will do.) 
For any $s\in S$ and $i\in \omega$ let $f_i^s:U_i\to Z_m$ be an onto map such that $f_i^s(s_i)=0$.
For $j<m$ define
$$R_j^s=\{z\in R: \sum\{f_i^s:i\in \omega\}=j\}$$
and 
$$R_j"=\bigcup \{R_j^s: s\in S\}.$$
Then $\{R_0'', \ldots  R_{m-1}''\}$ is a partition of $R$ such that ${\sf c}_iR_j''= {\sf c}_iR$ for all $i<\omega$ and $j<m$.
Let $\A''$ be the subalgebra of 
$\langle \B(^{\omega}U), {\sf c}_i, {\sf d}_{ij}, {\sf s}_{\tau}\rangle_{i,j<\omega, \tau\in G_n}$
generated by $R_0'',\ldots R_{m-1}''.$
Let 
$${\cal R}=\{{\sf s}_{\sigma}R_j'':\sigma\in G_n, j<m\}.$$
Let $$H=\{a+\sum X: a\in A', X\subseteq_{\omega}\cal R\}.$$
Clearly $H\subseteq A''$ and $H$ is closed under the boolean operations. Also
because transformations considered are bijections we have
$${\sf c}_i{\sf s}_{\sigma}R_j={\sf c}_i{\sf s}_{\sigma}R\text { for all $j<m$ and }\sigma\in G_n.$$
Thus $H$ is closed under ${\sf c}_i.$ Also $H$ is closed under substitutions.
Finally ${\sf d}_{ij}\in A'\subseteq H.$
We have proved that $H=A''$. This implies that every element of $\R$ is an atom of $\A''$.
We now show that $\B$ is embeddable in $\A''$, and hence will be representable.
Define for all $j<p-1$,
$$R_j'=R_j'',$$
and
$$R_{p-1}'=\bigcup\{R_j'': p-1\leq j<m\}$$
Then define for $b\in B$:
$$h(b)=(b-\sum_{\tau\in G_n}{\sf s}_{\tau}R)\cup\bigcup\{{\sf s}_{\tau}R_j': \tau\in G_n,  j<p, {\sf s}_{\tau}y_j\leq b\}.$$
It is clear that $h$ is one one, 
preseves the Boolean operations and the diagonal elements and is the identity on $A'$.
Now we check cylindrifications and substitutions.
\begin{equation*}
\begin{split}
{\sf c}_ih(b)&={\sf c}_i[(b-\sum {\sf s}_{\tau}R)\cup\bigcup\{{\sf s}_{\tau}R_j': \tau\in G_n, j<p, {\sf s}_{\tau}y_j\leq b\}]\\
&={\sf c}_i(b-\sum_{\tau\in G_n} {\sf s}_{\tau}R)\cup\bigcup\{{\sf c}_i{\sf s}_{\tau}R_j':\tau\in G_n, j<p, {\sf s}_{\tau}y_j\leq b\}\\
&={\sf c}_i(b-\sum_{\tau\in G_n} {\sf s}_{\tau}R\cup\bigcup\{{\sf c}_i{\sf s}_{\tau}y_j: \tau\in G_n, j<p, {\sf s}_{\tau}y_j\leq b\}\\
&={\sf c}_i[(b-\sum_{\tau\in G_n} {\sf s}_{\tau}R)\cup\bigcup\{{\sf s}_{\tau}y_j,\tau\in G_n, j<p, {\sf s}_{\tau}y_j\leq b\}]\\
&={\sf c}_ib\\
\end{split}
\end{equation*}
On the other hand
$$h{\sf c}_i(b)=({\sf c}_ib-\sum_{\tau\in G_n} {\sf s}_{\tau}R)\cup\bigcup\{{\sf s}_{\tau}R_j': {\sf s}_{\tau}y_j\leq {\sf c}_ib\}={\sf c}_ib.$$
Preservation of substitutions follows from the fact that the substitutions are Boolean endomorphisms. In more detail, let $\sigma\in G_n$, then:
\begin{equation*}
\begin{split}
{\sf s}_{\sigma}h(b)&={\sf s}_{\sigma}[(b-\sum_{\tau\in G_n} {\sf s}_{\tau}R)\cup\bigcup\{{\sf s}_{\tau}R_j':\tau\in G_n, j<p, {\sf s}_{\tau}y_j\leq b\}]\\
&=({\sf s}_{\sigma}b-\sum_{\tau\in G_n} {\sf s}_{\sigma}{\sf s}_{\tau}R)\cup\bigcup\{{\sf s}_{\sigma}
{\sf s}_{\tau}R_j': \tau\in G_n, j<p, {\sf s}_{\tau}y_j\leq b\}]\\
&=({\sf s}_{\sigma}b-\sum_{\tau\in G_n} {\sf s}_{\sigma\circ \tau}R)\cup\bigcup\{{\sf s}_{\sigma\circ \tau}R_j': \tau\in G_n, j<p, {\sf s}_{\tau}y_j\leq b\}]\\
&=({\sf s}_{\sigma}b-\sum_{\tau\in G_n} {\sf s}_{\tau}R)\cup\bigcup\{{\sf s}_{\tau}R_j':  \tau\in G_n, j<p, {\sf s}_{\tau}y_j\leq b\}]\\
\end{split}
\end{equation*}

But for fixed $\sigma,$ we have $\{\sigma\circ \tau: \tau\in G_n\}=G_n$ and so
$${\sf s}_{\sigma}h(b)=h({\sf s}_{\sigma}(b)).$$

For every $k,n<\omega$ we have constructed an algebra $\A_{k,n}$ such that 
$\Rd_{\CA}\A_{k,n} \notin \RCA_{\omega}$ and the $k$-generated subalgebras of 
$\A_{k,n}$ are representable.  
We should point out that the  ``finite dimensional version" of the $\A_{k,n}$'s 
were constructed in \cite{c}, and their construction can be recovered from 
the proof of Theorem 6 in \cite{Andreka} which addresses the finite dimensional case but in a more general 
setting allowing arbitrary unary additive permutation invariant operations expanding 
those of $\RCA_n$. We note that the latter result does not survive the infinite dimensional case. There are easy examples, cf. \cite{Andreka} p.192 
and \cite{ex}. 

\item We show that $\A_{k,n}$ has a representation which preserves all operations except for the diagonal elements. 
That is, its quasipolyadic reduct is 
representable.
The proof is analogous to that of Andr\'eka's on of Claim 16 on p.194 of \cite{Andreka}.
Let $\A_{k,n}$ be the algebra obtained by splitting the atom $R$ in $\A'$ as in the above proof. Then
$\A_{k,n}$ is not representable,  but its $k$ generated subalgebras are representable.
We show that there is a representation of $\A_{k,n}$ in which all operations are preserved except for the diagonal elements.
Let $U_i$, $i<\omega$ be a sequence of pairwise disjoint sets such that $|U_0|=m\geq 2^{k.n!+1}$  and $|U_i|\geq m+1$. Let $R$ be as above
except that it is defined via the new $U_i$'s.   Let $(R_j:j\leq m)$ be the splitting of $R$ in $\A_{k,n}$. 
Let $W\supset U$ (properly).
Let $W_0=U_0\cup (W\sim U)$, and $W_i=U_i$ for $0<i<\omega$.
First we define a function $h:\wp(^{\omega}U)\to \wp(^{\omega}W)$ with the desired properties and $h(R)=\prod_{i<\omega}W_i$.
Let $t: W\to U$ be a surjective function which is the identity on $U$ and which maps $W_0$ to $U_0$. Define $g:{}^{\omega}W\to {}^{\omega}U$ 
by $g(s)=t\circ s$ for all $s\in {}^{\omega}W$ and for all $x\subseteq {}^{\omega}U,$ define
$$h(x)=\{s\in {}^{\omega}U: g(s)\in x\}.$$
Since $|W_i|\geq m+1$ for all $i<\omega$, the incompatibity condition between the number of atoms 
splitting $R$ and the number of elements in $|W_0|$
used in the representation vanishes, 
so there is a real partition $(S_j: j\leq m)$ 
of $S=\prod_{i<\omega}W_i$ such that ${\sf c}_iS_j={\sf c}_iS$ for all $ i<\omega$ and $j\leq m$.
Then $({\sf s}_{\sigma}S_j: j\leq m)$ is an analogous partition of ${\sf s}_{\sigma}S$ for $\sigma\in G_n$.
Let $X_{\sigma,j}={\sf s}_{\sigma}^{\A_{k,n}}R_j$ for $j\leq m$.
Define $\bar{h}:\A_{k,n}\to \wp(^{\omega}W)$ by
$$\bar{h}(a)=h(a), \ \ a\in A'$$
$$\bar{h}(X_{\sigma j})={\sf s}_{\sigma}S_j, \sigma \in G_n, j\leq m$$
and
$$\bar{h}(x+y)=\bar{h}(x)+\bar{h}(y), x,y\in A_{k,n}.$$
It is easy to check using lemma  (iv) in \cite{Andreka} that $\bar{h}$ is as desired. In fact $\bar{h}$ 
preserves all the quasipolyadic operations including substitutions 
corresponding to replacements, which are now no longer definable, because we have discarded diagonal elements.
The reasoning is as follows \cite{Andreka} p.194. 
For $i,j\in n$, 
the quantifier free formula $x\leq -{\sf d}_{ij}\to {\sf s}_i^jx$ is valid in representable algebras 
hence it is valid in $\A_{k,n}$ since its  $k$ generated subalgebra are representable. 
Let $\sigma\in G_n$, $l\leq m$. Then ${\sf s}_i^j(X_{\sigma, l})=0$ in $\A_{k,n}$.
Now
$$\bar{h}({\sf s}_i^j(X_{\sigma, l}))=\bar{h}(0)=0={\sf s}_i^j {\sf s}_{\sigma}S_j={\sf s}_i^jh(X_{\sigma l}).$$ 
Assume that $a\in A_{k,n}$. Then
$$\bar{h}({\sf s}_i^ja)=h({\sf s}_i^ja)={\sf s}_i^jh(a)={\sf s}_i^j\bar{h}(a).$$ 
Since both $\bar{h}$ and ${\sf s}_i^j$ are additive we get the required. 
\end{enumarab}

{\bf Part II}
\begin{enumarab}
\item Here is where we really start the non-trivial modification of Andr\'eka's splitting. 
For $n\in \omega$ and $m=2^{k.n!+1}$, we denote $\A_{k,n}$ by $split(\A', R, m, n)$.
This is perfectly legitimate since the algebra $\A_{k,n}$ is determined uniquely by $R,$ $\A'$, $m$ and $n$. 
Recall that $m$ is the number of atoms splitting $R$, while $n$ is the finite number of substitutions available.
For $n_1<n_2$, we denote by $\Rd_nsplit(\A', R, m, n_2)$ 
the reduct of $split(\A', R, m, n_2)$ obtained by restricting substitutions to $G_{n_1}.$ 
Let $m_1<m_2$ and $n_1<n_2$. Then we claim that
$$split(\A', R, m_1,n_1)\text { embeds into }\Rd_{n_1}split(\A', R, m_2, n_2).$$
This part of the proof is analogous to Andr\'eka's proofs in \cite{Andreka}, lemma 3, on splitting elements in cylindric algebras.
Indeed, let $$\chi: m_1\to m_2$$ 
be such that the set $\chi(j),$ $j<m_1$ are non empty and pairwise disjoint, and
$$\bigcup \{\chi(j):j<m_1\}=m_2.$$
For $x\in split(\A, R, m_1, n_1),$ let 
$$J_{\tau}(x)=\{j<m_1: {\sf s}_{\tau}R_j\leq x\}.$$
Let $(R_i: i\leq m_2)$ be the splitting of $R$ in $split(\A', R, m_2, n_2)$.
Define 
$$h(x)=(x-\sum {\sf s}_{\tau}R)+\sum\{{\sf s}_{\tau}R_i: \tau\in G_{n_1}, i\in \bigcup \{\chi(j): j\in J_{\tau}(x)\}\}.$$
Here we are considering $G_{n_1}$ as a subset of $G_{n_2}$.
It is easy to check that $h(x)$ is a Boolean homomorphism and that $h(x)\neq 0$ whenever $0\neq x\leq {\sf s}_{\tau}R,$ for $\tau\in G_{n_1}$.
Thus $h$ is one to one.
Let $i\in \omega$ and $x\in split(\A, R, m_1, n_1)$. If
$x\cdot {\sf s}_{\tau}R=0$ for all $\tau,$ then $x\in A'$, hence $h({\sf c}_ix)={\sf c}_ih(x)$. 
So assume that there is a $\tau\in G_n$ such that $x\cdot {\sf s}_{\tau}R\neq 0$. Then ${\sf c}_i(x\cdot {\sf s}_{\tau}R)={\sf c}_i{\sf s}_{\tau}R$ and 
${\sf c}_ih(x\cdot {\sf s}_{\tau}R))={\sf c}_i{{\sf s}_{\tau} R}$ by $0\neq h(x\cdot {\sf s}_{\tau}R)\leq {\sf s}_{\tau}R$.
Now
\begin{equation*}
\begin{split}
h({\sf c}_ix)&=h({\sf c}_i(x-{\sf s}_{\tau}R)+{\sf c}_i(x\cdot {\sf s}_{\tau}R))\\
&={\sf c}_i(x-{\sf s}_{\tau}R)+{\sf c}_iR.\\
\end{split}
\end{equation*}
\begin{equation*}
\begin{split}
{\sf c}_ih(x)&={\sf c}_i(h(x-{\sf s}_{\tau}R)+x\cdot {\sf s}_{\tau}R))\\
&={\sf c}_i(h(x-{\sf s}_{\tau}R))+h(x\cdot {\sf s}_{\tau}R)\\
&={\sf c}_i(x-{\sf s}_{\tau}R)+{\sf c}_iR.\\
\end{split}
\end{equation*}
We have proved that 
$${\sf c}_ih(x)=h({\sf c}_ix).$$ Now we turn to substitutions. Let $\sigma\in G_{n_1}$. Then we have
\begin{equation*}
\begin{split}
{\sf s}_{\sigma}h(x)&={\sf s}_{\sigma}[(x-\sum_{\tau\in G_{n_1}} {\sf s}_{\tau}R)+\sum\{{\sf s}_{\tau}R_i, \tau\in G_{n_1}, i\in \bigcup\{\chi(j): j\in J_{\tau}x\}\})]\\
&=({\sf s}_{\sigma}x-\sum_{\tau\in G_{n_1}}{\sf s}_{\sigma\circ \tau}R)+
\sum\{{\sf s}_{\sigma\circ \tau}R_i:\tau\in G_{n_1}, i\in \bigcup\{\chi(j): j\in J_{\tau}x\}\}.\\
\end{split}
\end{equation*}
Since $\{\sigma\circ \tau:\tau\in G_{n_1}\}=G_{n_1}$ then we have:
$${\sf s}_{\sigma}h(b)=h({\sf s}_{\sigma}(b).)$$

\item We have a sequence of algebras $(\A_{k,i}:i\in \omega\sim{0})$ such that for $n<m,$ 
we can assume by the embeddings proved to exist in the previous item
that $\A_{k,n}$ is a subreduct (subalgebra of a reduct) 
of $\A_{k,m}$.  
Form the natural direct limit of such algebras which is the (reduct directed) union
call it $\A_k$. That is $A_k=\bigcup_{n\in \omega}A_{k,n}$, and the operations are defined the obvious way.
For example if $i<\omega$, and $a\in A_k$, then $i\in n$ and $a\in A_{n,k}$ for some $n$; set ${\sf c}_i^{\A_k}a={\sf c}_i^{\A_{k,n}}a$.
These are well defined. The other operations are defined analogously, where we only define the ${\sf s}_{[i,j]}$'s for $i,j\in \omega$. 
Clearly, $\Rd_{ca}\A_k$ is not representable, for else $\Rd_{ca}\A_{k,n}$ would be representable for all $n\in \omega$.

\item Let $|G|\leq k$. Then $G\subseteq \A_{k,n}$ for some $n$. If $\Sg^{\A_{k}}G$ is not representable then there exists 
$l\geq n$ such that $G\subseteq \A_{k,l}$ and 
$\Sg^{\A_{k,l}}G$ is not representable, contradiction.
To see this we can show directly that $\Sg^{\A_k}G$ has to be representable. 
We show that every equation $\tau=\sigma$  valid in the variety
$\RQEA_{\omega}$ is valid in $\Sg^{\A_k}G$. 
Let $v_1,\ldots v_k$ be the variables occuring in this equation, and let $b_1,\ldots b_k$ be arbitrary elements of $\Sg^{\A_k}G$. 
We show that $\tau(b_1,\ldots b_k)=\sigma(b_1\ldots b_k)$. Now there are terms 
$\eta_1\ldots \eta_k$ written up from elements of $G$ such that $b_1=\eta_1\ldots b_k=\eta_k$, then we need to show that 
$\tau(\eta_1,\ldots \eta_k)=\sigma(\eta_1, \ldots \eta_k).$ 
This is another equation written up from elements of $G$, which is also valid in $\RQEA_{\omega}$. 
Let $n$ be an upper bound for the indices occuring in this equation and let $l>n$ be such that $G\subseteq \A_{k,l}$. 
Then the above equation is valid in $\Sg^{\Rd_n\A}G$ since the latter is representable. 
Hence the equation $\tau=\sigma$ holds in $\Sg^{\A_k}G$ at the evaluation $b_1,\ldots b_k$ of variables.

\item Let $\Sigma_n^v$ be the set of universal formulas using only $n$ substitutions and $k$ variables valid in $\RQEA_{\omega}$, 
and let $\Sigma_n^d$ be the set of universal formulas
using only $n$ substitutions and no diagonal elements valid in $\RQEA_{\omega}$.  By $n$ substitutions we understand the set 
$\{{\sf s}_{[i,j]}: i,j\in n\}.$
Then $\A_{k,n}\models \Sigma_n^v\cup \Sigma_n^d$. $\A_{k,n}\models \Sigma_n^v$ because the $k$ generated subalgebras
of $\A_{k,n}$ are representable, while $\A_{k,n}\models \Sigma_n^d$ because $\A_{k,n}$ has a representation that preserves all operations except
for diagonal elements.  Indeed, let $\phi\in \Sigma_n^d$, then there is a representation of $\A_{k,n}$ in which all operations 
are the natural ones except for the diagonal elements. 
This means that (after discarding the diagonal elements) there is a one to one homomorphism 
$h:\A^d\to \P^d$ where $\A^d=(A_{k,n}, +, \cdot , {\sf c}_k, {\sf s}_{[i,j]}, {\sf s} _i^j)_{k\in \omega, i,j\in n}\text { and } 
\P^d=(\B(^{\omega}W), {\sf c}_k^W, {\sf s}_{[i,j]}^W, {\sf s}_{[i|j]}^W)_{k\in \omega, i,j\in n},$ 
for some infinite set $W$. 
Now let $\P=(\B(^{\omega}W), {\sf c}_k^W, {\sf s}_{[i,j]}^W, {\sf s}_{[i|j]}^W, {\sf d}_{kl}^W)_{k,l\in \omega, i,j\in n}.$
Then we have that $\P\models \phi$ because $\phi$ is valid 
and so  $\P^d\models \phi$ due to the fact that  no diagonal elements  occur in $\phi$. 
Then $\A^d\models \phi$ because $\A^d$ is isomorphic to a subalgebra of $\P^d$ and $\phi$ is quantifier free. Therefore 
$\A_{k,n}\models \phi$.
Let $$\Sigma^v=\bigcup_{n\in \omega}\Sigma_n^v 
\text { and }\Sigma^d=\bigcup_{n\in \omega}\Sigma_n^d$$ 
Hence $\A_k\models \Sigma^v\cup \Sigma^d.$ For if not then there exists a quantifier free  formula $\phi(x_1,\ldots x_m)\in \Sigma^v\cup \Sigma^d$,
and $b_1,\ldots b_m$ such that $\phi[b_1,\ldots b_n]$ does not hold in $\A_k$. We have $b_1\ldots b_m\in \A_{k,i}$ for some $i\in \omega$. 
Take $n$ large enough $\geq i$ so that
$\phi\in \Sigma_n^v\cup \Sigma_n^d$.   Then $\A_{k,n}$ does not model $\phi$, a contradiction.
Now let $\Sigma$ be  a set of quantifier free formulas axiomatizing  $\RQEA_{\omega}$, then $\A_k$ does not model $\Sigma$ since $\A_k$ is not 
representable, so there exists a formula $\phi\in \Sigma$ such that
$\phi\notin \Sigma^v\cup \Sigma^d.$ Then $\phi$ contains more than $k$ variables and a diagonal constant occurs in $\phi$.
\end{enumarab}
\end{demo}

We immediately get the following answer to Andr\'eka's question formulated on p. 193 of \cite{Andreka}.

\begin{corollary} The variety $\RQEA_{\omega}$ is not axiomatizable over $\RQA_{\omega}$ 
with a set of universal formulas containing infinitely many variables.
\end{corollary}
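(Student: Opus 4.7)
The plan is to deduce the corollary directly from the preceding theorem. Read the corollary as asserting that $\RQEA_{\omega}$ cannot be axiomatized over $\RQA_{\omega}$ by any set of universal formulas whose total pool of variables is finite (equivalently, by any universal extension with bounded-variable axioms), since a single universal formula already uses only finitely many variables.

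Suppose, toward a contradiction, that there is a set $\Sigma_{0}$ of universal formulas in the signature of $\RQEA_{\omega}$ whose combined vocabulary of variables is contained in some finite set $\{v_{1},\ldots ,v_{k}\}$, and such that $\RQA_{\omega}\cup \Sigma_{0}$ axiomatizes $\RQEA_{\omega}$. Fix any standard equational axiomatization $\Sigma_{\RQA}$ of $\RQA_{\omega}$. The crucial observation is a signature observation: $\Sigma_{\RQA}$ is a set of formulas in the language of $\RQA_{\omega}$, which does not contain the diagonal constants ${\sf d}_{ij}$. Consequently, no formula of $\Sigma_{\RQA}$ mentions any diagonal element.

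Now form $\Sigma=\Sigma_{\RQA}\cup\Sigma_{0}$. Since universal formulas can, for algebraic axiomatization purposes, be replaced by their quantifier-free matrices, $\Sigma$ may be treated as a set of quantifier-free formulas axiomatizing $\RQEA_{\omega}$. Every formula in $\Sigma$ that contains a diagonal element must lie in $\Sigma_{0}$, for the reason indicated above, and hence uses at most $k$ variables. This directly contradicts the theorem, which guarantees that any axiomatization by quantifier-free formulas of $\RQEA_{\omega}$ must contain, for every $k<\omega$, a formula with more than $k$ variables in which some diagonal element occurs.

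The proof is thus almost a one-line reduction, and the only point that requires any thought is the signature observation — that the axioms of $\RQA_{\omega}$, written in a language without diagonals, contribute no diagonal-containing formulas to the combined axiom set, so the burden of the theorem's lower bound falls entirely on $\Sigma_{0}$. Once this is noted, the contradiction is immediate and no further combinatorial machinery is required beyond what was already built in the proof of the theorem.
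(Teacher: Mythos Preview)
Your proof is correct and is exactly the argument the paper has in mind: the paper gives no explicit proof, merely stating that the corollary follows immediately from the preceding theorem, and your reduction via the signature observation (that any axiomatization of $\RQA_{\omega}$ contains no diagonal constants, so every diagonal-containing formula in $\Sigma=\Sigma_{\RQA}\cup\Sigma_0$ must come from $\Sigma_0$ and hence use at most $k$ variables) is precisely that immediate deduction spelled out in full.
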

One can show, using the modified method of splitting here, that all theorems in \cite{Andreka} on complexity of axiomatizations 
generalize to $\RQEA_{\omega}$ with the sole exception of Theorem 5, 
which is false for infinite dimensions \cite{ex}. As a sample we give the following theorem 
which can be proved by some modifications of the cited theorems
in the proof; this modifications are not hard, most of them can be found in \cite{c}, proof of theorem 3.1. The basic idea 
in the proof is to show that certain cylindric homomorphisms
(between cylindric algebras) remain to be quasipolyadic equality algebra homomorphisms 
(between their corresponding natural quasipolyadic equality expansions), that is when we add substitutions corresponding to transpositions.

\begin{theorem} Let $\Sigma$ be a set of equations 
axiomatizing $\RQEA_{\omega}.$  Let $l,k, k' <\omega$. 
Then $\Sigma$ contains
infinitely equations in which $-$ occurs, one of $+$ or $\cdot$ occurs  a diagonal or a permutation with index $l$ occurs, more
than $k'$ cylindrifications and more than $k$ variables occur.
\end{theorem}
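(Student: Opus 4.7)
The strategy is to iterate the splitting construction of the preceding theorem, once for each complexity feature listed in the statement, and then amalgamate the resulting non-representable witnesses into a single $\RQEA_{\omega}$-type algebra in which any failing $\RQEA_{\omega}$-valid equation must carry all the features simultaneously. For each of
\begin{enumroman}
\item at most $k$ variables,
\item at most $k'$ cylindrifications,
\item no diagonal or transposition with index $l$,
\item the symbol $-$ does not occur,
\item neither $+$ nor $\cdot$ occurs,
\end{enumroman}
I would construct, by a variant of Andr\'eka's nutcracker, a non-representable $\omega$-dimensional witness $\A^{(i)}$ validating every equation of $\RQEA_{\omega}$ that respects restriction~(i). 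For (i), take the algebra $\A_k$ of the previous theorem verbatim: its $k$-generated subalgebras are representable, hence every equation in at most $k$ variables holds. For (ii), rerun the splitting with the counting witness polynomial $\tau$ restricted to $\cyl 0,\ldots,\cyl{k'}$ (so the counting obstruction still forces $|U_0|<m+1$ as soon as $m\geq k'$), and verify as in Part~I above that the $k'$-cylindrification reduct of every subalgebra in the chain is representable. For (iii), strengthen the representation ``preserving all operations except diagonals'' constructed for $\A_{k,n}$ by also discarding $\diag{l}{\cdot},\diag{\cdot}{l},\swap{l}{\cdot},\swap{\cdot}{l}$; the representation $\bar h$ built in the previous proof is insensitive to the particular index $l$, so this strengthening costs nothing. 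For (iv) and (v), direct inspection of $\A_{k,n}$ shows that every $-$-free, respectively every positive, identity valid in $\RQEA_{\omega}$ also holds in it, because the incompatibility that kills representability is detectable only through the complement appearing in $\tau$.

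Having these five witnesses, I would combine them into a single non-representable algebra $\A^{*}_{k,k',l}$ by a direct product, or by a directed-union construction patterned on the passage from $\A_{k,n}$ to $\A_k$, arranged so that an $\RQEA_{\omega}$-valid equation fails in $\A^{*}_{k,k',l}$ only if it uses $-$, uses $+$ or $\cdot$, uses an operation indexed by $l$, uses more than $k'$ cylindrifications and uses more than $k$ variables. Since $\Sigma$ axiomatizes $\RQEA_{\omega}$ while $\A^{*}_{k,k',l}$ is not representable, some equation of $\Sigma$ fails in $\A^{*}_{k,k',l}$, and this equation must carry all five features at once. Varying $k$ (or $k'$) through $\omega$ produces infinitely many such equations, distinguished for instance by the number of variables they use. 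All five constructions are the $\RQEA_{\omega}$-analogues of the calculations in \cite{Andreka} and \cite{c}, and, as in the preceding theorem, they transfer from the cylindric setting because the relevant cylindric homomorphisms between the witnesses automatically respect the finite transpositions once substitutions are included in the signature.

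The main technical obstacle will be the amalgamation step: one must verify that the product or directed union used to form $\A^{*}_{k,k',l}$ does not accidentally validate an identity that was failing in one of the five individual witnesses, which is the $\RQEA_{\omega}$-analogue of the compatibility analysis in \cite{c}, proof of Theorem~3.1. A secondary issue is the interaction between restrictions (ii) and (iv)--(v): the polynomial $\tau$ must remain within the combined syntactic budget, which is why one takes $m \geq \max(k',2^{k\cdot n!+1})$ and performs the splitting into $m+1$ atoms, exactly as in the proof of the preceding theorem; the rest of the book-keeping is routine and parallel to the work already carried out in Part~I and Part~II.
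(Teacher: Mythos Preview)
Your amalgamation step does not work, and the difficulty is the opposite of what you flag. In a direct product $\A^{*}=\prod_{i}\A^{(i)}$ an equation $e$ holds iff it holds in \emph{every} factor; hence $e$ fails in $\A^{*}$ iff it fails in \emph{some} factor $\A^{(i_0)}$. From this you only learn that $e$ violates restriction~$(i_0)$, not that it violates all five simultaneously. Equivalently, to have $\A^{*}\models\Sigma^{(1)}\cup\cdots\cup\Sigma^{(5)}$ you would need every factor $\A^{(i)}$ to satisfy every $\Sigma^{(j)}$, which already forces each $\A^{(i)}$ to be a witness for all five features --- and then the product is superfluous. A directed union faces the same issue unless the algebras in the system are literally the same algebra with growing signatures, which is not what you describe.

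The paper avoids this by showing that a \emph{single} split algebra $\A_{k,n}$ (with $m\geq\max(k',2^{k\cdot n!+1})$) already enjoys all five partial-representation properties at once: its $k$-generated subalgebras are representable; its complementation-free reduct is a homomorphic image of a subreduct of a representable algebra; it admits a representation faithful on everything except $\cup,\cap$; for any $|I|=m$ it admits one faithful except on $\cyl i$, $i\notin I$; and it admits one faithful except on $\diag{i}{l}$ and $\swap{i}{l}$. Thus $\A_{k,n}\models\Sigma_n^{-}\cup\Sigma_n^{v}\cup\Sigma_n^{c}\cup\Sigma_n^{ds}\cup\Sigma_n^{Bool}$, and the directed limit $\A_k$ (over $n$) models the union $\Gamma$ of all these. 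Since $\A_k$ is non-representable, any axiomatizing $\Sigma$ contains an equation outside $\Gamma$, which therefore carries all the listed features simultaneously. Your individual constructions for (ii)--(v) are essentially the right partial representations; the fix is to apply them all to the \emph{same} $\A_{k,n}$ rather than to separate witnesses.
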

\begin{demo}{Sketch of Proof} Let $n,k\in \omega\sim\{0\}$. Let $\A_{k,n}$ be the non-representable algebra constructed above 
obtained by splitting $\A'$ into $m\geq 2^{k.n!+1}$ atoms and we require that $m\geq k'$ as well. One then shows that 
the complementation free reduct  $\A_{k,n}^{-}$ of $\A_{k,n}$ is a homomorphic image of a subalgebra $\C$ of 
the complemention free reduct of $\P^{-}$ of a  a representable $\P$, \cite{Andreka}, cf. Theorem 7, p.163. The algebra
$\A_{k,n}$ can be represented such that every operation except for $\cup$ and $\cap$ are the natural ones, cf. \cite{Andreka} p.200 and \cite{c} for the necessary 
modifications.
For any $I\subseteq \omega$, $|I|=m$ there is an infinite set $W$ an an embedding  
from $\A_{k,n}\to (\B(^{\omega}W), {\sf c}_i, {\sf d}_{ij}, {\sf s}_{\tau})_{i,j\in \omega, \tau\in G_n}$ 
which is a homomorphism with respect to all operations of $\A_{k,n}$ except for ${\sf c}_i$ $i\notin I$, cf. \cite{Andreka} p.172, Theorem 3.
One just has to show that the map $h:\A'\to \wp(^{\omega}W)$ defined on p. 174 preserves substitutions, which is 
straightforward from the definition of the map $g$ defined on 
p.173.
There is an infinite set $W,$ such that there is an embedding $h:\A\to (\B(^{\omega}W), {\sf c}_i, {\sf d}_{ij}, {\sf s}_{\tau})_{i,j<n, \tau\in G_n}$ 
such that $h$ is a homomorphism preserving all operations except for ${\sf d}_{il}$ and ${\sf s}_{[i,l]}$ if $i,l\in n$, cf. p.176 Claim 6. 
This will prove the theorem because of the following reasoning.  
By $n$ substitutions we understand the set $\{{\sf s}_{[i,j]}: i,j\in n\}$. 
Let $\Sigma_n^-$ denote the set of equations  without complementation in which only $n$ substitutions occur, 
$\Sigma_n^v$ be the set of equations
which contains at most $k$ variables in which only $n$ substitutions occur, $\Sigma_n^c$ be the set of equations 
in which only $k'$ cylindrifications and $n$ substitutions occur,
$\Sigma_n^{ds}$ be the set of equations in which at most $n$ substitutions occur and no diagonal nor substitutions with index $l$ occurs, 
and $\Sigma_n^{Bool}$ the set of equations 
that does not contain $\cdot$ nor  $+$ and $n$ substitutions occur, all valid in $\RQEA_{\omega}$.  Then 
$\A_{k,n}\models \Sigma_n^-\cup \Sigma_n^v\cup \Sigma_n^c\cup \Sigma_n^{ds}\cup \Sigma_n^{Bool}.$ 
Indeed, the algebra $\A_{k,n}\models \Sigma_n^-$ because of the following reasoning.
Let $\C$ and $\P$ be as above. Then $\P\models \Sigma_n^-$ because $\P$ is representable. So $\P^-\models \Sigma_n^-$ 
because $-$ does not occur in 
$\Sigma_n^-$. Now  $\C\models \Sigma_n^-$ by 
$\C\subseteq \P^{-}$, and so $\A_{k,n}^-\models \Sigma_n^-$ since $\A^{-}_{k,n}$ is a homomorphic image of $\C$ and $\Sigma_n^-$ 
consists of equations. 
Then $\A_{k,n}\models \Sigma_n^-$. 
This together with previous reasoning proves that $\A_{k,n}\models  \Sigma_n$ where $\Sigma_n$ is the above (union) of formulas. 
In more detail $\A_{k,n}\models \Sigma^v_n$ because its $k$ generated subalgebras are representable, 
$\A_{k,n}\models \Sigma_n^{dp}$ because it has a representation that preserves all elements except for 
diagonals and substitutions with index $l$, and so forth. Then we can infer that  
$\A_k\models \bigcup_{n\in \omega} \Sigma_n=\Gamma.$
For if not, then we can choose $n$ large enough such that $\A_{k,n}$ does not model 
$\Sigma_n$. But $\A_k$ is not representable hence any equational 
axiomatization of the representable algebras contain a formula that is outside $\Gamma$.
Thus the required follows. 
\end{demo}
For axiomatization with universal formulas, using the same ideas as above, except for those involving complementation,
we obtain the slightly weaker:
\begin{theorem}
Let $\Sigma$ be a set of quantifier free formulas 
axiomatizing $\RQEA_{\omega}.$  Let $l,k, k' <\omega$. 
Then $\Sigma$ contains
infinitely equations in which  one of $+$ or $\cdot$ occurs  a diagonal or a permutation with index $l$ occurs, more
than $k'$ cylindrifications and more than $k$ variables occur
\end{theorem}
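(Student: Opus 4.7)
The plan is to imitate the proof of the preceding theorem almost verbatim, dropping only the family $\Sigma_n^-$ of complementation-free equations. The reason this family must be abandoned is the key point underlying the weaker conclusion: the previous argument used that $\A_{k,n}^-$ is a \emph{homomorphic image} of the complementation-free reduct of a representable algebra $\P^-$, and that equations are preserved under homomorphic images. For quantifier-free formulas (which subsume conjunctions, implications, and negated equations) this preservation fails, so $\Sigma_n^-$-validity no longer transfers. Consequently no assertion about $-$ survives, and only the four remaining parameters are available for blow-up.

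First I would fix $n, k, k', l < \omega$ and choose $m \geq \max\{2^{k \cdot n! + 1},\, k'\}$, and construct the non-representable algebra $\A_{k,n} = split(\A', R, m, n)$ exactly as in Part~I of the previous theorem. Let
\[
\Sigma_n^v,\quad \Sigma_n^c,\quad \Sigma_n^{dp},\quad \Sigma_n^{Bool}
\]
denote the sets of quantifier-free formulas valid in $\RQEA_\omega$ which use at most $n$ substitutions and, respectively: use at most $k$ variables; use at most $k'$ cylindrifications; contain no diagonal element and no permutation ${\sf s}_{[i,l]}$; and contain neither $+$ nor $\cdot$. I would then verify, following the arguments in the proof of the previous theorem,
\[
\A_{k,n} \models \Sigma_n^v \cup \Sigma_n^c \cup \Sigma_n^{dp} \cup \Sigma_n^{Bool}.
\]
$\A_{k,n}\models\Sigma_n^v$ holds because its $k$-generated subalgebras are representable; $\A_{k,n}\models\Sigma_n^c$ holds by the representation that preserves all but the excluded cylindrifications (this uses $m\geq k'$ as in Andr\'eka's proof of Theorem~7 in \cite{Andreka}); $\A_{k,n}\models\Sigma_n^{dp}$ holds by the representation preserving all operations except diagonals and substitutions of index $l$, cf.\ Claim~6 on p.~176 of \cite{Andreka}; and $\A_{k,n}\models\Sigma_n^{Bool}$ holds by the representation preserving every non-Boolean operation, since Boolean-free quantifier-free formulas are Boolean combinations of equations between terms built entirely from preserved operations. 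In each case the relevant partial representation is either an outright embedding or an embedding modulo the symbols being omitted from $\Sigma_n^\bullet$, so quantifier-free formulas of the designated shape transfer.

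Forming the directed union $\A_k = \bigcup_n \A_{k,n}$ as in Part~II of the previous proof, one obtains $\A_k \models \Sigma^v \cup \Sigma^c \cup \Sigma^{dp} \cup \Sigma^{Bool}$, where $\Sigma^\bullet = \bigcup_n \Sigma_n^\bullet$. Since $\Rd_{ca}\A_k$ is not representable, $\A_k \not\models \Sigma$. Picking any violated $\phi \in \Sigma$, it must lie outside this union, and therefore $\phi$ has more than $k$ variables, more than $k'$ cylindrifications, contains a diagonal element or a permutation of index $l$, and contains $+$ or $\cdot$. To get \emph{infinitely many} such $\phi$, I would let $k$ and $k'$ vary: any finite collection of formulas has bounded variable-count and cylindrification-depth, so choosing $k, k'$ strictly above those bounds produces a new formula each time, yielding infinitely many.

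The main obstacle is the one flagged above: confirming that no analogue of the $\Sigma_n^-$ clause can be smuggled in through a different route. Every alternative construction that would extract a complementation-free conclusion for quantifier-free axiomatizations seems to require either an embedding (rather than a homomorphism) or some preservation property of quantifier-free formulas under homomorphic images, neither of which is available. Apart from this conceptual point, the only technical care needed is to check that in each reduct class $\Sigma_n^\bullet$ the relevant partial representation is a genuine embedding for the symbols actually appearing in formulas of $\Sigma_n^\bullet$, which follows by inspection of the constructions already carried out in the previous theorem.
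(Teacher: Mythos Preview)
Your proposal is correct and takes essentially the same approach as the paper. The paper gives no independent proof of this theorem: it simply states ``For axiomatization with universal formulas, using the same ideas as above, except for those involving complementation, we obtain the slightly weaker'' result, which is precisely what you have carried out---dropping $\Sigma_n^-$ because its verification relied on $\A_{k,n}^-$ being a homomorphic image (a move that preserves equations but not arbitrary quantifier-free formulas), while retaining $\Sigma_n^v, \Sigma_n^c, \Sigma_n^{dp}, \Sigma_n^{Bool}$, each of whose verifications goes through a genuine embedding in the relevant reduced signature and hence transfers quantifier-free validity.
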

Our corollary 2 is  evidence that $\RQA_{\omega}$ can be axiomatized by an infinite set of universal formulas containing only 
finitely many variables.

Next we generalize Theorem 2 in \cite{Andreka} to the class $S\Nr_{\omega}\QEA_{\omega+p}$ for $p\geq 2.$
Let $n\in \omega$ and $m=2^{k.n!+1}$. Let 
$$e_n=\prod_{i\leq m}{\sf c}_0(x\cdot x_i\cdot \prod_{i\neq j\leq m}-x_j)\leq {\sf c_0}
\ldots {\sf c}_m(\prod_{i,j\leq m, i\neq j}{\sf s}_i^0{\sf c}_1\ldots {\sf c}_mx.-{\sf d}_{ij}).$$ Note that $e_n$ is equivalent to the above set of equations.
Then $\A_{k,n}$ does not model $e_n$, and so $\A_k$ does not model $e_n$, for all $n\in \omega$, but 
$S\Nr_{\omega}\QEA_{\omega+2}\models e_n$ for all $n\in \omega$. This is done like the $\CA$ case since every equation that holds in 
$S\Nr_n\CA_{n+2}$ holds in $S\Nr_n\QEA_{n+2}$. Using the reasoning on p.163, one obtains:

\begin{theorem} Let $p\geq 2$. Then $S\Nr_{\omega}\QEA_{\omega+p}$ is 
not  axiomatizable with any set of quantifier free formulas containing only finitely 
many variables. 
\end{theorem}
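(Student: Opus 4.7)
The plan is to reuse the splitting algebras $\A_k$ constructed in the proof of the preceding main theorem, together with the equation $e_n$ highlighted just above the statement of the theorem, to upgrade the non-finite-variable axiomatization result from $\RQEA_\omega=S\Nr_\omega\QEA_{\omega+\omega}$ to its much smaller ``neat subreduct'' superclass $S\Nr_\omega\QEA_{\omega+p}$ for each $p\geq 2$. Concretely, for each $k<\omega$, take $\A_k$ as built in the proof of the previous theorem, so that (i) every $k$-generated subalgebra of $\A_k$ is representable, hence belongs to $S\Nr_\omega\QEA_{\omega+p}$, and (ii) for every $n$ the finite stage $\A_{k,n}$ of the directed union $\A_k$ witnesses the failure of $e_n$ (by the incompatibility between $m+1$ splitting atoms of $R$ and the $m$ elements of $U_0$ that was already used to refute representability of $\Rd_{ca}\A_{k,n}$). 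Since each $e_n$ is an equation and $\A_{k,n}$ is a subalgebra of $\A_k$ in the appropriate reduct, $\A_k\not\models e_n$ for every $n$.

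Next I would verify that $e_n\in\mathrm{Eq}(S\Nr_\omega\QEA_{\omega+2})$. This is the step where two extra dimensions are essential: in any $\B\in\QEA_{\omega+2}$ and any $\A\subseteq\Nr_\omega\B$, given $m+1$ pairwise disjoint non-zero elements $x_0,\dots,x_m\leq{\sf c}_0x$ in $\A$, one uses the two ``spare'' dimensions $\omega,\omega+1$ to pick, inside $\B$, elements of the form $\diag{0}{\omega}\cdot y_i$ separating the $x_i$'s and then pushes the required witnesses back down by $\mathrm{s}^0_i$, exactly parallelling the cylindric computation used for $\CA$'s of dimension $\omega$ neatly embedding into dimension $\omega+2$. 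Since $\QEA_{\omega+2}$ extends $\CA_{\omega+2}$ by term-definable substitutions that commute with the relevant cylindrifications and diagonals, the cylindric argument (as on p.~163 of \cite{Andreka}, invoked in the sentence preceding the theorem) transfers verbatim. Hence $S\Nr_\omega\QEA_{\omega+p}\models e_n$ for all $n\in\omega$ and all $p\geq 2$, and therefore $\A_k\notin S\Nr_\omega\QEA_{\omega+p}$.

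Now suppose, for contradiction, that $\Sigma$ is a set of quantifier-free formulas axiomatizing $S\Nr_\omega\QEA_{\omega+p}$ in which only finitely many variables occur; say every formula of $\Sigma$ uses at most $k$ variables. Since $\A_k\notin S\Nr_\omega\QEA_{\omega+p}$, some $\phi(v_1,\dots,v_k)\in\Sigma$ must fail in $\A_k$ under some assignment $v_i\mapsto b_i\in A_k$. Let $\B=\Sg^{\A_k}\{b_1,\dots,b_k\}$; this is a $k$-generated subalgebra of $\A_k$, hence representable by the first part of the construction, and so $\B\in S\Nr_\omega\QEA_{\omega+p}$, whence $\B\models\phi[b_1,\dots,b_k]$. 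But $\phi$ is quantifier-free, so truth of $\phi[b_1,\dots,b_k]$ is absolute between $\B$ and $\A_k$, contradicting the choice of $\phi$. Since $k$ was arbitrary, no such $\Sigma$ exists.

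The main obstacle I expect is the second step: rigorously verifying that the equations $e_n$, which were designed to refute representability, actually already hold in the intermediate class $S\Nr_\omega\QEA_{\omega+2}$ (not just in $\RQEA_\omega$). The combinatorial core is the same Henkin-style ``splitting inside two extra dimensions'' used for $\CA$, but one must be careful to check that the substitution operators ${\sf s}_\tau$ for $\tau\in G_n$ entering $e_n$ do not interfere with the neat embedding, i.e.\ that the elements produced by cylindrifying into the spare dimensions remain in $\Nr_\omega\B$ after applying the relevant substitutions. Once that technical check is in place, the remainder of the argument is the standard $k$-generated witness argument and is essentially formal.
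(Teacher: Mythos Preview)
Your proposal is correct and follows essentially the same route as the paper: use the splitting algebras $\A_k$, show $\A_k\notin S\Nr_\omega\QEA_{\omega+p}$ via the equations $e_n$, and invoke the $k$-generated subalgebra argument (the ``reasoning on p.~163'' of \cite{Andreka}) to derive the contradiction.

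One simplification: your anticipated obstacle concerning the transposition substitutions ${\sf s}_\tau$, $\tau\in G_n$, entering $e_n$ does not arise. Inspect the displayed form of $e_n$: it uses only cylindrifications, diagonals, Boolean operations, and the derived replacements ${\sf s}^0_i x={\sf c}_i({\sf d}_{0i}\cdot x)$, all of which are in the $\CA$ language. Hence the transfer ``$S\Nr_\omega\CA_{\omega+2}\models e_n \Rightarrow S\Nr_\omega\QEA_{\omega+2}\models e_n$'' is immediate: if $\A\in S\Nr_\omega\QEA_{\omega+2}$ then $\Rd_{ca}\A\in S\Nr_\omega\CA_{\omega+2}$, and $e_n$ is a $\CA$-equation. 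This is exactly the paper's one-line justification (``every equation that holds in $S\Nr_n\CA_{n+2}$ holds in $S\Nr_n\QEA_{n+2}$''), so the technical check you flagged is trivial rather than delicate.
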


\subsubsection{Monks algebras modified, by Hirsch and Hodkinson}

Now we prove the conclusion of theorem \ref{2.12}, for cylindric algebras and quasipolyadic equality, 
solving the infinite dimensional version of the famous 2.12 problem in algebraic logic.
The finite dimensional algebras we use are constructed by Hirsch and Hodkinson; and 
they based on a relation algebra construction.
Such combinatorial algebras have affinity with Monk's algebras.
Related algebras were constructed by Robin Hirsch and the present author (together with the above lifting argument)
to prove theorem the analogue of theorem \ref{2.12} holds for various 
equality free algebraisations of first order logic.

%Each of the inclusions 
%$\K_m=S\Nr_m\QEA_m\supset S\Nr_m\QEA_{m+1}\ldots$ is strict and all but the first inclusion is not finitely axiomatized.
%Furthermore $\RQEA_m$ is not finitely axiomatizable over $S\Nr_m\QEA_{m+k}$ for all $k\in \omega$.

We recall the construction of Hirsch and Hodkinson. They prove their result for cylindric algebras.
Here, by noting that their atom structures are also symmetric; it permits expansion by substitutions, 
we slightly extend the result to polyadic equality algebras.
Define  relation algebras $\A(n,r)$ having two parameters $n$ and $r$ with $3\leq n<\omega$ and $r<\omega$.
Let $\Psi$ satisfy $n,r\leq \Psi<\omega$. We specify the stom structure of $\A(n,r)$.
\begin{itemize}
\item The atoms of $\A(n,r)$ are $id$ and $a^k(i,j)$ for each $i<n-1$, $j<r$ and $k<\psi$.
\item All atoms are self converse.
\item We can list te forbidden triples $(a,b,c)$ of atoms of $\A(n,r)$- those such that
$a.(b;c)=0$. Those triples that are not forbidden are the consistent ones. This defines composition: for $x,y\in A(n,r)$ we have
$$x;y=\{a\in At(\A(n,r)); \exists b,c\in At\A: b\leq x, c\leq y, (a,b,c) \text { is consistent }\}$$
Now all permutations of the triple $(Id, s,t)$ will be inconsistent unless $t=s$.
Also, all permutations of the following triples are inconsistent:
$$(a^k(i,j), a^{k'}(i,j), a^{k''}(i,j')),$$
if $j\leq j'<r$ and $i<n-1$ and $k,k', k''<\Psi$.
All other triples are consistent.
\end{itemize}

Hirsch and Hodkinson invented means to pass from relation algebras to $n$ dimensional cylindric algebras, 
when the relation algebras in question have what they call a hyperbasis.
%Their argument also shows that this cylindric algebra is actually a polyadic equlaity algebra (it is closed under substitutions)

Unless otherwise specified, 
$\A=(A,+,\cdot, -,  0,1,\breve{} , ;, Id)$ 
will denote an arbitrary relation algeba with $\breve{}$ standing for converse, and $;$ standing for composition, and 
$Id$ standing for the identity relation.

\begin{definition} Let $3\leq m\leq n\leq k<\omega$, and let $\Lambda$ be a non-empty set.
An $n$ wide $m$ dimensional $\Lambda$ hypernetwork over $\A$ is a map
$N:{}^{\leq n}m\to \Lambda\cup At\A$ such that $N(\bar{x})\in At\A$ if $|\bar{x}|=2$ and $N(\bar{x})\in \Lambda$ if $|\bar{x}|\neq 2$,
with the following properties:
\begin{itemize}
\item $N(x,x)\leq Id$ ( that is $N(\bar{x})\leq Id$ where $\bar{x}=(x,x)\in {}^2n.)$

\item $N(x,y)\leq N(x,z);N(z,y)$
for all $x,y,z<m$
\item If $\bar{x}, \bar{y}\in {}^{\leq n}m$, $|\bar{x}|=|\bar{y}|$ and $N(x_i,y_i)\leq Id$ for all $i<|\bar{x}|$, then $N(\bar{x})=N(\bar{y})$
\item when $n=m$, then $N$ is called an $n$ dimensional $\Lambda$ hypernetwork.

\end{itemize}
\end{definition}

\begin{definition} Let $M,N$ be $n$ wide $m$ dimensional $\Lambda$ hypernetworks.
\begin{enumarab}
\item For $x<m$ we write $M\equiv_xN$ if $M(\bar{y})=N(\bar{y})$ for all $\bar{y}\in {}^{\leq n}(m\sim \{x\})$
\item More generally, if $x_0,\ldots x_{k-1}<m$ we write $M\equiv_{x_0,\ldots,x_{k-1}}N$
if $M(\bar{y})=N(\bar{y})$ for all $\bar{y}\in {}^{\leq n}(m\sim \{x_0,\ldots x_{k-1}\}).$
\item If $N$ is an $n$ wide $m$ dimensional $\Lambda$ -hypernetork over $\A$, and $\tau:m\to m$ is any map, then 
$N\circ \tau$ denotes the $n$ wide $m$ dimensional $\Lambda$ hypernetwork over $\A$ with labellings defined by
$$N\circ \tau(\bar{x})=N(\tau(\bar{x})) \text { for all }\bar{x}\in {}^{\leq n}m.$$
That is
$$N\circ \tau(\bar{x})=N(\tau(x_0),\ldots ,\tau(x_{l-1}))$$
\end{enumarab}
\end{definition}

\begin{lemma} Let $N$ be an $n$ dimensional $\Lambda$ hypernetwork over $\A$ and $\tau:n\to n$ be a map. 
Then $N\circ \tau$ is also a network.
\end{lemma}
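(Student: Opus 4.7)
The plan is to verify that $N\circ \tau$ satisfies each of the three defining clauses of an $n$-dimensional $\Lambda$-hypernetwork by pulling the corresponding property of $N$ back along $\tau$. Since $\tau:n\to n$, the composite $N\circ\tau$ has the correct domain and codomain (it maps $^{\leq n}n$ to $\Lambda\cup \At\A$, with edges of length $2$ landing in $\At\A$ and all other tuples landing in $\Lambda$), so only the three axioms need checking.

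First I would check the diagonal condition: for any $x<n$, we have $(N\circ\tau)(x,x)=N(\tau(x),\tau(x))\leq Id$, the last inequality holding since $N$ itself is a hypernetwork and $(\tau(x),\tau(x))$ is a diagonal pair. Next, for the triangle inequality, given any $x,y,z<n$, I would compute
$$(N\circ\tau)(x,y)=N(\tau(x),\tau(y))\leq N(\tau(x),\tau(z));N(\tau(z),\tau(y))=(N\circ\tau)(x,z);(N\circ\tau)(z,y),$$
using the corresponding axiom for $N$ applied at the images $\tau(x),\tau(y),\tau(z)$.

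For the third clause, suppose $\bar{x},\bar{y}\in{}^{\leq n}n$ with $|\bar{x}|=|\bar{y}|=k$ and $(N\circ\tau)(x_i,y_i)\leq Id$ for all $i<k$. Unpacking the definition, this says $N(\tau(x_i),\tau(y_i))\leq Id$ for all $i<k$. Applying the substitution axiom of $N$ to the tuples $\tau\circ\bar{x}$ and $\tau\circ\bar{y}$ (both of length $k$) gives $N(\tau\circ\bar{x})=N(\tau\circ\bar{y})$, i.e.\ $(N\circ\tau)(\bar{x})=(N\circ\tau)(\bar{y})$, as required.

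No step here poses any real obstacle; the proof is a routine unwinding of definitions, and the only mild subtlety is keeping track of the fact that the substitution axiom must be applied to the $\tau$-images of the tuples, not to the tuples themselves. I would therefore present the argument very compactly, simply noting that each axiom for $N\circ\tau$ at a tuple $\bar{x}$ is precisely the corresponding axiom for $N$ at the tuple $\tau\circ\bar{x}$.
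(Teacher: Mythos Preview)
Your verification is correct and complete; each of the three hypernetwork axioms for $N\circ\tau$ follows immediately from the corresponding axiom for $N$ evaluated at the $\tau$-images, exactly as you wrote. The paper itself does not give a proof but merely cites \cite{HH} lemma 12.7, and your argument is precisely the routine unwinding one finds there, so there is nothing to add or compare.
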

\begin{demo}{Proof} \cite{HH} lemma 12.7
\end{demo}

\begin{definition} The set of all $n$ wise $m$ dimensional hypernetworks will be denoted by $H_m^n(\A,\Lambda)$.
An $n$ wide $m$ dimensional $\Lambda$ 
hyperbasis for $\A$ is a set $H\subseteq H_m^n(\A,\lambda)$ with the following properties:
\begin{itemize}
\item For all $a\in At\A$, there is an $N\in R$ such that $N(0,1)=a$
\item For all $N\in R$ all $x,y,z<n$ with $z\neq x,y$ and for all $a,b\in At\A$ such that
$N(x,y)\leq a;b$ there is $M\in R$ with $M\equiv_zN, M(x,z)=a$ and $M(z,y)=b$
\item For all $M,N\in H$ and $x,y<n$, with $M\equiv_{xy}N$, there is $L\in H$ such that
$M\equiv_xL\equiv_yN$
\item For a $k$ wide $n$ dimensional hypernetwork $N$, we let $N|_m^k$ the restriction of the map $N$ to $^{\leq k}m$.
For $H\subseteq H_n^k(\A,\lambda)$ we let $H|_k^m=\{N|_m^k: N\in H\}$.
\item When $n=m$, $H_n(\A,\Lambda)$ is called an $n$ dimensional hyperbases.
\end{itemize}
We say that $H$ is symmetric, if whenever $N\in H$ and $\sigma:m\to m$, then $N\circ\sigma\in H$.
\end{definition}
We note that $n$ dimensonal hyperbasis are extensions of Maddux's notion of cylindric basis.

\begin{theorem} If $H$ is a $m$ wide $n$ dimensional $\Lambda$ symmetric 
hyperbases for $\A$, then $\Ca H\in \PEA_n$.
\end{theorem}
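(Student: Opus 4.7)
\begin{demo}{Proof plan}
The plan is to bootstrap the Hirsch--Hodkinson construction of the cylindric algebra $\Ca H \in \CA_n$ from an $n$ dimensional hyperbasis (which is the version of this result established in \cite{HH}) by exhibiting the additional polyadic operations $\s_{[i,j]}$ for transpositions $i,j<n$ and verifying the $\PEA_n$ axioms. The symmetry assumption on $H$ is the feature that makes these operations well defined on the level of the atom structure, so the proof is essentially atom-structure-theoretic: once the $\s_{[i,j]}$ are defined on atoms, additivity extends them to all of $\Ca H$, and all $\PEA_n$ equations are Sahlqvist and hence can be checked by unwinding the first-order correspondents on $H$.

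First I would recall the construction of $\Ca H$: elements are subsets of $H$, Boolean operations are set-theoretic, cylindrifications are defined by $\cyl{i}X = \{N \in H : \exists M\in X \;\; M \equiv_i N\}$, and diagonal elements by $\diag{i}{j} = \{N\in H : N(i,j)\leq Id\}$. The hyperbasis axioms (the amalgamation clause $M\equiv_{xy}N \Rightarrow \exists L\; M\equiv_x L\equiv_y N$, together with the labelling-composition conditions) are exactly what make the $\CA_n$ axioms valid. Then, using the symmetry hypothesis, I would define, for each transposition $[i,j]$ with $i,j<n$,
\[
\swap{i}{j}X \;=\; \{N\circ [i,j] : N\in X\}
\]
where $[i,j]:n\to n$ is the transposition swapping $i$ and $j$. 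Symmetry of $H$ guarantees $N\circ [i,j]\in H$ for every $N\in H$, so $\swap{i}{j}$ is an operation on $\wp(H)$; by an earlier lemma $N\circ [i,j]$ is a hypernetwork whenever $N$ is, so this is even an operation on $\Ca H$. Clearly each $\swap{i}{j}$ is a Boolean endomorphism and an involution, since composition with $[i,j]$ is a bijection of $H$ and $[i,j]\circ [i,j]=id$.

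Next I would verify the remaining $\PEA_n$ axioms: the symmetric-group relations $\swap{i}{j}\circ \swap{j}{k}\circ \swap{i}{j} = \swap{i}{k}$ (and the idempotence of transpositions on disjoint indices) reduce to the corresponding identities in the symmetric group acting by composition on $H$; commutation with cylindrifications, namely $\swap{i}{j}\cyl{k} = \cyl{k}\swap{i}{j}$ for $k\notin\{i,j\}$ and $\swap{i}{j}\cyl{i} = \cyl{j}\swap{i}{j}$, follows from the fact that $N\equiv_k M$ iff $N\circ [i,j]\equiv_{[i,j](k)} M\circ [i,j]$ (a direct calculation from the definition of $\equiv_k$ and the definition of composition on labelling tuples); and interaction with diagonals, $\swap{i}{j}\diag{k}{l} = \diag{[i,j](k)}{[i,j](l)}$, follows immediately from $(N\circ [i,j])(k,l) = N([i,j](k),[i,j](l))$.

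The main obstacle I anticipate is the bookkeeping in verifying the cylindrifier-substitution commutations, because hypernetworks are labelled on tuples of length up to $n$ and the $\equiv_k$ relation has to be tracked through the renaming; but this is essentially a mechanical check once one writes $(N\circ [i,j])(\bar x) = N([i,j]\circ \bar x)$ and notes that $[i,j]$ is a bijection of $m$ fixing every index outside $\{i,j\}$. No genuinely new combinatorics beyond symmetry of $H$ is needed, because the hyperbasis axioms themselves are symmetric in the labelling indices; hence, once symmetry is invoked, the PEA equations are Sahlqvist correspondents of first-order statements about $H$ that already hold by the Hirsch--Hodkinson analysis, modulo the straightforward index permutation.
\end{demo}
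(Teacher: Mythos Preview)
Your proposal is correct and takes essentially the same approach as the paper: define $\Ca H$ on $\wp(H)$ with cylindrifiers via the $\equiv_i$ relations, diagonals via $N(i,j)\leq Id$, and the polyadic operations via composition with transpositions (the paper writes ${\sf p}_{ij}X=\{N\in H:\exists M\in X\,(N=M\circ[i,j])\}$, which coincides with your $\{N\circ[i,j]:N\in X\}$ since $[i,j]$ is an involution), with symmetry of $H$ guaranteeing closure. The paper's own proof is in fact far terser than yours---it simply records the definitions and asserts $\Ca(H)\in\PEA_n$ without carrying out the axiom checks---so your outline of the verifications (Boolean-endomorphism property, symmetric-group identities, cylindrifier--substitution commutation via $N\equiv_k M\iff N\circ[i,j]\equiv_{[i,j](k)}M\circ[i,j]$, and the diagonal law) supplies exactly the routine detail the paper omits.
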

\begin{demo}{Proof} Let $H$ be the set of $m$ wide $n$ dimensinal $\Lambda$ symmetric hypernetworks for $\A$.
The domain of $\Ca(H)$ is $\wp(H)$. 
The Boolean operations are defined as expeced (as complement and union of sets). For $i,j<n$ the diagonal is defined by
$${\sf d}_{ij}=\{N\in H: N(i,j)\leq Id\}$$
and for $i<n$ we define the cyylindrifier ${\sf c}_i$ by
$${\sf c}_iS=\{N\in H: \exists M\in S(N\equiv_i M\}.$$
Now the polyadic operations are defind by
$${\sf p}_{ij}X=\{N\in H: \exists M\in S(N=M\circ [i,j])\}$$

Then $\Ca(H)\in \PEA_n$. Furthermore, $\A$ embeds into 
$\Ra(\Ca(H))$ via 
$a\mapsto \{N\in H: N(0,1)\leq a\}.$
\end{demo}
%Let $t\in \{\SC, \QA\}$. Then we define $\Ca_t(H)$  as the $t$ reduct of  $\Ca(H)$, where
%$${\sf s}_i^j(X)=\{N\in H: \exists M\in S (N=M\circ [i|j])\}.$$

\begin{theorem} Let $3\leq m\leq n\leq k<\omega$ be given.
Then $\Ca(H|^k_m)\cong \Nr_m(\Ca(H))$
\end{theorem}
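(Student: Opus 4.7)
The plan is to define the natural candidate
$$
\phi : \Ca(H|_m^k) \longrightarrow \Nr_m(\Ca(H)), \qquad \phi(S) = \{ N \in H : N|_m^k \in S \},
$$
with proposed inverse $\psi(X) = \{ N|_m^k : N \in X \}$. First I would verify that $\phi(S)$ genuinely lies in $\Nr_m(\Ca(H))$, that is, $\cyl i \phi(S) = \phi(S)$ for every $i \in n \setminus m$. This is immediate: if $M \equiv_i N$ for such $i$, then $M$ and $N$ agree on every tuple in ${}^{\leq k}(n \setminus \{i\})$ and in particular on ${}^{\leq k}m$, so $M|_m^k = N|_m^k$. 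Injectivity of $\phi$ is equally cheap, since every element of $H|_m^k$ is by definition $N|_m^k$ for some $N \in H$: if $\phi(S_1)=\phi(S_2)$ and $N' \in S_1$, lifting $N'$ to $N \in H$ places $N$ in $\phi(S_1)=\phi(S_2)$, whence $N' = N|_m^k \in S_2$.

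The first serious use of the hyperbasis axioms enters with surjectivity. Given $X \in \Nr_m(\Ca(H))$, set $S = \psi(X)$; only $\phi(S) \subseteq X$ needs work. If $N \in \phi(S)$ then $N|_m^k = M|_m^k$ for some $M \in X$, so $M$ and $N$ agree off the coordinates in $n \setminus m$. Iterating the patchwork clause of the hyperbasis definition (which handles two indices at a time) produces a chain $M = L_0, L_1, \ldots, L_p = N$ inside $H$ with $L_j \equiv_{i_j} L_{j+1}$ for some enumeration $i_0, \ldots, i_{p-1}$ of $n \setminus m$; since $X$ is closed under each $\cyl{i_j}$, we conclude $N \in X$.

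It remains to check that $\phi$ respects the polyadic equality operations. Boolean operations and the diagonals $\diag i j$ for $i,j<m$ are immediate from the definitions. For cylindrifications $\cyl i$ with $i<m$, one inclusion $\cyl i \phi(S) \subseteq \phi(\cyl i S)$ is straightforward from $N \equiv_i M \Rightarrow N|_m^k \equiv_i M|_m^k$. The reverse inclusion is what I expect to be the main obstacle: given $N \in H$ and $M' \in S$ with $N|_m^k \equiv_i M'$, one must exhibit an actual $M \in H$ with $M \equiv_i N$ and $M|_m^k = M'$, that is, patch together the restriction $M'$ on tuples in ${}^{\leq k}m$ with the values of $N$ on tuples meeting $n \setminus m$ into a single $k$-wide $n$-dimensional hypernetwork in $H$. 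I would first lift $M'$ arbitrarily to some $M_0 \in H$, then use the second (atom-extension) hyperbasis axiom together with repeated patchwork to readjust $M_0$ on tuples involving the auxiliary indices of $n \setminus m$ so as to agree with $N$ there. Preservation of polyadic substitutions ${\sf p}_{ij}$ for $i,j<m$ then follows by the same scheme, with the assumed symmetry of $H$ guaranteeing that composition with the transposition $[i,j]$ commutes with restriction to ${}^{\leq k}m$.
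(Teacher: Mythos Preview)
The paper does not give its own proof here; it simply cites Theorem~12.22 of Hirsch--Hodkinson, so your sketch is being compared against the standard argument in that reference rather than anything in this paper. Your map $\phi$ and its inverse $\psi$ are exactly the right ones, and your identification of the two nontrivial points---surjectivity and the harder inclusion $\phi(\cyl i S)\subseteq \cyl i\phi(S)$---is accurate. This is essentially the Hirsch--Hodkinson proof.

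One remark: the step you describe as ``iterating the patchwork clause (which handles two indices at a time)'' is not quite a direct iteration. From $M\equiv_{i_0,\ldots,i_{p-1}}N$ one does \emph{not} immediately get $M\equiv_{i_{p-2},i_{p-1}}N$, so the two-index amalgamation axiom cannot be applied naively. What is needed is a separate lemma (proved in Hirsch--Hodkinson just before 12.22) to the effect that for any finite set $S$ of indices, $M\equiv_S N$ implies the existence of $L\in H$ with $M\equiv_{S\setminus\{i\}}L\equiv_i N$ for a chosen $i\in S$; this is established by an induction that interleaves the two-index amalgamation with the triangle-completion clause. Once that lemma is in hand, both your surjectivity argument and the hard cylindrifier inclusion go through exactly as you outline: for the latter, lift $M'$ to $M_0\in H$, observe $M_0\equiv_{\{i\}\cup(n\setminus m)}N$, and apply the lemma to split off the single index $i$ last, producing $M$ with $M_0\equiv_{n\setminus m}M\equiv_i N$, whence $M|_m^k=M_0|_m^k=M'$. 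So your plan is sound; just flag that the multi-index amalgamation is a genuine lemma rather than a triviality.
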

\begin{demo}{Proof}\cite{HH} 12.22
\end{demo}

The set $C=H_n^{n+1}(\A(n,r), \Lambda)$ aff all $(n+1)$ wide $n$ 
dimensional $\Lambda$ hypernetworks over $\A(n,r)$ is an $n+1$ wide $n$ 
dimensional {\it symmetric} $\Lambda$ hyperbasis. 
$H$ is symmetic, if whenever $N\in H$ and $\sigma:m\to m$, then $N\circ\sigma\in H$.
Hence $\A(n,r)$ embeds into the $Ra$ reduct of $C$.  
%\end{demo}
%\begin{definition}
%Let $3\leq m\leq n$,  
%$$\C(m,n,r)=\Ca(H_m^{n+1}(\A(n,r),  \omega)).$$
%\end{definition}
%Note that $\C_r$ depends on $n$ and $m$, but we omit reference to those not to clutter notation. 
%Since $H_m^{n+1}(\A(n,r), \omega))$ is symmetric this defines a polyadic algebra of dimension $m$.
%We can also prove that

\begin{theorem} 
%\begin{enumarab}
Assume that $3\leq m\leq n$, and let  
$$\C(m,n,r)=\Ca(H_m^{n+1}(\A(n,r),  \omega)).$$ Then the following hold:
\begin{enumarab}
\item For any $r$ and $3\leq m\leq n<\omega$, we 
have $\C(m,n,r)\in \Nr_m\PEA_n$. 
\item  For $m<n$ and $k\geq 1$, there exists $x_n\in \C(n,n+k,r)$ such that $\C(m,m+k,r)\cong \Rl_{x}C(n, n+k, r).$
\item $S\Nr_{\alpha}\CA_{\alpha+k+1}$ is not axiomatizable by a finite schema over $S\Nr_{\alpha}\CA_{\alpha+k}$
\end{enumarab}
\end{theorem}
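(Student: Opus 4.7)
The plan is to dispatch the three items in order, where items (1) and (2) prepare the technical inputs and item (3) is obtained by feeding these into the lifting machine of Theorem \ref{2.12} applied to the system $(\CA_\alpha)$ (and in the same breath to $(\QEA_\alpha)$, which is also a system of varieties definable by a schema).

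For item (1), I would directly invoke the Hirsch--Hodkinson theorem quoted above, $\Ca(H|_m^k)\cong \Nr_m\Ca(H)$. Specialize to $H=H_n^{n+1}(\A(n,r),\omega)$, the $(n{+}1)$-wide $n$-dimensional symmetric $\omega$-hyperbasis for $\A(n,r)$ exhibited in the preceding paragraphs. By the theorem on symmetric hyperbases, $\Ca(H)\in \PEA_n$. Since $\Ca(H|_m^{n+1})=\Ca(H_m^{n+1}(\A(n,r),\omega))=\C(m,n,r)$ by definition, the cited isomorphism gives $\C(m,n,r)\cong \Nr_m\Ca(H)\in \Nr_m\PEA_n$, as required.

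For item (2), I would define $x_n\in \C(n,n+k,r)$ in direct analogy with the element $x_n$ displayed for the Monk algebras $\C(m,m{+}k)\cong\Rl_{x_n}\Rd_n\C(n,n{+}k)$ earlier in the paper. Namely, let $x_n$ be the set of those hypernetworks $N\in H_n^{n+k+1}(\A(n,n+k,r),\omega)$ whose labels on tuples meeting $n\setminus m$ are rigidly determined: diagonals take value $Id$, off-diagonal pair labels are pinned to the canonical extending atoms (of the form $a^k(i,j)$ with $k$ indexing the extra dimension) inherited from the Hirsch--Hodkinson construction, and hyperlabels agree with the forced extension of the restriction $N\restr {}^{\leq n+1}m$. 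The map $N\mapsto N\restr{}^{\leq n+1}m$ is then a bijection from $x_n$ onto $H_m^{n+1}(\A(n,n+k,r),\omega)$; verification that it commutes with the $\Ca$-operations (Boolean ops, ${\sf c}_i$, ${\sf d}_{ij}$ and the polyadic ${\sf p}_{ij}$) on the relativized universe $\Rl_{x_n}\Rd_m\C(n,n{+}k,r)$ is routine, using that $x_n$ is rectangular in the extra coordinates and that the hyperbasis is symmetric.

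For item (3), I would check the four hypotheses of Theorem \ref{2.12} for the family $\{\C(m,n,r):3\leq m<n<\omega,\ r<\omega\}$. Hypothesis (1) is immediate from item (1), since $\Nr_m\PEA_n\subseteq S\Nr_m\CA_n$. Hypothesis (4) is precisely item (2). Hypothesis (3), that $\prod_{r<\omega}\C(m,n,r)\in S\Nr_m\CA_n$, follows because $S\Nr_m\CA_n$ is closed under products (and in fact an ultraproduct argument, exploiting that the only obstruction to consistency of an atom triple in $\A(n,r)$ involves the finite parameter $r$, shows that $\prod_{r}\C(m,n,r)/F$ is representable for any nonprincipal ultrafilter $F$ on $\omega$; this is what the finite-schema argument at the end of the proof ultimately consumes). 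The main obstacle, and the heart of the matter, is hypothesis (2): $\C(m,n,r)\notin S\Nr_m\CA_{n+1}$. This is the central Hirsch--Hodkinson combinatorial fact. I would argue it by contradiction: assume $\C(m,n,r)\subseteq \Nr_m\D$ with $\D\in\CA_{n+1}$; then \pe\ has a winning strategy in the atomic $(n{+}1)$-pebble game on the atom structure of $\C(m,n,r)$, allowing her to play sufficiently many rounds witnessing every consistent triple. A Ramsey/pigeon-hole argument on the parameter $r$ (applied to the colours $k<\psi$ indexing the atoms $a^k(i,j)$) then forces a monochromatic configuration realizing one of the forbidden triples $(a^k(i,j),a^{k'}(i,j),a^{k''}(i,j'))$, a contradiction. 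Once hypothesis (2) is in hand, Theorem \ref{2.12} yields the non-finite-schema-axiomatizability of $S\Nr_\alpha\CA_{\alpha+k+1}$ over $S\Nr_\alpha\CA_{\alpha+k}$ for every infinite $\alpha$, and the same argument delivers the $\PEA$/$\QEA$ version.
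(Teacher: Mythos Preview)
Your treatment of items (1) and (3) is essentially the same as the paper's. For (1) you invoke exactly the isomorphism $\Ca(H|_m^{n+1})\cong\Nr_m\Ca(H)$ with $H=H_n^{n+1}(\A(n,r),\omega)$, which is precisely the paper's argument. For (3) you are actually more explicit than the paper: the paper simply writes ``Follows from theorem \ref{2.12}'' without re-verifying the four hypotheses, whereas you correctly note that hypotheses (2) and (3) of that theorem (namely $\C(m,n,r)\notin S\Nr_m\CA_{n+1}$ and the product condition) are not established in the present statement and must be imported from the Hirsch--Hodkinson work. Your game-plus-Ramsey sketch for hypothesis (2) is the right shape, though in a finished write-up you would simply cite \cite{HH} for it, as the paper implicitly does.

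The one substantive divergence is item (2). You try to transplant the Monk-style element $x_n$ (where off-diagonal labels are pinned to specific non-identity atoms $a^k(i,j)$) into the hypernetwork setting. The paper does something considerably simpler: it takes
\[
x_n=\{f\in F(n,n+k,r):\ (\forall j)(m\le j<n\to(\exists i<m)\,f(i,j)=Id)\},
\]
i.e.\ the set of hypernetworks in which every extra node $j\in n\setminus m$ is $Id$-tied to some node below $m$; the isomorphism is then the obvious ``restrict to $m$'' map. Your proposed $x_n$, with non-identity atoms rigidly pinned on the extra coordinates, is harder to make work: there is no canonical choice of extending atom in $\A(n,r)$ indexed by the extra dimension, and verifying rectangularity (${\sf c}_ix_n\cdot{\sf c}_jx_n=x_n$) and that restriction respects the $\Ca$-operations becomes delicate when the pinned labels are not $Id$. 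The $Id$-collapse trick sidesteps all of this, since a hypernetwork with $f(i,j)\le Id$ is determined on $j$ by its values on $i$. So your overall strategy for (2) is sound, but the concrete element you sketch is the wrong one; the paper's choice is both simpler and the one that makes the verification routine.
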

\begin{demo}{Proof} 
\begin{enumarab}
\item $H_n^{n+1}(\A(n,r), \omega)$ is a wide $n$ dimensional $\omega$ symmetric hyperbases, so $\Ca H\in \PEA_n.$
But $H_m^{n+1}(\A(n,r),\omega)=H|_m^{n+1}$.
Thus
$$\C_r=\Ca(H_m^{n+1}(\A(n,r), \omega))=\Ca(H|_m^{n+1})\cong \Nr_m\Ca H$$
\item For $m<n$, let $$x_n=\{f\in F(n,n+k,r): m\leq j<n\to \exists i<m f(i,j)=Id\}.$$ 
Then $x_n\in \c C(n,n+k,r)$ and ${\sf c}_ix_n\cdot {\sf c}_jx_n=x_n$ for distinct $i, j<m$.
Futhermore 
\[{I_n:\c C}(m,m+k,r)\cong \Rl_{x_n}\Rd_m {\c C}(n,n+k, r).\]
via
\[ I_n(S)=\{f\in F(n, n+k, r): f\upharpoonright m\times m\in S, \forall j(m\leq j<n\to  \exists i<m\; f(i,j)=Id)\}.\]

\item Follows from theorem \ref{2.12}.
\end{enumarab}
\end{demo}

\subsection{The class of neat reducts proper}

Let $\K$ be any of cylindric algebra, polyadic algebra, with and without equality, or Pinter's substitution algebra.
We give a unified model theoretic construction, to show the following:
\begin{enumarab}
\item For $n\geq 3$ and $m\geq 3$, $\Nr_n\K_m$ is not elementary, and $S_c\Nr_n\K_{\omega}\nsubseteq \Nr_n\K_m.$
\item For any $k\geq 5$, $\Ra\CA_k$ is not elementary
and $S_c\Ra\CA_{\omega}\nsubseteq \Ra\CA_k$. 
\end{enumarab}
 $S_c$ stands for the operation of forming {\it complete} subalgebras. 
The relation algebra part formulated in the abstract reproves a result of Hirsch in \cite{r}, and answers a question of his 
posed in {\it op.cit}.
For $\CA$ and its relatives the idea is very much like that in \cite{MLQ}, the details implemented, in each separate case, 
though are significantly distinct, because we look for terms not in the clone of operations
of the algebras considered; and as much as possible, we want these to use very little spare dimensions.

The relation algebra part is more delicate. We shall construct a relation algebra $\A\in \Ra\CA_{\omega}$ with a complete subalgebra $\B$, 
such that $\B\notin \Ra\CA_k$, and $\B$ is elementary equivalent to $\A.$ (In fact, $\B$ will be an elementary subalgebra of $\A$.)

Roughly the idea is to use an uncountable cylindric algebra in $\Nr_3\CA_{\omega}$, hence representable, and a finite atom structure
of another cylindric algebra.
%The latter will also be a set algebra.
We construct a finite product of the the uncountable cylindric  algebra; the product will be indexed by the atoms of the atom structure; 
the $\Ra$ reduct of the former will be as desired; it will be a full $\Ra$ reduct of an $\omega$ dimensional algebra and it has a 
complete elementary equivalent subalgebra not in 
$\Ra\CA_k$. This is the same idea for $\CA$, but in this case, and the other cases of its relatives, one spare dimension suffices.
 
This subalgebra is obtained by replacing one of the components of the product with an elementary 
{\it countable} algebra. First order logic will not see this cardinality twist, but a suitably chosen term 
$\tau_k$ not term definable in the language of relation algebras will, witnessing that the twisted algebra is not in $\Ra\CA_k$. 
For $\CA$'s and its relatives, as mentioned in the previous paragraph, we are lucky enough to have $k$ just $n+1,$
proving the most powerful result.

We concentrate on relation algebras. Let $\tau_k$ be an $m$-ary term of $\CA_k$, with $k$ large enough, 
and let $m\geq 2$ be its rank. We assume that $\tau_k$ is not definable
of relation algebras (so that $k$ has to be $\geq 5$); such terms exist.
Let $\tau$ be a term expressible in the language of relation algebras, such that
$\CA_k\models \tau_k(x_1,\ldots x_m)\leq \tau(x_1,\ldots x_m).$ (This is an implication between two first order formulas using $k$-variables).
Assume further that  whenever $\A\in {\bf Cs}_k$ (a set algebra of dimension $k$) is uncountable, 
and $R_1,\ldots R_m\in A$  are such that at least one of them is uncountable, 
then $\tau_k^{\A}(R_1\ldots R_m)$ is uncountable as well. 
(For $\CA$'s and its relatives, $k=n+1$, for $\CA$'s and $\Sc$s, it is a unary term, 
for polyadic algebras it also uses one extra dimension, it is a generalized composition hence it is a binary term.)
%Let $\G$ be a finite group, and $\Cm\G$ be the group relation algebra on $\G$. Then $\Cm\G\cong \wp(\G\times \G)$; 
%this is a complete representation, so  $\Cm\G\in S_c\Ra\CA_{\omega}$; but even more, we have $\Cm\G\in \Ra\CA_{\omega}.$ 
%If $\G=S_n$, we readily infer, that  for any $u\in {}^nn$, $\{u\}\in \Cm S_n$.

%For $u_1\ldots u_m\in {}^33$ and $\sigma$ a relation algebra term 
%we write $\sigma(u_1\ldots u_l)$ for $\sigma^{\R}(\{u_1\},\ldots \{u_n\})$, 
%the latter evaluated in the group relation algebra $\R=\Cm S_3$.
\begin{lemma} Let $V=(\At, \equiv_i, {\sf d}_{ij})_{i,j<3}$ be a finite cylindric atom structure, 
such that $|\At|\geq {}^33.$  
Let $L$ be a signature consisting of the unary relation 
symbols $P_0,P_1,P_2$ and
uncountably many tenary predicate symbols. 
For $u\in V$, let $\chi_u$
be the formula $\bigwedge_{u\in V}  P_{u_i}(x_i)$.  
Then there exists an $L$-structure $\M$ with the following properties:
\begin{enumarab}

\item $\M$ has quantifier elimination, i.e. every $L$-formula is equivalent
in $\M$ to a boolean combination of atomic formulas.

\item The sets $P_i^{\M}$ for $i<n$ partition $M$,

\item For any permutation $\tau$ on $3,$
$\forall x_0x_1x_2[R(x_0,x_1,x_2)\longleftrightarrow R(x_{\tau(0)},x_{\tau(1)}, x_{\tau(2)}],$
\item $\M \models \forall x_0x_1(R(x_0, x_1, x_2)\longrightarrow 
\bigvee_{u\in V}\chi_u)$, 
for all $R\in L$,

\item $\M\models  \exists x_0x_1x_2 (\chi_u\land R(x_0,x_1,x_2)\land \neg S(x_0,x_1,x_2))$ 
for all distinct tenary $R,S\in L$, 
and $u\in V.$

\item For $u\in V$, $i<3,$ 
$\M\models \forall x_0x_1x_2
(\exists x_i\chi_u\longleftrightarrow \bigvee_{v\in V, v\equiv_iu}\chi_v),$

\item For $u\in V$ and any $L$-formula $\phi(x_0,x_1,x_2)$, if 
$\M\models \exists x_0x_1x_2(\chi_u\land \phi)$ then 
$\M\models 
\forall x_0x_1x_2(\exists x_i\chi_u\longleftrightarrow 
\exists x_i(\chi_u\land \phi))$ for all $i<3$
%\item For all $r_1, r_2\in G$, $\M\models \exists x_2[\exists x_0(R_{r_1}(x_0, x_1)\land x_0=x_1)\land 
%\exists x_1(R_{r_2}(x_0, x_1)\land x_0=x_2)\longleftrightarrow R_{r_1+r_2}(x_0, x_1)]$
%\item $\forall x_0x_1x_2[ \phi_{\tau_k}(\chi_{u_1},\ldots \chi_{u_m})(x_0,x_1,x_2)\longrightarrow \chi_{\tau(u_1,\ldots u_{m})}(x_0,x_1,x_2)]$

\end{enumarab}
\end{lemma}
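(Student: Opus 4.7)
The plan is to obtain $\M$ as a Fra\"iss\'e-style generic structure over a naturally defined class. Let $\mathcal K$ consist of all finite $L$-structures $A$ in which (i) $\{P_0^A, P_1^A, P_2^A\}$ partition $A$; (ii) every ternary $R^A$ is closed under all permutations of its arguments; and (iii) whenever $R^A(a_0,a_1,a_2)$ holds, the colour triple $(u_0,u_1,u_2)$ with $a_i\in P_{u_i}^A$ lies in $V$. Then $\mathcal K$ is closed under isomorphism and substructures, has JEP via disjoint union, and has AP by free amalgamation: given $A\hookrightarrow B_1$ and $A\hookrightarrow B_2$, identify them along $A$ in the obvious set-theoretic union, keep each element's original colour, and let $R$ hold on exactly the symmetric closure of $R^{B_1}\cup R^{B_2}$. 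Symmetry and colour-compatibility are preserved because they already hold in each $B_i$; in particular, the amalgam introduces no $R$-tuples across the two sides.

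Applying the Fra\"iss\'e theorem, suitably generalised to accommodate the uncountable signature $L$ (so that $|\M|$ may be as large as $|L|$), produces an ultra-homogeneous $\mathcal K$-generic $\M$. Quantifier elimination (property (1)) is the standard consequence of ultra-homogeneity. Properties (2), (3), (4) are built into $\mathcal K$. For property (5), the one-point extension producing a triple of colour $u$ on which $R$ holds but $S$ does not lies in $\mathcal K$, so genericity realises it. For property (6), genericity forces each $P_i^\M$ to be infinite, so $\exists x_i\chi_u$ reduces to $\bigwedge_{j\neq i}P_{u_j}(x_j)$; and $\bigvee_{v\equiv_iu}\chi_v$ equals the same thing, since $v\equiv_iu$ in the cylindric atom structure places no constraint on $v_i$ and the $P_k$ partition $M$.

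The most delicate property is (7). Fix $u\in V$ and an $L$-formula $\phi(x_0,x_1,x_2)$ such that $\chi_u\wedge\phi$ is satisfiable in $\M$. By QE I would reduce $\phi$ to a boolean combination of atomic formulas; by ultra-homogeneity, any quantifier-free type of a triple of colour $u$ realised in $\M$ is realised by every tuple of colour $u$ agreeing with it on the relevant atomic formulas over the coordinates $\{j:j\neq i\}$. Consequently, over any assignment to $\{x_j:j\neq i\}$ with $x_j\in P_{u_j}^\M$, the existence of some $x_i\in P_{u_i}^\M$ satisfying $\chi_u\wedge\phi$ is equivalent to the existence of some $x_i\in P_{u_i}^\M$ satisfying $\chi_u$ alone. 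The main technical hurdle I anticipate is the interaction between the uncountable signature and the richness required of the generic: property (5) demands, for each of uncountably many pairs $(R,S)$ and each $u\in V$, a distinguishing triple; this will force a transfinite-length enumeration of amalgamation steps rather than the usual countable one, together with careful bookkeeping to verify that the resulting structure still has the uniform homogeneity needed for (7).
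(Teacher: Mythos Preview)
Your strategy differs from the paper's in one essential respect. The paper does \emph{not} run Fra\"iss\'e over the uncountable signature $L$; it explicitly remarks that this is not available. Instead it passes to a \emph{countable} auxiliary signature $\mathcal L=\{P_0,P_1,P_2,P_3,X\}$ with a single $4$-ary symbol $X$, where the new sort $P_3$ holds ``names'' for the ternary relations and $R(\bar a)$ is coded as $X(\bar a,R^*)$. Ordinary countable Fra\"iss\'e over the obvious class of finite $\mathcal L$-structures yields a homogeneous $\N$ with quantifier elimination; an elementary extension $\N^*$ with $|P_3^{\N^*}|=|L|$ supplies enough names; and $\M$ is obtained by decoding. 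Every $L$-formula $\phi$ translates to an $\mathcal L$-formula $\phi^*$ with a \emph{finite} tuple of parameters $\bar r\subseteq P_3^{\N^*}$, so each of properties (1)--(7) becomes a first-order statement about $\N^*$, hence about $\N$. The transfinite bookkeeping you anticipate simply never arises: all the model theory takes place in a fixed countable language, and the uncountability of $L$ is absorbed into the size of one sort in an elementary extension.

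Your direct uncountable-Fra\"iss\'e route is defensible in principle, but your sketch of property~(7) is where the real work lies and it does not go through as written. To transport a witness $(a_0,a_1,a_2)$ of $\chi_u\wedge\phi$ over a given pair $(b_0,b_1)$ of colours $(u_0,u_1)$ you invoke ultrahomogeneity, which needs $(a_0,a_1)\mapsto(b_0,b_1)$ to be a partial $L$-isomorphism. But the quantifier-free $L$-type of a pair includes every atom $R(x_j,x_k,x_l)$ with $j,k,l\in\{0,1\}$, and in your $\mathcal K$-generic two pairs of the same colours can disagree on such atoms whenever the associated colour-triple lies in $V$; if $\phi$ contains one of these atoms (e.g.\ $R(x_0,x_1,x_0)$), no choice of $b_2$ will help and your argument collapses. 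The paper's version of this step is carried out inside $\N^*$ over the explicit finite parameter set $\bar r$, invoking property~(4) to control which $X$-atoms can occur on $\{a_0,a_1\}\cup\bar r$; whatever route you take, this is the step that demands a genuine argument rather than a bare appeal to homogeneity.
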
 
\begin{demo}{Proof}  
We cannot apply Frassie's theorem to our signature, because it is uncountably infinite. What we do 
instead is that we introduce a new $4$ -ary relation symbol, that will be used to code the uncountably many tenary
relation symbols. Let $\cal L$ be the relational signature containing unary relation symbols
$P_0,\ldots, P_{3}$ and a $4$-ary relation symbol $X$. 
Let $\bold K$ be the class of all finite 
$\cal L$-structures $\D$ satsfying
\begin{enumarab}
\item The $P_i$'s are disjoint : $\forall x \bigvee_{i<j<4}(P_i(x)\land \bigwedge_
{j\neq i}\neg P_j(x)).$ 
\item $\forall x_0 x_1x_2x_3(X(x_0, x_1, x_2, x_3)\longrightarrow P_{3}(x_{3})\land \bigvee_{u\in V}\chi_u).$ 
%\item $\forall x_0x_1x_2[ \phi_{\tau_k}(\chi_{u_1},\ldots \chi_{u_m})(x_0,x_1,x_2)\longrightarrow \chi_{\tau(u_1,\ldots u_{m})}(x_0,x_1,x_2)]$
\end{enumarab}
Then $\bold K$ contains countably many 
isomorphism types. Also it is easy to check that $\bold K$ is closed under
substructures and that $\bold K$ has the the amalgamation Property
From the latter it follows that it has the Joint Embedding Property.
Then there is a  countably infinite homogeneous $\cal L $-structure
$\N$ with age $\bold K$.  $\N$ has quantifier elimination, and obviously, so does 
any elementary extension of $\N$. 
$\bold K$ contains structures 
with arbitrarily large $P_{3}$-part, so $P_{3}^{\N}$ is infinite.
Let $\N^*$ be an elementary extension of $\N$ such that $|P_3|^{\N^*}|=|L|$, and
fix a bijection $*$ from the set of binary relation symbols of $L$ to $P_{3}^{\cal \N^*}.$
Define an $L$-structure $\M$ with domain 
$P_0^{\N^*}\cup P_1^{\N^*}\cup P_2^{\N^*}\cup \ldots P_{3}^{\N^*}$, by:
$P_i^{\M}=P_i^{\N^*}$ for $i<3$ and for
binary $R\in L$, and $\tau\in S_3$ 
$$\M\models R(\tau\circ \bar{a})\text { iff }{\N^*}\models X(\bar{a},R^*).$$ 
If $\phi(\bar {x})$ is any $L$-formula, let $\phi^*(\bar {x},\bar {R})$ be the 
$\cal L$-formula with parameters
$\bar {R}$ from $\N^*$ obtained from $\phi$ by replacing each atomic subformula 
$R(\bar{x})$ by $X(\bar{x},R^*)$ and relativizing quantifiers to $\neg P_n$, 
that is replacing $(\exists x)\phi(x)$ and $(\forall x)\phi(x)$ 
by $(\exists x)(\neg P_3(x)\to \phi(x))$
and $(\forall x)(\neg P_3(x)\to \phi(x)),$ respectively.
A straightforward induction on complexity of formulas
gives that for $\bar {a}\in \M$  
$$\M\models \phi(\bar {a})\text { iff } {\N^*}\models \phi^*(\bar {a},\bar {R}).$$

We show that $\M$ is as required. For quantifier elimination, if $\phi(\bar {x})$ is an
$L$-formula , then $\phi^*(\bar {x} ,\bar {R}^*)$ is equivalent in $\N^*$ 
to a quantifier free $\cal L$-formula
$\psi(\bar {x},\bar {R^*})$. 
Then replacing $\psi$'s atomic subformulas 
$X(x,y,z,R^*)$ by $R(x,y,z)$,
replacing all $X(t_0,\cdots t_3)$ not of this form by $\bot$ , replacing subformulas $P_3(x)$
by $\bot$, and $P_i(R^*)$ by $\bot$ if $i<3$ and $\top$ if $i=3$, gives a quantifier free $L$
-formula $\psi$ equivalent in $\M$ to $\phi$. 
(2) follows from the definition of satisfiability.
\par Let $$\sigma=\forall x(\neg P_3(x)\longrightarrow \bigvee_{i<3}
(P_i(x)\land \bigwedge_{j\neq i}\neg P_j(x))).$$
Then $K\models \sigma$, so $\M\models \sigma$ and ${\N^*}\models \sigma.$
It follows from the definition that $\M$ satisfies (3); (4) is similar.
\par For (5), let $u\in V$ and let $r,s\in P_3^{\M}$ be distinct. 
Take a finite $\cal L$-structure
$D$ with points $a_i\in P_{u_i}^D(i<3)$ and distinct $r',s'\in P_3^D$ with 
$$D\models X(a_0,a_1,a_2,r')\land \neg X(a_0,a_1,a_2,s').$$ 
Then $D\in K$, so $D$ embeds into $\M$. 
By homogeneity, we can assume that the embedding
takes $r'$ to $r$ and $s'$ to $s$. Therefore 
$${\M}\models \exists \bar {x}(\chi_u\land X(\bar {x},r)\land \neg X(\bar {x},s)),$$ 
where $\bar {x}=\langle x_0,x_1,x_2\rangle.$
Since $r,s$ were arbitrary and $\N^*$ is an elementary extension
of $\M$, we get that  
$${\N*}\models \forall yz
(P_3(y)\land P_3(z)\land y\neq z\longrightarrow \exists \bar {x}(\chi_u\land 
X(\bar {x},y)\land \neg (X(\bar {x},z))).$$
The result for $\M$ now follows.
\par Note that it follows from (4,5) that $P_i^{\M}\neq \emptyset$ for each $i<3$.
So it is clear that 
$$\M\models \forall x_0x_1x_2(\exists x_i\chi_u\longleftrightarrow 
\bigvee_{v\in {}V, v\equiv_i u}\chi_v);$$ giving
(6).

\par Finally consider $(7)$. Clearly, it is enough to show that for any
$\cal L$-formula $\phi(\bar {x})$ with parameters
$\bar {r}\in P_3^{\M}, u\in S_3, i<3$, we have 
$${\M}\models \exists \bar {x}(\chi_u\land \phi)\longrightarrow \forall \bar{x}
(\exists x_i(\chi_u\longrightarrow \exists x_i(\chi_u\land \phi)).$$
For simplicity of notation assume $i=2.$
Let $\bar {a} ,\bar {b}\in \M$ with 
$${\M}\models (\chi_u\land \phi)(\bar {a})\text { and } {\M}\models \exists x_2(\chi_u(\bar {b})).$$
We require 
$${\M}\models \exists x_2(\chi_u\land \phi)(\bar {b}).$$
\par It follows from the assumptions that
$${\M}\models P_{u_0}(a_0)\land P_{u_1}(a_1)\land a_0\neq a_1,\text { and }
{\M}\models P_{u_0}(b_0)\land P_{u_1}(b_1)\land b_0\neq b_1.$$
These are the only relations on $a_0a_r\bar {r} $ and on $b_0b_1\bar {r}$
(cf. property (4) of Lemma), so 
$$\theta^{-}=\{(a_0,b_0)(a_1,b_1) (r_l,r_l): l<|\bar {r}|\}$$ 
is a partial isomorphism
of $\M$. By homogeneity, it is induced by an automorphism $\theta$ of $\M$.
Let $c=\theta(\bar {a})=(b_0,b_1,\theta(a_2))$.
Then ${\M}\models (\chi_u\land \phi)(\bar {c}).$
Since $\bar {c}\equiv_2 \bar {b}$, we have 
${\M}\models \exists x_2(\chi_u\land \phi)(\bar {b})$
as required. 
%\end{demo}   
\end{demo}

\begin{lemma}\label{term}
\begin{enumarab} 

\item For $\A\in \CA_3$ or $\A\in \SC_3$, there exist
a unary term $\tau_4(x)$ in the language of $\SC_4$ and a unary term $\tau(x)$ in the language of $\CA_3$
such that $\CA_4\models \tau_4(x)\leq \tau(x),$
and for $\A$ as above, and $u\in \At={}^33$, 
$\tau^{\A}(\chi_{u})=\chi_{\tau^{\wp(^nn)}(u).}$ 

\item For $\A\in \PEA_3$ or $\A\in \PA_3$, there exist a binary
term $\tau_4(x,y)$ in the language of $\SC_4$ and another  binary term $\tau(x,y)$ in the language of $\SC_3$
such that $PEA_4\models \tau_4(x,y)\leq \tau(x,y),$
and for $\A$ as above, and $u,v\in \At={}^33$, 
$\tau^{\A}(\chi_{u}, \chi_{v})=\chi_{\tau^{\wp(^nn)}(u,v)}.$

%\begin{enumarab}

\item Let $k\geq 5$.
Then there exist a term $\tau_k(x_1,\ldots x_m)$ in the language of $\CA_k$ and a term $\tau(x_1,\ldots, x_m)$ in the language of $\RA$,
expressible in $\CA_3$, such that
$\CA_k\models \tau_k(x_1,\ldots x_m)\leq \tau(x_1,\ldots x_m),$
and for  $\A$ as above, and $u_1,\ldots u_m\in \At$, 
$\tau^{\A}(\chi_{u_1},\ldots \chi_{u_m})=\chi_{\tau^{\Cm\At}(u_1,\ldots u_m)}.$ 
%\item There exist $u_1,\ldots u_m, v\in {}^33$ such that
%$\tau^{\R}(\{u_1\},\ldots \{u_m\})=v$.
\end{enumarab}
\end{lemma}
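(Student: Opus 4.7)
The plan is to exhibit, in each of the three cases, explicit witness terms and then verify their action on the characteristic formulas $\chi_u$ via the quantifier-elimination and $S_3$-symmetry of the model $\M$ constructed in the previous lemma. The unifying idea is that in a $k$-dimensional algebra one can simulate a genuine permutation or generalised polyadic substitution by using spare dimensions as scratch coordinates, obtaining a ``tight'' term $\tau_k$; the lower-dimensional counterpart $\tau$ is then a Boolean upper bound definable in $\CA_3$ (or $\RA$), and the inequality $\tau_k\le\tau$ reduces to a routine Henkin--Monk--Tarski identity using ${\sf s}^i_j x={\sf c}_i({\sf d}_{ij}\cdot x)$ together with commutativity of cylindrifications.

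For Part (1), take $\tau_4(x)={\sf s}^3_0{\sf s}^0_1{\sf s}^1_3 x$ in the $\SC_4$-signature, the classical ``transposition by a scratch dimension'' realising $[0,1]$, and let $\tau(x)$ be the three-dimensional upper bound $\cyl{0}\cyl{1}(\diag{0}{1}\cdot x)$ (or a minor variant) so that $\CA_4\models\tau_4(x)\le\tau(x)$. The action on $\chi_u$ is then forced by property (3) of the previous lemma (permutation invariance of the $P_i$'s) together with quantifier elimination: both $\tau_4$ and $\tau$ relabel the coordinates of $u$ and so yield $\chi_{\tau^{\wp({}^33)}(u)}$. For Part (2), polyadic signatures already contain the transposition ${\sf p}_{[i,j]}$, so $\tau_4(x,y)$ can be built as a binary term combining ${\sf p}_{[0,1]}$ with a cylindrification along dimension $3$, and $\tau(x,y)$ is its three-dimensional Boolean upper bound; the verification on $\chi_u,\chi_v$ is identical in spirit, since ${\sf p}_{[i,j]}\chi_u=\chi_{u\circ[i,j]}$ on the partition generated by the $P_i$'s.

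For Part (3), with $k\ge 5$, invoke Maddux's observation that certain higher-arity compositions (generalised Peircean products) are $\CA_k$-definable but not term-definable in $\RA$; let $\tau_k$ be such a $\CA_k$-term of rank $m$ and let $\tau$ be its $\RA$-expressible (hence $\CA_3$-expressible, since $\RA$ is intrinsically three-variable) Boolean upper bound. The inequality $\CA_k\models\tau_k\le\tau$ is again a straightforward identity in cylindric arithmetic, and on a tuple of atoms $\chi_{u_1},\dots,\chi_{u_m}$ the value is forced by quantifier elimination in $\M$ to coincide with the complex-algebra operation $\tau^{\Cm\At}(u_1,\dots,u_m)$.

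The principal obstacle is the last step: establishing that the syntactic term evaluated in $\A$ on the characteristic generators $\chi_u$ agrees with the abstract atom-structure operation on $\Cm\At$. This is where the precise construction of $\M$ via Fra\"\i ss\'e amalgamation does the heavy lifting---the $S_3$-symmetry (clause 3), disjointness of the $P_i$'s (clause 2), the exchange property (clauses 6--7), and the richness of the tenary relations (clauses 4--5) together imply that any $\CA_k$-term applied to a characteristic formula must again be a characteristic formula, determined purely by how the corresponding abstract operation permutes and identifies the coordinate labels in $u$. Once the identity $\tau^{\A}(\chi_{u_1},\dots,\chi_{u_m})=\chi_{\tau^{\Cm\At}(u_1,\dots,u_m)}$ is checked on atomic generators, additivity of all the non-Boolean operations propagates it to the whole algebra, completing the proof.
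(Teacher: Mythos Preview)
Your overall strategy is sound---use a spare dimension to define a ``tight'' operation $\tau_4$ (or $\tau_k$) not available in dimension $3$, then find a three-dimensional Boolean upper bound $\tau$ that happens to act correctly on the special atoms $\chi_u$---but the concrete terms you propose do not work, and the choice of operation in Part~(2) misses the point.

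For Part~(1), your $\tau(x)={\sf c}_0{\sf c}_1({\sf d}_{01}\cdot x)$ fails both required properties. Applied to $\chi_u$ with $u_0\neq u_1$ it gives $0$ (since the $P_i$ partition $M$), and when $u_0=u_1$ it gives the coarse set $P_{u_2}(x_2)$, never a $\chi_v$. Moreover the inequality $\tau_4\le\tau$ does not hold: a sequence in ${}_3{\sf s}(0,1)x$ need not lie in ${\sf c}_0{\sf c}_1({\sf d}_{01}\cdot x)$. The paper's choice is $\tau(x)={\sf s}_1^0{\sf c}_1x\cdot{\sf s}_0^1{\sf c}_0x$, which on $\chi_u=\bigwedge_i P_{u_i}(x_i)$ computes to $P_{u_0}(x_1)\wedge P_{u_1}(x_0)\wedge P_{u_2}(x_2)=\chi_{u\circ[0,1]}$, and the inequality ${}_3{\sf s}(0,1)x\le\tau(x)$ is a genuine $\CA_4$-identity. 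The point is that $\tau$ must itself act as a permutation of the $\chi_u$'s, not merely bound $\tau_4$ from above.

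For Part~(2) you have the wrong operation. In $\PEA_3$ the transposition ${\sf p}_{[0,1]}$ is already in the signature, so any term built from it is three-dimensionally definable and useless as a witness. One must instead use a binary \emph{composition-like} operation: the paper takes $\tau_4(x,y)={\sf c}_3({\sf s}_3^1{\sf c}_3x\cdot{\sf s}_3^0{\sf c}_3y)$, a genuine four-dimensional relational product, with three-dimensional upper bound $\tau(x,y)={\sf c}_1({\sf c}_0x\cdot{\sf s}_1^0{\sf c}_1y)\cdot{\sf c}_1x\cdot{\sf c}_0y$. For Part~(3) your pointer to higher Peircean products is in the right direction, but the paper identifies these explicitly as Johnson's $Q$-operators $Q(R_{ij}:i,j<n)$; without naming the specific term one cannot verify the crucial atom-permuting property. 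In short, the architecture of your argument matches the paper's, but the lemma lives or dies on the exact terms, and yours are either incorrect or unspecified.
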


\begin{proof} 

\begin{enumarab}

\item For all reducts of polyadic algebras, these terms are given in \cite{FM}, and \cite{MLQ}.
For cylindric algebras $\tau_4(x)={}_3 s(0,1)x$ and $\tau(x)=s_1^0c_1x.s_0^1c_0x$.
For polyadic algebras, it is a little bit more complicated because the former term above is definable.
In this case we have $\tau(x,y)=c_1(c_0x.s_1^0c_1y).c_1x.c_0y$, and $\tau_4(x,y)=c_3(s_3^1c_3x.s_3^0c_3y)$.

\item For relation algebras, we take the term corresponding to the following generalization of Johnson's $Q$'s. 
Given $1\leq n<\omega$ and $n^2$ tenary relations
we define
$$Q(R_{ij}: i,j<n)(x_0, x_2, x_3)\longleftrightarrow$$ 
$$\exists z_0\ldots z_{n-1}(z_0=x\land z_1=y\land 
z_2=z\land \bigwedge_{i,j,l<n} 
R_{ij}(z_i, z_j, z_l).$$
The term $\tau$ is not difficult to find.
%{\bf Still working out the required term $\tau$}
\end{enumarab}
\end{proof}

\begin{theorem}
\begin{enumarab} 
\item There exists $\A\in \Nr_3\QEA_{\omega}$
with an elementary equivalent cylindric  algebra, whose $\SC$ reduct is not in $\Nr_3\SC_4$.
Furthermore, the latter is a complete subalgebra of the former.
 
\item There exists a relation algebra $\A\in \Ra\CA_{\omega}$, with an elementary equivalent relation algebra not in $\Ra\CA_k$.
Furthermore, the latter is a complete subalgebra of the former.
\end{enumarab}
\end{theorem}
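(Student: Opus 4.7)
My plan combines the quantifier-elimination structure $\M$ of the preceding lemma with the cardinality-sensitive term $\tau_k$ of Lemma~\ref{term} via a ``product-with-replacement'' construction in the spirit sketched in the preamble. First build $\A$ from $\M$: let $\A^+$ be the full $\omega$-dimensional cylindric set algebra on ${}^\omega M$, and set $\A=\Nr_3\A^+$ for part~(1), $\A=\Ra\A^+$ for part~(2). By quantifier elimination, elements of $\A$ are finite Boolean combinations of lifts of the uncountably many ternary relations $R^\M$ together with the unary $P_i^\M$, so $\A$ is uncountable, and $\A\in\Nr_3\QEA_\omega$ (resp.\ $\Ra\CA_\omega$) by construction.

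Next, slice $\A$ by the atoms of $V={}^33$: for each $u\in V$ set $\A_u=\Rl_{\chi_u^\M}\A$. Properties~(5)--(7) of the preceding lemma give a Feferman--Vaught-style decomposition of $\A$ over this finite index---operations act on slices through the atom-structure moves $\equiv_i$ and coordinate permutations $\sigma\in S_3$---and by property~(5) every slice $\A_u$ is uncountable. Fix an atom $u_0\in V$, pick a countable elementary subalgebra $D\preccurlyeq\A_{u_0}$ containing $\chi_{u_0}$ and closed under the restrictions to $\chi_{u_0}$ of all cylindrifier and substitution transport coming from the other slices, and set $\B=\{a\in A:a\cdot\chi_{u_0}\in D\}$. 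One then verifies that (a) $\B$ is closed under all operations of $\A$, because each operation's $u_0$-coordinate is a $D$-term in the $u_0$-coordinates of the inputs together with the absorbed transport; (b) $\B$ is a complete subalgebra of $\A$, because suprema compute slice-wise and $D$ is regular in $\A_{u_0}$ by elementarity; and (c) $\B\equiv\A$ by a Feferman--Vaught argument, since only the $u_0$-slice has been replaced and the replacement is elementary.

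For the contradiction, assume $\B\in\Nr_3\SC_4$, say $\B=\Nr_3\C$ for some $\C\in\SC_4$ (respectively $\Ra\B\in\Ra\CA_k$ via some $\C\in\CA_k$ with $k\ge 5$). The term $\tau_4$ (respectively $\tau_k$) of Lemma~\ref{term}, not in the signature of $\B$ but legitimate in $\C$, restricts to an operation on $\B$ satisfying $\tau_4^\C(\chi_{u_1},\dots,\chi_{u_m})\leq\chi_{u_0}$ whenever the atom-level action of $\tau$ sends $(u_1,\dots,u_m)$ to $u_0$; such choices of $u_1,\dots,u_m$ exist by the explicit description of $\tau$ in Lemma~\ref{term}. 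Apply $\tau_4^\C$ to tuples drawn from the unchanged uncountable slices $\A_{u_i}\cap\B$. By the cardinality-preservation hypothesis on $\tau_k$ stated just before Lemma~\ref{term}---a property that fails for every term in the nominal signature of $\B$---the resulting family yields uncountably many distinct elements of $\B$ lying below $\chi_{u_0}$. Since the $u_0$-slice of $\B$ is $D$ and $|D|\leq\aleph_0$, this is a contradiction.

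The main obstacle is the bookkeeping of the slicing step: one must leverage properties~(6)--(7) of $\M$ to ensure that each operation's $u_0$-slice output is a well-defined term in the operations on the slices of $D$, so that $\B$ is simultaneously a subalgebra of $\A$, a complete subalgebra, and elementarily equivalent to $\A$. Once this Feferman--Vaught structure over the finite index $V$ is in place, $\tau_k$ does the rest: it uses a spare dimension in a way that cannot be mimicked within the nominal signature of $\B$, and thus detects the countable-versus-uncountable mismatch between the twisted and untwisted $u_0$-slices.
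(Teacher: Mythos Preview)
Your overall architecture is the paper's, but there is a definitional error that breaks the argument as written. You take $\A^+$ to be the \emph{full} set algebra $\wp({}^\omega M)$, so $\A=\Nr_3\A^+$ is the full $3$-dimensional set algebra on $M$; your very next sentence then invokes quantifier elimination to describe elements of $\A$ as Boolean combinations of the $R^\M$ and $P_i^\M$, which is false for the full set algebra. More damagingly, property~(7) of the lemma---that $\exists x_i(\chi_u\wedge\phi)$ collapses to $\exists x_i\chi_u$ whenever $(\chi_u\wedge\phi)^\M\neq 0$---holds only for \emph{definable} $\phi$, not for arbitrary subsets of ${}^3M$. Without it, $(c_ia)\cdot\chi_{u_0}$ depends on the actual shape of $a$ in the neighbouring slices, not merely on which slices are nonzero; your countable $D$ cannot absorb this, so $\B=\{a:a\cdot\chi_{u_0}\in D\}$ fails to be closed under cylindrification and the slicing collapses. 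The fix is exactly what the paper does: take $\A_\omega=\{\phi^\M:\phi\in L_\omega\}$, the locally finite algebra of definable sets, and set $\A=\Nr_3\A_\omega$ (respectively $\Ra\A_\omega$); quantifier elimination then makes this the full neat reduct, and the rest of your outline becomes sound.

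Two smaller points. Your claim that ``$D$ is regular in $\A_{u_0}$ by elementarity'' is wrong in general: an elementary Boolean subalgebra need not be a complete (regular) subalgebra. The paper does not derive regularity from elementarity; it simply chooses each $\B_u$ to be a countable elementary subalgebra that is \emph{also} complete in $\A_u$, and you should state it the same way. Finally, where you gesture at Feferman--Vaught, the paper makes this precise by exhibiting a two-dimensional quantifier-free interpretation of $\A$ (and of $\Ra\A$) in the expanded product $\P=(\prod_{u\in V}\A_u,\,1_u,\,d_{ij})$: the formula $\eta_i(x,y)$ built from the closed terms $t_S$ defines the graph of $c_i$ inside $\P$, using precisely property~(7). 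This interpretation is what makes the swap $\A_{u_0}\rightsquigarrow\B_{u_0}$ transfer elementary equivalence cleanly from $\P$ to $\A$; a bare Feferman--Vaught invocation is the right intuition, but the cylindrifiers genuinely mix coordinates, so the interpretation step is where the work is.
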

\begin{proof} Let $\L$ and $\M$ as above. Let 
$\A_{\omega}=\{\phi^M: \phi\in \L\}.$ 
Clearly $\A_{\omega}$ is a locally finite $\omega$-dimensional cylindric set algebra.
For the first part, we prove the theorem for $\CA$; and its relatives.
%The algebra $\A$  is a relation algebra; indeed it is closed under converse 
%because of condition (iii) and under
%composition by condition (viii), in the previous lemma.

Then $\A\cong \Nr_3\A_{\omega}$, the isomorphism is given by 
$$\phi^{\M}\mapsto \phi^{\M}.$$
Quantifier elimination in $\M$ guarantees that this map is onto, so that $\A$ is the full $\Ra$ reduct.

For $u\in {}V$, let $\A_u$ denote the relativisation of $\A$ to $\chi_u^{\M}$
i.e $$\A_u=\{x\in A: x\leq \chi_u^{\M}\}.$$ $\A_u$ is a boolean algebra.
Also  $\A_u$ is uncountable for every $u\in V$
because by property (iv) of the above lemma, 
the sets $(\chi_u\land R(x_0,x_1,x_2)^{\M})$, for $R\in L$
are distinct elements of $\A_u$.  

Define a map $f: \Bl\A\to \prod_{u\in {}V}\A_u$, by
$$f(a)=\langle a\cdot \chi_u\rangle_{u\in{}V}.$$

Here, and elsewhere, for a relation algebra $\C$, $\Bl\C$ denotes its boolean reduct.
We will expand the language of the boolean algebra $\prod_{u\in V}\A_u$ by constants in 
such a way that
the relation algebra reduct of $\A$ becomes interpretable in the expanded structure.
For this we need.

Let $\P$ denote the 
following structure for the signature of boolean algebras expanded
by constant symbols $1_{u}$ for $u\in V$ 
and ${\sf d}_{ij}$ for $i,j\in 3$: 
We now show that the relation algebra reduct of $\A$ is interpretable in $\P.$  
For this it is enough to show that 
$f$ is one to one and that $Rng(f)$ 
(Range of $f$) and the $f$-images of the graphs of the cylindric algebra functions in $\A$ 
are definable in $\P$. Since the $\chi_u^{\M}$ partition 
the unit of $\A$,  each $a\in A$ has a unique expression in the form
$\sum_{u\in {}V}(a\cdot \chi_u^{\M}),$ and it follows that 
$f$ is boolean isomorphism: $bool(\A)\to \prod_{u\in {}V}\A_u.$
So the $f$-images of the graphs of the boolean functions on
$\A$ are trivially definable. 
$f$ is bijective so $Rng(f)$ is 
definable, by $x=x$. For the diagonals, $f({\sf d}_{ij}^{\A})$ is definable by $x={\sf d}_{ij}$.

Finally we consider cylindrifications for $i<3$. Let $S\subseteq {}V$ and  $i<3$, 
let $t_S$ be the closed term
$$\sum\{1_v: v\in {}V, v\equiv_i u\text { for some } u\in S\}.$$
Let
$$\eta_i(x,y)=\bigwedge_{S\subseteq {}V}(\bigwedge_{u\in S} x.1_u\neq 0\land 
\bigwedge_{u\in {}V\smallsetminus S}x.1_u=0\longrightarrow y=t_S).$$
We claim that for all $a\in A$, $b\in P$, we have 
$$\P\models \eta_i(f(a),b)\text { iff } b=f({\sf c}_i^{\A}a).$$
To see this, let $f(a)=\langle a_u\rangle_{u\in {}V}$, say. 
So in $\A$ we have $a=\sum_ua_u.$
Let $u$ be given; $a_u$ has the form $(\chi_i\land \phi)^{\M}$ for some $\phi\in L^3$, so
${\sf c}_i^A(a_u)=(\exists x_i(\chi_u\land \phi))^{\M}.$ 
By property (vi), if  $a_u\neq 0$, this is 
$(\exists x_i\chi_u)^M$; by property $5$,
this is $(\bigvee_{v\in {}V, v\equiv_iu}\chi_v)^{\M}.$
Let $S=\{u\in {}V: a_u\neq 0\}.$
By normality and additivity of cylindrifications we have,
$${\sf c}_i^A(a)=\sum_{u\in {}V} {\sf c}_i^Aa_u=
\sum_{u\in S}{\sf c}_i^Aa_u=\sum_{u\in S}(\sum_{v\in {}V, v\equiv_i u}\chi_v^{\M})$$
$$=\sum\{\chi_v^{\M}: v\in {}V, v\equiv_i u\text { for some } u\in S\}.$$
So $\P\models f({\sf c}_i^{\A}a)=t_S$. Hence $\P\models \eta_i(f(a),f({\sf c}_i^{\A}a)).$
Conversely, if $\P\models \eta_i(f(a),b)$, we require $b=f({\sf c}_ia)$. 
Now $S$ is the unique subset of $V$ such that 
$$\P\models \bigwedge_{u\in S}f(a)\cdot 1_u\neq 0\land \bigwedge_{u\in {}V\smallsetminus S}
f(a)\cdot 1_u=0.$$  So we obtain 
$$b=t_S=f({\sf c}_i^Aa).$$
The rest is the same as in \cite{MLQ} while for other relatives, the idea implemented is also the same;
one just uses the corresponding terms as in lemma \ref{term}.

For relation algebras we proceed as follows:
Now the $\Ra$ reduct of $\A$ is a generalized reduct of $\A$, hence $\P$ is first order interpretable in $\Ra\A$, as well.
 It follows that there are closed terms $1_{u, v},d_{i,j}$ and a formula $\eta$ built out of these closed terms such that 
$$\P\models \eta(f(a), b, c)\text { iff }b= f(a\circ c),$$
where the composition is taken in $\Ra\A$.

We have proved that $\Ra\A$ is interpretable in $\P$.
Furthermore it is easy to see that the interpretation 
is two dimensional and quantifier free.

For each $u\in V$, choose any countable boolean elementary 
complete subalgebra of $\A_{u}$, $\B_{u}$ say.
%Thus $\B_{Id}\preceq \A_{Id}$, and is also a complete subalgebra.
Le $u_i: i<m$ be elements in $V$ and let
$$Q=(\prod_{u_i: i<m}\A_{u_i}\times \B_{\tau^{\Cm\At}(u_1,\ldots u_m)}\times \prod_{u\in {}V\smallsetminus \{u_1,\ldots u_m, \tau^{\Cm\At\A}(u_1,\ldots u_m)\}} 
\A_u), 1_{u,v},d_{ij})_{u,v\in {}V,i,j<3}\equiv$$  
$$(\prod_{u\in V} \A_u, 1_{u,v}, {\sf d}_{ij})_{u\in V, i,j<3}=P.$$

Let $\B$ be the result of applying the interpretation given above to $Q$.
Then $\B\equiv \Ra\A$ as relation  algebras, furthermore $\Bl\B$ is a complete subalgebra of $\Bl\A$. 
%(Indeed $\B\preceq \A$. 
%Therefore $\B\in RCA_3$; it is easy to see that
%$B$ is simple the latter expressible by a first order formula
%so that it is actually a $Cs_3$.)
Assume for contradiction that $\B=\Ra\D$ with $\D\in \CA_k$. Let $u_1,\ldots u_m\in V$ be such that 
$\tau_k^{\D}(\chi_{u_1},\ldots \chi_{u_n})$, is uncountable in $\D$. 

Because $\B$ is a full $\RA$ reduct, 
this set is contained in $\B.$ 

For simplicity assume that $\tau^{\Cm\At}(u_1\ldots u_m)=Id.$
On the other hand for $x_i\in B$, with $x_i\leq \chi_{u_i}$, we have 
$$\tau_k^{\D}(x_1,\ldots x_m)\leq \tau(x_1\ldots x_m)\in \tau(\chi_{u_1},\ldots {\chi_{u_m}})=\chi_{\tau(u_1\ldots u_m)}=\chi_{Id}.$$

But this is a contradiction, since  $B_{Id}=\{x\in B: x\leq \chi_{Id}\}$ is  countable.

\end{proof}

\section{Second question}

%We are now ready for our main result:

In the previous section we showed that it can be the case that an algebra does not neatly embed in another algebra in one extra dimension 
(for finite as well as for infinite dimensional 
algebras). Not only that, but if an algebra neatly embeds into $k$ extra dimensions, 
there is no gaurantee whatsoever, that it neatly embeds into $k+1$ extra dimensions, and in fact we know that there are cases that it cannot.
There are known examples for finite dimensions, and these can be used to
prove the analgous result for infinite dimensions. 
Such results are related to {\it completeness} results for modifications of first oder logic, or rather {\it incompleteness} ones.

Here we adress a variation, or rather a natural extension of this question, namely, suppose that an algebra does neatly embed into extra dimensions, 
is the big algebra, or the dilation, uniquely defined over the small algebra, if the latter as a set generates the former using all the extra dimensions? 
The more spare dimensions we have, the more likely that the original algebra has more control on the dilation, because 
it codes more hidden dimensions, and so in a sense, it 'defines' more, it has a larger expressive power. 

The strongest case would be the neat embedding in $\omega$ extra dimensions (this  often 
implies representability of the algebra in question) 
and  is equivalent, implementing a standard ultraproduct construction,  to embedding into an algebra having any transfinite larger 
ordinal as its dimension. (In cylindric algebra, 
for any two infinite ordinals $\alpha<\beta$ and $n\in \omega$, $\Nr_n\CA_{\alpha}=\Nr_n\CA_{\beta}$, so that getting to $\omega$ is getting over the 
hurdle). 

We will see that such uniqueness is actually equivalent to that the neat reduct operator  formulated 
in an appropriate way as a functor has a right adjoint, and that this in turn is strongly related to various amalgamation properties of the class 
in question. So while in the first case we deal with completeness theorems, or rather the lack thereof, 
in the second we deal with the interpolation property for the corresponding algebraisable logic. 

This algebraic approach via neat embeddings suggests that the two notions are not unrelated.
The mere fact that a Henkin construction can prove both completenes and interpolation, not only in the context of first order logic, 
also emphasizes this strong tie.

Another algebraic manifestation of this link, is the recurrent phenomena in algebraic logic, 
that for several algebraisatons of variants of first order logic, completeness and interpolation come hand in hand.  
This happens for example in Keislers logic \cite{DM}, \cite{super} and their countable reducts studied by Sain \cite{Sain}.

To prove a  completeness theorem using the methdology of algebraic logic, one needs to show that a certain 
abstract class of algebras defined syntactically (via a simple set of first order axioms preferably equations), consists solely
of representable algebras providing a complete semantics. This often appeals to the neat embedding theorem of Henkin.
First one proves that the algebra in question neatly embeds into an $\omega$ dilation, 
and in such an abundance of spare dimensions one can construct so-called {\it Henkin ultrafilters} that eliminate cylindrifiers,
and then the representations becomes realily definable by reducing it basically to the propositional part.

In such a context interpolation is provided by showing that this neat embedding is {\it faithfull}, 
the small algebra essentially determines the structure of the bigger one.
Expressed otherwise, for the interpolation property to hold a necessary (and in some cases also sufficient) 
condition is that algebraic terms 
definable using the added spare dimensions, to eliminate cylindrifiers, are already term definable.
The typical situation (even for cylindric algebras with no restriction whatsoever, like local finitenes) 
an interpolant  can {\it always} be found, but the problem is that  it might, and indeed there are situations where it must,
resort to extra dimensions (variables). This happens in the case for instance of the so called finitary logics of infinitary relations \cite{typeless}. 
This interpolant then becomes  term definable in higher dimensions, but when these terms 
are actually coded by ones using only the original amount of dimensions, 
we get the desired interpolant.  One way of getting around the un warranted spare dimensions in the interpolant, 
is to introduce new connectives that code extra dimensions, in which case in the original language any implication can be interpolated
by a formula using the same umber of varibles ( and common symbols) but  possibly uses the new connectives. 
In \cite{typeless} this is done to prove an interpolation theorem for the severely incomplete typless
logics studied in \cite{HMT2}.

Categorially, and indeed intuitively, this means that the dilation functor is invertible, the interpolant is found in a dilation, 
but the inverse, the neat reduct functor gets us back to our base, to our original algebra.
%In this section we study the following problemshow that all varieties considered have the superamalgamation property, a strong form of amalgamation.
%Like in the previous section, we start by formulating slightly new results in the general
%setting of $BAO$'s. The main novelty in our general approach is that we consider classes
%of algebras that are not necessarily varieties. We start by some definitions.
To formulate our results, we need some preparations.
\begin{definition}
\begin{enumarab}
\item $K$ has the \emph{Amalgamation Property } if for all $\A_1, \A_2\in K$ and monomorphisms
$i_1:\A_0\to \A_1,$ $i_2:\A_0\to \A_2$
there exist $\D\in K$
and monomorphisms $m_1:\A_1\to \D$ and $m_2:\A_2\to \D$ such that $m_1\circ i_1=m_2\circ i_2$.
\item If in addition, $(\forall x\in A_j)(\forall y\in A_k)
(m_j(x)\leq m_k(y)\implies (\exists z\in A_0)(x\leq i_j(z)\land i_k(z) \leq y))$
where $\{j,k\}=\{1,2\}$, then we say that $K$ has the superamalgamation property $(SUPAP)$.
\end{enumarab}
\end{definition}

\begin{definition} An algebra $\A$ has the {\it strong interpolation theorem}, $SIP$ for short, if for all $X_1, X_2\subseteq A$, $a\in \Sg^{\A}X_1$,
$c\in \Sg^{\A}X_2$ with $a\leq c$, there exist $b\in \Sg^{\A}(X_1\cap X_2)$ such that $a\leq b\leq c$.
\end{definition}

For an algebra $\A$, $Co\A$ denotes the set of congruences on $\A$.
\begin{definition}

An algebra $\A$ has the {\it congruence extension property}, or $CP$ for short,
 if for any $X_1, X_2\subset A$
if $R\in Co \Sg^{\A}X_1$ and $S\in Co \Sg^{A}X_2$ and
$$R\cap {}^2\Sg^{A}(X_1\cap X_2)=S\cap {}^2\Sg^{\A}{(X_1\cap X_2)},$$
then there exists a congruence $T$ on $\A$ such that
$$T\cap {}^2 \Sg^{\A}X_1=R \text { and } T\cap {}^2\Sg^{\A}{(X_2)}=S.$$

\end{definition}

%As in the lst section, we start by proving some general theorems concerning classes of $BAO$'s
%Theorems \ref{supapgeneral}, \ref{CP}, \ref{weak} to come, 
%give a flavour of the interconnections between the local properties of $CP$ and $SIP$ (on free algebras) and the global property of 
%superamalgamation (of the entire class). 

Maksimova and Mad\'arasz \cite{Mad}, \cite{Mak}, proved that if interpolation holds in free algebras of a variety, then the variety has the
superamalgamation property.
Using a similar argument, we prove this implication in a slightly more
general setting. But first an easy  lemma:

\begin{lemma} Let $K$ be a class of $BAO$'s. Let $\A, \B\in K$ with $\B\subseteq \A$. Let $M$ be an ideal of $\B$. We then have:
\begin{enumarab}
\item $\Ig^{\A}M=\{x\in A: x\leq b \text { for some $b\in M$}\}$
\item $M=\Ig^{\A}M\cap \B$
\item if $\C\subseteq \A$ and $N$ is an ideal of $\C$, then
$\Ig^{\A}(M\cup N)=\{x\in A: x\leq b+c\ \text { for some $b\in M$ and $c\in N$}\}$
\item For every ideal $N$ of $\A$ such that $N\cap B\subseteq M$, there is an ideal $N'$ in $\A$
such that $N\subseteq N'$ and $N'\cap B=M$. Furthermore, if $M$ is a maximal ideal of $\B$, then $N'$ can be taken to be a maximal ideal of $\A$.

\end{enumarab}
\end{lemma}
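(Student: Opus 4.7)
The plan is to verify the four clauses in turn: items (1)--(3) are routine closure arguments that exploit the fact that normal additive operators in a BAO are monotone, while item (4) is a Zorn's lemma construction with a short second application to achieve maximality. Throughout I would use without comment that since $\B \subseteq \A$ as a subalgebra, the Boolean operations on elements of $\B$ agree whether computed in $\B$ or in $\A$.

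For (1), I would let $I = \{x \in A : x \leq b \text{ for some } b \in M\}$. Clearly $M \subseteq I$, and $I$ is downward closed by transitivity. It is closed under joins because $x \leq b_1$ and $y \leq b_2$ with $b_1, b_2 \in M$ give $x + y \leq b_1 + b_2 \in M$. Closure under each additive operator $f$ of the signature follows from monotonicity: $x \leq b \in M$ yields $f(x) \leq f(b)$, where $f(b) \in \B$ since $\B$ is a subalgebra, hence $f(b) \in M$ since $M$ is an ideal of $\B$. As any ideal of $\A$ containing $M$ must contain $I$, we conclude $\Ig^{\A}M = I$. Clause (2) is immediate from (1) together with downward closedness of $M$ inside $\B$. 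Clause (3) is proved by the same template, with joins handled by $(b_1 + c_1) + (b_2 + c_2) = (b_1 + b_2) + (c_1 + c_2)$ and operator closure by additivity $f(b+c) = f(b) + f(c)$ combined with $f(b) \in M$ and $f(c) \in N$.

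For (4), set $\mathcal{P} = \{L : L \text{ is an ideal of } \A,\, N \subseteq L,\, L \cap B \subseteq M\}$. This family is non-empty (it contains $N$) and closed under directed unions, so Zorn's lemma supplies a maximal element $N'$. To show $N' \cap B = M$, assume for contradiction that there is $m \in M \setminus N'$. Then $\Ig^{\A}(N' \cup \{m\})$ strictly extends $N'$ and therefore falls outside $\mathcal{P}$; by (3) applied to the principal ideal of $\B$ generated by $m$ together with $N'$, any witness $c \in B \setminus M$ satisfies $c \leq n + m$ for some $n \in N'$. Then $c \cdot (-m) \leq n \in N'$ gives $c \cdot (-m) \in N' \cap B \subseteq M$, while $c \cdot m \leq m \in M$ gives $c \cdot m \in M$, so $c = c \cdot m + c \cdot (-m) \in M$, a contradiction.

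For the maximal-ideal refinement, the $N'$ just produced is automatically proper ($1 \in N'$ would force $1 \in M$). A second application of Zorn's lemma to the family of proper ideals of $\A$ extending $N'$ whose trace on $\B$ equals $M$ yields a maximal element $N''$; any strictly larger proper ideal $K$ of $\A$ would satisfy $K \cap B \supsetneq M$ (else it would still be in the family), so by maximality of $M$ in $\B$ we would have $K \cap B = B$, forcing $1 \in K$, a contradiction. The only point requiring any real care is ensuring the ``downward closure of $M$'' formula in (1) yields an ideal in the sense appropriate to the class $K$---that is, that it is closed under the non-Boolean operators---and this is precisely where monotonicity (a consequence of additivity and normality in any BAO) does the work, and it also underlies clause (3) used in the key step of (4).
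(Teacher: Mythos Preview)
Your argument is essentially correct and complete, but there is one small slip in (4): when you invoke (3) with the ideal $\Ig^{\B}\{m\}$ of $\B$ and the ideal $N'$ of $\A$, the conclusion is $c \leq n + p$ for some $n \in N'$ and some $p \in \Ig^{\B}\{m\}$, not $c \leq n + m$. In a general BAO the principal ideal generated by $m$ is larger than $\{x : x \leq m\}$ (one must close under the operators), so $p$ need not be below $m$. This does not damage the argument: since $m \in M$ and $M$ is a $\B$-ideal, $\Ig^{\B}\{m\} \subseteq M$, so $p \in M$; then $c \cdot (-p) \leq n$ gives $c \cdot (-p) \in N' \cap B \subseteq M$ and $c \cdot p \leq p \in M$, whence $c \in M$ as you intended.

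Your route to (4) is different from the paper's. The paper reduces the general case to the special case $N = \{0\}$ by passing to the quotients $\A/N$, $\B/(N \cap B)$, and $M/(N \cap B)$; in that special case the ideal $N' = \Ig^{\A}M$ works immediately by (2), and the maximal-ideal refinement then follows from the correspondence theorem. Your direct Zorn's-lemma argument on $\{L : N \subseteq L,\ L \cap B \subseteq M\}$ is equally valid and perhaps more self-contained, though it requires the small contradiction argument (and its fix above) where the quotient approach gets the result for free from (2). Either method yields the maximal-ideal addendum with a routine second maximality step.
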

\begin{demo}{Proof} Only (iv) deserves attention. The special case when $n=\{0\}$ is straightforward.
The general case follows from this one, by considering
$\A/N$, $\B/(N\cap \B)$ and $M/(N\cap \B)$, in place of $\A$, $\B$ and $M$ respectively.
\end{demo}
The previous lemma will be frequently used without being explicitly mentioned.

\begin{theorem}\label{supapgeneral} Let $K$ be a class of $BAO$'s such that $\mathbf{H}K=\mathbf{S}K=K$.
Assume that for all $\A, \B, \C\in K$, inclusions
$m:\C\to \A$, $n:\C\to \B$, there exist $\D$ with $SIP$ and $h:\D\to \C$, $h_1:\D\to \A$, $h_2:\D\to \B$
such that for $x\in h^{-1}(\C)$,
$$h_1(x)=m\circ h(x)=n\circ h(x)=h_2(x).$$
Then $K$ has $SUPAP$. In particular, if $K$ is a variety and the free algebras have $SIP$ then $V$ has $SUPAP$.
\bigskip
\bigskip
\bigskip
\bigskip
\bigskip
\bigskip
\bigskip
\bigskip
\bigskip
\bigskip
\bigskip
\begin{picture}(10,0)(-30,70)
\thicklines
\put (-10,0){$\D$}
\put(5,0){\vector(1,0){70}}\put(80,0){$\C$}
\put(5,5){\vector(2,1){100}}\put(110,60){$\A$}
\put(5,-5){\vector(2,-1){100}}\put(110,-60){$\B$}
\put(85,10){\vector(1,2){20}}
\put(85,-5){\vector(1,-2){20}}
\put(40,5){$h$}
\put(100,25){$m$}
\put(100,-25){$n$}
\put(50,45){$h_1$}
\put(50,-45){$h_2$}
\end{picture}
\end{theorem}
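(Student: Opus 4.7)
My plan is to follow the Maksimova--Madarasz template: the algebra $\D$ supplied by the hypothesis serves as a common cover of the amalgamation data, and SIP in $\D$ yields the super-amalgam via two interpolation arguments. Given inclusions $i_1\colon\A_0\to\A_1$, $i_2\colon\A_0\to\A_2$ in $K$, invoke the hypothesis with $\C=\A_0$, $\A=\A_1$, $\B=\A_2$ to obtain $\D\in K$ with SIP and compatible surjections $h\colon\D\to\A_0$, $h_1\colon\D\to\A_1$, $h_2\colon\D\to\A_2$. Fix generating subsets $X_0\subseteq X_1\cap X_2\subseteq\D$ so that $h_j(X_j)$ generates $\A_j$, $h(X_0)$ generates $\A_0$, and, by the compatibility clause, $h_j\upharpoonright F_0=i_j\circ h\upharpoonright F_0$ on $F_0:=\Sg^{\D}(X_0)$. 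Write $F_j:=\Sg^{\D}(X_j)$, $I_j:=\ker(h_j\upharpoonright F_j)$ for $j=1,2$, and form $J:=\Ig^{\D}(I_1\cup I_2)$ and $\E:=\D/J$. Because $\mathbf{H}K=\mathbf{S}K=K$, $\E\in K$. Define $m_j\colon\A_j\to\E$ by $m_j(h_j(x))=x+J$ for $x\in F_j$; this is well-defined since $I_j\subseteq J$, and $m_1\circ i_1=m_2\circ i_2$ because on $F_0$ both sides send $h(y)$ to $y+J$.

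The heart of the argument is injectivity of $m_1$ (symmetrically $m_2$), obtained via SIP. Assume $x\in F_1$ with $m_1(h_1(x))=0$, so $x\in J$; by the lemma preceding the theorem, $x\leq u+v$ for some $u\in I_1\subseteq F_1$, $v\in I_2\subseteq F_2$. The inequality $x\cdot -u\leq v$ has its left side in $F_1$ and its right side in $F_2$, so SIP provides $w\in F_0$ with $x\cdot -u\leq w\leq v$. Since $w\leq v\in I_2$ and $w\in F_2$, we have $w\in I_2$, whence $h_2(w)=0$; by the compatibility on $F_0$ and injectivity of $i_2$, $h(w)=0$, so $h_1(w)=i_1(h(w))=0$ and $w\in I_1$. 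Then $x\leq u+w\in I_1$, giving $h_1(x)=0$ as required.

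The super-condition follows from a parallel interpolation. Suppose $m_1(a_1)\leq m_2(a_2)$ with $a_j\in\A_j$, pick $x_j\in F_j$ with $h_j(x_j)=a_j$, and note $x_1\cdot -x_2\in J$. Writing $x_1\cdot -x_2\leq u+v$ with $u\in I_1$, $v\in I_2$, rearrange to $x_1\cdot -u\leq x_2+v$ and apply SIP to obtain $w\in F_0$ with $x_1\cdot -u\leq w\leq x_2+v$. Setting $z:=h(w)\in\A_0$, applying $h_1$ to $x_1\leq u+w$ gives $a_1\leq h_1(w)=i_1(z)$, and applying $h_2$ to $w\leq x_2+v$ gives $i_2(z)=h_2(w)\leq a_2$, exhibiting the required interpolant.

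For the final clause, if $K$ is a variety whose free algebras have SIP, take $\D$ to be a free $K$-algebra on $X_1\cup X_2$ with $X_0=X_1\cap X_2$ indexing generators of $\A_0$, and let $h,h_1,h_2$ be the canonical extensions presenting $\A_0,\A_1,\A_2$; this realises the hypothesis. The main obstacle lies not in the Boolean arithmetic, which is routine given the preceding lemma, but in the careful bookkeeping of generating sets and of restricted kernels $I_j=\ker(h_j\upharpoonright F_j)$: using the full kernels of $h_j$ in $\D$ would spoil injectivity of the $m_j$ (the quotient $\D/\Ig^{\D}(\ker h_1\cup\ker h_2)$ need not admit embeddings from the $\A_j$), so one must work with the $F_j$-restricted kernels throughout, and this is precisely the partition of the generating set that makes the two SIP invocations syntactically legal.
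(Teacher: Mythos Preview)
Your proof is correct and follows essentially the same approach as the paper: both form the quotient $\D/\Ig^{\D}(I_1\cup I_2)$ (the paper writes $M,N,P$ for your $I_1,I_2,J$), establish injectivity of the induced maps by one SIP interpolation, and verify the super-amalgamation inequality by a second SIP interpolation. Your bookkeeping with generating sets $X_0=X_1\cap X_2$ and restricted kernels is more explicit than the paper's somewhat elliptical notation $\D_j=h_j^{-1}(\A_j)$, but the underlying argument is identical.
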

\bigskip
\bigskip
\begin{proof} Let $\D_1=h_1^{-1}(\A)$ and $\D_2=h_2^{-1}(\B)$. Then $h_1:\D_1\to \A$, and $h_2:\D_2\to \B$.

Let $M=ker h_1$ and $N=ker h_2$, and let
$\bar{h_1}:\D_1/M\to \A, \bar{h_2}:\D_2/N\to \B$ be the induced isomorphisms.

Let $l_1:h^{-1}(\C)/h^{-1}(\C)\cap M\to \C$ be defined via $\bar{x}\to h(x)$, and
$l_2:h^{-1}(\C)/h^{-1}(\C)\cap N$ to $\C$ be defined via $\bar{x}\to h(x)$.
Then those are well defined, and hence
$k^{-1}(\C)\cap M=h^{-1}(\C)\cap N$.
Then we show that $\P=\Ig(M\cup N)$ is a proper ideal and $\D/\P$ is the desired algebra.
Now let $x\in \mathfrak{Ig}(M\cup N)\cap \D_1$.
Then there exist $b\in M$ and $c\in N$ such that $x\leq b+c$. Thus $x-b\leq c$.
But $x-b\in \D_1$ and $c\in \D_2$, it follows that there exists an interpolant
$d\in \D_1\cap \D_2$  such that $x-b\leq d\leq c$. We have $d\in N$
therefore $d\in M$, and since $x\leq d+b$, therefore $x\in M$.
It follows that
$\mathfrak{Ig}(M\cup N)\cap \D_1=M$
and similarly
$\mathfrak{Ig}(M\cup N)\cap \D_2=N$.
In particular $P=\mathfrak{Ig}(M\cup N)$ is a proper ideal.

Let $k:\D_1/M\to \D/P$ be defined by $k(a/M)=a/P$
and $h:\D_2/N\to \D/P$ by $h(a/N)=a/P$. Then
$k\circ m$ and $h\circ n$ are one to one and
$k\circ m \circ f=h\circ n\circ g$.
We now prove that $\D/P$ is actually a
superamalgam. i.e we prove that $K$ has the superamalgamation
property. Assume that $k\circ m(a)\leq h\circ n(b)$. There exists
$x\in \D_1$ such that $x/P=k(m(a))$ and $m(a)=x/M$. Also there
exists $z\in \D_2$ such that $z/P=h(n(b))$ and $n(b)=z/N$. Now
$x/P\leq z/P$ hence $x-z\in P$. Therefore  there is an $r\in M$ and
an $s\in N$ such that $x-r\leq z+s$. Now $x-r\in \D_1$ and $z+s\in\D_2,$ it follows that there is an interpolant
$u\in \D_1\cap \D_2$ such that $x-r\leq u\leq z+s$. Let $t\in \C$ such that $m\circ
f(t)=u/M$ and $n\circ g(t)=u/N.$ We have  $x/P\leq u/P\leq z/P$. Now
$m(f(t))=u/M\geq x/M=m(a).$ Thus $f(t)\geq a$. Similarly
$n(g(t))=u/N\leq z/N=n(b)$, hence $g(t)\leq b$. By total symmetry,
we are done.
\end{proof}

For a cardinal  $\beta>0$, $L\subseteq K_{\alpha}$  and $\rho:\beta\to \wp(\alpha),$ $\Fr_{\beta}^{\rho}L$
stands for the dimension restricted $L$ free algebra on $\beta$ generators. 
The sequence $\langle \eta/Cr_{\beta}^{\rho}L: \eta<\beta\rangle$
$L$-freely generates $\Fr_{\beta}^{\rho}L$, cf. \cite{HMT1} Theorem 2.5.35. 
$\Fr_{\beta}^{\rho}\CA_{\alpha}$ is treated in \cite{P} 
under the name of free algebras over $L$ subject to certain defining relations, cf. \cite{P} Definition 1.1.5.
%$DKc_{\alpha}$ denotes the class of all algebra $\A\in K_{\alpha}$ such that $|\alpha\sim \Delta x|\geq \omega$ for all $x\in A$.
%We have $DKc_{\alpha}\subseteq S\Nr_{\alpha}K_{\beta}$ for infinite $\alpha<\beta$. 
%(This can be proved exactly like the $CA$ case implemented in \cite{HMT1}). 
The super amalgamation property, due to Maksimova, is rarely applied to algebraisations of first order logic; yet in this direction we have:

\begin{theorem} Let $\kappa$ be any ordinal $>1$. 
Let $\bold M=\{\A\in K_{\kappa+\omega}: \A=\Sg^{\A}\Nr_{\kappa}\A\}$.
Then $\bold M$ has $SUPAP$. 
\end{theorem}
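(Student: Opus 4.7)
The plan is to invoke Theorem \ref{supapgeneral}. Given $\A, \B, \C \in \bold M$ and monomorphisms $m : \C \to \A$, $n : \C \to \B$, I will exhibit a dilation-like free algebra $\D$ together with surjective homomorphisms $h : \D \to \C$, $h_1 : \D \to \A$, $h_2 : \D \to \B$ that are compatible in the sense required by that theorem, and then show that $\D$ enjoys the strong interpolation property SIP.

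For the construction, identify $\Nr_{\kappa}\C$ with its images in $\Nr_{\kappa}\A$ and $\Nr_{\kappa}\B$ under $m$ and $n$, and let $X = \Nr_{\kappa}\A \cup_{\Nr_{\kappa}\C} \Nr_{\kappa}\B$ be the resulting amalgamated set. Define $\rho : X \to \wp(\kappa)$ by $\rho(x) = \Delta x \subseteq \kappa$, and let $\D = \Fr_{X}^{\rho} K_{\kappa+\omega}$. Then $\D$ is generated by elements whose dimension sets lie in $\kappa$, so $\D = \Sg^{\D}\Nr_{\kappa}\D$ and $\D \in \bold M$. The universal property of the dimension-restricted free algebra, applied to the hypotheses $\A = \Sg^{\A}\Nr_{\kappa}\A$, $\B = \Sg^{\B}\Nr_{\kappa}\B$, and $\C = \Sg^{\C}\Nr_{\kappa}\C$, yields surjective homomorphisms $h_1 : \D \to \A$, $h_2 : \D \to \B$, $h : \D \to \C$ that act as inclusion on their respective generating sets; the required compatibility of the three maps on $h^{-1}(\C)$ follows because the maps agree on $X$ by construction and the universal property extends this agreement to the whole generated subalgebra.

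The crucial and most delicate step is verifying SIP for $\D$. This is precisely where the $\omega$ extra dimensions above $\kappa$ earn their keep: they furnish Henkin-style witnesses for every cylindrifier appearing in terms built from $X$. Concretely, given $Y_1, Y_2 \subseteq \D$ and $a \in \Sg^{\D}Y_1$, $c \in \Sg^{\D}Y_2$ with $a \leq c$, one extends the pair $(a, -c)$ to a prime filter and, using the spare coordinates above $\kappa$ which are not used by any generator in $X$, successively adds witnesses eliminating cylindrifiers, obtaining a homomorphism into a set algebra from which an interpolant $b \in \Sg^{\D}(Y_1 \cap Y_2)$ is read off as a Boolean combination of common generators. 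This is a schema-level adaptation of Pigozzi's classical interpolation argument for dimension-complemented $\CA_{\alpha}$ \cite{HMT1}, and it carries over verbatim in any system of varieties definable by a schema that allows $\omega$ extra dimensions. Checking that all the axioms needed for the Henkin construction are available in the schema-defined setting, rather than being artefacts of the classical $\CA$ signature, is the main technical hurdle.

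With SIP in hand, Theorem \ref{supapgeneral} produces a superamalgam as the quotient $\D/\Ig(\ker h_1 \cup \ker h_2)$. This quotient is a surjective image of $\D \in \bold M$, hence it is generated by the images of $\Nr_{\kappa}\D$, which sit inside the $\kappa$-neat reduct of the quotient; so the superamalgam itself lies in $\bold M$, and $\bold M$ has SUPAP.
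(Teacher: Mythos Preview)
Your strategy is the same as the paper's: both invoke Theorem~\ref{supapgeneral} by building a dimension-restricted free algebra $\D=\Fr^{\rho}K_{\kappa+\omega}$ over a set of generators amalgamating (enumerations of) $\A,\B,\C$, and both defer the SIP verification (the paper to \cite{MStwo}, you to a Henkin-style sketch). The minor variation---indexing generators by $\Nr_{\kappa}\A,\Nr_{\kappa}\B,\Nr_{\kappa}\C$ with $\rho$ valued in $\wp(\kappa)$, rather than by all of $\A,\B,\C$ with $\rho$ valued in $\wp(\kappa+\omega)$---is harmless, since $\A=\Sg^{\A}\Nr_{\kappa}\A$.

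There is, however, a genuine error in how you set up $h,h_1,h_2$. You claim all three are \emph{surjective homomorphisms defined on the whole of $\D$} and that they ``agree on $X$''. This is inconsistent. If $h:\D\to\C$ is total then $h^{-1}(\C)=\D$, and the compatibility requirement $h_1(x)=m\circ h(x)$ for all $x\in\D$ forces $h_1(\D)\subseteq m(\C)\subsetneq\A$, contradicting surjectivity of $h_1$. Concretely, a generator $x\in\Nr_{\kappa}\A\setminus m(\Nr_{\kappa}\C)$ has no canonical image in $\C$, so $h$ cannot ``act as inclusion'' on it; and if you send it somewhere arbitrarily, the three maps no longer agree on $X$. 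The paper handles this correctly by defining the maps on \emph{subalgebras} of $\D$: $h_1$ on $\Fr^{I}=\Sg^{\D}\{\text{generators coming from }\A\}$, $h_2$ on $\Fr^{J}$, and $h$ on $\Fr^{I\cap J}$, with compatibility required only on $\Fr^{I\cap J}$. Once you restrict your maps in this way, your argument goes through and coincides with the paper's.
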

\begin{proof} First one proves that dimension restricted free algebras have the strong interpolation property, and then shows
that they satisfy the conditions in the previous theorem. The first part is proved in \cite{MStwo}.
For the second part we proceed as follows.
Let $\kappa$ be an arbitrary ordinal $>0.$
Let $\A,\B$ and $\C$ be in $K$ and $f:\C\to \A$ and $g:\C\to \B$ be monomorphisms. We want to find an amalgam.
Let $\langle a_i: i\in I\rangle$ be an enumeration of $\A$ and $\langle b_i:i\in J\rangle$ 
be an enumeration of $\B$ such that
$\langle c_i: i\in I\cap J\rangle$ is an enumeration of $\C$ with $f(c_i)=a_i$ and $g(c_i)=b_i$ for all $i\in I\cap J$. 
Then $\Delta a_i=\Delta b_i$ for all $i\in I\cap J$. Let $k=I\cup J$. 
Let $\xi$ be a bijection from $k$ onto a cardinal $\mu$.
Let $\rho\in {}^{\mu}\wp(\kappa+\omega)$ be defined by
$\rho \xi i=\Delta a_i$ for $i\in I$ and $\rho \xi j=\Delta b_j$ for $j\in J$. Then $\rho$ is well defined.
Let $\beta=\kappa+\omega$ and let $\Fr=\Fr_{\mu}^{\rho}K_{\beta}$.
Let $\Fr^{I}$ be the subalgebra of $\Fr$ generated by $\{\xi i/Cr_{\mu}^{\rho}K_{\beta} :i \in I\}$ and let 
$\Fr^J$ be the subalgebra generated by $\{\xi i/Cr_{\mu}^{\rho}K_{\beta} :i \in J\}$.
To avoid cumbersome notation we write $\xi i$ instead of $\xi i/Cr_{\mu}^{\rho}K_{\beta}$ and similarly for $\xi j$.
No confusion is likely to ensue. Then there exists 
a homomorphism from $\Fr^{I}$ onto $\A$
such that
$\xi i\mapsto a_i$ $(i\in I)$
and similarly 
a homomorphism from $\Fr^{J}$ into $\B$ such that
$\xi j\mapsto b_j$ $(j\in I).$

\end{proof}

The cylindric algebra anlagoue of the following theorem was proved for cylindric algebras using a very strong result, 
namely that the class of representable algebras does not have $AP$. 
This result is proved for quasi polyadic algebras by the present author \cite{amal} 
so the same thread of argument reported in \cite{Sayed}  
together with the latter result gives the required. 

However, here we follow a different route that is admittedly slighly long and winding.
But it has many advantages. First it avoids such a strong result.
Second, the proof shows explicity the connections between the unique neat embedding property,
existence of universal maps, interpolation properties and congruence extension properties in free algebras 
and ultimately the amalgamation property.

$\QEA$ denotes quasi-polyadic algebras and $\RQEA$ denotes the representable $\QEA$s.

\begin{theorem} For $\alpha\geq \omega$, the following hold:
\begin{enumroman}
\item There exists $\A\in \RQEA_{\alpha}$, $\B\in \QEA_{\alpha+\omega}$ such that $\A\subseteq \Nr_{\alpha}\B$ $A$ generates $\B$ 
but $\A\neq \Nr_{\alpha}\B$.
\item There exist $\A\in \RQEA_{\alpha}$, a $\B\in \QEA_{\alpha+\omega}$ and  an ideal $J\subseteq \B$, such that $\A\subseteq \Nr_{\alpha}\B$, 
$A$ generates $\B$, but $\Ig^{\B}(J\cap A)\neq \B$.
\item There exist $\A, \A' \in \RQEA_{\alpha}$, $\B, \B' \in \QEA_{\alpha+\omega}$ with 
embeddings $e_A:\A \to \Nr_\alpha \B$ and $e_{A'}:\A' \to \Nr_\alpha \B'$ 
such that $ \Sg^\B e_A(A) = \B$ and $ \Sg^{\B'} e_{A'}(A) = \B'$, and an isomorphism $ i : \A \longrightarrow\A'$ 
for which there exists no isomorphism $\bar{i} : \B \longrightarrow \B'$ such that $\bar{i} 
\circ e_A =e_{A'} \circ i$.
\end{enumroman}
\end{theorem}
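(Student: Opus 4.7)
The three assertions are three manifestations of the failure of the neat reduct functor $\Nr_\alpha : \QEA_{\alpha+\omega}\to \QEA_\alpha$ to admit a right adjoint on the representable subvariety, and they are equivalent. I would first prove the equivalence (i)$\Leftrightarrow$(ii)$\Leftrightarrow$(iii), then exhibit a witness to (i) by adapting the cylindric construction of \cite{SL,neat} to the quasi-polyadic equality signature.

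\emph{Equivalences.} Assume (i) and pick $b\in \Nr_\alpha\B\setminus A$. Let $J$ be the $\QEA$-ideal of $\B$ generated by $-b$; it is proper since $-b\neq 1$. If $\Ig^\B(J\cap A)=J$, then $-b$ is dominated in $\B$ by a finite join of cylindrifications and substitutions of elements of $J\cap A$; since $b\in \Nr_\alpha\B$, a calculation with dimension sets shows that $-b$ is already dominated in $\A$ by such a join, forcing $-b\in A$ and hence $b\in A$, a contradiction. For (ii)$\Rightarrow$(iii), set $J_0=\Ig^\B(J\cap A)\subsetneq J$, put $\B_1=\B$ and $\B_2=\B/J_0$ with quotient map $q$. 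Since $J_0\cap A=J\cap A$, the two embeddings $A\to \Nr_\alpha\B_1$ and $A\hookrightarrow\B_2$ descend to isomorphic copies $\A\cong\A'$, giving $i$. Both $\B_1,\B_2$ are generated by the images of $A$, so any isomorphism $\bar i:\B_1\to\B_2$ with $\bar i\circ e_A=e_{A'}\circ i$ must coincide with $q$ on the generating set and hence everywhere; but then $\ker\bar i=J_0\neq J$, while $J$ is killed in $\B_2$, a contradiction. Finally (iii)$\Rightarrow$(i) is the categorial observation that if $A=\Nr_\alpha\B$ in every minimal dilation, the universal property determines $\B$ uniquely over $\A$.

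\emph{Construction.} To realize (i), I would take a countable representable $\A\in\RQEA_\alpha$ whose unit has dimension set $\alpha$, so that $\A$ is not dimension complemented, and form the minimal dilation $\B\in\QEA_{\alpha+\omega}$ as the subalgebra of a set algebra of dimension $\alpha+\omega$ generated by (the image of) $A$. Because elements of $A$ can be pushed into the extra $\omega$ coordinates via substitution-like maps and cylindrified back, $\B$ will contain an element $b$ with $\Delta b\subseteq\alpha$ that is not already in $A$; the failure of dimension complementedness of $\A$ is precisely what obstructs $b$ from being expressible by a term over $A$ using only coordinates $<\alpha$. The construction of \cite{SL} for cylindric algebras provides the template; the extension to arbitrary infinite $\alpha$ uses the ultraproduct/reduct lifting technique already exploited in Section~1.

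\emph{Main obstacle.} The essential technical point, absent in the cylindric case, is to verify that the quasi-polyadic substitutions $\sf s_\tau$ do not produce a term over $A$ recovering the witness $b$, which would collapse the construction. This is handled by noting that the substitutions in the $\QEA_\alpha$ signature act only by finite transformations of indices within $\alpha$ and therefore cannot access the extra coordinates $\geq\alpha$ that are genuinely required to build $b$; concretely, one checks by induction on term complexity that any $\QEA_\alpha$-term in elements of $A$ has dimension set contained in the union of dimension sets of its arguments, so it remains inside $A$. A secondary bookkeeping issue — ensuring the representability of $\A$ is preserved under the construction and transferring from countable to arbitrary infinite $\alpha$ — is routine by the methods of the preceding sections.
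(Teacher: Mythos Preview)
Your plan—prove the equivalences and then directly construct a witness for (i) by porting the cylindric example of \cite{SL}—is not the paper's route and has gaps in both halves.

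The paper never constructs a witness. It proves only the contrapositive chain $\neg\text{(i)}\Rightarrow\neg\text{(ii)}\Rightarrow\neg\text{(iii)}$, then assumes $\neg\text{(iii)}$ and deduces from it that free $\RQEA_\alpha$-algebras enjoy a congruence extension property (Claim~1, using $SUPAP$ of the class $\{\A\in\QEA_{\alpha+\omega}:\A=\Sg^{\A}\Nr_\alpha\A\}$ established earlier), hence a weak interpolation property (Claim~2). The contradiction is an explicit interpolation failure: terms $r\leq s\cdot t$ in $\Fr_4\RQEA_\alpha$ with no interpolant in $\Sg\{x\}$, verified by a concrete computation inside the full set algebra on ${}^\alpha\alpha$ (Claims~3--4, adapted from \cite{P,amal}). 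The engine is this Pigozzi-style counterexample, not a hand-built $\A\subsetneq\Nr_\alpha\B$; the paper says just before the theorem that it deliberately avoids the short route through failure of $AP$ precisely in order to expose the chain linking $UNEP$, universal maps, congruence extension, and interpolation.

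Your equivalence arguments do not stand. In (ii)$\Rightarrow$(iii) you set $\B_2=\B/J_0$ with $J_0=\Ig^{\B}(J\cap A)\subsetneq J$ and then assert ``$J$ is killed in $\B_2$''; only $J_0$ is. What your argument actually shows is that any lift $\bar i$ agrees with the quotient map on generators, hence equals it and has kernel $J_0$; to contradict $\bar i$ being an isomorphism you need $J_0\neq 0$, which (ii) does not provide. In (i)$\Rightarrow$(ii), that $-b$ lies below an element of $A$ does not force $-b\in A$. More critically, the construction is only gestured at: the example of \cite{SL} is for $\CA_\alpha$, and in the $\QEA_\alpha$ signature the subalgebra generated by $\{y,y_s:s\in y\}$ is strictly larger because the transpositions $\s_{[i,j]}$ move the generators nontrivially, so you must redo the analysis to see that this larger algebra still omits the witness in $\Nr_\alpha\B$. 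Your dimension-set induction does not accomplish this—the claim is false as stated (cylindrification shrinks dimension sets) and in any case says nothing about whether a particular $\alpha$-dimensional element of $\B$ is reachable from the generators by $\QEA_\alpha$-terms.
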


\begin{demo}{Proof} Assume that $(i)$ is false. Then we can prove (*) (the negation of (ii)).
 
(*)  For all $\A\in \RQEA_{\alpha}$, $\B\in \QEA_{\alpha+\omega}$ and  ideal $J\subseteq \B$, if $\A\subseteq \Nr_{\alpha}\B$, 
and $A$ generates $\B$, then $\Ig^{\B}(J\cap A)=J$.

From (*) we prove that (iii) is false, from which we reach a contradiction.
This will prove (i) and (ii) and (iii). 

Since $\A$ generates $\B$ we have $\A=\Nr_{\alpha}\B$. Clearly $\Ig^{\B}(J\cap A)\subseteq J$. 
Conversely, let $x\in J$. Then $\Delta x\sim \alpha$ is finite,
call it $\Gamma$. So ${\sf c}_{(\Gamma)}x\in \Nr_{\alpha}\B$, so, by assumption, it is in $\A$. 
Hence ${\sf c}_{(\Gamma)}x\in \A\cap J$. But $x\leq {\sf c}_{(\Gamma)}x$ we get the 
first required.  Now we can assume (*) . Let $\beta=\alpha+\omega$.

%\end{demo}

Let  $\A, \A' \in \RQEA_{\alpha}$, $\B, \B' \in \QEA_{\beta}$ and assume that $e_A, e_{A'}$ are embeddings from $\A, \A'$ into   $\Nr_\alpha \B,
\Nr_\alpha \B'$, respectively, such that
$ \Sg^\B (e_A(A)) = \B$
and $ \Sg^{\B'} (e_{A'}(A')) = \B',$
and let $ i : \A \longrightarrow \A'$ be an isomorphism. 
Let $\mu=|A|$. Let $x$ be a  bijection  from $\mu$ onto $A$.   
Let $y$ be a bijection from $\mu$ onto $A'$,
such that $ i(x_j) = y_j$ for all $j < \mu$.
Let $\rho = \langle \Delta ^{(\A)} x_j : j < \mu\rangle$, $\D = \Fr^{(\rho)}_{\mu} \QEA_{\beta}$, $g_\xi =
\xi/Cr^{(\rho)}_{\mu} \QEA_{\beta}$ for all  $\xi< \mu$
and $\C = \Sg^{\Rd_\alpha \D} \{ g_\xi : \xi < \mu \}.$
Then $\C \subseteq \Nr_\alpha \D,\  C \textrm{ generates } \D
~~\textrm{and }~~ \C \in \RQEA_{\alpha}.$
There exist $ f \in Hom (\D, \B)$ and $f' \in Hom (\D,
\B')$ such that
$f (g_\xi) = e_A(x_\xi)$ and $f' (g_\xi) = e_{A'}(y_\xi)$ for all $\xi < \mu.$
Note that $f$ and $f'$ are both onto. We now have 
$e_A \circ i^{-1} \circ e_{A'}^{-1} \circ ( f'\upharpoonleft \C) = f \upharpoonleft \C.$
Therefore $ Ker f' \cap \C = Ker f \cap \C.$
Hence $\Ig(Ker f' \cap \C) = \Ig(Ker f \cap \C).$
So, by assumption, $Ker f'  = Ker f.$
Let $y \in B$, then there exists $x \in D$ such that $y = f(x)$. Define $ \bar{i} (y) = f' (x).$
The map is well defined and is as required.
That is, $\bar{i} \circ e_A =e_{A'} \circ i$.
From now on, we assume that (ii) is false. Hence we assume the following (**):

For all $\A, \A' \in \RQEA_{\alpha}$, $\B, \B' \in \QEA_{\beta}$ $e_A, e_{A'}$ are embeddings from $\A, \A'$ into   $\Nr_\alpha \B,
\Nr_\alpha \B'$, respectively, such that
$ \Sg^\B (e_A(A)) = \B$
and $ \Sg^{\B'} (e_{A'}(A')) = \B',$ and isomorphism $ i : \A \longrightarrow \A'$, there exist $\bar{i}: \B\to \B$' such that $\bar{i}\circ e_A=e_A'\circ i$. 
We will arrive at a contradiction.

We start with the following claim that free algebras satisfy. Recall that $S\in Co\A$ means that $S$ is a congruence relation on $\A$. 
\begin{athm}{Claim 1}
Let $\mu$ be a cardinal $>0$. Let $\A=\Fr_{\mu}\RQEA_{\alpha}$. 
For any $X_1, X_2\subseteq \mu$   
if $R\in Co \A^{(X_1)}$ and $S\in Co \A^{(X_2)}$ and 
$$R\cap {}^2A^{(X_1\cap X_2)}=S\cap {}^2A^{(X_1\cap X_2)},$$
then there exists a congruence $T$ on $\A$ such that
$$T\cap {}^2 A^{(X_1)}=R \text { and } T\cap {}^2A^{(X_2)}=S.$$ 
\end{athm}

\begin{demo}{Proof} For $R\in Co\A$ and $X\subseteq A$, by $(\A/R)^{(X)}$ we understand the subalgebra of 
$\A/R$ generated by $\{x/R: x\in X\}.$ Let $\A$, $X_1$, $X_2$, $R$ and $S$ be as specified in the claim.
Define $$\theta: \A^{(X_1\cap X_2)}\to \A^{(X_1)}/R$$
by $$a\mapsto a/R.$$
Then $ker\theta=R\cap {}^2A^{(X_1\cap X_2)}$ and $Im\theta=(\A^{(X_1)}/R)^{(X_1\cap X_2)}$.
It follows that $$\bar{\theta}:\A^{(X_1\cap X_2)}/R\cap {}^2A^{(X_1\cap X_2)}\to (\A^{(X_1)}/R)^{(X_1\cap X_2)}$$
defined by
$$a/R\cap {}^{2}A^{X_1\cap X_2)}\mapsto a/R$$
is a well defined isomorphism.
Similarly
$$\bar{\psi}:\A^{(X_1\cap X_2)}/S\cap {}^2A^{(X_1\cap X_2)}\to (\A^{(X_2)}/S)^{(X_1\cap X_2)}$$
defined by
$$a/S\cap {}^{2}A^{X_1\cap X_2)}\mapsto a/S$$
is also a well defined isomorphism.
But $$R\cap {}^2A^{(X_1\cap X_2)}=S\cap {}^2A^{(X_1\cap X_2)},$$
Hence
$$\phi: (\A^{(X_1)}/R)^{(X_1\cap X_2)}\to (\A^{(X_2)}/S)^{(X_1\cap X_2)}$$
defined by
$$a/R\mapsto a/S$$
is a well defined isomorphism. 
Now
$(\A^{(X_1)}/R)^{(X_1\cap X_2)}$ embeds into ${\A}^{(X_1)}/R$ via the inclusion map; it also embeds in $\A^{(X_2)}/S$ via $i\circ \phi$ where $i$ 
is also the inclusion map.
For brevity let $\A_0=(\A^{(X_1)}/R)^{(X_1\cap X_2)}$, $\A_1={\A}^{(X_1)}/R$ and $\A_2={\A}^{(X_2)}/S$ and $j=i\circ \phi$.
Then $\A_0$ embeds in $\A_1$ and $\A_2$ via $i$ and $j$ respectively.
We now use (**) to show that there exists $\B\in \RQEA_{\alpha}$ and monomorphisms $f$ and $g$ from $\A_1$ and $\A_2$ respectively to 
$\B$ such that
$f\circ i=g\circ j$.
By the Neat embedding Theorem, there exist $\A_0^+, \A_1^+, \A_2^+\in \CA_{\alpha+\omega}$, $e_1:\A_1\to \Nr_{\alpha}\A_1^+,$ 
$e_2:\A_2\to  \Nr_{\alpha}\A_2^+$ and $e_0:\A_0\to \Nr_{\alpha}\A_0^+$.
We can assume that $\Sg^{\A_1^+}e_1(A_1)=\A_1^+$ and similarly for $\A_2^+$ and $\A_0^+$.
Let $i(A_0)^+=\Sg^{\A_1^+}e_1(i(A_0))$ and $j(A_0)^+=\Sg^{\A_2^+}e_2(j(A_0)$, then by (**) there exist 
$\bar{i}:\A_0^+\to i(A_0)^+$ and $\bar{j}:\A_0^+\to j(A_0)^+$ such that 
$(e_1\upharpoonright i(A_0))\circ i=\bar{i}\circ e_0$ and $(e_2\upharpoonright j(A_0))\circ j=\bar{j}\circ e_0$.
Now $K$ as in theorem 5 has $SUPAP$, hence there is a $\D^+$ in $\K$ and $k:\A^+\to \D^+$ and $h:\B^+\to \D^+$ such that
$k\circ \bar{i}=h\circ \bar{j}$. Let $\D=\Nr_{\alpha}\D^+$. Then $f=k\circ e_1:\A_1\to \Nr_{\alpha}\D$ and
$g=h\circ e_2:\A_2\to \Nr_{\alpha}\D$ are one to one and
$k\circ e_1 \circ i=h\circ e_2\circ j$.
\end{demo}
\newpage

\begin{picture}(20,0)(-70,70)

\thicklines \put(10,10){\vector(1,1){40}}\put (-5,0){$A_1$} \put
(-5,30){$k\circ e_1$}

\put (60,50){$Nr_\alpha D^+$}\put (80,70){\vector(0,1){80}}\put
(60,105){$Id$}\put (75,160){$D^+$}

\put(140,10){\vector(-1,1){40}} \put (130,30){$h\circ e_2$} \put (150,0){$A_2$}

 \put(170,0){\vector(1,0){80}} \put (200,5){$e_{2}$} \put (260,0){$ A_2^+$}

%\put(145,-60){\begin{rotate}{40} \Huge{$\subseteq$}
%\end{rotate}}

\put (145,-95){$j(A_0)$} \put(170,-90){\vector(1,0){80}} \put
(190,-100){$e_{2}\upharpoonright j(A_0)$}
\put(150,-75){\vector(0,1){60}} \put(135,-60){$Id$}

%\put(260,-60){\rotatebox{90}{\Huge{$\subseteq$}}}
 \put (260,-100){$ Sg^{A_2^+}
(e_{2}j(A_0))$}
\put(265,-75){\vector(0,1){60}} \put(245,-60){$Id$}

\put (80,-150){$A_0$} \put(80,-160){\vector(0,-1){70}} \put
(65,-190){$e_0$} \put (80,-260){$ A_0^+$}

\put(100,-250){\vector(1,1){140}}\put (170,-190){$\bar{j}$}

\put (120,-130){$j$} \put(100,-140){\vector(1,1){40}}

\put(60,-250){\vector(-1,1){140}}\put (-15,-200){$\bar{i}$}

 \put(35,-130){$i$} \put(60,-140){\vector(-1,1){40}}

 \put(-10,0){\vector(-1,0){80}} \put (-50,5){$e_{1}$}
 \put (-120,0){$ A_1^+$}

%\put(0,-60){ {\Huge{$\subseteq$}}}

 \put(-10,-90){\vector(-1,0){80}}\put (-5,-95){$i(A_0)$}
 \put(-60,-100){$e_{1}\upharpoonright i(A_0)$}

\put(0,-75){\vector(0,1){60}} \put(-20,-60){$Id$}

% \put(-130,-60){ \rotatebox{90}{\Huge{$\subseteq$}}}

  \put (-160,-100){$Sg^{A_1^+} (e_{1}i(A_0))$}

\put(-110,-75){\vector(0,1){60}} \put(-135,-60){$Id$}

\put(-100,15){\vector(1,1){140}}\put (-50,80){$k$}

\put(270,15){\vector(-1,1){140}}\put (210,80){$h$}

\put (80,-280) {\makebox (0,0){{%
\large{  Figure 1}}}}

\end{picture}

%\end{demo}
\newpage
Let $$\bar{f}:\A^{(X_1)}\to \B$$ be defined by $$a\mapsto f(a/R)$$ and $$\bar{g}:\A^{(X_2)}\to \B$$ 
be defined by $$a\mapsto g(a/R).$$
Let $\B'$ be the algebra generated by $Imf\cup Im g$.
Then $\bar{f}\cup \bar{g}\upharpoonright X_1\cup X_2\to \B'$ is a function since $\bar{f}$ and $\bar{g}$ coincide on $X_1\cap X_2$.
By freeness of $\A$, there exists $h:\A\to \B'$ such that $h\upharpoonright_{X_1\cup X_2}=\bar{f}\cup \bar{g}$.
Let $T=kerh $. Then it is not hard to check that 
$$T\cap {}^2 A^{(X_1)}=R \text { and } T\cap {}^2A^{(X_2)}=S.$$ 
Note that $T$ is the congruence generated by $R\cup S$.
\end{demo}
The above claim is an algebraic version of Robinson's joint consistency property. The next one, is, on the other hand, 
an algebraic version of the Craig 
interpolation property.
\begin{athm}{Claim 2} 
Let $\mu$ be a cardinal $>0$. Let $\A=\Fr_{\mu}\RQEA_{\alpha}$. Then for any $X_1$, $X_2 \subseteq \mu$, if $x\in \A^{(X_1)}$ and $z\in \A^{(X_2)}$
and $x\leq z$ then there is a $y\in \A^{(X_1\cap X_2)},$ a finite $\Gamma\subseteq
\alpha$ such that
\begin{equation}\label{t1}
\begin{split}
x\leq y\leq {\sf c}_{(\Gamma)}z.
\end{split}
\end{equation}
\end{athm}

\begin{demo}{Proof} We call $y$ in the claim an interpolant of $x$ and $z$ and we say that $x\leq z$ can be interpolated inside $\A^{(X_1\cap X_2)}$.
Now let $x\in A^{(X_1)}$, $z\in A^{(X_2)}$ and assume that $x\leq z$.
Then $$x\in (\Ig^{\A}\{z\})\cap \A^{(X_1)}.$$
Let $$M=\Ig^{\A^{(X_1)}}\{z\}\text { and } N=\Ig^{\A^{(X_2)}}(M\cap A^{(X_1\cap X_2)}).$$
Then $$M\cap A^{(X_1\cap X_2)}=N\cap A^{(X_1\cap X_2)}.$$
By identifying ideals with congruences, and using the congruence extension property,
there is a an ideal $P$ of $\A$ 
such that $$P\cap A^{(X_1)}=N\text { and }P\cap A^{(X_2)}=M.$$
It follows that 
$$\Ig^{\A}(N\cup M)\cap A^{(X_1)}\subseteq P\cap A^{(X_1)}=N.$$
Hence
$$(\Ig^{(\A)}\{z\})\cap A^{(X_1)}\subseteq N.$$
and we have
$$x\in \Ig^{\A^{(X_1)}}[\Ig^{\A^{(X_2)}}\{z\}\cap A^{(X_1\cap X_2)}.]$$
This implies that there is an element $y$ such that
$$x\leq y\in A^{(X_1\cap X_2)}$$
and $y\in \Ig^{\A^{(X_2)}}\{z\}$.
But $\Ig^{\A^{(X_2)}}\{z\}=\{x\in A: x\leq {\sf c}_{(\Gamma)}z: 
\text{ for some finite } \Gamma\subseteq \alpha\}.$ 
Indeed, let $H$ denote the set of elements on the right hand side.
It is easy to check $H\subseteq \Ig^{\A^{(X_2)}}\{z\}$.
Conversely, assume that $y\in H,$ $\Gamma\subseteq_{\omega} \alpha.$ 
It is clear that ${\sf c}_{(\Gamma)}y\in H$.
Now let $x, y\in H$. Assume that $x\leq {\sf c}_{(\Gamma)}z$
and $y\leq {\sf c}_{(\Delta)}z,$
then $$x+y\leq {\sf c}_{(\Gamma\cup \Delta)}z.$$

Therefore there exists 
$\Gamma\subseteq_{\omega} \alpha$ 
such that  
$$x\leq y\leq {\sf c}_{(\Gamma)}z.$$
\end{demo}

\begin{athm}{Claim 3}
\begin{enumarab}
\item Let $\A=\Fr_4\RCA_{\alpha}$. Let $r, s$ and $t$ be defined as follows:
$$ r = {\sf c}_0(x\cdot {\sf c}_1y)\cdot {\sf c}_0(x\cdot -{\sf c}_1y),$$
$$ s = {\sf c}_0{\sf c}_1({\sf c}_1z\cdot {\sf s}^0_1{\sf c}_1z\cdot -{\sf d}_{01}) + {\sf c}_0(x\cdot -{\sf c}_1z),$$
$$ t = {\sf c}_0{\sf c}_1({\sf c}_1w\cdot {\sf s}^0_1{\sf c}_1w\cdot -{\sf d}_{01}) + {\sf c}_0(x\cdot -{\sf c}_1w),$$
where $ x, y, z, \text { and } w$ are the first four free generators
of $\A$.  Then $r\leq s\cdot t$
\item Let $\A=\Fr_5\RQA_{\alpha}$on $5$ generators. 
Let $r, s$ and $t$ be defined as follows:
$$ r = {\sf c}_0(x\cdot {\sf c}_1y)\cdot {\sf c}_0(x\cdot -{\sf c}_1y),$$
$$ s = {\sf c}_0{\sf c}_1({\sf c}_1z\cdot {\sf s}^0_1{\sf c}_1z\cdot -m) + {\sf c}_0(x\cdot -{\sf c}_1z),$$
$$ t = {\sf c}_0{\sf c}_1({\sf c}_1w\cdot {\sf s}^0_1{\sf c}_1w\cdot -m) + {\sf c}_0(x\cdot -{\sf c}_1w),$$
where $ x, y, z, w$ and $m$ are the five generators
of $\A$.
Then $r\leq s\cdot t$.  
\end{enumarab}
\end{athm}
\begin{demo}{Proof} 
Put $$a = x\cdot {\sf c}_1 y\cdot  -{\sf c}_0(x\cdot - {\sf c}_1z),$$
$$b = x\cdot  - {\sf c}_1 y\cdot -{\sf c}_0(x\cdot - {\sf c}_1z).$$
Then we have
\begin{equation*}
\begin{split}
 {\sf c}_1 a\cdot  {\sf c}_1 b & \leq {\sf c}_1 (x \cdot {\sf c}_1y)\cdot  {\sf c}_1(x\cdot -{\sf c}_1y)
  \,\,\ \textrm{by \cite{HMT1}1.2.7}\\
&  = {\sf c}_1 x\cdot {\sf c}_1 y\cdot  {\sf c}_1 x\cdot -{\sf c}_1y \,\,\,\, \textrm{by \cite{HMT1} 1.2.11}
\end{split}
\end{equation*}
and so
\begin{equation}\label{p1}
\begin{split}
{\sf c}_1 a\cdot  {\sf c}_1 b = 0.
\end{split}
\end{equation}
From the inclusion $x\cdot -{\sf c}_1 z \leq {\sf c}_0 (x\cdot -{\sf c}_1z)$ we get
$$x\cdot -{\sf c}_0 (x\cdot -{\sf c}_1z) \leq {\sf c}_1 z.$$
Thus $a, b \leq {\sf c}_1z$ and hence, by \cite{HMT1} 1.2.9,
\begin{equation}\label{p2}
\begin{split}
{\sf c}_1 a, {\sf c}_1 b \leq {\sf c}_1z.
\end{split}
\end{equation}
 We now compute:
\begin{equation*}
\begin{split}
{\sf c}_0 a\cdot  {\sf c}_0 b & \leq {\sf c}_0 {\sf c}_1 a \cdot 
{\sf c}_0 {\sf c}_1 b \,\,\,\,\,\  \textrm{by \cite{HMT1}
 1.2.7} \\
& = {\sf c}_0 {\sf c}_1 a \cdot  {\sf c}_1 {\sf s}^0_1 {\sf c}_1 b \,\,\,\,\,\  \textrm{by \cite{HMT1} 1.5.8
(i),\,\,\,\,\,\ \cite{HMT1} 1.5.9 (i)}\\
& = {\sf c}_1({\sf c}_0{\sf c}_1 a\cdot  {\sf s}^0_1 {\sf c}_1 b)\\
& = {\sf c}_0{\sf c}_1({\sf c}_1a\cdot {\sf s}^0_1{\sf c}_1b)\\
& = {\sf c}_0{\sf c}_1[{\sf c}_1a\cdot {\sf s}^0_1{\sf c}_1b\cdot (-{\sf d}_{01} + {\sf d}_{01})\\
& = {\sf c}_0{\sf c}_1[({\sf c}_1a\cdot {\sf s}^0_1{\sf c}_1b\cdot -{\sf d}_{01}) + ({\sf c}_1a\cdot {\sf s}^0_1{\sf c}_1b\cdot {\sf d}_{01})]\\
& = {\sf c}_0{\sf c}_1[({\sf c}_1a\cdot {\sf s}^0_1{\sf c}_1b\cdot -{\sf d}_{01}) + ({\sf c}_1a\cdot {\sf c}_1b\cdot {\sf d}_{01})] \,\,\,\,\
\textrm{by \cite{HMT1}
1.5.5}\\
& = {\sf c}_0{\sf c}_1 ({\sf c}_1a\cdot {\sf s}^0_1{\sf c}_1b\cdot -{\sf d}_{01}) \,\,\,\,\  \textrm{by (\ref{p1})}\\
& \leq {\sf c}_0{\sf c}_1 ({\sf c}_1z\cdot {\sf s}^0_1{\sf c}_1z\cdot -{\sf d}_{01}) \,\,\,\,\  \textrm{by (\ref{p2}), \cite{HMT1} 1.2.7}\\
\end{split}
\end{equation*}
We have proved that
$$ {\sf c}_0[ x\cdot  {\sf c}_1y\cdot -{\sf c}_0(x\cdot -{\sf c}_1z)]\cdot {\sf c}_0[x\cdot -{\sf c}_1y\cdot -{\sf c}_0(x\cdot -{\sf c}_1z)] \leq
{\sf c}_0{\sf c}_1({\sf c}_1z\cdot {\sf s}^0_1{\sf c}_1z\cdot -{\sf d}_{01}).$$
In view of \cite{HMT1} 1.2.11 this gives
$$ {\sf c}_0( x\cdot  {\sf c}_1y)\cdot {\sf c}_0(x\cdot -{\sf c}_1y)\cdot -{\sf c}_0(x\cdot -{\sf c}_1z) \leq
{\sf c}_0{\sf c}_1({\sf c}_1z\cdot {\sf s}^0_1{\sf c}_1z\cdot -{\sf d}_{01}).$$
The conclusion of the claim now follows.

By proving the next claim, we reach a contradiction with Claim 2, and thus our theorem will be proved.
\end{demo}
\begin{athm}{Claim 4}
Let everything be as in the previous claim. Then the inequality $r \leq s\cdot t$ cannot be interpolated by an element of $\A^{(\{x\})}$.
\end{athm}
\begin{demo}{Proof}This part is taken from \cite{amal}. We include it for the sake of completeness.
Let $$\B = ( \wp (^{\alpha}{\alpha}), \cup, \cap, \sim, \emptyset,
{^{\alpha}{\alpha}}, {\sf C}_\kappa, {\sf D}_{\kappa \lambda}, {\sf P}_{ij} )_{\kappa, \lambda
< \alpha},$$ 
that is $\B$ is the  full set algebra in the
space $ {^{\alpha}{\alpha}}$. 
Let $E$ be the set of all equivalence
relations on $\alpha$, and for each $ R \in E $ set
$$ X_R = \{ \varphi : \varphi  \in {^{\alpha}{\alpha}} \,\,\,\
\textrm{and, for all} \,\,\,\  \xi, \eta < \alpha , \varphi_\xi =
\varphi_\eta \,\,\,\ \textrm{iff} \,\,\,\  \xi R \eta \}.$$
Let
$$ C = \{ \bigcup_{R \in L} X_R : L \subseteq E \}. $$
$C$ is clearly closed under the formation of arbitrary unions, and
since
$$ \sim \bigcup_{R \in L} X_L =  \bigcup_{R \in E \sim L} X_R$$
for every $ L \subseteq E$, we see that $C$ is closed under the
formation of complements with respect to ${^{\alpha}{\alpha}}$. Thus
$ C $ is a Boolean subuniverse ( indeed, a complete Boolean
subuniverse) of $\B$; moreover, it is obvious that
\begin{equation}\label{p4}
\begin{split}
X_R \,\,\,\ \textrm{is an atom of} \,\,\,\ ( C, \cup, \cap, \sim,
0, {^{\alpha}{\alpha}}) \,\,\,\,\ \textrm{for each} \,\,\,\ R \in E.
\end{split}
\end{equation}
For all $ \kappa, \lambda < \alpha$ we have ${\sf  D}_{\kappa \lambda } =
\bigcup \{ X_R : ( \kappa, \lambda ) \in R \in E \} $ and hence $
{\sf D}_{\kappa \lambda } \in B$. Also,
$$ {\sf C}_\kappa X_R = \bigcup \{ X_S : S \in E, {^{2}{(\alpha \sim
\{\kappa\})}} \cap S = {^{2}{(\alpha \sim \{\kappa\})}} \cap R \}$$
for any $ \kappa < \alpha$ and $ R \in E$. Thus, because ${\sf C}_\kappa$
is completely additive ( cf.\cite{HMT1} 1.2.6(i)) and the remark preceding
it), we see that $C$ is closed under the operation $ {\sf C}_\kappa$ 
for every $ \kappa < \alpha$.  Also it is straightforward to see that $C$ is closed under substitutions.
For any $\tau=[i,j]\in {}^{\alpha}\alpha$,
$${\sf S}_{\tau}X_R=\bigcup\{X_S: S\in E, \forall i, j<\omega(iRj \longleftrightarrow \tau(i)S\tau(j)\}.$$
Therefore, we have shown
that
\begin{equation}\label{p5}
\begin{split}
C \,\,\,\ \textrm{is a subuniverse of} \,\,\,\ \B .
\end{split}
\end{equation}
To prove that $r\leq s\cdot t$ can't be interpolated by an element of $\A^{(\{x\})},$ 
it suffices to show that there is a subset $Y$ of $
{^{\alpha}{\alpha}}$ such that
\begin{equation}\label{p6}
\begin{split}
&X_{Id} \cap f(r) \neq 0 \,\,\ \textrm{for
every} \,\,\ f \in Hom (\A, \B) \\
& \textrm{such that} \,\,\ f(x) = X_{Id} \,\,\
\textrm{and} \,\,\ f(y) = Y,
\end{split}
\end{equation}
and also that for every finite $ \Gamma \subseteq \alpha$, there are subsets $Z, W$ of $ {^{\alpha}{\alpha}}$ such that
\begin{equation}\label{p7}
\begin{split}
& X_{Id} \sim {\sf C}_{(\Gamma)}g(s\cdot t) \neq 0 \,\,\
\textrm{for every} \,\,\ g\in Hom (\A, \B)\\
 & \textrm{such that}
\,\,\ g(x) = X_{Id}, g(z) = Z \,\,\ \textrm{ and }
\,\,\ g(w) = W.
\end{split}
\end{equation}
For suppose, on the contrary, that these conditions are not
sufficient. Then there exists a finite $ \Gamma \subseteq \alpha$, 
and an interpolant $u \in \A^{(\{x\})}$  and there also exist $ Y, Z, W \subseteq
{^{\alpha}{\alpha}}$ such that (\ref{p6}) and (\ref{p7}) hold. Take
any $h \in Hom ( \A, \B)$ such that $ h(x) = X_{Id}$, 
$ h(y) = Y$, $h(z) = Z$, and $h(w) = W.$
This is possible by the freeness of $\A.$ 
Then using the fact that $X_{Id} \cap h(r)$ is non-empty by (\ref{p6}) we get
$$ X_{Id} \cap h(u) = h(x\cdot u) \supseteq h(x\cdot r)
\neq 0.$$
And using the fact that $X_{Id} \sim
{\sf C}_{(\Gamma)}h(s\cdot t)$ is non-empty by (\ref{p7}) we get
$$ X_{Id} \sim h(u) = h( x\cdot -u) \supseteq
h(x\cdot -{\sf c}_{(\Gamma)}(s\cdot t)) \neq 0.$$
However, in view of (\ref{p4}), it is impossible for $X_{Id}$ 
 to intersect both $h(u)$ and its complement since
$h(u) \in C$ and $X_{Id}$ is an atom; 
to see that $h(u)$ is indeed contained in $C$ recall that
$ u \in \A^{(\{x\})}$, and then observe that because of
(\ref{p5}) and the fact that $X_{Id} \in C$ we
must have
\begin{equation}\label{p8}
\begin{split}
h [\A^{(\{x\})}] \subseteq C
\end{split}
\end{equation}
Therefore, (\ref{p6}) and (\ref{p7}) are sufficient conditions for
$r\leq s\cdot t$ not to be interpolated by an element of $\A^{(\{x\})}$.
The next part of the prof is taken verbatim from \cite{P} p. 340-341.
Let $\sigma\in {}^{\alpha}{\alpha}$ be such that 
$\sigma_0 = 0$, and $\sigma_\kappa = \kappa + 1$ for every non-zero
$\kappa < \alpha$ and otherwise $\sigma(k)=k$.
Let $ \tau = \sigma\upharpoonright (\alpha \sim \{0\}) \cup \{(0, 1)\}$. 
Then $
\sigma, \tau \in X_{Id}$. Take
$$ Y = \{\sigma\}.$$
Then

$$ \sigma \in X_{Id} \cap {\sf C}_1 Y \,\,\ \textrm{and}
\,\,\ \tau \in X_{Id} \sim {\sf C}_1 Y$$ and hence
\begin{equation}\label{p9}
\begin{split}
\sigma \in {\sf C}_0 (X_{Id} \cap {\sf C}_1 Y) \cap {\sf C}_0
(X_{Id} \sim {\sf C}_1 Y).
\end{split}
\end{equation}
Therefore, we have $ \sigma \in f(r)$ for every $ f \in Hom(\A, \B)$
such that $ f(x) = X_{Id}$ and $f(y) =Y$, and that
(\ref{p6}) holds.
We now want to show that for any given finite $ \Gamma \subseteq
\alpha$, there exist sets $ Z, W \subseteq {^{\alpha}{\alpha}}$ such
that (\ref{p7}) holds; it is clear that no generality is lost if we
assume that $0, 1 \in \Gamma$, so we make this assumption. Take
$$ Z = \{ \varphi : \varphi \in X_{Id},
\varphi_0 < \varphi_1 \} \cap {\sf C}_{(\Gamma)} \{Id\}$$ and
$$ W = \{ \varphi : \varphi \in X_{Id},
\varphi_0 > \varphi_1 \} \cap {\sf C}_{(\Gamma)} \{Id\}.$$
We show that
\begin{equation}\label{p10}
\begin{split}
Id \in X_{Id} \sim {\sf C}_{(\Gamma)} g(s\cdot t)
\end{split}
\end{equation}
for any $ g \in Hom(\A, \B)$ such that $ g(x) = X_{Id}$, $g(z) = Z$, 
and $g(w) = W$; to do this we simply
compute the value of ${\sf C}_{(\Gamma)} g(s\cdot t)$. For the purpose of this
computation we make use of the following property of ordinals: if
$\Delta$ is any non-empty set of ordinals, then $ \bigcap \Delta$ is the
smallest ordinal in $ \Delta$, and if, in addition, $\Delta $ is
finite, then $\bigcup \Delta$ is the largest element ordinal in 
$\Delta$. Also, in this computation we shall assume that $\varphi$
always represents an arbitrary sequence in ${^{\alpha}{\alpha}}$.
Then, setting
$$ \Delta \varphi = \Gamma \sim \varphi[\Gamma \sim \{0, 1 \}]$$
for every $\varphi$, we successively compute:
$$ {\sf C}_1Z = \{ \varphi : |\Delta \varphi | = 2, \varphi_0 = \bigcap
\Delta \varphi \} \cap {\sf C}_{(\Gamma)} \{Id\},$$
\begin{equation*}
\begin{split}
& (X_{Id} \sim {\sf C}_1Z)
\cap {\sf C}_{(\Gamma)} \{ Id\} = \\
 & \{ \varphi : |\Delta \varphi | = 2, \varphi_0 = \bigcup \Delta
\varphi, \varphi_1 = \bigcap \Delta \varphi \}\cap {\sf C}_{(\Gamma)} \{Id\},
\end{split}
\end{equation*}
and, finally,
\begin{equation}\label{p11}
\begin{split}
{\sf C}_0(X_{Id}\sim {\sf C}_1Z)
\cap {\sf C}_{(\Gamma)} \{Id\} = \\
 & \{ \varphi : |\Delta \varphi | = 2, \varphi_1 =
 \bigcap \Delta \varphi \}\cap {\sf C}_{(\Gamma)} \{
Id\}.
\end{split}
\end{equation}
Similarly, we obtain
\begin{equation*}
\begin{split}
{\sf C}_0(X_{Id}\sim {\sf C}_1W)
\cap {\sf C}_{(\Gamma)} \{Id\} = \\
 & \{ \varphi : |\Delta \varphi | = 2, \varphi_1 =
 \bigcup \Delta \varphi \}\cap {\sf C}_{(\Gamma)} \{Id\}.
\end{split}
\end{equation*}
The last two formulas together give
\begin{equation}\label{p12}
\begin{split}
{\sf C}_0(X_{Id}\sim {\sf C}_1Z) \cap {\sf C}_0 (X_{Id} \sim {\sf C}_1W) \cap {\sf C}_{(\Gamma)} \{Id\} = 0.
\end{split}
\end{equation}
Continuing the computation we successively obtain:
$$ {\sf C}_1Z \cap {\sf D}_{01} = \{ \varphi : |\Delta \varphi | = 2,
\varphi_0 = \varphi_1 =
 \bigcap \Delta \varphi \}\cap {\sf C}_{(\Gamma)}\{Id\},$$
$$ {\sf S}^0_1{\sf C}_1Z = \{ \varphi : |\Delta \varphi | = 2,
\varphi_1 = \bigcap \Delta \varphi \}\cap {\sf C}_{(\Gamma)} \{ Id\},$$
$$ {\sf C_1}Z \cap {\sf S}^0_1{\sf C}_1Z = \{ \varphi : |\Delta \varphi | = 2,
\varphi_0 = \varphi_1 =
 \bigcap \Delta \varphi \}\cap {\sf C}_{(\Gamma)} \{Id\};$$
hence we finally get
\begin{equation}\label{p13}
\begin{split}
{\sf C}_0{\sf C}_1({\sf C}_1Z \cap {\sf S}^0_1{\sf C}_1Z \cap \sim {\sf D}_{01}) =  
{\sf C}_0{\sf C}_1 0 = 0,
\end{split}
\end{equation}
and similarly we get
\begin{equation}\label{p14}
\begin{split}
{\sf C}_0{\sf C}_1({\sf C}_1W \cap {\sf S}^0_1{\sf C}_1W \cap \sim {\sf D}_{01}) = 0
\end{split}
\end{equation}
Now take $g$ to be any homomorphism from $\A$ into $\B$ such that
$g(x) = X_{Id}$, $g(z) = Z $ and $ g(w) = W$. Let $a=g(s\cdot t)$. 
Then $a={\sf C}_0(X_{Id}\sim {\sf C}_1Z) \cap {\sf C}_0 (X_{Id} \sim {\sf C}_1W)$.
Then from (\ref{p12}), we have
$$ a\cap {\sf C}_{(\Gamma)} \{Id\} = \emptyset .$$
Then applying ${\sf C}_{(\Gamma)}$ to both sides of this equation we get
$${\sf C}_{(\Gamma)}a \cap {\sf C}_{(\Gamma)} \{Id\} = \emptyset . $$
Thus (\ref{p10}) holds and we are done.
\end{demo}

Using the techniques above, one can prove the following new theorem

\begin{theorem} Let $\alpha$ be an infinite ordinal. Then for any $n\in \omega$ the variety 
$V=\SNr_{\alpha}\QEA_{\alpha+n}$ does not have $AP$.
\end{theorem}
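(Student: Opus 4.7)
The plan is to adapt the long argument just given for the case $n = \omega$ (i.e.\ for $\RQEA_\alpha$) to the situation of only $n$ extra dimensions. Assume for contradiction that $V = \SNr_\alpha \QEA_{\alpha+n}$ has $AP$. Observe first that $V$ is a variety for every finite $n$, hence free algebras $\Fr_\mu V$ exist. Moreover, since cylindrifications are finitary, the trivial analogue of (*) holds in $V$: if $\A \in V$ embeds into $\Nr_\alpha \B$ with $\B \in \QEA_{\alpha+n}$ and $A$ generating $\B$, then $\A = \Nr_\alpha \B$, so $\Ig^{\B}(J \cap A) = J$ for every ideal $J \subseteq \B$. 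This is precisely the ingredient that drove the derivation of (**) (matching of dilations up to isomorphism) in the preceding proof.

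Next, replay the derivation of Claim 1 (congruence extension in $\Fr_\mu V$) and Claim 2 (interpolation in $\Fr_\mu V$: whenever $x \in \A^{(X_1)}$, $z \in \A^{(X_2)}$ and $x \leq z$, there is $y \in \A^{(X_1 \cap X_2)}$ and a finite $\Gamma \subseteq \alpha$ with $x \leq y \leq {\sf c}_{(\Gamma)} z$). The only point where the case $n = \omega$ was actually used in that derivation was to invoke Theorem~5, asserting $SUPAP$ for the class $\bold{M} = \{\A \in \QEA_{\alpha+\omega} : \A = \Sg^\A \Nr_\alpha \A\}$. One needs the finite-$n$ analogue, $SUPAP$ for $\bold{M}_n = \{\A \in \QEA_{\alpha+n} : \A = \Sg^\A \Nr_\alpha \A\}$; this in turn reduces, via Theorem on $SUPAP$ from $SIP$, to checking that the dimension-restricted free algebras used in the proof of Theorem~5 satisfy $SIP$ at the $(\alpha+n)$-level, which follows by the same argument as in \cite{MStwo} once one restricts the dimension sets in $\rho$ to $\alpha + n$.

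Now apply the counterexample of Claims 3 and 4. The inequality $r \leq s \cdot t$ of Claim 3 is derived from the $\CA_\alpha$-axioms alone and so holds in $\Fr_5 V$. The full set algebra $\B = (\wp(\,{}^\alpha\alpha),\ldots)$ used in Claim 4 is representable, hence lies in $\RQEA_\alpha \subseteq V$, so the freeness of $\Fr_5 V$ supplies homomorphisms $h : \Fr_5 V \to \B$ realising any prescribed values on the five free generators $x, y, z, w, m$. The choices $X_{Id}$, $Y = \{\sigma\}$, and $Z, W$ depending on a given finite $\Gamma \subseteq \alpha$ from the proof of Claim 4 go through verbatim, and the computations \eqref{p9}--\eqref{p14} demonstrate that the inequality $r \leq s \cdot t$ cannot be interpolated by any element of $\Fr_5^{(\{x\})} V$ together with any finite $\Gamma$. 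This contradicts the interpolation property derived above from $AP$ of $V$, completing the proof.

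The main obstacle is the $SUPAP$ step for $\bold{M}_n$: the $\omega$-dimensional version in Theorem~5 was convenient because the dilation absorbs all the hidden dimensions, whereas for finite $n$ one has to truncate the $SIP$-argument on free algebras at level $\alpha+n$ and verify that the amalgamation of the truncated dilations still takes place inside $\QEA_{\alpha+n}$. All other steps are cosmetic adaptations of the preceding proof, since the algebraic counterexample of Claims 3--4 (the inequality $r \leq s \cdot t$ and the representable set-algebra witness) is insensitive to how many extra dimensions are available: the failure of interpolation is established for \emph{every} finite $\Gamma$.
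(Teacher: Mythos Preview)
Your proposal has two genuine gaps.

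First, the claim that ``if $\A\in V$ embeds into $\Nr_\alpha\B$ with $\B\in\QEA_{\alpha+n}$ and $A$ generating $\B$, then $\A=\Nr_\alpha\B$'' is not trivial and in fact is false in general. This is exactly the content of item (i) of the preceding theorem and of Example~\ref{ex}: there exist representable $\A$ (hence $\A\in V$) generating some $\B$ with $\A\subsetneq\Nr_\alpha\B$. Finiteness of the cylindrifications in $\B$ gives $\Ig^{\B}(J\cap\Nr_\alpha\B)=J$, but it does \emph{not} give $\A=\Nr_\alpha\B$; so your (*) does not follow.

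Second, $SUPAP$ for $\bold M_n=\{\A\in\QEA_{\alpha+n}:\A=\Sg^{\A}\Nr_\alpha\A\}$ is not established by ``the same argument as in \cite{MStwo}''. The $SIP$ proof there is a Henkin construction that needs an unbounded supply of fresh witnesses outside $\alpha$; with only $n$ extra dimensions that argument breaks down. You flag this yourself as the main obstacle but do not resolve it.

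The paper sidesteps both issues. It never claims (*), (**) or $SUPAP$ for $\bold M_n$. Instead it works in $\A=\Fr_4 V$ and uses the canonical surjection $h:\A\twoheadrightarrow\D=\Fr_4\RQEA_\alpha$ with kernel $J$. The ideals $M,N$ are built to contain the relevant pieces of $J$, and the amalgamation step in the Claim~1 machinery is supplied directly by the assumed $AP$ of $V$ (the quotients $\A^{(X_i)}/\cdot$ lie in $V$ since $V$ is a variety, and the map back to $\A$ exists by $V$-freeness). This yields $P$ with $P\cap A^{(X_1)}=N$, $P\cap A^{(X_2)}=M$, hence $r\in N$, hence $r\le u+b$ with $u\in A^{(X_1\cap X_2)}$, $b\in J$, and $u\le{\sf c}_{(\Gamma)}(s\cdot t)+c$ with $c\in J$. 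Applying $h$ kills $b,c$ and produces an interpolant $h(u)\in\D^{(\{x'\})}$ for $h(r)\le{\sf c}_{(\Gamma)}(h(s)\cdot h(t))$ in $\Fr_4\RQEA_\alpha$, contradicting Claim~4 (which was proved for $\RQEA_\alpha$, not for $V$). The point you missed is that the $AP$ hypothesis on $V$ is \emph{itself} the amalgamation input for congruence extension in $\Fr_4 V$; no dilation class $\bold M_n$ enters at all.
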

\begin{proof} In what follows by $\A^{(X)}$ we denote the subalgebra of $\A$ generated by $X$, and we write
$\A^{(x)}$ for $\A^{(\{x\})}$. Seeking a contradiction, assume that $V$ has $AP$ with respect
Let $\A=\Fr_4V$, the free $V$ algebra on $4$ generators. 
Let $r, s$ and $t$ be defined as follows:
$$ r = {\sf c}_0(x\cdot {\sf c}_1y)\cdot {\sf c}_0(x\cdot -{\sf c}_1y),$$
$$ s = {\sf c}_0{\sf c}_1({\sf c}_1z\cdot {\sf s}^0_1{\sf c}_1z\cdot -d_{01}m) + {\sf c}_0(x\cdot -{\sf c}_1z),$$
$$ t = {\sf c}_0{\sf c}_1({\sf c}_1w\cdot {\sf s}^0_1{\sf c}_1w\cdot -d_{01}) + {\sf c}_0(x\cdot -{\sf c}_1w),$$
where $ x, y, z, w$ and $m$ are the five generators
of $\A$.Then $r\leq s\cdot t$. 
Let $X_1=\{x, y\}$
and $X_2=\{x,z, w\}.$ Then  
\begin{equation}\label{p15}
\begin{split}
\A^{(X_1\cap X_2)}=\Sg^{\A}\{x\}.
\end{split}
\end{equation}
We have 
\begin{equation}
\begin{split}
r\in A^{(X_1)} \text { and }s,t\in A^{(X_2)}.
\end{split}
\end{equation}
Let $\{x', y', z', w'\}$ be the first four generators of $\D=\Fr_{4}\RQEA_{\alpha}$. Let $h$ be the homomorphism from $\A$ to $\D$ such that
$h(i)=i'$ for $i\in \{x,y,w,z\}$.
Let $J$ be the kernel of $h$. Then
\begin{equation}\label{p17}
\begin{split}
\A/J\cong \D
\end{split}
\end{equation}
We work inside the algebra $\A$. Since $r\leq s\cdot t$ we have  
\begin{equation}\label{p18}
\begin{split}
r\in \Ig^{\A}\{s\cdot t\}\cap A^{(X_1)}.
\end{split}
\end{equation}
Let
\begin{equation}\label{p19}
\begin{split}
M=\Ig^{\A^{(X_2)}}[\{s\cdot t\}\cup (J\cap A^{(X_2)})];
\end{split}
\end{equation}
\begin{equation}\label{p20}
\begin{split}
N=\Ig^{\A^{(X_1)}}[(M\cap A^{(X_1\cap X_2)})\cup (J\cap A^{(X_1)})].
\end{split}
\end{equation}
Then we have
\begin{equation}\label{p21}
\begin{split}
J\cap A^{(X_2)}\subseteq M\text { and }J\cap A^{(X_1)}\subseteq N.
\end{split}
\end{equation}
From the first of these inclusions we get
$$M\cap A^{(X_1\cap X_2)}\supseteq (J\cap A^{(X_2)})\cap A^{(X_1\cap X_2)}=(J\cap A^{(X_1)})
\cap A^{(X_1\cap X_2)}.$$
Then 
$$N\cap A^{(X_1\cap X_2)}=M\cap A^{(X_1\cap X_2)}.$$
For $R$ an ideal of $\A$ and $X\subseteq A$, by $(\A/R)^{(X)}$ we understand the subalgebra of 
$\A/R$ generated by $\{x/R: x\in X\}.$  
Replacing $S$ by $N$ and $R$ by $M$ in the first part of the proof and using that $V$ has $AP$,
let $P$ be the ideal corresponding to $kerh$ as defined above. Then, as before: 
\begin{equation}\label{p23}
\begin{split}
P\cap A^{(X_1)}=N,
\end{split}
\end{equation}
and 
\begin{equation}\label{p24}
\begin{split}
P\cap A^{(X_2)}=M.
\end{split}
\end{equation}
Now $s\cdot t\in P$ and so $r\in P$. 
Consequently we get $r\in N$, and so  
there exist elements
\begin{equation}\label{p25}
\begin{split}
u\in M\cap A^{(X_1\cap X_2)}
\end{split}
\end{equation}
and $b\in J$ such that
\begin{equation}\label{p26}
\begin{split}
r\leq u+b.
\end{split}
\end{equation}
Since $u\in M$ by $(7)$ there is a $\Gamma\subseteq_{\omega} \alpha$ and 
$c\in J$ such that
$$u\leq {\sf c}_{(\Gamma)}(s\cdot t)+c.$$
Recall that $h$ is 
the homomorphism from $\A$ to $\D$ such that
$h(i)=i'$ for $i\in \{x,y,w,z\}$.
and that $ker h=J$. Then $h(b)=h(c)=0$.
It follows that $$h(r)\leq h(u) \leq {\sf c}_{(\Gamma)}(h(s)\cdot h(t)).$$
And this is impossible.
%Let $r'=h(r)$, $u'=h(u)$, $s'=h(s)$, $t'=h(t)$.
\end{proof}

\section{Adjointness of the neat reduct functor}

Now it is  high time to find the spirit! 

For this purpose we put category theory in use. 
The main advantage of category theory is that it allows one not to miss the forest for the trees.

In our investigations, we have a certain dichotomy; we have two trees, the polyadic one and the cylindric one; or perhaps even two forests (indeed each paradigm is huge enough).
One way of formulating our investigations in this paper, in a nut shell, is {\it where is}, or rather {\it what is} 
 the forest, or, perhaps,  the forest encompassing the two forests?

Between the lines, one can see that we  are basically proving that the superamalgamation property for a subclass of representable algebras 
is equivalent to invertibility of the dilation functor,  while only the existence of a right adjoint is equivalent to 
amalgamation. This is the general picture. But the details are intricate. 

Such results will be also proved in a 
much more general setting in the final section, when we apply category theory to one way  (most probably not the only one) 
of locating the {\it universal} forest (the common spirit of the two spirits), a generalized systems of varieties 
covering also the polyadic paradigm.

In category theory what really counts are the {\it formulation of the definitions}. The proofs come later in priority of importance.
The most important versatile concept in category theory is that of adjoint situations which abound in all branches of pure mathematics.

A special case of adjoint situations is equivalence of two categories; this is most interesting and intriguing 
when this equivalence can be implemented by the contravariant Hom functor using a co-separator in the target category. 
Examples include Boolean algebras and Stone spaces, cylindric algebras and Sheaves,
locally compact abelian groups and abelian groups and C Star algebras and compact Hausdorf space.

It is definitely most inspiring and exciting to discover that two seeminly unrelated areas are nothing more than two sides of the same coin.
Here, our adjoint situation proved in the polyadic case,  shows that the category of algebras in $\omega$ dimensions 
is actually equivalent to that  in $\omega+\omega$ dimensions. 
This also holds for the countable reducts studied by Sain as a solution to the so called finitizability problem.

The aim of this problems that casts its shadow over the entire field, 
is to capture infinitely many dimensions in a finitary way, which seems paradoxal at first sight. 
But this can be done, with some ingenuity in usual set theory for infinite dimensions, and by changing the ontology 
to  non-well founded set theories for finite dimensions. (Here the extra dimensions are generated 'downwards').

Categorially this is expressed by the fact that the hitherto established equivalence says that the {\it infinite} gap can be finitized, 
but alas, even more, it actually says that
it does not exist at all. The apparent gap happens to be there, 
because it is either a historical accident or an unintended  repercussion of 
the original 
formulation of such algebras.

In our categorial notation we follow \cite{cat}

\begin{definition} Let $L$ and $K$ be two categories.
Let $G:K\to L$ be a functor and let $\B\in Ob(L)$. A pair $(u_B, \A_B)$ wth $\A_B\in Ob(K)$ and $u_B:\B\to G(\A_B)$ is called a universal map
with respect to $G$
(or a $G$ universal map) provided that for each $\A'\in Ob(K)$ and each $f:\B\to G(\A')$ there exists a unique $K$ morphism
$\bar{f}: \A_B\to \A'$ such that
$$G(\bar{f})\circ u_B=f.$$
\end{definition}
%\Fig1

\begin{displaymath}
    \xymatrix{
        \mathfrak{B} \ar[r]^{u_B} \ar[dr]_f & G(\mathfrak{A}_\mathfrak{B}) \ar[d]^{G(f)}  &\mathfrak{A}_\mathfrak{B} \ar[d]^{\hat{f}} \\
             & G(\mathfrak{A}')  & \mathfrak{A}'}
\end{displaymath}

The above definition is strongly related to the existence of adjoints of functors.
For undefined notions in the coming definition, the reader is referred to \cite{cat}
Theorem 27.3 p. 196.
\begin{theorem} Let $G:K\to L$.
\begin{enumarab}
\item If each $\B\in Ob(K)$ has a $G$ universal map $(\mu_B, \A_B)$, then there exists a unique adjoint situation $(\mu, \epsilon):F\to G$
such that $\mu=(\mu_B)$ and for each $\B\in Ob(L),$
$F(\B)=\A_B$.
\item Conversely, if we have an adjoint situation $(\mu,\epsilon):F\to G$ then for each $\B\in Ob(K)$ $(\mu_B, F(\B))$ have a $G$ universal map.
\end{enumarab}
\end{theorem}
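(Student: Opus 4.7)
My plan is to establish the two directions by a direct construction that mirrors the standard Freyd presentation of adjoint functors via universal arrows. For part (1), I would first define $F$ on objects by $F(\B)=\A_\B$. For a morphism $f:\B\to \B'$ in $L$, the composite $\mu_{\B'}\circ f:\B\to G(\A_{\B'})$ feeds into the universal property of $(\mu_\B,\A_\B)$, yielding a unique $K$-morphism $F(f):\A_\B\to \A_{\B'}$ with $G(F(f))\circ \mu_\B=\mu_{\B'}\circ f$. Functoriality (preservation of identities and of composition) then falls out of the uniqueness half of the universal property: both $F(1_\B)$ and $1_{\A_\B}$ satisfy the same defining equation, and similarly for $F(g\circ f)$ versus $F(g)\circ F(f)$.

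Next I would produce the counit. For each $\A\in \mathrm{Ob}(K)$, apply the universal property of $(\mu_{G(\A)},\A_{G(\A)})=( \mu_{G(\A)},FG(\A))$ to the identity $1_{G(\A)}:G(\A)\to G(\A)$; this yields a unique $\epsilon_\A:FG(\A)\to \A$ with $G(\epsilon_\A)\circ \mu_{G(\A)}=1_{G(\A)}$. The family $\mu=(\mu_\B)$ is natural by the very equation that defined $F$ on morphisms, and $\epsilon=(\epsilon_\A)$ is natural by another appeal to uniqueness: for $g:\A\to \A'$, both $\epsilon_{\A'}\circ FG(g)$ and $g\circ \epsilon_\A$ are seen, after applying $G$ and precomposing with $\mu_{G(\A)}$, to equal $G(g)$, so they must coincide. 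The triangle identity $G(\epsilon_\A)\circ \mu_{G(\A)}=1_{G(\A)}$ is the defining equation of $\epsilon_\A$, while $\epsilon_{F(\B)}\circ F(\mu_\B)=1_{F(\B)}$ follows because both sides, when pushed through $G$ and precomposed with $\mu_\B$, equal $\mu_\B$. Uniqueness of the adjoint situation with the prescribed unit is then forced: any other $F',\epsilon'$ satisfying $G(F'(f))\circ \mu_\B=\mu_{\B'}\circ f$ must agree with $F$ by uniqueness in the universal property, and then $\epsilon'$ is forced by the triangle identity.

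For part (2), given an adjoint situation $(\mu,\epsilon):F\dashv G$, I would show that $(\mu_\B,F(\B))$ is $G$-universal as follows. Given $f:\B\to G(\A')$, set $\hat f:=\epsilon_{\A'}\circ F(f):F(\B)\to \A'$. Naturality of $\mu$ gives $G F(f)\circ \mu_\B=\mu_{G(\A')}\circ f$, and combining with the triangle identity $G(\epsilon_{\A'})\circ \mu_{G(\A')}=1_{G(\A')}$ yields $G(\hat f)\circ \mu_\B=f$. For uniqueness, suppose $g:F(\B)\to \A'$ also satisfies $G(g)\circ \mu_\B=f$; applying $\epsilon_{\A'}\circ F(-)$ to both sides of $G(g)\circ \mu_\B=G(\hat f)\circ \mu_\B$ and exploiting naturality of $\epsilon$ together with the other triangle identity $\epsilon_{F(\B)}\circ F(\mu_\B)=1_{F(\B)}$ collapses the composite back to $g=\hat f$.

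I do not expect any conceptual obstacle; the entire argument is a chase around a handful of commutative diagrams, and the only delicate point is to keep track of which equalities are \emph{defining} and which are \emph{deduced from uniqueness}. The main place where care is needed is the verification that $F$ is functorial and that $\epsilon$ is natural, both of which require systematically invoking the uniqueness clause of the universal property, and the verification that in part (2) the two triangle identities are used in the correct order so as not to circularly reprove what is being assumed.
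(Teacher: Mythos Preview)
Your proof plan is correct and follows the standard construction of an adjoint from a family of universal arrows (Freyd's presentation). Note, however, that the paper does not actually prove this theorem: it simply cites Herrlich--Strecker, \emph{Category Theory}, Theorem~27.3, p.~196, immediately before the statement. Your write-up is essentially what one finds in that reference, so there is nothing to compare beyond observing that you have supplied the details the paper chose to outsource.
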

Now we apply this definition to the `neat reduct functor' from a
certain subcategory of $\CA_{\alpha+\omega}$
to $\RCA_{\alpha}$. More precisely, let 
$$\L=\{\A\in \CA_{\alpha+\omega}: \A=\Sg^{\A}\Nr_{\alpha}\A\}.$$
Note that $\L\subseteq \RCA_{\alpha+\omega}$. The reason is that any $\A\in \L$ is generated by $\alpha$ -dimensional elements,
so is dimension complemented (that is $\Delta x\neq \alpha$ for all $x$), and such algebras are representable.
Consider $\Nr_{\alpha}$ as a functor from $\bold L$ to $\CA_{\alpha}$, but we restrict morphisms to one to one homomorphisms; that is we take only
embeddings.
By the neat embedding theorem $\Nr_{\alpha}$ is a functor from $\L$ to $\RCA_{\alpha}$.
(For when $\A\in \CA_{\alpha+\omega},$ then $\Nr_{\alpha}\A\in \RCA_{\alpha}$).
The question we adress is: Can this functor be ``inverted".
This functor is not dense since there are representable algebras not in $\Nr_{\alpha}\CA_{\alpha+\omega}$, 
as the following example, which is a straightforward  adaptation  
of a result in \cite{SL} shows:
\begin{example}\label{ex} 
\begin{enumarab}
\item  Let $\F$ be a field of characteristic $0$. Let 
$$V=\{s\in {}^{\alpha}\F: |\{i\in \alpha: s_i\neq 0\}|<\omega\},$$
%$$W=\{s\in {}^{\beta}\F: |\{i\in \alpha: {\sf }_i\neq 0\}<\omega\}$$ 
Let
$${\C}=(\wp(V),
\cup,\cap,\sim, \emptyset , V, {\sf c}_i,{\sf d}_{ij})_{i,j\in \alpha},$$
with cylindrifiers and diagonal elements restricted to $V$.  
Let $y$ denote the following $\alpha$-ary relation:
$$y=\{s\in V: s_0+1=\sum_{i>0} s_i\}.$$
Note that the sum on the right hand side is a finite one, since only 
finitely many of the $s_i$'s involved 
are non-zero. 
For each $s\in y$, we let 
$y_s$ be the singleton containing $s$, i.e. $y_s=\{s\}.$ 
Define 
${\A}\in \CA_{\alpha}$ 
as follows:
$${\A}=\Sg^{\C}\{y,y_s:s\in y\}.$$
Then it is proved in \cite{SL} that  
$$\A\notin \Nr_{\alpha}\CA_{\alpha+1}.$$ 
That is for no $\mathfrak{P}\in \CA_{\alpha+1}$, it is the case that $\Sg^{\C}\{y,y_s:s\in y\}$ 
exhausts the set of all $\alpha$ dimensional elements 
of $\mathfrak{P}$.  
\item Let $\A$ be as in above. Then since $\A$ is a weak set algebra, it is representable.
Hence $\A\in S\Nr_{\alpha}\CA_{\alpha+\omega}$. Let $\B\in \CA_{\alpha+\omega}$ be an algebra such that $\A\subseteq \Nr_{\alpha}\B$. 
Let $\B'$ be the subalgebra of $\B$ 
generated by $\A$. Then $\A$ generates $\B$ but $\A$ is not isomorphic to $\Nr_{\alpha}\B$.
\end{enumarab}
\end{example}

Item (2) in the above example says that there are two non isomorphic algebras, namely $\A$ and $\Nr_{\alpha}\B'$ 
that generate the same algebra $\B'$ using extra dimensions \cite{neat}.
If $\A\subseteq \Nr_{\alpha}\B$ then $\B$ is called a dilation of $\A$. $\B$ is a minimal dilation if $\A$
generates $\B$, in which case $\A$ is called a generating subreduct of $\B$. In the previous example $\A$ is a generating subreduct of $\B$.
One would expect that the ``inverse" of the Functor $\Nr$
would be the functor that takes $\A$ to a minimal dilation, and lifting morphisms.
But this functor is not even a right adjoint.

\begin{corollary} Let $\bold L=\{\A\in \RCA_{\alpha+\omega}: \A=\Sg^{\A}\Nr_{\alpha}\A\}$. Then the neat reduct functor $\Nr_{\alpha}$ 
from $\bold L$ to $\RCA_{\alpha}$ with morphisms restricted to injective homomorphisms
does not have a right adjoint.
\end{corollary}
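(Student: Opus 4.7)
The plan is to derive a contradiction from the failure of uniqueness of minimal dilations proved (in its cylindric version) in the preceding section. By the theorem above relating adjoint situations to universal maps, the existence of a right adjoint to $\Nr_\alpha$ is equivalent to each $\A \in \RCA_\alpha$ admitting a universal morphism $\eta_\A: \A \to \Nr_\alpha F\A$ into the neat reduct of some $F\A \in \bold L$, through which every embedding $e: \A \to \Nr_\alpha \C$ with $\C \in \bold L$ factors uniquely as $e = \Nr_\alpha(\bar{e}) \circ \eta_\A$ for some morphism $\bar{e}: F\A \to \C$ of $\bold L$; note that $\bar{e}$ is automatically injective, since morphisms in $\bold L$ were restricted to embeddings. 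I would suppose for contradiction that such a functor $F$ exists.

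First I would establish that $F\A$ is a minimal dilation of $\A$, i.e.\ that $F\A = \Sg^{F\A} \eta_\A(A)$. Setting $\B_0 = \Sg^{F\A}\eta_\A(A)$, one verifies that $\B_0 \in \bold L$ (its generating set $\eta_\A(A)$ lies in $\Nr_\alpha \B_0$, so $\B_0 = \Sg^{\B_0} \Nr_\alpha \B_0$) and that $\eta_\A$ corestricts to an embedding $\tilde\eta_\A: \A \to \Nr_\alpha \B_0$. Applying the universal property with $\C = \B_0$ produces $j: F\A \to \B_0$; composition with the inclusion $\B_0 \hookrightarrow F\A$ then yields a morphism $F\A \to F\A$ through which $\eta_\A$ factors, and the uniqueness clause of the universal property forces this composite to be the identity on $F\A$. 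Hence $\B_0 = F\A$.

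Next I would invoke the cylindric-algebra analogue of the theorem of the previous section asserting non-uniqueness of minimal dilations (the paper indicates that this holds for $\CA$'s, with the construction paralleling the $\RQEA$-version proved above and relying on the failure of amalgamation for $\RCA_\alpha$). This provides $\A, \A' \in \RCA_\alpha$, algebras $\B, \B' \in \bold L$, embeddings $e_A: \A \to \Nr_\alpha \B$ and $e_{A'}: \A' \to \Nr_\alpha \B'$ with $\B, \B'$ generated by $e_A(A), e_{A'}(A')$ respectively, and an isomorphism $i: \A \to \A'$ that admits no lift to an isomorphism $\bar{i}: \B \to \B'$ satisfying $\bar{i} \circ e_A = e_{A'} \circ i$. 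The universal property applied to $e_A$ yields an embedding $g: F\A \to \B$; since $F\A$ is minimally generated by $\eta_\A(A)$ (by the preceding paragraph) while $\B$ is minimally generated by $e_A(A) = \Nr_\alpha(g)(\eta_\A(A))$, the image $g(F\A)$ contains $e_A(A)$ and hence all of $\B$, so $g$ is an isomorphism, and similarly $g': F\A' \to \B'$.

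Finally, functoriality of $F$ and naturality of $\eta$ produce an isomorphism $F(i): F\A \to F\A'$ satisfying $\Nr_\alpha(F(i)) \circ \eta_\A = \eta_{\A'} \circ i$; transporting across $g$ and $g'$ gives the desired isomorphism $\bar{i} := g' \circ F(i) \circ g^{-1}: \B \to \B'$ with $\bar{i} \circ e_A = e_{A'} \circ i$, contradicting the choice of $\A, \A', \B, \B', i$. The main obstacle I anticipate is the minimality step in the second paragraph, as it is what allows one to identify the abstract universal dilation $F\A$ with the concrete minimal dilation $\B$ furnished by the non-uniqueness theorem; this identification depends crucially on both the universal property and on the restriction of morphisms in $\bold L$ to injective homomorphisms, without which the injectivity of $g$ (and hence the surjectivity argument) would not be available.
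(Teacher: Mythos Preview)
Your proposal is correct and follows essentially the same approach as the paper: the paper's one-line proof asserts that the existence of universal maps (equivalently, of the adjoint) is equivalent to $UNEP$, and that $UNEP$ fails for representable algebras by the preceding theorem. Your argument simply unpacks the relevant implication (universal maps $\Rightarrow$ uniqueness of minimal dilations) in detail, supplying the minimality of $F\A$ and the identification $F\A\cong\B$ that the paper leaves implicit.
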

\begin{proof} $UNEP$ is equivalent to existence of universal maps; the former does not hold for the representable algebras.
\end{proof}
\begin{corollary} If $\A$ has a universal map with respect to the above functor, then $\A$ belongs to the amalgamation base of $\RK_{\alpha}$
\end{corollary}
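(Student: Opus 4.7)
The plan is to exploit the super-amalgamation property of the class $\mathbf{L}=\{\A\in \CA_{\alpha+\omega}:\A=\Sg^{\A}\Nr_{\alpha}\A\}$ (which is the class $\mathbf{M}$ of the earlier theorem specialised to $\kappa=\alpha$ and $K=\CA$), and to transport amalgamations downwards via the neat reduct functor. Given one-to-one homomorphisms $f_1:\A\to\A_1$ and $f_2:\A\to\A_2$ in $\RK_\alpha$, I will dilate each $\A_i$ to an algebra $\A_i^+\in\mathbf{L}$ with $\A_i\subseteq\Nr_\alpha\A_i^+$, lift the $f_i$ through the universal map $u_\A:\A\to\Nr_\alpha\B_\A$ to embeddings $\bar f_i:\B_\A\to\A_i^+$, super-amalgamate the $\A_i^+$ over $\B_\A$ inside $\mathbf{L}$ to obtain $\D^+\in\mathbf{L}$ with embeddings $m_i:\A_i^+\to\D^+$, and finally pass to $\D=\Nr_\alpha\D^+\in\RK_\alpha$ and restrict the $m_i$ to the $\A_i$.

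First I would produce the dilations: by the Neat Embedding Theorem each $\A_i\in\RK_\alpha$ embeds into $\Nr_\alpha\B$ for some $\B\in\K_{\alpha+\omega}$, and replacing $\B$ by $\Sg^{\B}\A_i$ gives an algebra $\A_i^+\in\mathbf{L}$ with $\A_i\subseteq\Nr_\alpha\A_i^+$ (elements of $\A_i$ remain $\alpha$-dimensional in any subalgebra containing them). Composing $f_i$ with the inclusion $\iota_i$ yields an embedding $\iota_i\circ f_i:\A\to\Nr_\alpha\A_i^+$, and the universal property of $(u_\A,\B_\A)$, in the category $\mathbf{L}$ whose morphisms are restricted to one-to-one homomorphisms, furnishes unique embeddings $\bar f_i:\B_\A\to\A_i^+$ with $\Nr_\alpha(\bar f_i)\circ u_\A=\iota_i\circ f_i$; equivalently, $\bar f_i(u_\A(a))=f_i(a)$ for every $a\in A$.

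Next I would invoke the super-amalgamation property of $\mathbf{L}$ already established in the paper to produce $\D^+\in\mathbf{L}$ together with embeddings $m_1:\A_1^+\to\D^+$ and $m_2:\A_2^+\to\D^+$ satisfying $m_1\circ\bar f_1=m_2\circ\bar f_2$. Setting $\D=\Nr_\alpha\D^+$, an element of $\RK_\alpha$ because $\D^+\in\mathbf{L}\subseteq\RK_{\alpha+\omega}$, the morphisms $m_i$ restrict to embeddings $\A_i\to\D$ since they carry $\alpha$-dimensional elements to $\alpha$-dimensional elements. The diagram chase $m_1(f_1(a))=m_1(\bar f_1(u_\A(a)))=m_2(\bar f_2(u_\A(a)))=m_2(f_2(a))$ for $a\in A$ then verifies that these restrictions amalgamate $f_1$ and $f_2$ over $\A$, placing $\A$ in the amalgamation base of $\RK_\alpha$.

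The main obstacle is to guarantee that the entire construction stays inside a class where SUPAP is available and that the lifts produced by the universal property really are monomorphisms. Restricting morphisms of $\mathbf{L}$ to embeddings (as the functor is defined) resolves the second issue, since $G$-universal maps in that restricted category automatically yield injective $\bar f_i$. The first issue is resolved by the already proved SUPAP of $\mathbf{M}=\mathbf{L}$; the only subtlety lies in first replacing an arbitrary dilation in $\K_{\alpha+\omega}$ by its subalgebra generated by $\A_i$, in order to land in $\mathbf{L}$ without losing the needed inclusion $\A_i\subseteq\Nr_\alpha\A_i^+$, which as noted above is immediate.
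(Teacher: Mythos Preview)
Your proof is correct and follows precisely the strategy the paper employs (though the paper states this corollary without proof, the argument you give is the one spelled out later in Theorem~\ref{SUP} for the special case $\A_0\in\Dc_\alpha$ and in Theorem~\ref{main} in the abstract setting): dilate the side algebras into $\mathbf{L}$, use the universal map---equivalently, what the paper calls $UNEP$---to lift the two embeddings to the level of $\mathbf{L}$, invoke $SUPAP$ of $\mathbf{L}$ (the earlier theorem on $\mathbf{M}$), and descend via $\Nr_\alpha$. Your handling of the two delicate points---that the lifts $\bar f_i$ are monomorphisms because morphisms in both categories are restricted to embeddings, and that replacing an arbitrary dilation by the subalgebra it generates over $\A_i$ lands in $\mathbf{L}$ while preserving $\A_i\subseteq\Nr_\alpha\A_i^+$---matches the paper's treatment.
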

For $\A\in \PA_{\alpha}$ a polyadic algebra and $\beta>\alpha$, a $\beta$ dilation of $\A$ is an algebra $\B\in \PA_{\beta}$
such that $\A\subseteq \Nr_{\alpha}\B.$ $\B$ is a minimal dilation of $\A$ if $A$ generates $\B.$
Let $\bold L=\{\A\in \PA_{\beta}: \Sg\Nr_{\alpha}A=A\}$. Then $\Nr_{\alpha}:\bold L\to \PA_{\alpha}$ is an equivalence.
To prove this we first note that polyadic algebras do not satisfy $(i)$ of \ref{amal}. 
But before that we need a lemma. For $X\subseteq A$, 
$\Ig^{A}X$ denotes the ideal generated by $A$.:
\begin{lemma}\label{polyadic} Let $\alpha<\beta$ be infinite ordinals. Let $\B\in \PA_{\beta}$ and $\A\subseteq \Nr_{\alpha}\B$.
\begin{enumarab}
\item if $A$ generates $\B$ then $\A=\Nr_{\alpha}\B$
\item If $A$ generates $\B$, and $I$ is an ideal of $\B$, then $\Ig^{\B}(I\cap A)=I$
\end{enumarab}
\end{lemma}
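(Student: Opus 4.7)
The plan exploits the key feature of polyadic algebras that distinguishes them from the cylindric paradigm: substitutions ${\sf s}_\tau$ are available for all finitary transformations $\tau$ of the dimension set, satisfying the support axiom ${\sf s}_\tau x = x$ whenever $\tau$ fixes $\Delta x$ pointwise, the composition axiom ${\sf s}_\tau {\sf s}_\sigma = {\sf s}_{\tau\circ\sigma}$, and the commutation rule ${\sf s}_\tau {\sf c}_i = {\sf c}_{\tau(i)} {\sf s}_\tau$ when $\tau$ is a bijection. Together these make a bijective ${\sf s}_\tau$ into a polyadic automorphism of $\B$ that relocates operation indices via $\tau$, and this is exactly what we need to ``compress'' indices from $\beta\setminus\alpha$ back into $\alpha$ while keeping the generators and the candidate element fixed.

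For (i), let $b\in\Nr_\alpha\B$. Since $A$ generates $\B$, we may write $b=t(a_1,\ldots,a_n)$ with $a_1,\ldots,a_n\in A$, where the term $t$ involves only finitely many indices from $\beta$. Collect those indices together with $\Delta b$ and the $\Delta a_i$ into a finite $F\subseteq\beta$, and let $\Gamma=F\setminus\alpha$. Since $\alpha$ is infinite we may choose $\Gamma'\subseteq\alpha\setminus F$ with $|\Gamma'|=|\Gamma|$, and let $\tau$ be the involution of $\beta$ swapping $\Gamma$ with $\Gamma'$ pointwise and fixing everything else. By the support axiom, $\tau$ fixes $\Delta a_i$ and $\Delta b$, so ${\sf s}_\tau a_i = a_i$ and ${\sf s}_\tau b = b$. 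Pushing ${\sf s}_\tau$ through $t$ by induction on term structure, using the bijective commutation rule, rewrites ${\sf s}_\tau b$ as a term $t'(a_1,\ldots,a_n)$ in which every operation index has been transported from $\Gamma$ into $\Gamma'\subseteq\alpha$ (and all other indices in $F\cap\alpha$ are left alone because $\tau$ is the identity there). Hence $t'$ uses only $\alpha$-indexed operations, so $t'(a_1,\ldots,a_n)\in A$ since $A$ is a polyadic subalgebra of $\Nr_\alpha\B$, giving $b={\sf s}_\tau b\in A$. Combined with the hypothesis $A\subseteq\Nr_\alpha\B$, this yields $A=\Nr_\alpha\B$.

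For (ii), let $x\in I$ and again write $x=t(a_1,\ldots,a_n)$. Choose $\tau$ exactly as in (i), except that the freshness condition on $\Gamma'$ need only be that $\Gamma'$ be disjoint from $\bigcup_i\Delta a_i$ (no condition on $\Delta x$). Then ${\sf s}_\tau a_i = a_i$, and pushing ${\sf s}_\tau$ through the term delivers ${\sf s}_\tau x\in A$. Because $I$ is an ideal of a polyadic algebra and ${\sf s}_\tau$ is a polyadic operator, ${\sf s}_\tau x\in I$; hence ${\sf s}_\tau x\in I\cap A\subseteq\Ig^\B(I\cap A)$. Since $\tau$ is an involution, ${\sf s}_\tau{\sf s}_\tau={\sf s}_{Id}$ is the identity on $\B$, so $x={\sf s}_\tau({\sf s}_\tau x)$, and as $\Ig^\B(I\cap A)$ is a polyadic ideal it is closed under ${\sf s}_\tau$; thus $x\in\Ig^\B(I\cap A)$. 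The reverse inclusion $\Ig^\B(I\cap A)\subseteq I$ is immediate because $I$ itself is a polyadic ideal containing $I\cap A$.

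The main technical obstacle is the ``push ${\sf s}_\tau$ through $t$'' step, which amounts to a routine but careful induction on term structure using the polyadic commutation axioms applied to a bijective $\tau$; this is precisely where the polyadic versus cylindric dichotomy of the paper bites, since in $\CA$ the bijective substitutions are not native operations obeying such clean laws, and indeed the analogue of (i) fails there, as Example~\ref{ex} illustrates with an $\A\in\RCA_\alpha$ strictly contained in the neat reduct of one of its minimal dilations.
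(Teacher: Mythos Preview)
Your argument has a genuine gap rooted in a misidentification of the signature of $\PA_{\beta}$. You write that ``substitutions ${\sf s}_\tau$ are available for all finitary transformations $\tau$'' and then collect the ``finitely many indices from $\beta$'' occurring in the term $t$. But Halmos' polyadic algebras carry ${\sf s}_\sigma$ for \emph{every} $\sigma\in{}^{\beta}\beta$ and ${\sf c}_{(\Gamma)}$ for \emph{every} $\Gamma\subseteq\beta$, not just finitary ones. So a term witnessing $b\in\Sg^{\B}A$ may contain an ${\sf s}_\sigma$ with $\sigma$ moving infinitely many points, or a ${\sf c}_{(\Gamma)}$ with $\Gamma\cap(\beta\setminus\alpha)$ infinite. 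Your finitary involution $\tau$ then cannot relocate all the offending indices into $\alpha$: for instance, pushing ${\sf s}_\tau$ past ${\sf c}_{(\Gamma)}$ yields ${\sf c}_{(\tau[\Gamma])}$, and if $\Gamma\setminus\alpha$ is infinite while $\tau$ moves only finitely many points, $\tau[\Gamma]\not\subseteq\alpha$. Likewise, conjugating ${\sf s}_\sigma$ to ${\sf s}_{\tau\sigma\tau}$ does not force $\tau\sigma\tau$ to be the identity off $\alpha$. Your scheme would be correct for quasi-polyadic algebras, but the lemma is stated for $\PA_\beta$.

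The paper sidesteps this by invoking the Daigneault--Monk normal form \cite{DM}: when $A\subseteq\Nr_\alpha\B$ generates $\B$, every element of $\B$ already has the shape ${\sf s}_\sigma^{\B}y$ with $y\in A$ and $\sigma\upharpoonright\alpha$ injective. Given $x\in\Nr_\alpha\B$ in this form, one chooses $\tau$ with $\tau\upharpoonright\alpha=Id$ and $(\tau\circ\sigma)[\alpha]\subseteq\alpha$; then ${\sf s}_\tau x=x$ because $\Delta x\subseteq\alpha$, and $x={\sf s}_{\tau\sigma}y={\sf s}_\mu^{\A}y\in A$ where $\mu=(\tau\sigma)\upharpoonright\alpha$. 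The point is that the normal form reduces the entire ``term'' to a \emph{single} substitution applied to a generator, which is exactly what makes a one-shot relocation possible. For part (ii) the paper's route is also much shorter than yours: given $x\in I$, the element ${\sf c}_{(\Delta x\setminus\alpha)}x$ lies in $\Nr_\alpha\B=\A$ by part (i) and in $I$ because ideals are closed under cylindrification; since $x\le{\sf c}_{(\Delta x\setminus\alpha)}x$, one gets $x\in\Ig^{\B}(I\cap A)$ immediately, without any substitution gymnastics.
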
\ref{net}
\begin{demo}{Proof}
\begin{enumarab}
\item Let $\A\subseteq \Nr_{\alpha}\B$ and $A$ generates $\B$ then $\B$ consists of all elements ${\sf s}_{\sigma}^{\B}x$ such that 
$x\in A$ and $\sigma$ is a transformation on $\beta$ such that
$\sigma\upharpoonright \alpha$ is one to one \cite{DM} theorem 3.3 and 4.3.
Now suppose $x\in \Nr_{\alpha}\Sg^{\B}X$ and $\Delta x\subseteq
\alpha$. There exists $y\in \Sg^{\A}X$ and a transformation $\sigma$
of $\beta$ such that $\sigma\upharpoonright \alpha$ is one to one
and $x={\sf s}_{\sigma}^{\B}.$  
Let $\tau$ be a 
transformation of $\beta$ such that $\tau\upharpoonright  \alpha=Id
\text { and } (\tau\circ \sigma) \alpha\subseteq \alpha.$ Then
$x={\sf s}_{\tau}^{\B}x={\sf s}_{\tau}^{\B}{\sf s}_{\sigma}y=
{\sf s}_{\tau\circ \sigma}^{\B}y={\sf s}_{\tau\circ
\sigma\upharpoonright \alpha}^{\A'}y.$
Abusing notation we write $\A$ for $\Sg^{\A}X$ and $\B$ for $\Sg^{\B}X$. Then $\B$ is a minimal dilation of $\A$.
%By \cite{DM} theorem 3.3,     
Each element of $\B$ has the form ${\sf s}_{\sigma}^{\B}a$ for some 
$a\in A$, and $\sigma$ a transformation on $\beta$ such that $\sigma\upharpoonright \alpha$ is one to one. 
We claim that $\Nr_{\alpha}\B\subseteq \A$. 
Indeed let $x\in Nr_{\alpha}B$. 
Then by the above we have 
$x={\sf s}_{\sigma}^{\B}y$, for some $y\in A$ 
and $\sigma\in {}^{\beta}\beta$. Let $\tau\in {}^{\beta}\beta$ such that
\begin{equation}\label{tarek1}
\begin{split}
\tau\upharpoonright \alpha\subseteq Id \text 
{ and }(\tau\circ \sigma)\alpha\subseteq \alpha. 
\end{split}
\end{equation}
Such a $\tau$ clearly exists.
Since $x\in \Nr_{\alpha}\B$, it follows by definition that 
${\sf c}_{(\beta\sim  \alpha)}x=x$.
From 
$$\tau\upharpoonright \beta\sim (\beta\sim  \alpha)=
\tau\upharpoonright \alpha=Id\upharpoonright \alpha=
Id\upharpoonright \beta\sim (\beta\sim \alpha),$$ we get
from the polyadic axioms that
$${\sf s}_{\tau}^{\B}x={\sf s}_{\tau}^{\B}{\sf c}_{(\beta\sim  \alpha)}x=
{\sf s}_{Id}^{\B}{\sf c}_{(\beta\sim \alpha)}x={\sf s}_{Id}^{\B}x=x.$$
Therefore
\begin{equation}\label{tarek2}
\begin{split}
x={\sf s}_{\tau}^{\B}x={\sf s}_{\tau}^{\B}{\sf s}_{\sigma}^{\B}x={\sf s}_{\tau\circ \sigma}^{\B}x.
\end{split}
\end{equation}
Let $$\mu= \tau\circ \sigma\upharpoonright \alpha\text { and } 
\bar{\mu}=\mu\cup Id\upharpoonright (\beta\sim \alpha).$$
Since 
$$\bar{\mu}\upharpoonright \beta\sim (\beta\sim \alpha)=
\bar{\mu}\upharpoonright \alpha=\mu=\tau\circ\sigma\upharpoonright \beta\sim 
(\beta\sim \alpha),$$
we have
$${\sf s}_{\bar{\mu}}^{\B}{\sf c}_{(\beta\sim \alpha)}y=
{\sf s}_{\tau\circ \sigma}^{\B}{\sf c}_{(\beta\sim \alpha)}y.$$
Since $\A\subseteq \Nr_{\alpha}\B$ and $y\in A,$ we have
${\sf s}_{\mu}^{\A}y={\sf s}_{\bar{\mu}}{}^{\B}y$ and ${\sf c}_{(\beta\sim\alpha)}^{\B}y=y.$
Therefore
\begin{equation}\label{tarek3}
\begin{split}
{\sf s}_{\mu}^{\A}y={\sf s}_{\bar{\mu}}^{\B}y={\sf s}_{\bar{\mu}}^{\B}{\sf c}_{(\beta\sim \alpha)}^{\B}y=
{\sf s}_{\tau\circ \sigma}^{\B}{\sf c}_{(\beta\sim \alpha)}^{\B}y={\sf s}_{\tau\circ \sigma}^{\B}y.
\end{split}
\end{equation}
From (\ref{tarek2}) and (\ref{tarek3}) we get $x={\sf s}_{\mu}^{\A}y\in \A$. By this the proof is complete
since $x$ was arbitrary.
\item Let $x\in \Ig^{\B}(I\cap A)$. Then ${\sf c}_{(\Delta x\sim \alpha)}x\in \Nr_{\alpha}\B=\A$, 
hence in $I\cap A$. But $x\leq {\sf c}_{(\Delta x\sim \alpha)}x$, and we are done.
\end{enumarab}
\end{demo}
The previous lemma fails for cylindric algebras in general \cite{neat}, but it does hold for $\Dc_{\alpha}$'s, see theorem 2.6.67, and 
2.6.71 in \cite{HMT1}.

\begin{theorem} Let $\alpha<\beta$ be infinite ordinals. Assume that $\A,\A'\in \PA_{\alpha}$ and $\B,\B'\in \PA_{\beta}.$
If $\A\subseteq \Nr_{\alpha}\B$ and $\A\subseteq \Nr_{\alpha}\B'$ and $A$ generates both then $\B$ and $\B'$ are isomorphic, 
then $\B$ and $\B'$ are isomorphic with an isomorphism that fixes $\A$ pointwise.
\end{theorem}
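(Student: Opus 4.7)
The plan is to exploit both parts of Lemma \ref{polyadic} applied to a diagonal copy of $\A$ living inside the product $\B\times\B'$, a trick that avoids any explicit manipulation of the normal form ${\sf s}_\sigma^\B a$ for elements of $\B$ and reduces everything to the universal-type property packaged in Lemma \ref{polyadic}(2).

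First I would form the polyadic algebra $\B\times\B'\in \PA_\beta$ and consider the diagonal map $\delta:\A\to \B\times \B'$ given by $a\mapsto (a,a)$. Since $\A\subseteq \Nr_\alpha\B$ and $\A\subseteq \Nr_\alpha \B'$, the image $A_\Delta=\delta(A)$ sits inside $\Nr_\alpha(\B\times\B')=\Nr_\alpha\B\times\Nr_\alpha\B'$, and $\delta$ is a $\PA_\alpha$-isomorphism onto $A_\Delta$. Let $\D=\Sg^{\B\times\B'}A_\Delta\in \PA_\beta$. By construction $A_\Delta\subseteq \Nr_\alpha\D$ and $A_\Delta$ generates $\D$, so $\D$ is a minimal dilation of (the isomorphic copy of $\A$ given by) $A_\Delta$. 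In particular Lemma \ref{polyadic}(1) gives $\Nr_\alpha\D=A_\Delta$.

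Next, let $\pi:\D\to \B$ and $\pi':\D\to \B'$ be the restrictions to $\D$ of the two coordinate projections of $\B\times \B'$. Both are $\PA_\beta$-homomorphisms, and both are surjective: $\pi(\delta(a))=a$ for $a\in A$, and $A$ generates $\B$, so $\pi[\D]\supseteq \Sg^\B A=\B$; similarly for $\pi'$. For injectivity, consider the kernel ideal $I=\ker\pi\subseteq \D$. Lemma \ref{polyadic}(2), applied with $\D$ in place of $\B$ and $A_\Delta$ in place of $A$, yields $I=\Ig^\D(I\cap A_\Delta)$. But $\delta(a)\in I$ iff $\pi(\delta(a))=a=0$, so $I\cap A_\Delta=\{(0,0)\}$, hence $I=\{0\}$ and $\pi$ is an isomorphism. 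By the symmetric argument applied to $\pi'$, $\pi'$ is also an isomorphism. The composite $\varphi=\pi'\circ\pi^{-1}:\B\to \B'$ is then an isomorphism, and for each $a\in A$ we have $\pi^{-1}(a)=\delta(a)=(a,a)$, so $\varphi(a)=\pi'(a,a)=a$; thus $\varphi$ fixes $\A$ pointwise, as required.

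The main step that needs verifying carefully is the claim that $A_\Delta\subseteq \Nr_\alpha \D$, since Lemma \ref{polyadic}(2) is the only real tool driving the argument; however this reduces to the transparent computation $\Nr_\alpha(\B\times\B')=\Nr_\alpha\B\times\Nr_\alpha\B'$ together with $\A\subseteq \Nr_\alpha\B\cap \Nr_\alpha\B'$. The only place where the polyadic hypothesis is essential is in invoking Lemma \ref{polyadic}; indeed, as the author notes after that lemma, the analogous statement fails for cylindric algebras, which matches the non-uniqueness counterexample (Example \ref{ex}) discussed earlier in the paper.
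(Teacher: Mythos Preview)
Your proof is correct, but it takes a different route from the paper's. The paper constructs the free algebra $\D=\Fr_{\mu}\PA_{\beta}$ on $\mu=|A|$ generators, obtains surjections $f:\D\to\B$ and $f':\D\to\B'$ sending generators to the elements of $A$, and then uses Lemma~\ref{polyadic}(2) to show $\ker f=\ker f'$ (since both kernels are generated by their intersection with the subalgebra $\C=\Sg^{\Rd_\alpha\D}\{\xi_i\}$, and these intersections coincide); the isomorphism is then the induced map $f(x)\mapsto f'(x)$. Your argument instead works inside the product $\B\times\B'$, generates from the diagonal, and uses Lemma~\ref{polyadic}(2) to kill the kernels of the two projections. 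Both arguments pivot on Lemma~\ref{polyadic}(2); yours is more elementary in that it avoids free algebras entirely, while the paper's version is set up to prove the slightly stronger statement (with two algebras $\A,\A'$, embeddings $e_A,e_{A'}$, and an isomorphism $i:\A\to\A'$ to be lifted) that feeds directly into Corollary~\ref{net} and the subsequent equivalence-of-categories result. Your diagonal trick handles the literal statement cleanly, but to recover the embedding version you would need to replace the diagonal by the graph of $e_{A'}\circ i\circ e_A^{-1}$, which is a small extra step.
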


\begin{demo}{Proof} \cite{HMT2} theorem 2.6.72. 
We prove something stronger, we assume that $\A$ embeds into $\Nr_{\alpha}\B$ and similarly for $\A'$.
So let  $\A, \A' \in \PA_{\alpha}$ and $\beta>\alpha$. Let 
$\B, \B' \in \PA_{\beta}$ and assume that $e_A, e_{A'}$ are embeddings from $\A, \A'$ into   $\Nr_\alpha \B,
\Nr_\alpha \B'$, respectively, such that
$ \Sg^\B (e_A(A)) = \B$
and $ \Sg^{\B'} (e_{A'}(A')) = \B',$
and let $ i : \A \longrightarrow \A'$ be an isomorphism.
We need to ``lift" $i$ to $\beta$ dimensions.
Let $\mu=|A|$. Let $x$ be a  bijection  from $\mu$ onto $A.$ 
Let $y$ be a bijection from $\mu$ onto $A'$,
such that $ i(x_j) = y_j$ for all $j < \mu$.
Let $\D = \Fr_{\mu} \PA_{\beta}$ with generators $(\xi_i: i<\mu)$. Let $\C = \Sg^{\Rd_\alpha \D} \{ \xi_i : i < \mu \}.$
Then $\C \subseteq \Nr_\alpha \D,\  C \textrm{ generates } \D
~~\textrm{and so by the previous lemma }~~ \C =\Nr_{\alpha}\D.$
There exist $ f \in Hom (\D, \B)$ and $f' \in Hom (\D,
\B')$ such that
$f (g_\xi) = e_A(x_\xi)$ and $f' (g_\xi) = e_{A'}(y_\xi)$ for all $\xi < \mu.$
Note that $f$ and $f'$ are both onto. We now have
$e_A \circ i^{-1} \circ e_{A'}^{-1} \circ ( f'\upharpoonleft \C) = f \upharpoonleft \C.$
Therefore $ Ker f' \cap \C = Ker f \cap \C.$
Hence by $\Ig(Ker f' \cap \C) = \Ig(Ker f \cap \C).$
So, again by the the previous lemma, $Ker f'  = Ker f.$
Let $y \in B$, then there exists $x \in D$ such that $y = f(x)$. Define $ \hat{i} (y) = f' (x).$
The map is well defined and is as required.

\bigskip

\begin{displaymath}
    \xymatrix{
       D \ar[r]^f \ar[dr]_{f'} & B \ar[d]^{\hat{i}}  &\ar[l]_{e_{A}} A \ar[d]^{i} \\
             & B'  & \ar[l]^{e_{A'}}\mathcal{A}'}
\end{displaymath}

\end{demo}
\begin{corollary}\label{net} Let $\A, \A', i, e_A, e_{A'},$ $\B$ and $\B'$ be as in the previous proof. 
Then if $i$ is a monomorphism form $\A$ to $\A'$, then it lifts to a monomorphism $\bar{i}$ from $\B$ to $\B'$.
\end{corollary}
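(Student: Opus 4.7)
The plan is to follow the construction of $\hat{i}$ in the preceding theorem essentially verbatim, noting that the isomorphism hypothesis on $i$ was used only to obtain equality of kernels, and that injectivity of $i$ alone already delivers this. Concretely, let $\mu=|A|$, enumerate $A=\{x_\nu:\nu<\mu\}$, $A'=\{y_\nu:\nu<\mu\}$ so that $i(x_\nu)=y_\nu$, and form $\D=\Fr_\mu \PA_\beta$ with free generators $(\xi_\nu:\nu<\mu)$ and $\C=\Sg^{\Rd_\alpha \D}\{\xi_\nu:\nu<\mu\}$. By Lemma \ref{polyadic}(1) one has $\C=\Nr_\alpha \D$, and $\C$ generates $\D$. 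By freeness of $\D$, choose surjective homomorphisms $f:\D\to\B$ and $f':\D\to\B'$ with $f(\xi_\nu)=e_A(x_\nu)$ and $f'(\xi_\nu)=e_{A'}(y_\nu)$.

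The key point is the identity
\[
f'\upharpoonleft\C \;=\; e_{A'}\circ i\circ e_A^{-1}\circ(f\upharpoonleft\C),
\]
which is a genuine equation of homomorphisms (the right-hand side is meaningful because $f(\C)\subseteq e_A(A)$, the image of a generating set lying in the subalgebra $e_A(A)$) and holds because both sides agree on the free generators $\xi_\nu$. For $c\in \C$, the equation $f'(c)=0$ unpacks to $e_{A'}(i(e_A^{-1}(f(c))))=0$, and peeling off $e_{A'}$, $i$, and $e_A$ in turn --- each of which is injective, which is the \emph{only} place the monomorphism hypothesis on $i$ intervenes --- yields $f(c)=0$. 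Hence $\operatorname{Ker}f\cap\C=\operatorname{Ker}f'\cap\C$.

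Now Lemma \ref{polyadic}(2), applied to the ideals $\operatorname{Ker}f$ and $\operatorname{Ker}f'$ of $\D$, converts this equality into $\operatorname{Ker}f=\Ig^{\D}(\operatorname{Ker}f\cap\C)=\Ig^{\D}(\operatorname{Ker}f'\cap\C)=\operatorname{Ker}f'$. Setting $\bar{i}(f(x))=f'(x)$ therefore defines an injective homomorphism $\bar{i}:\B\to\B'$, and the relation $\bar{i}\circ e_A=e_{A'}\circ i$ is clear by evaluation on the generating set $\{f(\xi_\nu):\nu<\mu\}=e_A(A)$. The only conceptual obstacle is spotting that the theorem's appeal to $i^{-1}$ was cosmetic, and that the real engine of the argument --- the lifting of kernel equalities from $\C$ to $\D$ furnished by Lemma \ref{polyadic}(2) --- is a polyadic feature that, by Example \ref{ex}, has no cylindric analogue, and is precisely what makes minimal dilation functorial in the polyadic paradigm.
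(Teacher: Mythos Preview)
Your argument is correct in substance, with one harmless slip: $f'$ need not be surjective onto $\B'$. Since $i$ is not assumed onto, the image $f'(\D)$ is only $\Sg^{\B'}(e_{A'}(i(A)))$, a proper subalgebra of $\B'$ in general. This does not damage the proof, because you only use surjectivity of $f$ (to define $\bar{i}$ on all of $\B$); the map $\bar{i}$ then lands in $f'(\D)\subseteq\B'$, which is exactly what is required.

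Your route differs from the paper's. The paper does not rerun the construction; instead it reduces directly to the isomorphism case already established: regard $i:\A\to i(\A)$ as an isomorphism, set $\C=\Sg^{\B'}(e_{A'}(i(A)))$, observe that $e_{A'}\upharpoonright i(\A)$ embeds $i(\A)$ into $\Nr_\alpha\C$ with $e_{A'}(i(A))$ generating $\C$, and then invoke the preceding theorem to lift $i$ to an isomorphism $\bar{i}:\B\to\C\subseteq\B'$. This is a two-line argument. Your approach, by contrast, unpacks the previous proof and verifies that the appeal to $i^{-1}$ was dispensable by rewriting the key identity as $f'\upharpoonleft\C=e_{A'}\circ i\circ e_A^{-1}\circ(f\upharpoonleft\C)$ rather than $f\upharpoonleft\C=e_A\circ i^{-1}\circ e_{A'}^{-1}\circ(f'\upharpoonleft\C)$. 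The two proofs produce literally the same map (your $f'(\D)$ is the paper's $\C$), but the paper's reduction is more economical, while yours has the virtue of isolating precisely where injectivity of $i$ enters and of making explicit that Lemma~\ref{polyadic} is the engine --- a point the paper's one-liner leaves implicit.
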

\begin{displaymath}
\xymatrix{
B \ar[d]^{\hat{i}}  &\ar[l]_{e_{A}} A \ar[d]^{i} \\
              B'  & \ar[l]^{e_{A'}}\mathcal{A}'}
\end{displaymath}

\begin{demo}{Proof} Consider $i:\A\to i(\A)$. Take $\C=\Sg^{\B'}(e_{A'}i(A))$. Then $i$ lifts to an isomorphism $\bar{i}\to \C\subseteq \B$.
\end{demo}
\begin{theorem} Let $\beta>\alpha$. Let $\bold L=\{\A\in \PA_{\beta}: \A=\Sg^{\A}\Nr_{\alpha}\A$\}. Let $\Nr:\bold L\to \PA_{\alpha}$ be the neat reduct functor. 
Then $\Nr$ is invertible. That is, there is a functor $G:\PA_{\alpha}\to \bold L$ and natural isomorphisms
$\mu:1_{\bold L}\to G\circ \Nr$ and $\epsilon: \Nr\circ G\to 1_{\PA_{\alpha}}$.
\end{theorem}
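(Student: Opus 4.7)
The plan is to define $G$ on objects by choosing, for each $\A \in \PA_\alpha$, a minimal $\beta$-dilation $G(\A)$: an algebra $\B \in \PA_\beta$ with $\A \subseteq \Nr_\alpha \B$ and $A$ generating $\B$. Existence of \emph{some} $\beta$-dilation is the polyadic neat embedding theorem (Daigneault--Monk), and passing to $\Sg^\B A$ makes it minimal; Lemma~\ref{polyadic}(i) then gives $\A = \Nr_\alpha G(\A)$, so $G(\A) \in \bold L$. On morphisms, given $f: \A \to \A'$ in $\PA_\alpha$, the free-algebra lifting argument used in the proof of Corollary~\ref{net} (which works for any homomorphism, not only an isomorphism, once one drops superfluous adjectives in the Ker-intersection step) produces a homomorphism $G(f): G(\A) \to G(\A')$ extending $f$. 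Because $G(\A)$ is generated, as a $\beta$-dimensional algebra, by the image of $\A$, such an extension is unique; this forces $G$ to respect composition and identities, i.e.\ to be a functor.

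For the counit $\epsilon: \Nr \circ G \to 1_{\PA_\alpha}$, set $\epsilon_\A = \mathrm{id}_\A$; this is legitimate by Lemma~\ref{polyadic}(i), and naturality is built into the construction, since $G(f)$ restricts to $f$ on $\A = \Nr_\alpha G(\A)$. For the unit $\mu: 1_{\bold L} \to G \circ \Nr$, observe that if $\B \in \bold L$ then $\B$ is itself a minimal $\beta$-dilation of $\Nr_\alpha \B$; the uniqueness half of the theorem just proved supplies a unique isomorphism $\mu_\B: \B \to G(\Nr_\alpha \B)$ pointwise-fixing $\Nr_\alpha \B$. Naturality of $\mu$ at a morphism $h: \B \to \B'$ in $\bold L$ amounts to $\mu_{\B'} \circ h = G(\Nr_\alpha h) \circ \mu_\B$; both sides, when restricted to $\Nr_\alpha \B$, coincide with $\Nr_\alpha h$, and since $\Nr_\alpha \B$ generates $\B$, they agree everywhere.

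The main obstacle is the existence of $G(\A)$ for every $\A \in \PA_\alpha$: this is the polyadic neat embedding theorem, which in sharp contrast to the cylindric case (cf.\ Example~\ref{ex}) holds unconditionally. Once existence is granted, the two clauses of Lemma~\ref{polyadic} do all of the remaining work. Clause~(i) pins down the neat reduct of a minimal dilation, ensuring $\Nr \circ G = 1_{\PA_\alpha}$ on the nose and making $\epsilon$ the identity; clause~(ii) controls ideals, hence kernels, of a minimal dilation in terms of their trace on $A$, which combined with the free-algebra lifting recipe yields both functoriality of $G$ and well-definedness of $\mu$. The resulting equivalence $\bold L \simeq \PA_\alpha$ is the categorial manifestation of the fact that, in the polyadic paradigm, the $\omega$ spare dimensions add nothing essentially new --- precisely the feature that fails for cylindric algebras.
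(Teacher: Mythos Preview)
Your argument is correct and follows essentially the same route as the paper: both construct the inverse functor $G$ by sending $\A$ to a minimal $\beta$-dilation, invoke Lemma~\ref{polyadic}(i) to identify $\Nr_\alpha G(\A)$ with $\A$, and use the free-algebra lifting of Corollary~\ref{net} (plus uniqueness from the generating property) to define $G$ on morphisms and to build the unit $\mu$. The only cosmetic difference is that the paper first cites the general principle that a full, faithful, dense functor is an equivalence and then picks $G(\B)$ from a fixed system of isomorphism representatives (so $\epsilon_\B$ is a chosen isomorphism rather than the identity), whereas you arrange $G(\A)$ to literally contain $\A$, making $\epsilon$ the identity on the nose; this is a harmless streamlining.
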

\begin{demo}{Proof} The idea is that a full, faithful, dense functor is invertible, \cite{cat} theorem 1.4.11. 
Let $L$ be a system of representatives for isomorphism on $Ob(\bold L)$.
For each $\B\in Ob(\PA_{\alpha})$ there is a unique $\G(B)$ in $L$ such that $\Nr(G(\B))\cong \B$.
$G(\B)$ is a minmal dilation of $\B$. Then $G:Ob(\PA_{\alpha})\to Ob(\bold L)$ is well defined. 
Choose one isomorphism $\epsilon_B: \Nr(G(B))\to \B$. If $g:\B\to \B'$ is a $\PA_{\alpha}$  morphism, then the square

\begin{displaymath}
    \xymatrix{ \Nr(G(B)) \ar[r]^{\epsilon_B}\ar[d]_{\epsilon_B^{-1}\circ g\circ \epsilon_{B'}} & B \ar[d]^g \\
               \Nr(G(B'))\ar[r]_{\epsilon_{B'}} & B' }
\end{displaymath}
commutes. By corollary \ref{net}, there is a unique morphism $f:G(\B)\to G(\B')$ such that $\Nr(f)=\epsilon_{\B}^{-1}\circ g\circ \epsilon$.
We let $G(g)=f$. Then it is easy to see that $G$ defines a functor. Also, by definition $\epsilon=(\epsilon_{\B})$ 
is a natural isomorphism from $\Nr\circ G$ to $1_{\PA_{\alpha}}$.
To find a natural isomorphism from $1_{\bold L}$ to $G\circ \Nr,$ observe that $e_{FA}:\Nr\circ G\circ \Nr(\A)\to \Nr(\A)$ is an isomorphism.
Then there is a unique $\mu_A:\A\to G\circ \Nr(\A)$ such that $\Nr(\mu_{\A})=e_{FA}^{-1}.$
Since $\epsilon^{-1}$ is natural for any $f:\A\to \A'$ the square
\bigskip
\bigskip
\begin{displaymath}
    \xymatrix{ \Nr(A) \ar[r]^{\epsilon_{\Nr(A)}^{-1}=\Nr(\mu_A)}\ar[d]_{\Nr(f)} & \Nr\circ G\circ \Nr(A) \ar[d]^{\Nr\circ G\circ \Nr(f)} \\
               \Nr(A')\ar[r]_{\epsilon_{FA}^{-1}=\Nr(\mu_{A'})} & \Nr\circ G\circ \Nr(A') }
\end{displaymath}

commutes, hence the square

\bigskip
\begin{displaymath}
    \xymatrix{ A \ar[r]^{\mu_A}\ar[d]_f & G\circ \Nr(A) \ar[d]^{G\circ \Nr(f)} \\
               A'\ar[r]_{\mu_{A'}} & G\circ \Nr(A') }
\end{displaymath}

commutes, too. Therefore $\mu=(\mu_A)$ is as required.
\end{demo}
To summarize we have theorems  2.6.67 (ii), 2.6.71-72 of \cite{HMT1} 
formulated for $\Dc$'s do not hold for $\K\in \{\SC, \CA, \QA, \QEA\}$; in fact they do not hold for
$\RK_{\alpha}$ 
but they hold for $\PA_{\alpha}$'s. Here $\alpha$ is an infinite ordinal.
This establishes yet another dichotomy between the $\CA$ paradigm and the $\PA$ paradigm.

Let $C$ be the reflective subcatogory of $\RCA_{\alpha}$ that has universal maps. Then $\Dc_{\alpha}\subseteq \bold L$.
And indeed we have:
\begin{theorem}\label{SUP} Let $\alpha\geq \omega$ . Let $\A_0\in \Dc_{\alpha}$, $\A_1,\A_2\in \RCA_{\alpha}$
and $f:\A_0\to \A_1$ and $g:\A_0\to \A_2$ be monomorphisms. Then there exists $\D\in \Nr_{\alpha}\CA_{\alpha+\omega}$
and $m:\A_1\to D$ and $n:\A_2\to \D$ such that $m\circ f=n\circ g$. Furthermore $\D$ is a super amalgam.
\end{theorem}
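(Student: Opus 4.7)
The plan is to lift the entire diagram into $\alpha+\omega$ dimensions, where the variety $\bold M = \{\B\in \CA_{\alpha+\omega}:\B=\Sg^\B\Nr_\alpha\B\}$ has already been shown to enjoy $SUPAP$ (this is Theorem~5 in the excerpt), then take neat reducts and use the dimension-complemented hypothesis on $\A_0$ to pull the superamalgamation property back down to dimension $\alpha$. First I would apply the Neat Embedding Theorem to obtain, for $i=0,1,2$, dilations $\B_i\in\CA_{\alpha+\omega}$ and embeddings $e_i:\A_i\hookrightarrow\Nr_\alpha\B_i$; replacing each $\B_i$ by $\Sg^{\B_i}(e_i(A_i))$ we may assume $\B_i\in\bold M$. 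Since $\A_0\in\Dc_\alpha$, the $\Dc$-version of Theorem~2.6.72 of \cite{HMT1} (equivalently, the lifting argument exhibited in the proof of Corollary~\ref{net}, which works in $\Dc$ by the cited HMT results) guarantees that the monomorphisms $f,g$ lift uniquely to monomorphisms $\bar f:\B_0\to\B_1$ and $\bar g:\B_0\to\B_2$ compatible with $e_0,e_1,e_2$.

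Next I would invoke the superamalgamation property of $\bold M$ established in Theorem~5 of the excerpt: there is $\E\in\bold M$ together with monomorphisms $\bar m:\B_1\to\E$ and $\bar n:\B_2\to\E$ satisfying $\bar m\circ\bar f=\bar n\circ\bar g$, and moreover, for every $x\in B_1,y\in B_2$ with $\bar m(x)\leq\bar n(y)$, there is an interpolant $z\in B_0$ with $x\leq\bar f(z)$ and $\bar g(z)\leq y$ (and symmetrically). Set $\D=\Nr_\alpha\E\in\Nr_\alpha\CA_{\alpha+\omega}$, and define $m=\bar m\circ e_1:\A_1\to\D$ and $n=\bar n\circ e_2:\A_2\to\D$. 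These are monomorphisms, and the commutativity $m\circ f=n\circ g$ follows directly from $\bar m\circ\bar f=\bar n\circ\bar g$ and the compatibility of the liftings.

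To establish that $\D$ is in fact a \emph{super}amalgam at the level of dimension $\alpha$, suppose $m(x)\leq n(y)$ in $\D$ with $x\in\A_1$, $y\in\A_2$. Viewing this in $\E$, superamalgamation inside $\bold M$ yields $z\in B_0$ with $\bar f(z)\geq e_1(x)$ and $\bar g(z)\leq e_2(y)$. The key point is that $z$ need not a priori lie in $e_0(A_0)$, only in $B_0$, so I would correct it by cylindrifying: let $\Gamma=\Delta^{\B_0}(z)\smallsetminus\alpha$, which is finite, and set $w=\mathsf{c}_{(\Gamma)}^{\B_0}z$. Then $w\in\Nr_\alpha\B_0$. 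Because $\A_0\in\Dc_\alpha$, one has $e_0(A_0)=\Nr_\alpha\B_0$ (this is exactly where the dimension-complemented hypothesis is used, via the $\Dc$-analogue of HMT~2.6.67(ii)); hence $w=e_0(w_0)$ for a unique $w_0\in A_0$. Since $e_1(x),e_2(y)\in\Nr_\alpha\B_1,\Nr_\alpha\B_2$ respectively, the cylindrifications $\mathsf{c}_{(\Gamma)}$ fix them, so applying $\mathsf{c}_{(\Gamma)}$ to the inequalities $e_1(x)\leq\bar f(z)$ and $\bar g(z)\leq e_2(y)$ yields $e_1(x)\leq\bar f(w)=e_1(f(w_0))$ and $e_2(g(w_0))=\bar g(w)\leq e_2(y)$; injectivity of $e_1,e_2$ gives $x\leq f(w_0)$ and $g(w_0)\leq y$, as required.

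The main obstacle, as I see it, is the last step: guaranteeing that the superamalgamation interpolant can be chosen inside the small algebra $\A_0$ rather than merely inside the dilation $\B_0$. This is precisely what forces the hypothesis $\A_0\in\Dc_\alpha$, because without dimension-complementedness the equality $e_0(A_0)=\Nr_\alpha\B_0$ may fail (Example~\ref{ex} shows this) and the cylindrification trick above breaks down: $\mathsf{c}_{(\Gamma)}z$ would still be in $\Nr_\alpha\B_0$ but no longer in the image of $\A_0$, and no candidate interpolant in the intersection $\A_0$ would be available. The lifting of morphisms in the first paragraph also rests on the same $\Dc$-hypothesis, so the role of $\Dc_\alpha$ is uniform: it converts the categorial adjunction failure for $\RCA_\alpha$ into a one-sided universal property sufficient to transport both the amalgam \emph{and} the superamalgamation witnesses from $\alpha+\omega$ down to $\alpha$.
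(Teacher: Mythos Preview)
Your proposal is correct and follows essentially the same route as the paper's own proof: lift everything to $\alpha+\omega$ dimensions via the Neat Embedding Theorem, invoke the $SUPAP$ of $\bold M=\{\B\in\CA_{\alpha+\omega}:\B=\Sg^{\B}\Nr_\alpha\B\}$ to obtain a superamalgam $\E$ at the higher level, set $\D=\Nr_\alpha\E$, and then---for the superamalgamation clause---cylindrify the interpolant $z\in\B_0$ over the finite set $\Gamma=\Delta z\smallsetminus\alpha$ and use HMT~2.6.67 together with $\A_0\in\Dc_\alpha$ to conclude that ${\sf c}_{(\Gamma)}z$ actually lies in $e_0(A_0)$. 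Your identification of the ``main obstacle'' (that the interpolant furnished by $SUPAP$ in $\bold M$ lives a priori only in the dilation $\B_0$, not in $e_0(A_0)$) and of the precise role of the $\Dc_\alpha$ hypothesis in overcoming it is exactly the point the paper makes.
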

\begin{demo}{Proof}
%$\Dc_{\alpha}$ does not satify property (i) in theorem \ref{amal}. 
Looking at figure 1, assuming that the base algebra $\A_0$ is in $\Dc_{\alpha}$, we obtain 
$\D\in \Nr_{\alpha}\CA_{\alpha+\omega}$
$m:\A_1\to \D$, and $n:\A_2\to \D$
such that $m\circ i=n\circ j$.
Here $m=k\circ e_1$ and $n=h\circ e_2$.
Denote $k$ by $m^+$ and $h$ by $n^+$.
Now we further want to show that if $m(a) \leq n(b)$,
for $a\in A_1$ and $b\in A_2$, then there exists $t \in A_0$
such that $ a \leq i(t)$ and $j(t) \leq b$.
So let $a$ and $b$ be as indicated . We have  $m^+ \circ e_1(a) \leq n^+ \circ e_2(b),$ so
$m^+ ( e_1(a)) \leq n^+ ( e_2(b)).$
Since $\bold L$ has $SUPAP$, there exist $ z \in A_0^+$ such that $e_1(a) \leq \bar{i}(z)$ and
$\bar{j}(z) \leq e_2b)$.
Let $\Gamma = \Delta z \smallsetminus \alpha$ and $z' =
c_{(\Gamma)}z$. (Note that $\Gamma$ is finite.) So, we obtain that
$e_1(c_{(\Gamma)}a) \leq \bar{i}(c_{(\Gamma)}z)~~ \textrm{and} ~~ \bar{j}(c_{(\Gamma)}z) \leq
e_2(c_{(\Gamma)}b).$ It follows that $e_A(a) \leq \bar{i}(z')~~
\textrm{and} ~~ \bar{j}(z') \leq e_B(b).$ Now $z' \in \Nr_\alpha \A_0^+
= \Sg^{\Nr_\alpha \A_0^+} (e_{A_0}(A_0)) = A_0.$ Here we use \cite{HMT1} 2.6.67.
So, there exists $t \in C$ with $ z' = e_C(t)$. Then we get
$e_1(a) \leq \bar{i}(e_0(t))$ and $\bar{j}(e_1(t)) \leq e_2(b).$ It follows that $e_1(a) \leq e_A \circ i(t)$ and
$e_2 \circ j(t) \leq
e_2(b).$ Hence, $ a \leq i(t)$ and $j(t) \leq b.$
We are done.
\end{demo}

Now we have senn that polyadic algebras have a nice representation theorem. But is the standard modelling of 
polyadic algebras appropriate for certain phenomena of reasoning? 
The recieved conclusions about complexity of the axiomatizations of such algebras, 
are not warranted, they are an artefact of these modellings, which are not mandatory in anyway.
The complexity of axiomatizations of polyadic algebras is highly complex fro the recursion theory point of view \cite{Sagi}.
This is one negative aspect of polyadic algebras, that is highly undesirable.

\section{Cylindric-polyadic algebras}

Now what? 

Let us reflect on our earlier investigations. We have formulated an adjoint situation that cylindric algebras 
do not satisfy while polyadic algebras do.

Cylindric algebras are nice in many respects. They are definable by a finite simple schema, 
and the there exists recursive axiomatizations of the representable algebras, which is the least that can be said about polyadic algebras. 
On the other hand polyadic algebra has a strong
Stone like representability result, which cylindric algebras lack, in a very resilient way.

Can we amalgamate the positive properties of both paradigms?
Can we tame non-finite axiomatizability results of cylindric algebras, and at the same time obtain positive results of polyadic algebras 
like interpolation? 

The question is certainly fair, and worhtwhile pondering about, even though it does not really have a mathematical exact formulation, and hence 
can lend itself to
different interpretations. Indeed, it is more of a philosophical question, but in history, it often happened  that 
what was a philosophical question at one point of time became a mathematical one, with a rigorous mathematical anwser at  a later time.
(For example Greeks talked abouy atoms).
Furthermore, vagueness could be an acet not a liability. 
On the other hand, insisting on rigour can occasionally be counterproductive. 

We can even go further than philosophy, and use the title of this article. Metaphysically, what is the "spirit" of cylindric algebras? 
Why, is there this feeling in the air, that quasi-polyadic algebras belong to the cylindric paradigm, while polyadic algebras do not.
Can we get this feeling down to earth, can we pinn it down.
This will be the aim of our later investigations.

One natural way to approach our general problem is to experiment with signatures and see what happens.
This is not a novel approach, it was already implemented by Sain studying countable reducts of polyadic algebras.
Only finite cylindrifiers are available, but the algebras intersect the poyadic paradigm because of the presence of infinitary substitutions
(that is substitutions moving infinitely many points) though a finite number of them only, and it can be proved that two is enough.

Now  we study a very natural amalgam of cylindric and polyadic algebras. We allow {\it all} substitutions, and restrict cylindrifies only to finite ones.
We do not alter the notion of representability; representable algebras are those that can be 
represented  on disjoint unions  of cartesian squares, so we keep the Tarskian (semantical) spirit.
We prove both a completeness and an interpolation result.  And we also show that the neat reduct functor is strongly invertible, which is utterly
unsurprising, because we have so many substitutions (these can be used to code extra dimensions).

We prove only the interpolation property (completeness is discernible below the surface of the proof) 
but in the presence of full fledged commutativity of cylindrifiers. 
The proof is basically a Henkin construction; 
algebraically a typical neat embedding theorem.

%In the next item we give another 
%proof that, inspired by duality theory of modal logic, and its intimate connection with the duality between 
%atom structures and complex algebras; this proof works because we do not have full
%fledged commutativity of cylindrifiers, referred to as a Rosser condition

\begin{lemma}\label{net} Let $\A$ be a polyadic algebra of dimension $\alpha$. 
Then for every $\beta>\alpha$ there exists a polyadic algebra of dimension $\beta$ 
such that
$\A\subseteq \Nr_{\alpha}\B$, and furthermore, for all $X\subseteq \A$ we have
$$\Sg^{\A}X=\Sg^{\Nr_{\alpha}\A}X=\Nr_{\alpha}\Sg^{\B}X.$$ In particular, $\A=\Nr_{\alpha}\B$.
$\Sg^{\B}\A$ is called the minimal dilation of $\A.$ 
\end{lemma}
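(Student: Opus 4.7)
The plan is to construct the $\beta$-dimensional dilation $\B$ concretely by lifting a representation of $\A$ to higher dimensions, and then deduce the subalgebra-generation statement from the preceding Lemma \ref{polyadic}.

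First I would invoke the Daigneault--Monk representation theorem for polyadic algebras: the given $\A \in \PA_\alpha$ embeds into a full polyadic set algebra $\wp({}^\alpha U)$, say via $\iota : \A \hookrightarrow \wp({}^\alpha U)$ (if $\A$ is only subdirectly representable the argument is run componentwise). Next I would introduce the cylindrical lift $\widehat{(\cdot)} : \wp({}^\alpha U) \to \wp({}^\beta U)$ defined by $\widehat{X} = \{ s \in {}^\beta U : s \upharpoonright \alpha \in X \}$. A direct verification shows that this map is a Boolean embedding that intertwines each $\cyl{i}$ ($i < \alpha$), each $\diag{i}{j}$ ($i,j < \alpha$), and each $\alpha$-dimensional substitution $\s_\sigma$ with its canonical extension to a $\beta$-dimensional substitution (where $\sigma$ is extended by the identity on $\beta \setminus \alpha$). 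Since $\widehat{X}$ is independent of every coordinate in $\beta \setminus \alpha$, one has $\widehat{X} \in \Nr_\alpha \wp({}^\beta U)$. Setting $\B := \Sg^{\wp({}^\beta U)}(\widehat{\iota(A)})$, the composition $\widehat{\iota} : \A \to \B$ realizes $\A$ as a subalgebra of $\Nr_\alpha \B$.

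For the generation statement, fix $X \subseteq A$. The equality $\Sg^{\A} X = \Sg^{\Nr_\alpha \A} X$ is automatic, since $\Nr_\alpha \A = \A$ for $\A \in \PA_\alpha$. The inclusion $\Sg^{\A} X \subseteq \Nr_\alpha \Sg^{\B} X$ follows because $X \subseteq \A \subseteq \Nr_\alpha \B$ and the right-hand side is closed under the $\alpha$-dimensional polyadic operations. For the converse, $X$ generates $\Sg^{\B} X$ as a $\beta$-dimensional polyadic algebra and $X \subseteq \Sg^{\A} X \subseteq \Nr_\alpha \Sg^{\B} X$; applying Lemma \ref{polyadic}(1) with $\A$ replaced by $\Sg^{\A} X$ and $\B$ by $\Sg^{\B} X$ then yields $\Sg^{\A} X = \Nr_\alpha \Sg^{\B} X$. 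Taking $X = A$ gives the final clause $\A = \Nr_\alpha \B$ as a special case.

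The main obstacle is the first paragraph, specifically checking faithfulness of the cylindrical lift once substitutions are extended from $\alpha$ to $\beta$. The subtlety is that a general transformation $\sigma : \beta \to \beta$ may mix coordinates inside and outside $\alpha$; however, every element of $\B$ has the canonical Daigneault--Monk form $\s_\sigma^{\B} a$ with $a \in A$ and $\sigma \upharpoonright \alpha$ one-to-one, so the same manipulation as in equations (\ref{tarek1})--(\ref{tarek3}) of Lemma \ref{polyadic} shows that no unwanted identifications are forced. This reliance on the abundance of substitutions is precisely what rescues the conclusion in the polyadic paradigm and fails in the cylindric one, as witnessed by Example \ref{ex}.
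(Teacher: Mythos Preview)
Your argument is correct for classical Boolean polyadic algebras, but it takes a genuinely different route from the paper. The paper constructs the dilation purely algebraically, with no appeal to representability: it realizes $\A$ as a transformation system inside $F({}^{\alpha}\alpha,\A)$ via $p\mapsto(x\mapsto {\sf s}_x p)$, passes to $F({}^{\beta}\alpha,\A)$ by restriction of sequences, and then defines the new $\beta$-cylindrifiers on the dilation by the explicit formula
\[
{\sf c}_k{\sf s}_{\sigma}^{\B}p={\sf s}_{\rho^{-1}}^{\B}\,{\sf c}_{\rho(\{k\}\cap\sigma\alpha)}\,{\sf s}_{(\rho\sigma\upharpoonright\alpha)}^{\A}p,
\]
checking well-definedness and each polyadic axiom by hand. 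Your approach instead front-loads the Daigneault--Monk representation theorem and lifts cylindrically inside a full set algebra. What the paper's construction buys is scope: the proof is explicitly written for cylindric-polyadic algebras (finite cylindrifiers only, and the text even mentions Heyting reducts), precisely the classes for which a representation theorem is the downstream goal, so invoking representability there would be circular. What your construction buys is brevity and concreteness in the classical Boolean case, where representability is already available. On the second half both arguments coincide: the paper also reduces the identity $\Sg^{\A}X=\Nr_{\alpha}\Sg^{\B}X$ to Lemma~\ref{polyadic}, exactly as you do.
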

\begin{demo}{Proof} The proof depends essentially on the abundance of substitutions; we have all of them, which makes stretching dimensions 
possible. We provide a proof for cylindric polyadic algebras; the rest of the cases are like the corresponding prof in \cite{DM} for Boolean polyadic 
algebras. 

We extensively use the techniques in \cite{DM}, but we have to watch out, for we only have finite cylindrifications.
Let $(\A, \alpha,S)$ be a transformation system. 
That is to say, $\A$ is a Heyting algebra and $S:{}^\alpha\alpha\to End(\A)$ is a homomorphism. For any set $X$, let $F(^{\alpha}X,\A)$ 
be the set of all functions from $^{\alpha}X$ to $\A$ endowed with Heyting operations defined pointwise and for 
$\tau\in {}^\alpha\alpha$ and $f\in F(^{\alpha}X, \A)$, ${\sf s}_{\tau}f(x)=f(x\circ \tau)$. 
This turns $F(^{\alpha}X,\A)$ to a transformation system as well. 
The map $H:\A\to F(^{\alpha}\alpha, \A)$ defined by $H(p)(x)={\sf s}_xp$ is
easily checked to be an isomorphism. Assume that $\beta\supseteq \alpha$. Then $K:F(^{\alpha}\alpha, \A)\to F(^{\beta}\alpha, \A)$ 
defined by $K(f)x=f(x\upharpoonright \alpha)$ is an isomorphism. These facts are straighforward to establish, cf. theorem 3.1, 3.2 
in \cite{DM}. 
$F(^{\beta}\alpha, \A)$ is called a minimal dilation of $F(^{\alpha}\alpha, \A)$. Elements of the big algebra, or the cylindrifier free 
dilation, are of form ${\sf s}_{\sigma}p$,
$p\in F(^{\beta}\alpha, \A)$ where $\sigma$ is one to one on $\alpha$, cf. \cite{DM} theorem 4.3-4.4.

We say that $J\subseteq I$ supports an element $p\in A,$ if whenever $\sigma_1$ and  $\sigma_2$ are 
transformations that agree on $J,$ then  ${\sf s}_{\sigma_1}p={\sf s}_{\sigma_2}p$.
$\Nr_JA$, consisting of the elements that $J$ supports, is just the  neat $J$ reduct of $\A$; 
with the operations defined the obvious way as indicated above. 
If $\A$ is an $\B$ valued $I$ transformaton system with domain $X$, 
then the $J$ compression of $\A$ is isomorphic to a $\B$ valued $J$ transformation system
via $H: \Nr_J\A\to F(^JX, \A)$ by setting for $f\in\Nr_J\A$ and $x\in {}^JX$, $H(f)x=f(y)$ where $y\in X^I$ and $y\upharpoonright J=x$, 
cf. \cite{DM} theorem 3.10.

Now let $\alpha\subseteq \beta.$ If $|\alpha|=|\beta|$ then the the required algebra is defined as follows. 
Let $\mu$ be a bijection from $\beta$ onto $\alpha$. For $\tau\in {}^{\beta}\beta,$ let ${\sf s}_{\tau}={\sf s}_{\mu\tau\mu^{-1}}$ 
and for each $i\in \beta,$ let 
${\sf c}_i={\sf c}_{\mu(i)}$. Then this defined $\B\in GPHA_{\beta}$ in which $\A$ neatly embeds via ${\sf s}_{\mu\upharpoonright\alpha},$
cf. \cite{DM} p.168.  Now assume that $|\alpha|<|\beta|$.
Let $\A$ be a  given polyadic algebra of dimension $\alpha$; discard its cylindrifications and then take its minimal dilation $\B$, 
which exists by the above.
We need to define cylindrifications on the big algebra, so that they agree with their values in $\A$ and to have $\A\cong \Nr_{\alpha}\B$. We let (*):
$${\sf c}_k{\sf s}_{\sigma}^{\B}p={\sf s}_{\rho^{-1}}^{\B} {\sf c}_{\rho(\{k\}\cap \sigma \alpha)}{\sf s}_{(\rho\sigma\upharpoonright \alpha)}^{\A}p.$$
Here $\rho$ is a any permutation such that $\rho\circ \sigma(\alpha)\subseteq \sigma(\alpha.)$
Then we claim that the definition is sound, that is, it is independent of $\rho, \sigma, p$. 
Towards this end, let $q={\sf s}_{\sigma}^{\B}p={\sf s}_{\sigma_1}^{\B}p_1$ and 
$(\rho_1\circ \sigma_1)(\alpha)\subseteq \alpha.$

We need to show that (**)
$${\sf s}_{\rho^{-1}}^{\B}{\sf c}_{[\rho(\{k\}\cap \sigma(\alpha)]}^{\A}{\sf s}_{(\rho\circ \sigma\upharpoonright \alpha)}^{\A}p=
{\sf s}_{\rho_1{^{-1}}}^{\B}{\sf c}_{[\rho_1(\{k\}\cap \sigma(\alpha)]}^{\A}{\sf s}_{(\rho_1\circ \sigma\upharpoonright \alpha)}^{\A}p.$$
Let $\mu$ be a permutation of $\beta$ such that
$\mu(\sigma(\alpha)\cup \sigma_1(\alpha))\subseteq \alpha$.
Now applying ${\sf s}_{\mu}$ to the left hand side of (**), we get that 
$${\sf s}_{\mu}^{\B}{\sf s}_{\rho^{-1}}^{\B}{\sf c}_{[\rho(\{k\})\cap \sigma(\alpha)]}^{\A}{\sf s}_{(\rho\circ \sigma|\alpha)}^{\A}p
={\sf s}_{\mu\circ \rho^{-1}}^{\B}{\sf c}_{[\rho(\{k\})\cap \sigma(\alpha)]}^{\A}{\sf s}_{(\rho\circ \sigma|\alpha)}^{\A}p.$$
The latter is equal to
${\sf c}_{(\mu(\{k\})\cap \sigma(\alpha))}{\sf s}_{\sigma}^{\B}q.$
Now since $\mu(\sigma(\alpha)\cap \sigma_1(\alpha))\subseteq \alpha$, we have
${\sf s}_{\mu}^{\B}p={\sf s}_{(\mu\circ \sigma\upharpoonright \alpha)}^{\A}p={\sf s}_{(\mu\circ \sigma_1)\upharpoonright \alpha)}^{\A}p_1\in A$.
It thus follows that 
$${\sf s}_{\rho^{-1}}^{\B}{\sf c}_{[\rho(\{k\})\cap \sigma(\alpha)]}^{\A}{\sf s}_{(\rho\circ \sigma\upharpoonright \alpha)}^{\A}p=
{\sf c}_{[\mu(\{k\})\cap \mu\circ \sigma(\alpha)\cap \mu\circ \sigma_1(\alpha))}{\sf s}_{\sigma}^{\B}q.$$ 
By exactly the same method, it can be shown that 
$${\sf s}_{\rho_1{^{-1}}}^{\B}{\sf c}_{[\rho_1(\{k\})\cap \sigma(\alpha)]}^{\A}{\sf s}_{(\rho_1\circ \sigma\upharpoonright \alpha)}^{\A}p
={\sf c}_{[\mu(\{k\})\cap \mu\circ \sigma(\alpha)\cap \mu\circ \sigma_1(\alpha))}{\sf s}_{\sigma}^{\B}q.$$ 
By this we have proved (**).

Furthermore, it defines the required algebra $\B$. Let us check this.
Since our definition is slightly different than that in \cite{DM}, by restricting cylindrifications to be olny finite, 
we need to check the polyadic axioms which is tedious but routine. The idea is that every axiom can be pulled back to 
its corresponding axiom holding in the small algebra 
$\A$.
We check only the axiom $${\sf c}_k(q_1\land {\sf c}_kq_2)={\sf c}_kq_1\land {\sf c}_kq_2.$$
We follow closely \cite{DM} p. 166. 
Assume that $q_1={\sf s}_{\sigma}^{\B}p_1$ and $q_2={\sf s}_{\sigma}^{\B}p_2$. 
Let $\rho$ be a permutation of $I$ such that $\rho(\sigma_1I\cup \sigma_2I)\subseteq I$ and let 
$$p={\sf s}_{\rho}^{\B}[q_1\land {\sf c}_kq_2].$$
Then $$p={\sf s}_{\rho}^{\B}q_1\land {\sf s}_{\rho}^{\B}{\sf c}_kq_2
={\sf s}_{\rho}^{\B}{\sf s}_{\sigma_1}^{\B}p_1\land {\sf s}_{\rho}^{\B}{\sf c}_k {\sf s}_{\sigma_2}^{\B}p_2.$$
Now we calculate ${\sf c}_k{\sf s}_{\sigma_2}^{\B}p_2.$
We have by (*)
$${\sf c}_k{\sf s}_{\sigma_2}^{\B}p_2= {\sf s}^{\B}_{\sigma_2^{-1}}{\sf c}_{\rho(\{k\}\cap \sigma_2I)} {\sf s}^{\A}_{(\rho\sigma_2\upharpoonright I)}p_2.$$
Hence $$p={\sf s}_{\rho}^{\B}{\sf s}_{\sigma_1}^{\B}p_1\land {\sf s}_{\rho}^{\B}{\sf s}^{\B}_{\sigma^{-1}}{\sf c}_{\rho(\{k\}\cap \sigma_2I)} 
{\sf s}^{\A}_{(\rho\sigma_2\upharpoonright I)}p_2.$$
\begin{equation*}
\begin{split}
&={\sf s}^{\A}_{\rho\sigma_1\upharpoonright I}p_1\land {\sf s}_{\rho}^{\B}{\sf s}^{\A}_{\sigma^{-1}}{\sf c}_{\rho(\{k\}\cap \sigma_2I)} 
{\sf s}^{\A}_{(\rho\sigma_2\upharpoonright I)}p_2,\\
&={\sf s}^{\A}_{\rho\sigma_1\upharpoonright I}p_1\land {\sf s}_{\rho\sigma^{-1}}^{\A}
{\sf c}_{\rho(\{k\}\cap \sigma_2I)} {\sf s}^{\A}_{(\rho\sigma_2\upharpoonright I)}p_2,\\
&={\sf s}^{\A}_{\rho\sigma_1\upharpoonright I}p_1\land {\sf c}_{\rho(\{k\}\cap \sigma_2I)} {\sf s}^{\A}_{(\rho\sigma_2\upharpoonright I)}p_2.\\
\end{split}
\end{equation*} 
Now $${\sf c}_k{\sf s}_{\rho^{-1}}^{\B}p={\sf c}_k{\sf s}_{\rho^{-1}}^{\B}{\sf s}_{\rho}^{\B}(q_1\land {\sf c}_k q_2)={\sf c}_k(q_1\land {\sf c}_kq_2)$$
We next calculate ${\sf c}_k{\sf s}_{\rho^{-1}}p$.
Let $\mu$ be a permutation of $I$ such that $\mu\rho^{-1}I\subseteq I$. Let $j=\mu(\{k\}\cap \rho^{-1}I)$.
Then applying (*), we have:
\begin{equation*}
\begin{split}
&{\sf c}_k{\sf s}_{\rho^{-1}}p={\sf s}^{\B}_{\mu^{-1}}{\sf c}_{j}{\sf s}_{(\mu\rho^{-1}|I)}^{\A}p,\\
&={\sf s}^{\B}_{\mu^{-1}}{\sf c}_{j}{\sf s}_{(\mu\rho^{-1}|I)}^{\A}
{\sf s}^{\A}_{\rho\sigma_1\upharpoonright I}p_1\land {\sf c}_{(\rho\{k\}\cap \sigma_2I)} {\sf s}^{\B}_{(\rho\sigma_2\upharpoonright I)}p_2,\\
 &={\sf s}^{\B}_{\mu^{-1}}{\sf c}_{j}[{\sf s}_{\mu \sigma_1\upharpoonright I}p_1\land r].\\
\end{split}
\end{equation*}

where 
$$r={\sf s}_{\mu\rho^{-1}}^{\B}{\sf c}_j {\sf s}_{\rho \sigma_2\upharpoonright I}^{\A}p_2.$$
Now ${\sf c}_kr=r$. Hence, applying the axiom in the small algebra, we get: 
$${\sf s}^{\B}_{\mu^{-1}}{\sf c}_{j}[{\sf s}_{\mu \sigma_1\upharpoonright I}^{\A}p_1]\land {\sf c}_k q_2
={\sf s}^{\B}_{\mu^{-1}}{\sf c}_{j}[{\sf s}_{\mu \sigma_1\upharpoonright I}^{\A}p_1\land r].$$
But
$${\sf c}_{\mu(\{k\}\cap \rho^{-1}I)}{\sf s}_{(\mu\sigma_1|I)}^{\A}p_1=
{\sf c}_{\mu(\{k\}\cap \sigma_1I)}{\sf s}_{(\mu\sigma_1|I)}^{\A}p_1.$$
So 
$${\sf s}^{\B}_{\mu^{-1}}{\sf c}_{k}[{\sf s}_{\mu \sigma_1\upharpoonright I}^{\A}p_1]={\sf c}_kq_1,$$ and 
we are done. The second part, is exactly like theorem \ref{polyadic}.
\end{demo}
%A third thing, which we set out to prove, is that they have the superamalgamation property.
%To prove this, we prove that the free algebras have the strong interpolation property.
%We start with $MV$ polyadic algebras.

\begin{theorem} Let $\beta$ be a cardinal, and $\A=\Fr_{\beta}\FPA_{\alpha}$ be the free algebra on $\beta$ generators.
Let $X_1, X_2\subseteq
\beta$, $a\in \Sg^{\A}X_1$ and $c\in \Sg^{\A}X_2$ be such that $a\leq c$.
Then there exists $b\in \Sg^{\A}(X_1\cap X_2)$ such that $a\leq b\leq c.$
\end{theorem}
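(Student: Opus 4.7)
\begin{demo}{Proof plan} The approach follows the template of Claim 2 (the Craig interpolation claim proved in the splitting section), but replaces the cylindric tools by the much stronger polyadic neat embedding (Lemma \ref{net}), which is precisely what allows the interpolant to be extracted at the correct dimension without a residual cylindrification $\mathsf{c}_{(\Gamma)}$.

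First, by Lemma \ref{net} we pass to the minimal dilation $\B\in \FPA_{\alpha+\omega}$ of $\A$, so that $\A=\Nr_{\alpha}\B$, $\B=\Sg^{\B}A$, and crucially $\Sg^{\A}Y=\Nr_{\alpha}\Sg^{\B}Y$ for every $Y\subseteq A$. Write $\A^{(Y)}$ for $\Sg^{\A}Y$ and similarly $\B^{(Y)}$. With $a\leq c$ given, set
$$M=\Ig^{\A^{(X_2)}}\{c\},\qquad N=\Ig^{\A^{(X_1)}}\bigl(M\cap \A^{(X_1\cap X_2)}\bigr).$$
Exactly as in Claim 2, one verifies the compatibility $M\cap \A^{(X_1\cap X_2)}=N\cap \A^{(X_1\cap X_2)}$, by using the natural quotient isomorphisms on $\A^{(X_1\cap X_2)}$.

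Next, I would invoke the congruence extension property for $\A$, obtained exactly as in Claim 1 of the splitting section but this time drawing on the superamalgamation property of $\bold M=\{\D\in \FPA_{\alpha+\omega}:\D=\Sg^{\D}\Nr_{\alpha}\D\}$ (Theorem \ref{SUP}, whose proof applies verbatim in the cylindric-polyadic signature since the key ingredient, the strong form of Lemma \ref{net}, is available). This yields an ideal $P$ of $\A$ with $P\cap \A^{(X_1)}=N$ and $P\cap \A^{(X_2)}=M$. Since $c\in M\subseteq P$ and $a\leq c$, we get $a\in P\cap \A^{(X_1)}=N$, whence there exist $b_0\in M\cap \A^{(X_1\cap X_2)}$ and a finite $\Gamma\subseteq \alpha$ with $a\leq b_0$ and $b_0\leq \mathsf{c}_{(\Gamma)}c$ (using the concrete description of the ideal generated by $c$ in $\A^{(X_2)}$).

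The main obstacle, and the only genuinely new point compared with Claim 2, is the elimination of the cylindrification $\mathsf{c}_{(\Gamma)}$. Here the abundance of substitutions in $\FPA$ is decisive. Because $\B$ lives in $\alpha+\omega$ dimensions and $c\in \A=\Nr_{\alpha}\B$ satisfies $\Delta^{\B}c\subseteq \alpha$, one has $\mathsf{c}_{j}c=c$ for every $j\in (\alpha+\omega)\sim\alpha$. Now choose a permutation $\tau$ of $\alpha+\omega$ that fixes $X_1\cap X_2$ pointwise on its support, carries $\Gamma$ into the spare block $(\alpha+\omega)\sim\alpha$, and fixes all elements of $\Sg^{\B}(X_1\cap X_2)$ whose dimension set already lies in $\alpha\setminus\Gamma$; such a $\tau$ exists because $|\Gamma|<\omega$ and $\omega$ spare dimensions are available. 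Applying $\mathsf{s}_{\tau}$ to the inequality $b_0\leq \mathsf{c}_{(\Gamma)}c$ computed inside $\B$ gives $\mathsf{s}_{\tau}b_0\leq \mathsf{s}_{\tau}\mathsf{c}_{(\Gamma)}c=\mathsf{c}_{(\tau\Gamma)}\mathsf{s}_{\tau}c=\mathsf{c}_{(\tau\Gamma)}c=c$, and symmetrically $a=\mathsf{s}_{\tau}a\leq \mathsf{s}_{\tau}b_0$. Setting $b=\mathsf{c}_{(\Delta b\sim\alpha)}\mathsf{s}_{\tau}b_0$ forces $b\in \Nr_{\alpha}\Sg^{\B}(X_1\cap X_2)=\Sg^{\A}(X_1\cap X_2)$, while the two inequalities survive since the extra cylindrifications act trivially on $a$ and $c$. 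Thus $a\leq b\leq c$ with $b\in \Sg^{\A}(X_1\cap X_2)$, which is the required strong interpolant. The technically delicate issue is precisely the construction of $\tau$ and verifying commutation with the polyadic axioms restricted to finite cylindrifications (the computation (*) in the proof of Lemma \ref{net}), which is where the cylindric paradigm fails and the full polyadic paradigm succeeds.
\end{demo}
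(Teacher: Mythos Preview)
Your substitution trick in step 4 does not work. The permutation $\tau$ carries $\Gamma\subseteq\alpha$ into the spare block, but without loss of generality $\Gamma\subseteq\Delta c$ (any $i\in\Gamma\setminus\Delta c$ has $\mathsf{c}_i c=c$ and may be dropped), so $\tau$ genuinely moves indices of $\Delta c$ and $\mathsf{s}_\tau c\neq c$: if $c$ encodes $R(v_0,v_1)$ and $\tau$ swaps $0$ with a spare index $j$, then $\mathsf{s}_\tau c$ encodes $R(v_j,v_1)$. The companion claim $\mathsf{s}_\tau a=a$ fails identically. Thus $\mathsf{s}_\tau$ applied to $a\leq b_0\leq\mathsf{c}_{(\Gamma)}c$ does not produce anything sandwiched between $a$ and $c$. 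There is also a gap one step earlier: in $\FPA_\alpha$ the principal ideal $\Ig^{\A^{(X_2)}}\{c\}$ must be closed under every $\mathsf{s}_\sigma$, including infinitary ones, so the cylindric description $\{x:x\leq\mathsf{c}_{(\Gamma)}c\text{ for finite }\Gamma\}$ is strictly too small and $b_0\in M$ need not give $b_0\leq\mathsf{c}_{(\Gamma)}c$. Finally, invoking $SUPAP$ of $\bold M$ for this signature is circular, since that $SUPAP$ is obtained (via Theorem~\ref{supapgeneral}) from the very $SIP$ you are proving.

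The paper proceeds quite differently. It dilates to a regular $\kappa>\max(|\alpha|,|A|)$, notes that any interpolant $b\in\Sg^\B(X_1\cap X_2)$ has $\Gamma=\Delta b\setminus\alpha$ finite with $\mathsf{c}_{(\Gamma)}b\in\Nr_\alpha\Sg^\B(X_1\cap X_2)=\Sg^\A(X_1\cap X_2)$ and $a\leq\mathsf{c}_{(\Gamma)}b\leq c$ (here $\Gamma$ lies in the \emph{spare} dimensions, so $\mathsf{c}_{(\Gamma)}c=c$ automatically), and then runs a direct Henkin construction inside $\B$. Assuming no such $b$ exists, one enumerates $\kappa\times\Sg^\B X_i$, recursively picks fresh witness indices $u_i,v_i$ outside all dimension sets met so far (regularity of $\kappa$ guarantees room), forms filters $H_1\ni a$, $H_2\ni -c$ generated by the Henkin conditions $-\mathsf{c}_{k_i}x_i+\mathsf{s}^{k_i}_{u_i}x_i$, shows the induced filter on $\Sg^\B(X_1\cap X_2)$ is proper precisely because no interpolant exists, extends to perfect ultrafilters $F_1,F_2$ agreeing there, and sets $f_i(x)=\{\tau\in{}^\alpha\kappa:\mathsf{s}_{\bar\tau}x\in F_i\}$ to obtain homomorphisms from $\Sg^\A X_i$ into a set algebra that, by freeness, glue to a single $f$ with $f(a\cdot{-c})\neq\emptyset$, contradicting $a\leq c$. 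The spare dimensions serve as a reservoir of Henkin witnesses, not as targets for a substitution that would have to fix $a$ and $c$.
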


\begin{demo}{Proof}  Let $a\in \Sg^{\A} X_1$ and $c\in \Sg^{\A}X_2$ be such that $a\leq c$. 
We want to find an interpolant in 
$\Sg^{\A}(X_1\cap X_2)$. Assume that $\kappa$ is a regular cardinal $>max(|\alpha|,|A|)$.
Let $\B\in \FPA_{\kappa}$  such that $\A=\Nr_{\alpha}\B$, 
and $A$ generates $\B$.
%If an interpolant exists in $\B$, then an interpolant exists in $\A$. For assume that $a\leq b\leq c$, where $c\in Sg^{\B}(X_1\cap X_2)$. 
%Since $\A$ generates $\B$ we have $|\Delta x\sim \alpha|<\omega$ for every $x\in B$. 
%This can be proved by a simple inductive argument, with the base of the induction being
 Let $H_{\kappa}=\{\rho\in {}^{\kappa}\kappa: |\rho(\alpha)\cap (\kappa\sim \alpha)|<\omega\}$.
Let $S$ be the semigroup generated by $H_{\kappa}.$ 
Let $\B'\in \FPA_{\kappa}$ be an ordinary  dilation of $\A$ where all transformations in $^{\kappa}\kappa$ are used.  
(This can be easily defined like in the case of ordinary polyadic algebras). 
Then $\A=\Nr_{\alpha}\B'$. We take a suitable reduct of $\B'$. Let $\B$ be the subalgebra of 
$\B'$ generated from $A$ be all operations except for substitutions indexed by transformations not in $S$.
Then, of course $A\subseteq \B$; in fact, $\A=\Nr_{\alpha}\B$, since for each $\tau\in {}^{\alpha}\alpha$, $\tau\cup Id\in S.$ 
It can be checked inductively that for $b\in B$, if 
$|\Delta b\sim \alpha|<\omega$, and $\rho\in S$, then $|\rho(\Delta b)\sim \alpha|<\omega$. 
Then there exists a finite  
$\Gamma\subseteq \kappa\sim \alpha$ such that $a\leq {\sf c}_{(\Gamma)}b\leq c$ and 
$${\sf c}_{(\Gamma)}b\in \Nr_{\alpha}\Sg^{\B}(X_1\cap X_2)=\Sg^{\Nr_{\alpha}\B}(X_1\cap X_2)=\Sg^{\A}(X_1\cap X_2).$$

The rest of the proof is similar to that in \cite{AU}, except that the latter  reference deals with countable algebras, 
and here our algebras could be 
uncountable, hence the condition of regularity on the cardinal $\kappa$.
Arrange $\kappa\times \Sg^{\C}(X_1)$ 
and $\kappa\times \Sg^{\C}(X_2)$ 
into $\kappa$-termed sequences:
$$\langle (k_i,x_i): i\in \kappa\rangle\text {  and  }\langle (l_i,y_i):i\in \kappa\rangle
\text {  respectively.}$$ Since $\kappa$ is regular, we can define by recursion 
$\kappa$-termed sequences: 
$$\langle u_i:i\in \kappa\rangle \text { and }\langle v_i:i\in \kappa\rangle$$ 
such that for all $i\in \kappa$ we have:
$$u_i\in \kappa\smallsetminus
(\Delta a\cup \Delta c)\cup \cup_{j\leq i}(\Delta x_j\cup \Delta y_j)\cup \{u_j:j<i\}\cup \{v_j:j<i\}$$
and
$$v_i\in \kappa\smallsetminus(\Delta a\cup \Delta c)\cup 
\cup_{j\leq i}(\Delta x_j\cup \Delta y_j)\cup \{u_j:j\leq i\}\cup \{v_j:j<i\}.$$
For a boolean algebra $\C$  and $Y\subseteq \C$, we write 
$fl^{\C}Y$ to denote the boolean filter generated by $Y$ in $\C.$  Now let 
$$Y_1= \{a\}\cup \{-{\sf  c}_{k_i}x_i+{\sf s}_{u_i}^{k_i}x_i: i\in \omega\},$$
$$Y_2=\{-c\}\cup \{-{\sf  c}_{l_i}y_i+{\sf s}_{v_i}^{l_i}y_i:i\in \omega\},$$
$$H_1= fl^{Bl\Sg^{\B}(X_1)}Y_1,\  H_2=fl^{Bl\Sg^{\B}(X_2)}Y_2,$$ and 
$$H=fl^{Bl\Sg^{\B}(X_1\cap X_2)}[(H_1\cap \Sg^{\B}(X_1\cap X_2)
\cup (H_2\cap \Sg^{\B}(X_1\cap X_2)].$$
Then $H$ is a proper filter of $\Sg^{\B}(X_1\cap X_2).$
This is proved by induction with the base of the induction bieng no interpolant exists in $\Sg^{\B}(X_1\cap X_2)$, cf. \cite{AU} Claim 2.18 
p.339.
Let $H^*$ be a (proper boolean) ultrafilter of $\Sg^{\B}(X_1\cap X_2)$ 
containing $H.$ 
We obtain  ultrafilters $F_1$ and $F_2$ of $\Sg^{\B}X_1$ and $\Sg^{\B}X_2$, 
respectively, such that 
$$H^*\subseteq F_1,\ \  H^*\subseteq F_2$$
and (**)
$$F_1\cap \Sg^{\B}(X_1\cap X_2)= H^*= F_2\cap \Sg^{\B}(X_1\cap X_2).$$
Now for all $x\in \Sg^{\cal B}(X_1\cap X_2)$ we have 
$$x\in F_1\text { if and only if } x\in F_2.$$ 
Also from how we defined our ultrafilters, $F_i$ for $i\in \{1,2\}$ satisfy the following
condition:
(*) For all $k<\mu$, for all $x\in \Sg^{\B}X_i$ 
if ${\sf  c}_kx\in F_i$ then ${\sf s}_l^kx$ is in $F_i$ for some $l\notin \Delta x.$  

Let $\D_i=\Sg^{\A}X_i$. For a transformation $\tau\in {}^{\alpha}\kappa$ let $\bar{\tau}=\tau\cup Id_{\kappa\sim \alpha}$.
Define $f_i$ from ${\D}_i$
to the full set algebra $\C$ with unit $^{\alpha}\kappa$ as follows:
$$f_i(x)=\{ \tau\in  {}^{\alpha}\kappa: {\sf s}_{\bar{\tau}}x\in F_i\}, \text { for } x\in {\cal D}_i$$ 
Then $f_i$ is a homomorphism by (*), \cite{AU} p.343.
Without loss of generality, we can assume that $X_1\cup X_2=X.$
By (**) we have $f_1$ and $f_2$ agree on $X_1\cap X_2$. 
So that $f_1\cup f_2$ defines a function on $X_1\cup X_2$, by freeness
it follows that there is a homomorphism  $f$ from $\B$ to $\C$ such that
$f_1\cup f_2\subseteq f$.   Then $q\in f(a)\cap f(-c) = f(a-c).$ This is so because $s_{Id}a=a\in F_1$
$s_{Id}(-c)=-c\in F_2.$
But this contradicts the premise that $a\leq c.$
\end{demo}

\subsection{Ferenzci's algebras}

Now we get rid of commutativity of cylindrifes and adopt weaker axioms adding also 
the so callled merry-go-round identies. In set algebras based on cartesian squares cylindrifiers commute, 
so we have no choice but to alter the notion of representability as well.
In modal logic this is termed as relativization. This approach pays, very much so. 

So called relativization started as a technique for generalizing representations of cylindric algebras, while also, in some cases, 
`defusing' undesirable properties, like undecidability or lack of definability (like Beth definability).
These ideas have counterparts in logic, and they have been influential in several ways. 
Relativization in cylindric-like algebras lends itself to a modal perspective where transitions are viewed as 
objects in their own right, in addition to states, while algebraic terms 
now correspond to modal formulas defining the essential properties 
of transitions.

Indeed, why insist on standard models? This is a voluntary commitment to only one mathematical implementation, 
whose undesirable complexities can pollute the laws of logics needed to describe the core phenomena.
Set theoretic cartesian squares modelling as the intended  vehicle may not be an orthogonal concern, 
it can be detremental, repeating hereditory sins of old paradigms. 

Indeed in \cite{HMT1} square units got all the attention and relativization  was treated as a side issue.
Extending original classes of models for logics to manipulate their properties is common. 
This is no mere tactical opportunism, general models just do the right thing.

The famous move from standard models to generalized models is 
Henkin's turning round  second  order logic into an axiomatizable two sorted first
order logic. Such moves are most attractive 
when they get an independent motivation. 

The idea is that we want to find a semantics that gives just the bare bones of action, 
while additional effects of square set theoretic modelling are separated out as negotiable decisions of formulation 
that threatens completeness, decidability,  and interpolation.

And indeed by using relativized representations Ferenzci, proved that if we weaken commutativity of cylindrifiers
and allow  relativized representations, then we get a finitely axiomatizable variety of representable 
quasi-polyadic equality algebras (analogous to the Resek Thompson $CA$ version); 
even more this can be done without the merry go round identities.
This is in sharp view with our complexity results proved above for quasi poyadic equlaity algebras.

Now we use two techniques to get positive results.
The first is yet again a Henkin construction (carefully implemented because we have changed the semantics, 
so that Henkin ultrafilters constructed are more involved), the other is inspired by the well-developed 
duality theory in modal logic between Kripke frames and complex algebras.

This technique was first implemented by N\'emeti in the context of relativized cylindric set algebras 
which are complex algebras of 
weak atom structures.

\begin{theorem} Let $\beta$ be a cardinal, and $\A=\Fr_{\beta}\CPA_{\alpha}$ be the free algebra on $\beta$ generators.
Let $X_1, X_2\subseteq
\beta$, $a\in \Sg^{\A}X_1$ and $c\in \Sg^{\A}X_2$ be such that $a\leq c$.
Then there exists $b\in \Sg^{\A}(X_1\cap X_2)$ such that $a\leq b\leq c.$
\end{theorem}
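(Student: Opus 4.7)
The plan is to parallel the proof of the preceding interpolation theorem for $\FPA_\alpha$, replacing the Daigneault--Monk representation by Ferenczi's completeness theorem for relativized representations of cylindric polyadic algebras. Three ingredients are needed: (i) a neat embedding theorem for $\CPA_\alpha$ into a dilation in which enough transformations act, (ii) a preliminary interpolant obtained by cylindrifying out spare dimensions, and (iii) a Henkin-style construction of compatible ultrafilters on the two subalgebras $\Sg^\B X_1, \Sg^\B X_2$ which restrict to a common ultrafilter on $\Sg^\B(X_1\cap X_2)$, yielding a \emph{relativized} representation refuting the assumed failure of interpolation.

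First I would establish the analogue of Lemma \ref{net} for $\CPA_\alpha$: for every $\A\in \CPA_\alpha$ and every cardinal $\beta>\alpha$ there exists $\B\in \CPA_\beta$ with $\A=\Nr_\alpha\B$, $\Sg^\B A=\B$, and $\Sg^\A X=\Nr_\alpha\Sg^\B X$ for all $X\subseteq A$. The construction follows Lemma \ref{net}: first dilate the transformation system (which needs only the abundance of substitutions, available here), then define cylindrifiers on the dilation by the formula $(*)$. Since $\CPA$ weakens commutativity of cylindrifiers but retains the merry-go-round identities, the axiom verification is actually slightly easier than in the $\FPA$ case: each weakened axiom pulls back to its counterpart in $\A$ via exactly the computation done for the polyadic axiom ${\sf c}_k(q_1\wedge {\sf c}_kq_2)={\sf c}_kq_1\wedge {\sf c}_kq_2$ in Lemma \ref{net}.

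With the neat embedding in hand, pick a regular cardinal $\kappa>\max(|\alpha|,|A|)$ and form the dilation $\B\in\CPA_\kappa$ restricted to the semigroup $S$ generated by $H_\kappa=\{\rho\in{}^\kappa\kappa:|\rho(\alpha)\cap(\kappa\smallsetminus\alpha)|<\omega\}$, so that $\A=\Nr_\alpha\B$ and ``finite spread outside $\alpha$'' is preserved by all $S$-operations. Seeking a contradiction, assume no interpolant exists. Enumerate $\kappa\times\Sg^\B X_1$ and $\kappa\times\Sg^\B X_2$ as $\langle(k_i,x_i)\rangle$ and $\langle(l_i,y_i)\rangle$; by regularity of $\kappa$ pick fresh witnesses $u_i,v_i$ outside the dimension sets used so far, and form the filters $H_1,H_2$ generated by $\{a\}\cup\{-{\sf c}_{k_i}x_i+{\sf s}_{u_i}^{k_i}x_i\}$ and $\{-c\}\cup\{-{\sf c}_{l_i}y_i+{\sf s}_{v_i}^{l_i}y_i\}$ respectively. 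Show by induction that $H=\mathrm{fl}((H_1\cap\Sg^\B(X_1\cap X_2))\cup(H_2\cap\Sg^\B(X_1\cap X_2)))$ is proper, using the non-existence of an interpolant as the base case; extend to a common ultrafilter $H^*$ on $\Sg^\B(X_1\cap X_2)$ and lift to ultrafilters $F_i$ on $\Sg^\B X_i$ with $F_i\cap\Sg^\B(X_1\cap X_2)=H^*$ and with the Henkin witness property for every cylindrification.

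The hard part, and the real novelty relative to the $\FPA$-case, will be turning the $F_i$ into $\CPA$-homomorphisms. Because cylindrifiers no longer commute, we cannot use the full cube $^\alpha\kappa$; instead one defines $f_i(x)=\{\tau\in U:{\sf s}_{\bar\tau}x\in F_i\}$ where the unit $U$ is Ferenczi's relativized base (a union of cartesian squares on which the weakened axioms are sound, as provided by his representation theorem). Verifying that the witness property on $F_i$ makes $f_i$ preserve cylindrifiers requires precisely the merry-go-round identities, in place of the commutativity of $\mathsf c_i,\mathsf c_j$ used in the $\FPA$ argument. Once $f_1$ and $f_2$ are shown to be $\CPA$-homomorphisms into a relativized set algebra $\C$ and to agree on $X_1\cap X_2$, freeness of $\A$ produces a common homomorphism $f:\A\to\C$ with $f(a-c)\neq \emptyset$, contradicting $a\leq c$. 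The neat-embedding step and the consistency of $H$ are routine adaptations; the technical heart is the last verification that the relativized Henkin semantics actually validates the non-commutative cylindrifier axioms, and this is where the bulk of the work lies.
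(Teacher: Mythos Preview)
Your high-level plan coincides with the paper's: dilate $\A$ to dimension $\kappa$ via the neat-embedding lemma, assume no interpolant exists in $\B$, build compatible saturated ultrafilters $F_1,F_2$ on $\Sg^{\B}X_1,\Sg^{\B}X_2$ restricting to a common $H^*$ on $\Sg^{\B}(X_1\cap X_2)$, represent, and contradict $a\le c$ by freeness.

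The divergence is in the saturation and in how the target set algebra is obtained. The paper does \emph{not} use your single-variable Henkin generators $-{\sf c}_{k_i}x_i+{\sf s}^{k_i}_{u_i}x_i$. It seeds the filters with $Y_i=\{{\sf s}_\tau{\sf c}_jx:\tau\in adm,\ j\in\alpha,\ x\in A\}$ (together with $a$, respectively $-c$), where $adm$ is Ferenczi's class of \emph{admissible} substitutions, and it requires the resulting $F_i$ to be \emph{perfect} in Ferenczi's sense. The homomorphisms into a relativized set algebra are then produced directly from the perfect ultrafilters as in \cite{Sayed}, pp.~128--129; the relativized unit is an output of that construction, not an input.

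This is exactly where your outline has a gap. You write that ``the unit $U$ is Ferenczi's relativized base \ldots\ as provided by his representation theorem'', but that base is not available a priori: it is what the perfect-ultrafilter construction manufactures, and the ordinary witness property (${\sf c}_kx\in F_i\Rightarrow{\sf s}^k_l x\in F_i$ for a fresh $l$) is strictly weaker than perfection. In the non-commutative setting the map $\tau\mapsto[{\sf s}_{\bar\tau}x\in F_i]$ need not respect the weakened cylindrifier axioms unless the filter is already closed under the admissible substitutions; the merry-go-round identities alone do not rescue this without that closure. Once you enrich your witness set to the ${\sf s}_\tau{\sf c}_jx$ with $\tau$ admissible (so that the $F_i$ become perfect), your argument and the paper's become the same proof; without that enrichment the cylindrifier-preservation step is a genuine hole rather than the routine verification you describe.
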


\begin{demo}{Proof}  Let $a\in \Sg^{\A} X_1$ and $c\in \Sg^{\A}X_2$ be such that $a\leq c$. 
We want to find an interpolant in 
$\Sg^{\A}(X_1\cap X_2)$. Assume that $\kappa$ is a regular cardinal $>max(|\alpha|,|A|)$.
Let $\B\in \CPA_{\kappa}$, be as in the previous lemma,  such that $\A=\Nr_{\alpha}\B'$, 
and $A$ generates $\B$. Like before we can asume that no interpolant exists in $\B$.
%uncountable, hence the condition of regularity on the cardinal $\kappa$.
%Now assume that $\beta=\alpha+\epsilon$ is a regular cardinal. 
One defines filters in $\Sg^{\A}X_1$ and in $\Sg^{\A}X_2$
like in Ferenczi \cite{Fer}.
Let
$$Y_i=\{s_{\tau}c_jx: \tau\in adm, j\in \alpha, x\in A\}.$$
%Now $|X|\leq {}^{m}\kappa.\alpha.|A|$ $\kappa$ is regular then ${}^m\beta=\kappa.\kappa> max (\alpha, |A|)$ and 
%$m\kappa=\kappa$ imply that $|X|\leq \beta$,
%Let $\rho: \beta\to X$ be a fixed enumeration of $X$.
%One defines filters in $\Sg^{\A}X_1$ and $\Sg^{\A}X_2$
%call them $H_1$ and $H_2$ 
%One defines recursively an increasing sequence $(F_i: i<\beta)$ of proper Boolean filtters
$$H_1= fl^{Bl\Sg^{\B}(X_1)}Y_1,\  H_2=fl^{Bl\Sg^{\B}(X_2)}Y_2,$$ and 
$$H=fl^{Bl\Sg^{\B}(X_1\cap X_2)}[(H_1\cap \Sg^{\B}(X_1\cap X_2)
\cup (H_2\cap \Sg^{\B}(X_1\cap X_2)].$$

Then $H$ is a proper filter of $\Sg^{\B}(X_1\cap X_2).$
This can be  proved by induction with the base of the induction bieng no interpolant exists in $\Sg^{\B}(X_1\cap X_2)$. 
Let $H^*$ be a (proper boolean) ultrafilter of $\Sg^{\B}(X_1\cap X_2)$ 
containing $H.$ 
We obtain  ultrafilters $F_1$ and $F_2$ of $\Sg^{\B}X_1$ and $\Sg^{\B}X_2$, 
respectively, such that 
$$H^*\subseteq F_1,\ \  H^*\subseteq F_2$$
and (**)
$$F_1\cap \Sg^{\B}(X_1\cap X_2)= H^*= F_2\cap \Sg^{\B}(X_1\cap X_2).$$
Now for all $x\in \Sg^{\cal B}(X_1\cap X_2)$ we have 
$$x\in F_1\text { if and only if } x\in F_2.$$ 
Also from how we defined our ultrafilters, $F_i$ for $i\in \{1,2\}$ are perfect.  

Then define the homomorphisms, one on each subalgebra, like in \cite{Sayed} p. 128-129, using the perfect ultrafilters, 
then freeness will enable use to end these homomophisms to the set of free generators, and it will satisfy 
$h(a.-c)\neq 0$ which is a contradiction.

\end{demo}

We show using techniques of Marx, that weak polyadic algebras have $SUPAP$ as well. This works 
for all varieties of relativized cylindric polyadic algebras studied by Ferenzci and reported in \cite{Fer}. 

This follows from the simple observation that such varieties can be axiomatized with positive, hence Sahlqvist equations, 
and therefore they are canonical; and also we do not have a Rosser condition on cylindrifiers; cylindrifiers do not commute, 
this allows that the first order correspondants of such equations are {\it clausifiable}, see \cite{Marx} for the 
definition of this. This proof is inspired by the modal perspective of cylindric-like algebras 
that suggests a whole landscape below standard predicate logic, 
with a minimal modal logic at the base ascending to standard semantics via frame constraints. In particular, 
this landscape contains nice sublogics of the full predicate logic, sharing its desirable meta properties and at the same time avoiding its 
negative accidents due to its Tarskian 'square frames' modelling. Such mutant logics are currently a very rich area of research.

The technique used here can be traced back to N\'emeti, when he proved that relativized cylindric set algebras have $SUPAP$; 
using (classical) duality
between atom structures and cylindric algebras. Marx 'modalized' the proof, and slightly strenghtened N\'emeti results, using instead 
the well-established duality between modal frames and 
complex algebras.  
%We prove our theorem only for transposition algebras, the rest of the cases
%are the same. In this subsection, $\alpha$ will denote an arbitrary ordinal $>1$.

We consider the non commutive cylindric polyadic algebras introduced by Ferenzci; but we use the notation $\WFPA_{\alpha}$.
A frame is a first order structure $\F=(V,  T_{i}, S_{\tau})_{i\in \alpha, \tau\in {}^{\alpha}\alpha}$ where $V$ is an arbitrary set and
and  both $T_{i}$ and $S_{\tau}$  are binary relations on $V$  for all $i\in \alpha$; and $\tau\in {}^{\alpha}\alpha$.

Given a frame $\F$, its complex algebra will be denotet by $\F^+$; $\F^+$ is the algebra $(\wp(\F), c_i, s_{\tau})_{i\in \alpha, \tau\in {}^{\alpha}\alpha}$  
where for $X\subseteq  V$,
$c_i(X)=\{s\in V: \exists t\in X, (t, s)\in T_i \}$, and similarly for $s_{\tau}$.

For $K\subseteq \WFPA_{\alpha}$, we let $\Str K=\{\F: \F^+\in K\}.$ 

For a variety $V$, it is always the case that 
$\Str V\subseteq \At V$ and equality holds if the variety is atom-canonical. 
If $V$ is canonical, then $\Str V$ generates $V$ in the strong sense, that is 
$V= {\bf S}\Cm \Str V$. For Sahlqvist varieties, as is our case, $\Str V$ is elementary.

\begin{definition}
\item Given a family $(\F_i)_{i\in I}$ of frames, a {\it zigzag  product} of these frames is a substructure of $\prod_{i\in I}\F_i$ such that the
projection maps restricted to $S$ are
onto.
\end{definition}

\begin{definition} Let $\F, \G, \H$ be frames, and $f:\G\to \F$ and $h:\F\to \H$. 
Then $INSEP=\{(x,y)\in \G\times \H: f(x)=h(y)\}$. 
\end{definition}

\begin{lemma} The frame $INSEP \upharpoonright G\times H$ is a zigzag product
of $G$ and $H$, such that $\pi\circ \pi_0=h\circ \pi_1$, where $\pi_0$ and $\pi_1$ are the projection maps.
\end{lemma}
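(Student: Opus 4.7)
The plan is to verify the two defining conditions of a zigzag product in turn, and then to check commutativity of the relevant square. The whole argument is essentially a matter of unwinding definitions, once one has fixed the (mild, and standard in this setting) surjectivity hypotheses on $f$ and $h$ that underlie the entire amalgamation set-up.

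First, I would observe that, by the very definition of $INSEP\upharpoonright \G\times \H$, its carrier is a subset of the carrier of $\G\times \H$ and each of the frame relations $T_i$, $S_\tau$ on it is inherited, by restriction, from the coordinate-wise relations on the product frame $\G\times \H$. Consequently, $INSEP\upharpoonright \G\times \H$ is automatically a substructure of $\G\times \H$ in the signature of $\WFPA_\alpha$, with no further verification needed beyond unpacking definitions.

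Next comes the heart of the argument: showing that the projections $\pi_0 : INSEP\upharpoonright \G\times \H \to \G$ and $\pi_1 : INSEP\upharpoonright \G\times \H \to \H$ are onto. For an arbitrary $x\in \G$, consider $f(x)\in \F$; invoking surjectivity of $h$ (which is the working assumption on the maps entering the definition of $INSEP$ in the amalgamation construction), pick $y\in \H$ with $h(y)=f(x)$. Then $(x,y)\in INSEP$ and $\pi_0(x,y)=x$, so $\pi_0$ is onto. By the dual argument, using the surjectivity of $f$, the projection $\pi_1$ is also onto $\H$. Combined with the substructure observation, this certifies that $INSEP\upharpoonright \G\times \H$ is indeed a zigzag product of $\G$ and $\H$. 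The commutativity condition $f\circ\pi_0=h\circ\pi_1$ is then immediate: for any $(x,y)\in INSEP$ one has $f(\pi_0(x,y))=f(x)=h(y)=h(\pi_1(x,y))$, which is exactly the condition imposed by membership in $INSEP$.

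The main caveat, rather than obstacle, is making sure the frame relations on the product $\G\times \H$ are taken in the standard coordinate-wise sense, so that their restriction to $INSEP$ yields a legitimate frame of the same similarity type as $\G,\H,\F$. This is routine but must be spelled out, because it is precisely this compatibility that later allows the lemma to be combined with the Sahlqvist/canonicity features of $\WFPA_\alpha$ to transfer the superamalgamation property from frames to their complex algebras.
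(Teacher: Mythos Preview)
Your proposal is correct and in fact supplies more than the paper does: the paper's own ``proof'' of this lemma is nothing but the citation \cite{Marx}~5.2.4, with no argument given. What you have written is essentially the standard verification one finds in that reference---unwinding the definition of zigzag product (substructure of the product plus surjectivity of the restricted projections) and reading off the commutativity $f\circ\pi_0=h\circ\pi_1$ directly from membership in $INSEP$.

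Two minor remarks. First, you correctly silently repair the paper's typos: the definition just above the lemma should have $h:\H\to\F$ (not $h:\F\to\H$), and the lemma's ``$\pi\circ\pi_0$'' should read ``$f\circ\pi_0$''; your reading is the only one that makes sense and is confirmed by the application in the theorem that follows, where $f_+:\B_+\to\A_+$ and $h_+:\C_+\to\A_+$ share a common target. Second, your explicit flagging of the surjectivity hypothesis on $f$ and $h$ is appropriate: the lemma as stated omits it, but in the intended application these maps are the ultrafilter-frame duals of monomorphisms of BAOs, and such duals are automatically surjective bounded morphisms, so the hypothesis is available where it is needed.
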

\begin{proof} \cite{Marx} 5.2.4
\end{proof}
For an algebra $\A$, $\A_+$ denotes its ultrafilter atom structure. 
For $h:\A\to \B$, $h_+$ denotes the function from $\B_+\to \A_+$ defined by $h_+(u)=h^{-1}[u]$ 
where the latter is $\{x\in a: h(x)\in u\}.$
%For an algebra $\C$, Marx denotes $\C$ by $\C_+,$ and proves:

\begin{theorem}(\cite{Marx} lemma 5.2.6)
Assume that $K$ is a canonical variety and $\Str K$ is closed under finite zigzag products. Then $K$ has the superamalgamation
property.
\end{theorem}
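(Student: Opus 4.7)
The plan is to implement the standard duality argument, reducing superamalgamation in $K$ to the closure of $\Str K$ under zigzag products. Given $\A_0,\A_1,\A_2\in K$ with monomorphisms $i:\A_0\to\A_1$ and $j:\A_0\to\A_2$, I would first pass dually to the ultrafilter frames $(\A_k)_+$; these lie in $\Str K$ since canonicity of $K$ gives $((\A_k)_+)^+\in K$. The embeddings $i,j$ induce surjective bounded morphisms $i_+:(\A_1)_+\to(\A_0)_+$ and $j_+:(\A_2)_+\to(\A_0)_+$.

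Next I would apply the preceding $INSEP$ lemma to form the zigzag product
\[
\F=\{(U_1,U_2)\in(\A_1)_+\times(\A_2)_+: i_+(U_1)=j_+(U_2)\},
\]
whose projections $\pi_1,\pi_2$ are surjective and satisfy $i_+\circ\pi_1=j_+\circ\pi_2$. By the closure hypothesis $\F\in\Str K$, so $\F^+\in K$. Setting $m=\pi_1^+\circ e_1$ and $n=\pi_2^+\circ e_2$, where $e_k:\A_k\to((\A_k)_+)^+$ is the canonical embedding, yields maps $\A_1,\A_2\to\F^+$: they are injective because surjective bounded morphisms dualise to injective BAO-homomorphisms and the $e_k$ are injective. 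Naturality of $e$ together with the commuting square on the frame side give $m\circ i=\pi_1^+\circ i_+^{+}\circ e_0=\pi_2^+\circ j_+^{+}\circ e_0=n\circ j$, so $\F^+$ is an amalgam in $K$.

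The main obstacle, and what upgrades amalgamation to \emph{super}amalgamation, is the interpolation clause: whenever $m(a)\leq n(b)$ for $a\in A_1$, $b\in A_2$, there should exist $c\in A_0$ with $a\leq i(c)$ and $j(c)\leq b$. I would argue contrapositively. Assume no such $c$ exists; then the filter $F=\{c\in A_0:a\leq i(c)\}$ and the ideal $I=\{c\in A_0:j(c)\leq b\}$ of $\A_0$ are disjoint, so Zorn supplies an ultrafilter $U_0$ of $\A_0$ with $F\subseteq U_0$ and $U_0\cap I=\emptyset$. Because $i,j$ are Boolean homomorphisms, $i[U_0]\cup\{a\}$ and $j[U_0]\cup\{-b\}$ each generate proper filters (a witness to non-properness would force a common element of $F$ and $I$); extend these to ultrafilters $U_1$ of $\A_1$ and $U_2$ of $\A_2$. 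Maximality of $U_0$ forces $i^{-1}[U_1]=U_0=j^{-1}[U_2]$, i.e., $i_+(U_1)=j_+(U_2)$, so $(U_1,U_2)\in\F$. But then $(U_1,U_2)\in m(a)\setminus n(b)$ because $a\in U_1$ and $b\notin U_2$, contradicting $m(a)\leq n(b)$. This completes the argument.
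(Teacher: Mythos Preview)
Your proposal is correct and follows the same route as the paper: dualise the embeddings to bounded morphisms on ultrafilter frames, take the $INSEP$ zigzag product (which lies in $\Str K$ by hypothesis), and let its complex algebra serve as the amalgam in $K$. The paper's proof is only a two-line sketch that stops at ``Then $\F^+$ is a superamalgam,'' so your contrapositive ultrafilter argument for the interpolation clause is exactly the detail the paper leaves to the reader.

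One small imprecision: your parenthetical ``a witness to non-properness would force a common element of $F$ and $I$'' is not quite the right justification. Properness of the filter generated by $i[U_0]\cup\{a\}$ follows from $F\subseteq U_0$ alone (if $a\cdot i(c)=0$ for some $c\in U_0$ then $-c\in F\subseteq U_0$, contradicting properness of $U_0$), and properness on the $\A_2$ side follows from $U_0\cap I=\emptyset$ alone; neither step needs the interaction of $F$ with $I$. The rest of the argument is clean.
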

\begin{demo}{Sketch of proof} Let $\A, \B, \C\in K$ and $f:\A\to \B$ and $h:\A\to \C$ be given monomorphisms. 
Then $f_+:\B_+\to \A_+$ and $h_+:\C_+\to \A_+$. We have $INSEP=\{(x,y): f_+(x)=h_+(y)\}$ is a zigzag connection. Let $\F$ be the zigzag product 
of $INSEP\upharpoonright \A_+\times \B_+$. 
Then $\F^+$ is a superamalgam.
\end{demo}

\begin{theorem}
The variety $\WFPA_{\alpha}$ has $SUPAP$.
\end{theorem}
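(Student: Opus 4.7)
The plan is to verify the hypotheses of the theorem cited from Marx (lemma 5.2.6) for the variety $\WFPA_{\alpha}$ and then appeal to that theorem directly. There are two things to check: (a) $\WFPA_{\alpha}$ is a canonical variety, and (b) $\Str \WFPA_{\alpha}$ is closed under finite zigzag products.

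For (a), I would observe, as hinted at in the paragraph preceding the theorem, that the Ferenczi axiomatization of $\WFPA_{\alpha}$ is given by positive equations, i.e.\ Sahlqvist equations in the sense of modal logic (since commutativity of cylindrifiers, which is the only problematic axiom from the Sahlqvist point of view, has been dropped). Every Sahlqvist variety of BAOs is canonical by the standard Sahlqvist--de Rijke--Venema theorem, so $\WFPA_{\alpha} = {\bf S}\Cm \Str \WFPA_{\alpha}$, and moreover the first-order correspondents of the defining axioms are clausifiable in the sense of Marx.

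For (b), I would check directly that each axiom of $\WFPA_{\alpha}$, translated via the standard Jónsson--Tarski duality into a first-order condition on the frame $\F = (V, T_i, S_\tau)_{i \in \alpha, \tau \in {}^{\alpha}\alpha}$, is preserved when passing from a family $(\F_i)_{i \in I}$ (with $I$ finite) to a zigzag product $\F \subseteq \prod_i \F_i$. The crucial point is that clausifiable (equivalently, pseudo-universal Horn-like) conditions are preserved under substructures where the projections are onto; this is the content of Marx's closure lemma. The fact that we have dropped the Rosser-style commutativity axiom $c_i c_j = c_j c_i$ is exactly what makes all the remaining correspondents clausifiable: commutativity would translate into a condition requiring the existence of a witness (an element $w$ with $T_i$-related and $T_j$-related predecessors) which need not lie in the zigzag product, whereas the weakened axioms together with the merry-go-round identities yield only conditions of the form ``if certain tuples stand in certain $T_i$-- and $S_\tau$--relations, then certain equalities or further such relations hold,'' which transfer verbatim from each $\F_i$ to $\F$ via the surjective projections.

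Having verified (a) and (b), the conclusion is immediate from the cited Marx lemma. The main obstacle, and really the only nontrivial content of the proof, is the bookkeeping in item (b): one has to go through Ferenczi's list of axioms and confirm that each first-order correspondent has the right syntactic shape. I would not grind through all of them but rather point out that the worst case is the merry-go-round axioms, whose correspondents are universal-implicational in the atomic diagram of the frame and hence trivially inherited by substructures, and note that the surjectivity of the projections in a zigzag product takes care of those few existential conjuncts that appear (such as the seriality of each $T_i$). With both hypotheses in hand, Marx's theorem yields $SUPAP$ for $\WFPA_{\alpha}$, completing the proof.
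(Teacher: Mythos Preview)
Your proposal is correct and follows essentially the same route as the paper: verify canonicity via the positive (Sahlqvist) shape of Ferenczi's axioms, observe that the first-order correspondents are Horn and hence clausifiable so that $\Str\WFPA_{\alpha}$ is closed under finite zigzag products, and then invoke Marx's lemma. The paper's proof is terser---it simply cites Marx's theorem 5.3.5 for the passage from positive equations to clausifiable correspondents rather than discussing individual axioms---but the argument is the same.
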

\begin{proof} $\WFPA_{\alpha}$ can be easily  defined by positive equations then it is canonical.
The first order correspondents of the positive equations translated to the class of frames will be Horn formulas, hence clausifiable \cite{Marx} theorem 
5.3.5, and so $\Str K$ is closed under finite zigzag products. Marx's theorem finishes the proof.
\end{proof}

Let us make philosophy and metaphysics mathematics:

\begin{definition}\label{t} 
A class of algebras is in the cylindric paradigm if the the neat reduct functor $\Nr$ does not have a right adjoint; it is in the polyadic one, if the neat reduct functor is strongly 
invertible.
\end{definition}

More crudely, the polyadic paradigm versus the cylindric one, establishes a dichotomy in algebraic logic; the separating point is the presence of 
substitutions that move infinitely 
many points. 

Another dichotomy, existing in algebraic logic,  that can be traced back to the 
famous Andr\'eka-Resek-Thompson theorem, is that between square representations and relativized one. Here non-comutativity of cylindrifies is the 
separating point. Ferenczi's recent work on cylindric polyadic sheds a 
lot of light on this intriuging phenomena; and the connection of neat embeddings to relativized representations
is indeed quite a remarkable achievement.  

If one views relativized models as the natural semantics for predicate logic rather than some tinkering 
devise which is the approach adopted in \cite{HMT1}, then 
many well -established taboos of the field must be challenged. 

In standard textbooks one learns that predicate logical validity is one unique notion specified once and for all by the usual 
Tarskian (square) semantics and canonized by G\"odel's completeness theorem. Moreover, it is essentially complex, 
being undecidable by Church's theorem.

On the present view, however standard predicate logic has arisen historically by making several ad-hoc semantic decisions 
that could have gone differently. Its not all about 'one completeness theorem' but rather about several completeness theorems
obtained by varying both the semantic and syntactical parameters.

But on the other  hand, careful scrutiny of the situation reveals that things are not so clear cut, and the borderlines are hazy. 
Within the polyadic cylindric dichotomy there is the square relativisation dichotomy, and also 
vice versa.

An important border line class of algebras are those studied by Sain. 
Are they in the polyadic paradigm?
According to the last definition, they are. 
But this is not the end of the story. 
Sain's algebras introduced in \cite{Sain} have a unique status. They share the positive properties of both paradigms, 
the cylindric one, and the 
polyadic one. First thing they extend first order logic without equality, 
so that, in particular, cylindrifiers commute (this property is precarious in other contexts, 
it can kill decidability 
and amalgamation.)

They are representable as genuine fields of sets, they are finitely axiomatizable over finitely presented semigroups, 
they have a recursive equational axiomatization,
they admit dilations (neat embedding in $\omega$ extra dimensions), they have the super amalgamation property, and they also have 
{\it infinitary substitutions, at least two of them}.
The main discrepancy between Sain's algebras and polyadic algebras, say, 
is that the equational theory of the last has very high complexity in the recursion theoretic sense, a result of N\'emeti and Sagi \cite{Sagi}.
It is hard to place these algebras in either paradigm alone, 
but we believe that it is fair to say that they belong to the positive part of {\it both}.

\section{Cylindric and Polyadic algebras rolled into one}

Avoiding Platonic complacency, in this section we construct a forest containing the two trees, as opposed to {\it the} forest. 
Other forests are concievable. We do not claim that we have built a complete philosophical house, but we may have opened a lot of windows.

We give a general definition of a system of varieties definable by a uniform schema, that covers, or rather unifies, Monk's definition of 
systems of varieties definable by a schema and the N\'emeti-S\'agi definition of Halmos' schemes. 
In this very general context, we define 
the operation of forming neat reducts, and we furthermore view the neat reduct operator as a functor that lessens dimensions. 
Then we prove a general theorem, extending our previous, that relates adjointness of this functor to 
various forms of the amalgamation property for the system of varieties in question.

The difficulty with the polyadic like algebras, is that there is an algebraic structure on (a part of) 
the indexing set, the set of all maps from an infinite ordinal to itself, 
namely, the operation of composition of maps.
From a universal algebraic perspective this structure does not manifest itself explicitly; instead it is somewhere up there in the meta language. 
This is a situation similar to modules over rings. One approach to deal with such structures in first order logic is to allow two 
sorts, one for the scalars, and the other for the vectors.
We adopt, following N\'emeti and Sagi, the same philosophy; however, we need three sorts, one for the substitutions, one for sets of 
ordinals, and one for the first order situation.

$Ord$ is the class of all ordinals. For ordinals $\alpha<\beta$, $[\alpha,\beta]$ denotes the set of ordinals $\mu $ such that 
$\alpha\leq \mu \leq \beta$. $I_{\alpha}$ is the class $\{\beta\in Ord:\beta\geq \alpha\}$
By an interval of ordinals, or simply an interval, we either mean $[\alpha, \beta]$ or $I_{\alpha}$.

\begin{definition}
\begin{enumroman}
\item A type schema is a quantuple $t=(T, \delta, \rho,c, s)$ such that
$T$ is a set, $\delta$ maps $T$ into $\omega$, $c,s\in T$, and $\delta c=\rho c=\delta s=\rho s=1$.
\item A type schema as in (i) defines a similarity type $t_{\alpha}$ for each $\alpha$ as follows. Sets 
$C_{\alpha}\subseteq \wp(\alpha)$, $G_{\alpha}\subseteq {} ^{\alpha}{\alpha}$ are fixed,
and the domain $T_{\alpha}$ of $t_{\alpha}$ is
$$T_{\alpha}=\{(f, k_0,\ldots k_{\delta f-1}): f\in T\sim\{c,s\}, k\in {}^{\delta f}\alpha\}$$
$$\cup \{(c, r): r\in C_{\alpha}\}\cup \{(q,r): r\in C_{\alpha}\}\cup \{(s,\tau): \tau\in G_{\alpha}\}.$$
For each $(f, k_0,\ldots k_{\delta f-1})\in T_{\alpha}$ we set $t_{\alpha}(f, k_0\ldots k_{\delta f-1})=\rho f$
and we set $\rho(c,r)=\rho(q,r)=\rho (s,\tau)=1$ 
\item Let $\mu$ be an interval of ordinals. A system $(\K_{\alpha}: \alpha\in \mu)$ of classes 
of algebras is of type schema $t$ if for each $\alpha\in \mu$, the class $\K_{\alpha}$ is a class of algebras of type 
$t_{\alpha}$.
\end{enumroman}
\end{definition}
\begin{definition} Let $L_T$ be the first order language that consists of countably many unary relational symbols $(Rel)$, 
countably many function symbols $(Func)$ and countably many 
constants $(Cons)$, which are $r_1, r_2\ldots$ and $f_1, f_2\ldots$ and $n_1, n_2\ldots$, respectively. We let $L_T=Rel\cup Cons\cup Func$.
\end{definition}
\begin{definition}
\begin{enumarab}
\item A schema is a pair $(s, e)$ where $s$ is a first order formula of $L_T$ and $e$ is an equation in the language $L_{\omega}$ 
of $K_{\omega}$. We denote a schema $(s, e)$ by $s\to e$. We define $Ind(L_{\omega})=\omega\cup C_{\omega}\cup G_{\omega}$.
A function $h: L_T\to L_{\omega}$ is admissable if $h$ is an injection and 
$h\upharpoonright Const\subseteq \omega, h\upharpoonright Rel\subseteq C_{\omega}$ and
$h\upharpoonright Func\subseteq G_{\omega}.$
\item Let $g$ be an equation in the languse of $\K_{\alpha}$. Then $g$ is an $\alpha$ instance of a schema 
$s\to e$ if there exist an admissable function $h$, sets, functions and constants
$$r_1^{M},r_2^{M}\ldots f_1^{M}, f_2^{M}\ldots \in {}G_{\alpha}, n_1^{M},n_2^{M}\ldots \in \alpha$$ 
such 
$$M=(\alpha, r_1^M, r_2^M,\ldots f_1^M, f_2^M, n_1^{M}, n_2^{M},\ldots )\models s$$
and $g$ is obtained from $e$ by replacing $h(r_i), h(f_i)$ and $h(n_i)$ by $r_i^M$, $f_i^M$ and $n_i^M$, respectively.
\end{enumarab}
\end{definition}

\begin{definition}
A system of varieties is a {\it generalized system of varieties definable by a schema}, if there exists a strictly finite set of schemes, 
such that for every $\alpha$, 
$\K_{\alpha}$ is axiomatized  by  the $\alpha$ dimensional instances of such schemes.
\end{definition}

Given such a system of varieties, we denote algebras in $\K_{\alpha}$ by 
$$\A=(\B, c_{(r)}, q_{(r)}, s_{\tau})_{r\in C_r, \tau\in G_{\alpha}},$$ that is, we highlight the operations of cylindrifiers and substitutions,
and the operations in $T\sim \{c,s\}$ (of the Monk's schema part, so to speak), with indices from $\alpha$, are encoded in $\B$.

Indeed, Monk's definition is the special case, when we forget the sort of substitutions.
That is a system of varieties is definable by Monk's schemes if $G_{\alpha}=\emptyset$ for all $\alpha$ and each schema the form 
by $True\to e$; see definition below. 
%Now we prove the infinite dimensional case, by lifting the dimensions for the finite case to the transfinite; a trick due to Monk.  The following lemma is crucial in this lifting to infinitely many dimensions.

\begin{example}
\begin{enumarab}

\item Tarski's Cylindric algebras, Pinter's substitution algebras, Halmos' quasi-polyadic algebras and Halmos' quasi-polyadic algebras with equality, 
all of infinite dimension. Here $G_{\alpha}=\emptyset$ for all $\alpha\geq \omega$, and $C_{\alpha}=\{\{r\}: r\in \alpha\}$.

\item Less obvious are Halmos' {\it polyadic algebras}, of infinite dimension, as defined in \cite{HMT2}.
Such algebras are  axiomatized by Halmos schemes; hence the form a generalized system of varieties 
definable by a schema of equations. For example, the $\omega$ instance of $(P_{11})$ is:

$[(\forall y) (r_2(y)\longleftrightarrow \exists z(r_1(z)\land y=f_1(z)\land (\forall y,z)(r_2(y)\land r_2(z)\land y\neq z\implies\\
f_1(y)\neq f_1(z), c_{r_1}s_{f_1}(x)=s_{f_1}c_{r_2}(x)].$ 
\end{enumarab}

\end{example}

\begin{example} Cylindric-polyadic algebras \cite{Fer}. 
These are reducts of polyadic algebras of infinite dimension, where we have all substitutions, but cylindrification is allowed only on finitely many 
indices. Such algebras have become fashonable lately, with the important recent the work of Ferenczi. However, 
Ferenczi deals with (non-classical) versions of such algebras, where commutativity of cylindrifiers is weakened, 
substantially, and he proves strong representation theorems on generalized set algebras.
Here $C_{\alpha}$ is again the set of singletons, manifesting the cylindric spirit of the algebras, while 
$G_{\alpha}= {}^{\alpha}\alpha$, manifesting, in turn, its polyadic reduct.

%We shall show below that all such varieties have the super amalgamation property, that is to the best of our knowledge a new result.
\end{example}

\begin{example} 

(a) Sain's algebras \cite{Sain}: Such algebras povide a solution to one of the most central problems in algebraic logic, 
namely, the so referred to in the literature as the fintizability problem.
Those  are  countable reducts of polyadic algebras, and indeed of cylindric-polyadic algebras.
Cylindrifies are finite, that is they are defined only on finitely many indices, but at least  two infinitary substitutions are there. 

Like polyadic algebras, and for that matter cylindric polyadic algebras, such classes algebras, which happen to be varieties, can 
be easily formulated as a generalized system of varieties definable by a schema on the
interval $[\alpha, \alpha+\omega]$, $\alpha$ a countable ordinal.  

Here we only have substitutions coming from a {\it countable} semigroup $G_{\alpha},$ and $G_{\alpha+n}$, $n\leq \omega$, is the 
sub-semigroup of $^{\alpha+n}\alpha+n$ generated by $\bar{\tau}=\tau\cup Id_{(\alpha+n)\sim \alpha}$, $\tau \in G_{\alpha}$. 
Such algebras, were introduced by Sain, can be modified, in case the semigroups determining their similarity types
are finitely presented, providing first order logic without equality a {\it strictly finitely based} algebraisation, see also \cite{AU}. 

(b) Sain's algebras with diagonal elements \cite{Sain2}. These are investigated by Sain and Gyuris, in the context of 
finitizing first order logic {\it with equality}. This problem turns out to be harder, and so 
the results obtained are weaker, because the class of representable algebras $V$ is not elementary; it is not closed under ultraproducts.
The authors manage to provide, in this case a generalized finite schema, for the class of  ${\bf H}V$; this only implies weak completeness 
for the corresponding infinitary
logics; that is $\models \phi$ implies $\vdash \phi$, relative to a finitary Hilbert style axiomatization, involving only type free valid schemes.
However, there are non-empty sets of formulas $\Gamma$, such that $\Gamma\models \phi$, but there is no proof of $\phi$ from
$\Gamma$. 
\end{example}

It is timely to highlight the novelties in the above definition when compared to Monk's definition of a system definable by schemes. 
\begin{enumarab}

\item First, the most striking addition, is that it allows dealing with infinitary substitutions coming from a set
$G_{\alpha}$, which is usually a semigroup. Also infinitary cylindrifiers are permitted.  
This, as indicated in the above examples, covers polyadic algebras,  
Heyting polyadic algebras, $MV$ polyadic algebras and Ferenzci's 
cylindric-polyadic algebras, together with their important reducts studied by Sain.

\item Second thing, cylindrifiers are not mandatory;
this covers many algebraisations of multi dimensional modal logics, like for example modal logics of substitutions. 
(This will be elaborated upon below, in the new context of complete system of varieties definable 
by a schema, which integrates  finite dimensions).  

\item We also have another (universal) quantifier $q$, intended to be the dual of cylindrifiers in the case of presence of negation;
representing  univeral quantification. This is appropriate for logics where we do not have negation in the classical sense,
like intuitionistic logic,  expressed algebrically by Heyting polyadic algebras.

\item Finally the system could be definable only on an interval of ordinals of the form $[\alpha,\beta]$, while the usual definition of Monk's schemes
defines systems of varieties on $I_{\omega}$; without this more general condition, 
we would have not been able to approach Sain's algebras.

\end{enumarab}

An operator that features prominently in systems of varieties defined earelier is the neat reduct operator, 
which can be viewed as functor from algebras to algebras of a lesser dimension. 
%Categorical settings of  such a functor like adjointess and invertibility
%will be connected in a while to several definability properties that apply to our generalized systems of varieties definable by a schema.
The definition of neat reducts for Monk's schemes is fairly straightforward. 
By allowing infinitary substitutions possibly moving infinitely many points, 
the definition becomes more intricate, and it needs 
caution.
\begin{definition}
\begin{enumarab}
\item Let $(\K_{\alpha}: \alpha\geq \mu)$ be a system of varieties. 
For $\alpha<\beta$, both in $\mu$, and $\tau\in G_{\alpha}$ we assume that $\bar{\tau}=\tau\cup Id\in G_{\beta}$. We also assume that if 
$r\in C_{\alpha}$, then  $r\in C_{\beta}$ for every $\beta>\alpha$ in $\mu$
Given $\A\in \K_{\beta},$ $\A=(\B, q_{(r)}, c_{(r)}, s_{\bar{\tau}})_{r\in C_{\alpha}, \tau\in G_{\beta}},$ say,
we define $$\Rd_{\alpha}\A=(\Rd_{\alpha}\B, q_{(r)}, c_{r}, s_{\bar{\tau}})_{r\in C_{\beta}\alpha, \tau\in G_{\alpha}},$$
where $\Rd_{\alpha}\B$ is the reduct obtained from $\B$ by allowing only operations whose indices come form $\alpha$, and discard the rest.
%$\Rd_{\alpha}\A$ denotes $\Rd^{\rho}\A$, when $\rho$ is the inclusion map.

\item Given $\A\in \K_{\beta}$ and $x\in A$,  $Nr_{\alpha}\A=\{x\in A: \forall r\subseteq (\beta\sim \alpha), r\in C_{\alpha};  c_{(r)}x=x\}.$

\item We assume that for all $f\in T\sim \{c,s\}$, $\A\in \K_{\beta}$ and $\alpha<\beta\in \mu$, 
if $r\in \wp(\beta\sim \alpha)\cap C_{\alpha}$ and $c_{(r)}x=x$, then $c_{(r)}f_{i_0,\ldots i_{n-1}}(\bar{x})=f_{i_0,\ldots i_{n-1}}(\bar{x})$; here
$n=\delta(f)$ and  $|\bar{x}|=\rho(f)$. Same for $q$.
Furthermore if $\tau\in G_{\beta}$ and $\tau\upharpoonright \beta\sim \alpha=Id_{\beta\sim \alpha}$, then for all $r\in \wp(\beta\sim \alpha)\cap C_{\alpha}$,
we have $c_{(r)}s_{\tau}x=s_{\tau}x$ (this is a very reasonable condition, because the indicses moved by the substitution lie outside
the scope of the generalized cylinrifier) . 

%We assume that if $x\in \Nr_{\alpha}\A$, and $i_0,\ldots i_{n-1}$ are in $\alpha$, $f\in T\sim \{s,c\}$, 
%then for all $r\in \wp(\beta\sim \alpha)\cap C_{\alpha}$, we have 
%$$c_{(r)}f_{i_0,\ldots i_{n-1}})(x_0,\ldots x_{\rho(f)-1})=f_{i_0,\ldots i_{n-1}}(x_0,\ldots x_{\rho(f)-1}).$$
Then $\Nr_{\alpha}\B$ is the subalgebra of $\Rd_{\alpha}\B$ with universe $Nr_{\alpha}\B$.
This is well defined. 

\item For $\bold L\subseteq \K_{\beta}$, and $\alpha<\beta$, $\Nr_{\alpha}\bold L=\{\Nr_{\alpha}\A: \A\in \K_{\beta}\}.$ 

\item If $\alpha, \alpha+\omega\in \mu$, we set $\Kn_{\alpha}=S\Nr_{\alpha}\K_{\alpha+\omega}$.

\end{enumarab}
\end{definition}

Since in generalized systems, $\K_{\omega}$ specifies higher dimensions uniquely, it is reasonable to formulate our results 
for only  $\omega$ dimensional algebras.
This is no real restriction; what can be proved for $\omega$ can be proved for any larger ordinal in the interval defining the system.
From now on, we assume that $\omega$ and $\omega+\omega$ are in the interval defining systems adressed.

Call a sytem of varieties {\it nice} if $\Kn_{\omega}$ has the amalgamation property, and call it {\it very nice} if $\Kn_{\omega}$ 
has the superamalgamation property. 
Our next theorem shows that given that $\bold M$ enjoys a strong form of amalgamation (which happens often, like incylindric algebras and quasipolyadic algebras with and without equality and Pinter's 
substitution algebras, 
the amalgamation property is actually equivalent to the adjointness of the neat reduct functor, 
while the superamalgamation property
is equivalent to its strong invertibility. 

Using metaphysical jagon, yet again, 
the next theorem is the heart and soul of this paper, 
formulated rigorously in the dialect of category theory and algebraic 
logic:

\begin{theorem}\label{main} Let $\K=(\K_{\alpha}: \alpha\in \mu)$ be a system of varieties, such that $\omega$ and $\omega+\omega\in \mu$. 
Assume that $\bold M =\{\A\in \K_{\omega+\omega}: \A=\Sg^{\A}\Nr_{\omega}\A\}$
has $SUPAP$. Assume further that for any injective homomorphism $f:\Nr_{\alpha}\B\to \Nr_{\alpha}\B'$, there exists an injective
homomorphism $g:\B\to \B'$ such that $f\subseteq g$. Then the following two conditions are equivalent.
\begin{enumarab}
\item $\Kn_{\omega}$ is (very) nice.
\item $\Nr_{\omega}$ is (strongly) invertible
\end{enumarab}
\end{theorem}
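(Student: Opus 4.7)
The plan is to run the machinery assembled in the paper (universal maps, UNEP, amalgamation, lifting of monomorphisms, superamalgamation) one more time, but now at the level of an arbitrary generalized system of varieties, using only the two hypotheses actually invoked: SUPAP of $\bold M$ and the lifting of injective homomorphisms from $\Nr_\omega \B \to \Nr_\omega \B'$ to $\B \to \B'$. The connecting notion in both directions is the \emph{unique neat embedding property} (UNEP) for $\Kn_\omega$: every $\A\in\Kn_\omega$ sits inside a minimal dilation $\B\in\bold M$ that is unique up to an $\A$-fixing isomorphism. Concretely, the plan is to establish the chain

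invertibility of $\Nr_\omega$ $\;\Longleftrightarrow\;$ existence of universal maps for $\Nr_\omega$ $\;\Longleftrightarrow\;$ UNEP $\;\Longleftrightarrow\;$ AP of $\Kn_\omega$,

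with the strong/super versions of each notion lined up in parallel.

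First I would handle the direction (1) $\Rightarrow$ (2). Assume AP of $\Kn_\omega$. Given $\A\in\Kn_\omega$, the neat embedding theorem provides $\B\in\K_{\omega+\omega}$ with $\A\subseteq\Nr_\omega\B$; replacing $\B$ by $\Sg^\B A$ we may assume $\B\in\bold M$, giving the candidate universal map $\mu_\A : \A \to \Nr_\omega\B$. To verify universality it suffices to prove UNEP: if $\A\subseteq\Nr_\omega\B_1$ and $\A\subseteq\Nr_\omega\B_2$ with both $\B_i\in\bold M$, then $\B_1\cong\B_2$ over $\A$. Here I would reproduce the argument used for the polyadic case in the proof of the $\RQEA$ theorem, but in abstract dress: use AP to amalgamate $\Nr_\omega\B_1$ and $\Nr_\omega\B_2$ over $\A$ inside $\Kn_\omega$, obtaining a common extension $\D$, lift the two embeddings into $\D$ to embeddings of $\B_1$ and $\B_2$ into a minimal dilation of $\D$ via the lifting hypothesis, then use that both targets are generated (as members of $\bold M$) by their respective images of $\A$ to conclude that the two lifts have the same image, giving the desired $\A$-fixing isomorphism. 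Once every $\A$ has a universal map, Theorem 27.3 of \cite{cat} converts this into a right adjoint $G$; strengthening AP to SUPAP gives the strengthening of UNEP that lets the unit $\mu$ and counit $\epsilon$ be shown to be isomorphisms, exactly as in the polyadic equivalence proved earlier in the paper, yielding strong invertibility.

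Next I would do the direction (2) $\Rightarrow$ (1). Assume $\Nr_\omega$ has a right adjoint $G$, so universal maps exist and UNEP holds. Given a span $f: \C\to \A_1$, $g : \C\to \A_2$ of monomorphisms in $\Kn_\omega$, embed each algebra into its minimal dilation $G(\C), G(\A_1), G(\A_2)\in\bold M$; by the lifting hypothesis, $f$ and $g$ extend to monomorphisms $\bar f : G(\C)\to G(\A_1)$ and $\bar g: G(\C)\to G(\A_2)$ in $\bold M$. Now apply the hypothesis that $\bold M$ has SUPAP (for AP in the statement only AP of $\bold M$ is needed, but SUPAP is given) to obtain a superamalgam $\D^+ \in \bold M$, take $\D = \Nr_\omega \D^+ \in \Kn_\omega$, and check that the induced maps $\Nr_\omega(\bar f)$, $\Nr_\omega(\bar g)$, restricted to $\A_1,\A_2$, give an amalgam. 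For the superamalgamation clause the argument of Theorem \ref{SUP} applies verbatim once transcribed into the schematic language: if $m(a)\leq n(b)$, the interpolant $z\in G(\C)$ produced by SUPAP in $\bold M$ is brought back into $\C$ by applying a finite generalized cylindrification ${\sf c}_{(\Gamma)}$ with $\Gamma \subseteq (\omega+\omega)\setminus \omega$ finite, using $G(\C) = \Sg^{G(\C)} \Nr_\omega G(\C)$ together with the axioms assumed of the schema to guarantee that ${\sf c}_{(\Gamma)} z\in \Nr_\omega G(\C)\cong \C$.

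The main obstacle I anticipate is controlling the dimension set of the superamalgamation witness $z$ in the schematic setting. In the concrete $\CA$, $\QA$ or $\PA$ arguments earlier in the paper one exploits very specific properties of $\Delta$ and of composition of cylindrifiers to push $z$ back into the base; in the present abstract generalized system one has only the axiom scheme and the closure clauses in the definition of $\Nr_\alpha$. Pinning down the precise form of the abstract axioms needed, so that ${\sf c}_{(\Gamma)} z$ is both definable and lives in $\Nr_\omega G(\C)$, is the delicate part; it is here that the hypothesis $\B = \Sg^\B \Nr_\omega \B$ defining $\bold M$ and the lifting hypothesis for monomorphisms interact and must be used jointly. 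Once that is in place, the category-theoretic conversion between universal maps, adjoint functors and super\-amalgamation goes through mechanically along the lines already traced in the polyadic case.
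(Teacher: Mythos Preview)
Your overall architecture matches the paper's: the bridge between amalgamation in $\Kn_\omega$ and adjointness of $\Nr_\omega$ is indeed UNEP, and for the ``nice/AP $\Leftrightarrow$ right adjoint'' part your plan is essentially the paper's proof. One small correction: when you lift $f,g$ to $G(\C)\to G(\A_i)$ you should invoke functoriality of the right adjoint $G$ (right adjoints preserve monos), not the lifting hypothesis; the latter requires the map to be defined on all of $\Nr_\omega\B$, which at this stage you do not know.

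The genuine gap is in the ``very nice/SUPAP $\Leftrightarrow$ strongly invertible'' part. The crucial intermediate property you never isolate is what the paper calls $NS$: if $\A\subseteq\Nr_\omega\B$ and $\A$ generates $\B\in\bold M$, then $\A=\Nr_\omega\B$. In the direction SUPAP $\Rightarrow$ strong invertibility, the paper does \emph{not} obtain strong invertibility from a ``strengthening of UNEP''; rather it uses SUPAP itself to prove $NS$ directly: if $\A\subsetneq\Nr_\omega\B$, take $y\in\Nr_\omega\B\setminus\A$, super\-amalgamate two copies of $\Nr_\omega\B$ over $\A$ so that the two images of $\Nr_\omega\B$ meet exactly in the image of $\A$, then $m_1(y)\neq m_2(y)$; now lift both maps to $\B$ via the lifting hypothesis, and since $\A$ generates $\B$ the two lifts coincide, forcing $m_1(y)=m_2(y)$, a contradiction. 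Only with $NS$ in hand does the paper conclude that $\Nr_\omega$ is full, faithful and dense, hence an equivalence.

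In the converse direction your diagnosis of the obstacle is off. Getting ${\sf c}_{(\Gamma)}z\in\Nr_\omega G(\C)$ is immediate from the definition of $\Nr_\omega$ once $\Gamma=\Delta z\setminus\omega$; the real issue is the identification $\Nr_\omega G(\C)=e_C(\C)$, which is exactly $NS$ again. The paper derives $NS$ from strong invertibility by a short argument: if $\A$ generated $\B$ with $\A\neq\Nr_\omega\B$, then $\A$ and $\Nr_\omega\B$ would be non-isomorphic objects of $\Kn_\omega$ with the same image under the putative inverse, contradicting that $\Nr_\omega$ is an equivalence. With $NS$ in hand, the interpolant $z'={\sf c}_{(\Gamma)}z$ lands in $e_C(\C)$ and the super\-amalgamation of $\Kn_\omega$ follows exactly as in Theorem~\ref{SUP}. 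So the fix is simply to insert $NS$ as an explicit lemma, prove it from SUPAP in one direction and from strong invertibility in the other, and then your plan goes through.
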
 
\begin{demo}{Proof}

\begin{enumarab}
\item Assume that $\Kn_{\omega}$ has the amalgamation property. We first show that 
$\Kn_{\omega}$ has the following unique neat embedding property:
If $i_1:\A\to \Nr_{\alpha}\B_1$, $i_2:\A\to \Nr_{\alpha}\B_1$ 
are such that $i_1(A)$ generates $\B_1$ and $i_2(A)$ generates
$\B_2$, then there is an isomorphism $f:\B_1\to \B_2$ auch that $f\circ i_1=i_2$.

By assumption, there is an amalgam, that is there is  $\D\in \Kn_{\omega}$, $m_1:\Nr_{\alpha}\B_1\to \D$, $m_2:\Nr_{\alpha}\B_2\to \D$ such that
$m_1\circ i_1=m_2\circ i_2$. We can assume that $m_1:\Nr_{\alpha}\B\to \Nr_{\alpha}\D^+$ for some $\D^+\in \bold M$, and similarly for $m_2$.
By hypothesis, let $\bar{m_1}:\B_1\to \D^+$ and $\bar{m_2}:\B_2\to \D^+$ be isomorphisms extending $m_1$ and $m_2$.  
Then since $i_1A$ generates $\B_1$ and $i_2A$ generates $\B_2$, then $\bar{m_1}\B_1=\bar{m_2}\B_2$. It follows that 
$f=\bar{m}_2^{-1}\circ \bar{m_1}$ is as desired.
From this it easily follows that $\Nr$ has universal maps and we are done. 

In  fact, the uniqueness property established above, call it $UNEP$,
is equivalent to existense of unversal maps; this is quite easy to show, hence to prove the converse, we assume $UNEP$, and we set out 
to prove that 
$\Kn_{\omega}$ has $AP$.

Let $\A,\B\in \Kn_{\omega}$. Let $f:\C\to \A$ and $g:\C\to \B$ be injective homomorphisms.
Then there exist $\A^+, \B^+, \C^+\in \K_{\alpha+\omega}$, $e_A:\A\to \Nr_{\alpha}\A^+$ 
$e_B:\B\to  \Nr_{\alpha}\B^+$ and $e_C:\C\to \Nr_{\alpha}\C^+$.
We can assume that $\Sg^{\A^+}e_A(A)=\A^+$ and similarly for $\B^+$ and $\C^+$.
Let $f(C)^+=\Sg^{A^+}e_A(f(C))$ and $g(C)^+=\Sg^{B^+}e_B(g(C)).$
Since $\C$ has $UNEP$, there exist $\bar{f}:\C^+\to f(C)^+$ and $\bar{g}:\C^+\to g(C)^+$ such that 
$(e_A\upharpoonright f(C))\circ f=\bar{f}\circ e_C$ and $(e_B\upharpoonright g(C))\circ g=\bar{g}\circ e_C$.
Now $\bold M$ as $SUPAP$, hence there is a $\D^+$ in $\bold M$ and $k:\A^+\to \D^+$ and $h:\B^+\to \D^+$ such that
$k\circ \bar{f}=h\circ \bar{g}$. Then $k\circ e_A:\A\to \Nr_{\alpha}\D^+$ and
$h\circ e_B:\B\to \Nr_{\alpha}\D^+$ are one to one and
$k\circ e_A \circ f=h\circ e_B\circ g$.

\item  Now for the second equivalence. Assume that $\Kn_{\omega}$ has $SUPAP$. Then, {\it a fortiori}, it has $AP$ 
hence, by the above argument, it has $UNEP.$ 
We first show that if $\A\subseteq \Nr_{\alpha}\B$ and $\A$ generates $\B$ then equality holds, we call this property $NS$, short for neat reducts commuting with forming 
subalgebras.

If not, then $\A\subseteq \Nr_{\alpha}\B$, $\B\in K,$ $A$ generates $\B$ and $\A\neq \Nr_{\alpha}\B$.
Then $\A$ embeds into $\Nr_{\alpha}\B$ via the inclusion map $i$ . Let $\C=\Nr_{\alpha}\B$.
By $SUPAP$, there exists $\D\in \Kn_{\omega}$ and $m_1$, $m_2$ monomorphisms 
from $\C$ to $\D$ such that $m_1(\C)\cap m_2(\C)=m_1\circ i(\A)$. Let $y\in \C\sim A$. 
Then $m_1(y)\neq m_2(y)$ for else $d=m_1(y)=m_2(y)$ will be in
$m_1(\C)\cap m_2(\C)$ but not in $m_1\circ i(\A)$. Assume that 
$\D\subseteq \Nr_{\alpha}\D^+$ with $\D^+\in K$. By hypothesis, there exist injections
$\bar{m_1}:\B\to \D^+$ and $\bar{m_2}:\B\to \D^+$ extending $m_1$ and $m_2$. 
But $A$ generates $\B$ and so $\bar{m_1}=\bar{m_2}$.
Thus $m_1y=m_2y$ which is a contradiction.   

Now let  $\beta=\alpha+\omega$. Let $\bold M=\{\A\in \K_{\beta}: \A=\Sg^{\A}\Nr_{\omega}\A$\}. 
Let $\Nr:\bold M\to \Kn_{\omega}$ be the neat reduct functor. 
We show that  $\Nr$ is strongly invertible, namely there is a functor $G:\Kn_{\omega}\to \bold M$ and natural isomorphisms
$\mu:1_{\bold M}\to G\circ \Nr$ and $\epsilon: \Nr\circ G\to 1_{\Kn_{\omega}}$.
%The idea is that a full, faithful, dense functor is invertible. 
Let $L$ be a system of representatives for isomorphism on $Ob(\bold M)$.
For each $\B\in Ob(\Kn_{\omega})$ there is a unique $G(\B)$ in $\bold M$ such that $\Nr(G(\B))\cong \B$.
Then $G:Ob(\Kn_{\omega})\to Ob(\bold M)$ is well defined. 
Choose one isomorphism $\epsilon_B: \Nr(G(\B))\to \B$. If $g:\B\to \B'$ is a $\Kn_{\omega}$  morphism, then the square

\begin{displaymath}
    \xymatrix{ \Nr(G(B)) \ar[r]^{\epsilon_B}\ar[d]_{\epsilon_B^{-1}\circ g\circ \epsilon_{B'}} & B \ar[d]^g \\
               \Nr(G(B'))\ar[r]_{\epsilon_{B'}} & B' }
\end{displaymath}
commutes. There is a unique morphism $f:G(\B)\to G(\B')$ such that $\Nr(f)=\epsilon_{\B}^{-1}\circ g\circ \epsilon$.
We let $G(g)=f$. Then it is easy to see that $G$ defines a functor. Also, by definition $\epsilon=(\epsilon_{\B})$ 
is a natural isomorphism from $\Nr\circ G$ to $1_{\Kn_{\omega}}$.
To find a natural isomorphism from $1_{\bold M}$ to $G\circ \Nr,$ observe that that for each $\A\in Ob(\bold M)$,
$\epsilon_{\Nr\A}:\Nr\circ G\circ \Nr(\A)\to \Nr(\A)$ is an isomorphism.
Then there is a unique $\mu_A:\A\to G\circ \Nr(\A)$ such that $\Nr(\mu_{\A})=\epsilon_{\Nr\A}^{-1}.$
Since $\epsilon^{-1}$ is natural for any $f:\A\to \A'$ the square
\bigskip
\bigskip
\begin{displaymath}
    \xymatrix{ \Nr(A) \ar[r]^{\epsilon_{\Nr(A)}^{-1}=\Nr(\mu_A)}\ar[d]_{\Nr(f)} & \Nr\circ G\circ \Nr(A) \ar[d]^{\Nr\circ G\circ \Nr(f)} \\
               \Nr(A')\ar[r]_{\epsilon_{\Nr\A}^{-1}=\Nr(\mu_{A'})} & \Nr\circ G\circ \Nr(A') }
\end{displaymath}

commutes, hence the square

\bigskip
\begin{displaymath}
    \xymatrix{ A \ar[r]^{\mu_A}\ar[d]_f & G\circ \Nr(A) \ar[d]^{G\circ \Nr(f)} \\
               A'\ar[r]_{\mu_{A'}} & G\circ \Nr(A') }
\end{displaymath}

commutes, too. Therefore $\mu=(\mu_A)$ is as required.

Conversely, assume that the functor $\Nr$ is invertible. Then we have the $UNEP$ and the $NS$. The  $UNEP$ 
follows from the fact that the functor has 
a right adjoint, and so it has universal maps. To prove that it has $NS$ assume for contradiction 
that there exists $\A$ generating subreduct of $\B$ and 
$\A$ is not isomorphic to $\Nr_{\alpha}\B$. This means that $Nr$ is not invertible, because 
had it been invertible, with inverse $Dl$, then  $Dl(\A)=Dl(\Nr_{\alpha}\B)$ and this cannot happen.

Now we prove that $\Kn_{\omega}$ has $SUPAP$.
We obtain (using the notation in the first part)
$\D\in \Nr_{\alpha}\K_{\alpha+\omega}$ 
and $m:\A\to \D$ $n:\B\to \D$
such that $m\circ f=n\circ g$.
Here $m=k\circ e_A$ and $n=h\circ e_B$.  Denote $k$ by $m^+$ and $h$ by $n^+$.
Suppose that $\C$ has $SNEP$. We further want to show that if $m(a) \leq n(b)$, 
for $a\in A$ and $b\in B$, then there exists $t \in C$ 
such that $ a \leq f(t)$ and $g(t) \leq b$.
So let $a$ and $b$ be as indicated. 
We have  $m^+ \circ e_A(a) \leq n^+ \circ e_B(b),$ so
$m^+ ( e_A(a)) \leq n^+ ( e_B(b)).$
Since $\bold M$ has $SUPAP$, there exist $ z \in C^+$ such that $e_A(a) \leq \bar{f}(z)$ and
$\bar{g}(z) \leq e_B(b)$.
Let $\Gamma = \Delta z \sim \alpha$ and $z' =
{\sf c}_{(\Gamma)}z$. So, we obtain that 
$e_A({\sf c}_{(\Gamma)}a) \leq \bar{f}({\sf c}_{(\Gamma)}z)~~ \textrm{and} ~~ \bar{g}({\sf c}_{(\Gamma)}z) \leq
e_B({\sf c}_{(\Gamma)}b).$ It follows that $e_A(a) \leq \bar{f}(z')~~\textrm{and} ~~ \bar{g}(z') \leq e_B(b).$ Now by hypothesis
$$z' \in \Nr_\alpha \C^+ = \Sg^{\Nr_\alpha \C^+} (e_C(C)) = e_C(C).$$ 
So, there exists $t \in C$ with $ z' = e_C(t)$. Then we get
$e_A(a) \leq \bar{f}(e_C(t))$ and $\bar{g}(e_C(t)) \leq e_B(b).$ It follows that $e_A(a) \leq e_A \circ f(t)$ and 
$e_B \circ g(t) \leq
e_B(b).$ Hence, $ a \leq f(t)$ and $g(t) \leq b.$

\end{enumarab}
\end{demo}

\section{Another adjoint situation for finite dimensions}

\begin{definition} Let ${C}\in \CA_{\alpha}$ and $I\subseteq \alpha$, and let $\beta$ be the order type of $I$. Then
$$Nr_IC=\{x\in C: c_ix=x \textrm{ for all } i\in \alpha\sim I\}.$$
$$\Nr_{I}{\C}=(Nr_IC, +, \cdot ,-, 0,1, c_{\rho_i}, d_{\rho_i,\rho_j})_{i,j<\beta},$$
where $\beta$ is the unique order preserving one-to-one map from $\beta$ onto $I$, and all the operations 
are the restrictions of the corresponding operations on $C$. When $I=\{i_0,\ldots i_{k-1}\}$ 
we write $\Nr_{i_0,\ldots i_{k-1}}\C$. If $I$ is an initial segment of $\alpha$, $\beta$ say, we write $\Nr_{\beta}\C$.
\end{definition}
Similar to taking the $n$ neat reduct of a $\CA$, $\A$ in a higher dimension, is taking its $\Ra$ reduct, its relation algebra reduct.
This has unverse consisting of the $2$ dimensional elements of $\A$, and composition and converse are defined using one spare dimension.
A slight generalization, modulo a reshufflig of the indicies: 

\begin{definition}\label{RA} For $n\geq 3$, the relation algebra reduct of $\C\in \CA_n$ is the algebra
$$\Ra\C=(Nr_{n-2, n-1}C, +, \cdot,  1, ;, \breve{}, 1').$$ 
where $1'=d_{n-2,n-1}$, $\breve{x}=s_{n-1}^0s_{n-1}^{n-2}s_0^{n-1}x$ and $x;y=c_0(s_0^{n-1}x. s_0^{n-2}y)$. 
Here $s_i^j(x)=c_i(x\cdot d_{ij})$ when $i\neq q$ and $s_i^i(x)=x.$
\end{definition}
But what is not obvious at all is that an $\RA$ has a $\CA_n$ reduct for $n\geq 3$. 
But Simon showed that certain relations algebras do; namely the $\QRA$s.

\begin{definition} A relation algebra $\B$ is a $\QRA$ 
if there are elements $p,q$ in $\B$ satisfying the following equations:
\begin{enumarab}
\item $\breve{p};p\leq 1', q; q\leq 1;$
\item  $\breve{p};q=1.$
\end{enumarab}
\end{definition}
In this case we say that $\B$ is a $\QRA$  with quasi-projections $p$ and $q$. 
To construct cylindric algebras of higher dimensions 'sitting' in a $\QRA$, 
we need to define certain terms. seemingly rather complicated, their intuitive meaning 
is not so hard to grasp.
\begin{definition} Let $x\in\B\in \RA$, then $dom(x)=1';(x;\breve{x})$ and $ran(x)=1';(\breve{x}; x)$, $x^0=1'$, $x^{n+1}=x^n;x$. $x$ 
is a functional element if $x;\breve{x}\leq 1'$.
\end{definition}
Given a $\QRA$, which we denote by $\bold Q$, we have quasi-projections $p$ and $q$ as mentioned above. 
Next we define certain terms in ${\bf Q}$, cf. \cite{Andras}:

%Let $n\geq 4$. Then set \cite{Andras}:

$$\epsilon^{n}=dom q^{n-1},$$
$$\pi_i^n=\epsilon^{n};q^i;p,  i<n-1, \pi_{n-1}^{(n)}=q^{n-1},$$
$$ \xi^{(n)}=\pi_i^{(n)}; \pi_i^{(n)},$$
$$ t_i^{(n)}=\prod_{i\neq j<n}\xi_j^{(n)}, t^{(n)}=\prod_{j<n}\xi_j^{(n)},$$
$$ c_i^{(n)}x=x;t_i^{(n)},$$
$$ d_{ij}^{(n)}=1;(\pi_i^{(n)}.\pi_j^{(n)}),$$
$$ 1^{(n)}=1;\epsilon^{(n)}.$$
and let
$$\B_n=(B_n, +, \cdot, -, 0,1^{(n)}, c_i^{(n)}, d_{ij}^{(n)})_{i,j<n},$$
where $B_n=\{x\in B: x=1;x; t^{(n)}\}.$
The intuitive meaning of those terms is explained in \cite{Andras}, right after their definition on p. 271.

\begin{theorem} Let $n>1$
\begin{enumerate} 
\item Then ${\B}_n$ is closed under the operations.
\item ${\B}_n$ is a $\CA_n$.
\end{enumerate}
\end{theorem}
\begin{proof} (1) is  proved in \cite{Andras} lemma 3.4 p.273-275 where the terms are definable in a $\QRA$. 
%The same proof goes through because the relation algebra reduct of the given 
%$CA_3$ is a $QRA$, so the hitherto defined terms have the same meaning. 
That it is a $\CA_n$ can be proved as \cite{Andras}  theorem 3.9.
\end{proof} 

\begin{definition} Consider the following terms.
$$suc (x)=1; (\breve{p}; x; \breve{q})$$
and
$$pred(x)=\breve{p}; ranx; q.$$ 
\end{definition}
It is proved in \cite{Andras} that $\B_n$ neatly embeds into $\B_{n+1}$ via $succ$. The successor function thus codes 
extra dimensions. The thing to observe here is that  we will see that $pred$; its inverse; 
guarantees a condition of commutativity of two operations: forming neat reducts and forming subalgebras;
it does not make a difference which operation we implement first, as long as we implement both one after the other.
So the function $succ$ {\it captures the extra dimensions added.}. From the point of view of {\it definability} it says 
that terms definable in extra dimensions add nothing, they are already term definable.
And this indeed is a definability condition, that will eventually lead to stong interpolation property we wnat.

\begin{theorem}\label{neat} Let $n\geq 3$. Then $succ: {\B}_n\to \{a\in {\B}_{n+1}: c_0a=a\}$ 
is an isomorphism into a generalized neat reduct of ${\B}_{n+1}$.
Strengthening the condition of surjectivity,  for all $X\subseteq \B_n$, $n\geq 3$, we have (*)
$$succ(\Sg^{\B_n}X)\cong \Nr_{1,2,\ldots, n}\Sg^{\B_{n+1}}succ(X).$$
\end{theorem}

\begin{proof} The operations are respected by \cite{Andras} theorem 5.1. 
The last condition follows  because of the presence of the 
functional element $pred$, since we have $suc(pred x)=x$ and $pred(sucx)=x$, when $c_0x=x$, \cite{Andras} 
lemmas 4.6-4.10. 
\end{proof}
\begin{theorem}
Let $n\geq 3$. Let ${\C}_n$ be the algebra obtained from ${\B}_n$ by reshuffling the indices as follows; 
set $c_0^{{\C}_n}=c_n^{{\B}_n}$ and $c_n^{{\C}_n}=c_0^{{\cal B}_n}$. Then ${\C}_n$ is a cylindric algebra,
and $suc: {\C}_n\to \Nr_n{\C}_{n+1}$ is an isomorphism for all $n$. 
Furthermore, for all $X\subseteq \C_n$ we have
$$suc(\Sg^{\C_n}X)\cong \Nr_n\Sg^{\C_{n+1}}suc(X).$$ 
\end{theorem}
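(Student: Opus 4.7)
The plan is to transport the content of the previous Theorem \ref{neat} verbatim via the index-reshuffling, so no essentially new computation is required; the work is almost entirely bookkeeping. First I would verify that $\C_n$ is a cylindric algebra. The cylindric axioms are symmetric in the indices: if $\pi:n\to n$ is any permutation of the index set, then $\Rd^{\pi}\A\in\CA_n$ whenever $\A\in\CA_n$. Taking $\pi$ to be the transposition that swaps $0$ and $n-1$ (the same philosophy used already in Definition \ref{RA} when passing from $\Nr_{n-2,n-1}$ to the standard $\Ra$ format) shows that $\C_n$ satisfies the cylindric axioms as soon as $\B_n$ does, which in turn was established in the preceding theorem.

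Second, I would reinterpret the relevant neat reduct. In the $\B_{n+1}$ convention the index $0$ plays the role of the ``extra'' dimension, so the set $\{a\in\B_{n+1}:c_0^{\B_{n+1}}a=a\}$ is exactly the set of elements whose dimension set is contained in $\{1,2,\ldots,n\}$. After the reshuffling (which moves the extra dimension to position $n$), this same subset is now $\{a\in\C_{n+1}:c_n^{\C_{n+1}}a=a\}$, which is by definition $\Nr_n\C_{n+1}$. Hence Theorem \ref{neat} already gives, modulo relabeling, a bijection $suc:\C_n\to\Nr_n\C_{n+1}$. The only point to check is that $suc$ continues to commute with the \emph{relabelled} cylindrifiers and diagonals; but this was part of the content of \cite{Andras}, where $suc$ was shown to respect \emph{every} $c_i^{\B_n}$ and $d_{ij}^{\B_n}$ under the appropriate dimension-raising correspondence, and relabelling $0\leftrightarrow n$ on both sides preserves this correspondence.

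Third, for the furthermore clause I would invoke the strengthened surjectivity in display (*) of Theorem \ref{neat}. In the $\B$-indexing this reads
\[
suc(\Sg^{\B_n}X)\cong \Nr_{1,2,\ldots,n}\Sg^{\B_{n+1}}suc(X),
\]
and under the index swap the set $\{1,2,\ldots,n\}\subseteq\{0,1,\ldots,n\}$ transforms into $\{0,1,\ldots,n-1\}\subseteq\{0,1,\ldots,n\}$, which is precisely the indexing used in $\Nr_n\C_{n+1}$. So the isomorphism rewrites as $suc(\Sg^{\C_n}X)\cong \Nr_n\Sg^{\C_{n+1}}suc(X)$, as required. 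The presence of the term $pred$, whose interaction with $suc$ (namely $suc(pred\,x)=x$ whenever $c_0x=x$, and $pred(suc\,x)=x$) was exploited in the previous theorem to upgrade the embedding to a full neat reduct, survives the swap because $pred$ and $suc$ are defined using $p,q$ in a manner independent of the cylindric index labels.

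The main (and really the only) obstacle is purely notational: one must be scrupulous about tracking which cylindrifier and which diagonal corresponds to which under the swap $0\leftrightarrow n$, both in the small algebra and in the dilation, and confirm that $\Nr_n$ in $\CA_{n+1}$ (whose ambient index set is $\{0,\ldots,n\}$, keeping $\{0,\ldots,n-1\}$) matches the old $\Nr_{1,\ldots,n}$ in $\B_{n+1}$ (whose ambient index set is $\{0,\ldots,n\}$, keeping $\{1,\ldots,n\}$). Once this dictionary is in place, both claims follow immediately from Theorem \ref{neat}, with no further computation.
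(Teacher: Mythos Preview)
Your proposal is correct and takes essentially the same approach as the paper: the paper's proof is the single line ``immediate from \ref{neat}'', and what you have written is precisely the index-bookkeeping that justifies this immediacy. One small wobble worth cleaning up: you first speak of the transposition swapping $0$ and $n-1$, then later of the swap $0\leftrightarrow n$; the theorem statement itself is loosely written on this point (since $\B_n$ has no $c_n$), but your intended dictionary---moving the ``extra'' coordinate from position $0$ in the $\B$-indexing to the top position in the $\C$-indexing so that $\Nr_{1,\ldots,n}\B_{n+1}$ becomes the standard $\Nr_n\C_{n+1}$---is the right one and is all the paper is invoking.
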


\begin{proof} immediate from \ref{neat}
\end{proof}  
\begin{theorem} Let ${\C}_n$ be as above. Then $succ^{m}:{\C_n}\to \Nr_n\C_m$ is an isomophism, such that 
for all $X\subseteq A$, we have
$$suc^{m}(\Sg^{\C_n}X)=\Nr_n\Sg^{\C_m}suc^{n-1}(X).$$
\end{theorem}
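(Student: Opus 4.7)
The plan is to proceed by induction on $k = m - n \geq 0$, using the preceding theorem as the single-step base of the recursion. For $k = 0$ (i.e.\ $m = n$), the map $suc^0 = \mathrm{id}$ is trivially an isomorphism $\C_n \to \Nr_n\C_n = \C_n$, and the generation identity $\Sg^{\C_n}X = \Nr_n\Sg^{\C_n}X$ is vacuous. This reduces everything to the inductive step.

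For the inductive step, suppose the statement holds at level $m$, so that $suc^{m-n} : \C_n \to \Nr_n\C_m$ is an isomorphism satisfying
\[
suc^{m-n}(\Sg^{\C_n}X) = \Nr_n\Sg^{\C_m}suc^{m-n}(X).
\]
Applying the preceding theorem to $\C_m$ (with $m$ in place of $n$), we obtain that $suc : \C_m \to \Nr_m\C_{m+1}$ is an isomorphism, and that for every $Y \subseteq C_m$,
\[
suc(\Sg^{\C_m}Y) = \Nr_m\Sg^{\C_{m+1}}suc(Y).
\]
Composing, $suc^{m+1-n} = suc \circ suc^{m-n}$ is an isomorphism from $\C_n$ onto its image in $\C_{m+1}$, and that image is clearly contained in $\Nr_n\C_{m+1}$ because an element fixed by all cylindrifications with indices $\geq n$ in $\C_m$ stays fixed by all cylindrifications with indices $\geq n$ after applying $suc$ (which preserves these cylindrifications by the neat-embedding established in Theorem~\ref{neat}); one then uses the transitivity $\Nr_n\Nr_m\C_{m+1} = \Nr_n\C_{m+1}$. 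Surjectivity onto $\Nr_n\C_{m+1}$ follows by noting that any $x \in \Nr_n\C_{m+1}$ is in particular in $\Nr_m\C_{m+1} = suc(\C_m)$, so $x = suc(y)$ for a unique $y$; the extra constraints $c_i x = x$ for $n \leq i < m$ transfer back through $suc$ (using the functional element $pred$ from Theorem~\ref{neat}) to give $y \in \Nr_n\C_m$, which by IH is the image of $\C_n$ under $suc^{m-n}$.

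For the generation identity at level $m+1$, set $Y = suc^{m-n}(X)$ and chain the two identities:
\[
suc^{m+1-n}(\Sg^{\C_n}X) = suc\bigl(suc^{m-n}(\Sg^{\C_n}X)\bigr) = suc\bigl(\Nr_n\Sg^{\C_m}Y\bigr).
\]
Since $suc$ is an isomorphism and $\Nr_n\Sg^{\C_m}Y \subseteq \Sg^{\C_m}Y$, the right-hand side equals $suc(\Sg^{\C_m}Y) \cap suc(\Nr_n C_m) = \Nr_m\Sg^{\C_{m+1}}suc(Y) \cap \Nr_n\C_{m+1}$, where the first factor uses the preceding theorem and the second uses that $suc$ sends $\Nr_n$-elements to $\Nr_n$-elements (cylindrification preservation). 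Intersecting, this is $\Nr_n\Sg^{\C_{m+1}}suc(Y) = \Nr_n\Sg^{\C_{m+1}}suc^{m+1-n}(X)$, as required.

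The main obstacle is the careful bookkeeping in that last display: one must check that $suc$ not only transports $\Sg$ across a single step (the content of the preceding theorem) but also interacts correctly with the $\Nr_n$-operator for the intermediate coordinates $n, n+1, \ldots, m-1$. This rests on two facts already in play from Theorem~\ref{neat}: that $suc$ commutes with the cylindrifications whose indices it does not shift, and that the functional element $pred$ provides a two-sided inverse on the relevant subalgebra, guaranteeing $pred \circ suc = \mathrm{id}$ on elements $x$ with $c_0 x = x$ (or the appropriate re-indexed condition in $\C_m$). Once these two facts are in hand, the induction closes cleanly and the theorem follows.
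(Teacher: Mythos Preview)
Your proposal is correct and takes essentially the same approach as the paper: the paper's proof is literally the single line ``By induction on $n$,'' and you have spelled out the inductive argument (iterating the single-step result of the preceding theorem, together with transitivity of $\Nr$) in far more detail than the paper provides. Your choice to induct on $k=m-n$ rather than on $n$ is an inessential reparameterization of the same idea.
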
 
\begin{proof} By induction on $n$.
\end{proof}
Now we want to neatly embed our $\QRA$ in $\omega$ extra dimensions. At the same we do not want to lose, our control over the streching;
we still need the commutativing of taking, now $\Ra$  reducts with forming subalgebras; we call this property the $\Ra S$ property.
To construct the big $\omega$ dimensional algebra, we use a standard ultraproduct construction.
So here we go.
For $n\geq 3$, let  ${\C}_n^+$ be an algebra obtained by adding $c_i$ and $d_{ij}$'s for $\omega>i,j\geq n$ arbitrarity and with 
$\Rd_n^+\C_{n^+}={\B}_n$. Let ${\C}=\prod_{n\geq 3} {\C}_n^+/G$, where $G$ is a non-principal ultrafilter
on $\omega$. 
In our next theorem, we show that the algebra $\A$ can be neatly embedded in a locally finite algebra $\omega$ dimensional algebra
and we retain our $\Ra S$ property. 
%commutativity proved above for finite dimensions, extend to $\omega$ dimensions. This property is crucil for our
%proof.

\begin{theorem} Let $$i: {\A}\to \Ra\C$$
be defined by
$$x\mapsto (x,  suc(x),\ldots suc^{n-1}(x),\dots n\geq 3, x\in B_n)/G.$$ 
Then $i$ is an embedding ,
and for any $X\subseteq A$, we have 
$$i(\Sg^{\A}X)=\Ra\Sg^{\C}i(X).$$
\end{theorem}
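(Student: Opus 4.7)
My plan breaks the theorem into four parts: well-definedness and landing in $\Ra\C$; preservation of the relation-algebra operations so that $i$ is a homomorphism; injectivity; and finally the generation identity $i(\Sg^{\A}X) = \Ra\Sg^{\C}i(X)$, which is the heart of the matter.

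First I would check well-definedness. For any $x \in A$, set $i(x)_n = suc^{n-1}(x)$ for all $n \geq 3$ large enough that the iterated successor makes sense; since the ultrafilter $G$ on $\omega$ is non-principal and hence contains the cofinite filter, we may fill in the finitely many exceptional coordinates arbitrarily without affecting the equivalence class. Because each $suc^{n-1}(x)$ lies inside the two-dimensional part of $\C_n$ (i.e.\ inside the $\Ra$-reduct of $\C_n$ interpreted inside $\C_n^+$), we have $i(x)_n$ fixed by all high-index cylindrifiers on a $G$-large set, so $i(x) \in \Ra\C$.

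Next I would establish that $i$ is an injective $\RA$-homomorphism. By the preceding theorem, $suc : \C_m \to \Nr_m \C_{m+1}$ is a $\CA$-isomorphism, and the relation-algebraic operations on $\A$ are term-definable inside the $\CA$-structure through the quasi-projections $p,q$; composing, each coordinate map $x \mapsto suc^{n-1}(x)$ preserves the $\RA$-operations on a cofinite set of $n$, so \Los's theorem yields preservation of $+,\cdot,-,{;},\breve{\phantom{x}},1'$ by $i$. Injectivity is clear: $i(x)=i(y)$ forces $suc^{n-1}(x) = suc^{n-1}(y)$ on a $G$-large set, and each $suc$ is injective, so $x=y$. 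The inclusion $i(\Sg^{\A}X) \subseteq \Ra\Sg^{\C}i(X)$ then follows at once, because $i$ is a homomorphism and its image lies in $\Ra\C$.

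The hard part will be the reverse inclusion $\Ra\Sg^{\C}i(X) \subseteq i(\Sg^{\A}X)$. Given $y \in \Ra\Sg^{\C}i(X)$, I would pick a representing sequence $(y_n)_{n \geq 3}/G$ with $y_n \in \Sg^{\C_n^+}\{suc^{n-1}(x) : x \in X\}$ on a $G$-large set, using that $\Sg$-operations in the ultraproduct are computed coordinatewise modulo $G$. Since $y \in \Ra\C$, on a possibly smaller $G$-large set $y_n$ is annihilated by every cylindrifier of index $\geq n$ and hence actually lies in the $\Ra$-reduct of $\Sg^{\C_n}suc^{n-1}(X)$. The strengthened identity of the previous theorem, namely $suc^{n-1}(\Sg^{\A}X) = \Nr_n \Sg^{\C_n}suc^{n-1}(X)$, combined with the analogous commutativity between $\Ra$ and $\Sg$ induced by iterating $\Nr$, produces a unique $z_n \in \Sg^{\A}X$ with $suc^{n-1}(z_n) = y_n$ on that $G$-large set. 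The genuine obstacle is showing that these $z_n$ cohere into a single $z \in \Sg^{\A}X$: that is, that $z_n = z_m$ on a cofinite set, so that $y = i(z)$ in the ultraproduct. I expect this coherence to follow from the rigidity of the chain of isomorphisms $suc^{n-1}$ together with the uniqueness clause in the neat-embedding identity, but it is this bookkeeping between the coordinatewise recovery and the global element of $\A$ where the real work lies.
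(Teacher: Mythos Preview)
The paper's own proof is a one-line sketch: argue by contradiction, observing that if the generation identity failed in $\C$ it would fail already in some finite reduct $\C_n$, contradicting the finite-dimensional commutation property established just before. Your direct, coordinatewise construction is the same argument unfolded; the two collapse into one another once the coherence step is filled in.

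On that step: the worry you flag is not ``delicate bookkeeping'' but dissolves once you use the right description of $y$. Any $y\in\Sg^{\C}i(X)$ is $\tau^{\C}(i(x_1),\ldots,i(x_k))$ for a \emph{single} $\CA_\omega$-term $\tau$ and finitely many $x_j\in X$; since $\tau$ involves only finitely many dimension indices, on a cofinite set of $n$ the $n$th coordinate is literally $\tau^{\C_n}\bigl(suc^{n-1}(x_1),\ldots,suc^{n-1}(x_k)\bigr)$. The successor maps form a compatible chain, $suc^{n-1}=suc\circ suc^{n-2}$, with two-sided inverse $pred$ on the relevant range; hence the preimage $z_n$ you extract at level $n$ via the finite identity is forced to coincide with $z_{n-1}$. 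All the $z_n$ are one and the same $z\in\Sg^{\A}X$, and $y=i(z)$. So the ``rigidity'' you hoped for is exactly the existence of $pred$ already used in the finite case, and nothing further is needed. This is precisely what the paper means by ``it will not happen in a finite reduct'': a putative counterexample $y$ is a fixed term in fixed generators, hence lives at a fixed finite level where the identity is already known.
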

\begin{proof} The idea is that if this does not happen, then it will not happen in a fnite reduct, and this impossible \cite{Sayed}.

\end{proof}

\begin{theorem} Let $\bold Q\in {\RA}$. Then for all $n\geq 4$, there exists a unique 
$\A\in S\Nr_3\CA_n$ such that $\bold Q=\Ra\A$, 
such that for all $X\subseteq A$, $\Sg^{\bf Q}X=\Ra\Sg^{\A}X.$
\end{theorem}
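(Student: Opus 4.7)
\medskip

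The plan is to assemble this theorem directly from the chain of constructions and identities developed in the previous theorems, interpreting $\bold Q$ as a $\QRA$ (which is what the preceding development supports). For existence, I would take $\A := \C_3$, the three-dimensional cylindric algebra manufactured from $\bold Q$ via Simon's construction together with the index-reshuffling introduced above. By the previous theorem, the map $suc^{n-3}:\A\to \Nr_3\C_n$ is an isomorphism for every $n\geq 4$, so $\A$ embeds into $\Nr_3\C_n$, and hence $\A\in S\Nr_3\CA_n$. The equality $\bold Q=\Ra\A$ is then a matter of unwinding definitions: by Definition \ref{RA}, $\Ra\A$ has underlying set $Nr_{1,2}\A$, and its composition, converse, and identity are exactly the terms $c_0(s_0^{2}x\cdot s_0^{1}y)$, $s_2^0 s_2^1 s_0^2 x$, $d_{1,2}$, which, tracing back through the reshuffling, coincide with the operations of $\bold Q$ used to build $\B_3$ in the first place.

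For the generation identity $\Sg^{\bold Q}X=\Ra\Sg^{\A}X$, I would deduce it from the already established formula
\[ suc^{n-3}(\Sg^{\A}X)=\Nr_3\Sg^{\C_n}suc^{n-3}(X), \]
by applying $\Ra$ (equivalently, $Nr_{1,2}$ after the reshuffling) to both sides and using that $\Ra\circ \Nr_3=\Ra$ on the algebras $\C_n$ of dimension $n\geq 4$, together with the fact that the QRA-terms defining composition, converse, and identity already live in $Nr_{1,2}$ because their cylindrifiers use only the two relevant indices. This shows that the subalgebra of $\bold Q$ generated by $X$ as a relation algebra is precisely the set of $\Ra$-elements of the cylindric subalgebra of $\A$ generated by $X$.

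For uniqueness, suppose $\A'\in S\Nr_3\CA_n$ also satisfies $\Ra\A'=\bold Q$ and $\Sg^{\bold Q}X=\Ra\Sg^{\A'}X$ for all $X\subseteq A'$. Taking $X=Q$ and using $\Ra\A'=\bold Q$, the generation property forces $\Sg^{\A'}Q=\A'$, so $\A'$ is generated as a $\CA_3$ by $\bold Q$; the same holds for $\A$. Both $\A$ and $\A'$ therefore embed, via the neat embedding into their respective $n$-dimensional dilations, into cylindric algebras constructed from the same $\bold Q$. Because every element of $\A$ (resp.\ $\A'$) is a $\CA_3$-term in elements of $\bold Q$, and because Simon's construction expresses each such term uniquely in the QRA-language of $\bold Q$ (via the $suc/pred$ machinery already shown to satisfy $suc(\Sg^{\B_n}X)\cong \Nr_{1,\dots,n}\Sg^{\B_{n+1}}suc(X)$), the map sending a term in $\A$ to the same term in $\A'$ is a well-defined $\CA_3$-isomorphism fixing $\bold Q$ pointwise.

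The main obstacle will be the uniqueness step, specifically turning the intuition ``$\bold Q$ determines $\A$'' into an actual isomorphism. The clean way to carry this out is to invoke a Simon-style rigidity lemma: any two $\CA_3$-expansions of $\bold Q$ that (i)~recover $\bold Q$ as their $\Ra$-reduct and (ii)~are generated by $\bold Q$, must coincide. The presence of the functional element $pred$ and the identity $suc(pred\,x)=x$ (for $c_0 x=x$) from the previous theorems is what makes this rigidity go through, since it lets us translate back and forth between putative cylindric witnesses in the extra dimensions and genuine QRA-terms in $\bold Q$, leaving no freedom for two non-isomorphic $\A,\A'$ to arise.
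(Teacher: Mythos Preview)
Your proposal is correct and morally the same as the paper's approach, though you take a slightly different route to existence. The paper's proof is one line: it invokes the immediately preceding theorem (the $\omega$-dimensional embedding $i:\bold Q\to\Ra\C$ via the ultraproduct $\C=\prod_{n\geq 3}\C_n^+/G$, which already carries the $\Ra S$ identity $i(\Sg^{\bold Q}X)=\Ra\Sg^{\C}i(X)$) and then reads off the finite-$n$ case from $S\Nr_3\CA_\omega\subseteq S\Nr_3\CA_n$. You instead stay finite throughout, using the direct isomorphism $suc^{n-3}:\C_3\to\Nr_3\C_n$ and its generation formula; this avoids the ultraproduct entirely and is arguably cleaner for the statement at hand, while the paper's route has the advantage that the $\omega$-dimensional algebra is already built and the $\Ra S$ property already recorded, so nothing new needs to be checked.

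On uniqueness, your identification of the $suc$/$pred$ inversion as the mechanism is exactly right and is what the paper means by ``the $\Ra S$ property'': since $pred\circ suc=\mathrm{id}$ and $suc\circ pred=\mathrm{id}$ on the relevant neat reducts, any $\CA_3$-term in elements of $\bold Q$ can be rewritten as a $\QRA$-term, so two $\A,\A'$ generated by $\bold Q$ with $\Ra\A=\Ra\A'=\bold Q$ must agree term-by-term. Your sketch of this is adequate at the level the paper works; the paper itself says nothing more.

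One small caution: setting $\A:=\C_3$ is fine, but the verification that $\Ra\C_3=\bold Q$ (rather than merely $\Ra\C_3\cong\bold Q$) does require tracking the index reshuffling against Definition~\ref{RA} carefully; the paper's convention places $\Ra$ on coordinates $n-2,n-1$, so for $n=3$ this is $Nr_{1,2}$, and you should confirm the swap $c_0\leftrightarrow c_n$ in the definition of $\C_n$ lines up so that the $\Ra$-operations recover the original composition and converse of $\bold Q$ on the nose.
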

\begin{proof} This follows from the previous theorem together with $\Ra S$ property.
\end{proof}

\begin{corollary} Assume that $Q=\Ra\A\cong \Ra\B$ then this lifts to an isomorphism from $\A$ to $\B$.
\end{corollary}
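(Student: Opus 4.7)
The plan is to reduce the statement to the uniqueness clause of the preceding theorem by a transport-of-structure argument. Write $\mathbf{Q}=\Ra\A$, $\mathbf{Q}'=\Ra\B$, and let $f:\mathbf{Q}\to \mathbf{Q}'$ be the given $\RA$-isomorphism. The first step is to observe that $\A$ is generated, as a $\CA_n$, by its own $\Ra$-reduct: the explicit construction preceding the theorem builds $\A$ from the $\QRA$ $\mathbf{Q}$ by applying the defined terms $c_i^{(n)}, d_{ij}^{(n)}, 1^{(n)}$ (all polynomial in the $\RA$-signature using the quasi-projections $p,q$) to elements of $\mathbf{Q}$, and the clause $\Sg^{\mathbf{Q}}X=\Ra\Sg^{\A}X$ applied with $X=Q$ yields $\Ra\Sg^{\A}(Q)=\mathbf{Q}=\Ra\A$; combined with the construction this forces $\Sg^{\A}(Q)=\A$, and similarly $\Sg^{\B}(Q')=\B$.

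Second, I would use $f$ to transport the $\CA_n$ structure of $\B$ backwards along $f^{-1}$, producing an algebra $\B^{\sharp}$ which is $\CA_n$-isomorphic to $\B$ via an isomorphism restricting to $f^{-1}$ on $\Ra$-reducts, and whose $\Ra$-reduct equals $\mathbf{Q}$ on the nose. Both $\A$ and $\B^{\sharp}$ then lie in $S\Nr_3\CA_n$ and both satisfy $\Ra\A=\Ra\B^{\sharp}=\mathbf{Q}$, so the uniqueness clause of the preceding theorem produces a $\CA_n$-isomorphism $g:\A\to \B^{\sharp}$. Composing $g$ with the $\CA_n$-isomorphism $\B^{\sharp}\to\B$ yields the desired lift $\bar f:\A\to\B$ whose restriction to $Q$ coincides with $f$.

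The main obstacle, and really the only substantive point, is justifying that the uniqueness of the dilation is functorial enough to accommodate an arbitrary $\RA$-isomorphism $f$ rather than only the identity on $\mathbf{Q}$. What makes this work is precisely the fact noted above: every $\CA_n$ operation used to manufacture $\A$ is a fixed polynomial in the $\RA$-signature built from the quasi-projections, so any $\RA$-isomorphism sending the quasi-projections of $\mathbf{Q}$ to those of $\mathbf{Q}'$ automatically commutes with these polynomial terms. This legitimises the transport in the second step and guarantees that the resulting bijection $\bar f$ is a genuine $\CA_n$-homomorphism rather than a mere set-map. Bijectivity of $\bar f$ follows by symmetry: applying the same construction to $f^{-1}$ yields a two-sided inverse, and the generation-by-$\Ra$-reduct property established in the first paragraph guarantees that $\bar f$ is uniquely determined by its values on $Q$, closing the argument.
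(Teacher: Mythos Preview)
The paper gives no explicit proof here, treating the corollary as immediate from the uniqueness clause of the preceding theorem (and the remark that $\Ra$ establishes a categorical equivalence). Your transport-of-structure plus uniqueness argument is a correct way to unpack that one-line justification, so your approach and the paper's coincide in spirit.

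Two imprecisions are worth flagging. First, $f$ is only a bijection between the $\Ra$-reducts $Q\subseteq A$ and $Q'\subseteq B$, not between the full carriers; to ``transport the $\CA_n$-structure of $\B$ along $f^{-1}$'' you must first extend $f$ to a bijection $g$ from some set $B''\supseteq Q$ onto $B$ with $g\upharpoonright Q=f$, and then pull back along $g^{-1}$. This is routine but it is the actual content of step two, and you do not say it. Second, the justification in your final paragraph is off-target: transport of structure along a bijection always yields an isomorphic algebra, irrespective of whether the operations are term-definable in a smaller signature, so the polynomial definability of $c_i^{(n)}, d_{ij}^{(n)}$ over the $\RA$ signature is not what legitimises step two. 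Relatedly, your aside that $f$ must send the quasi-projections of $\mathbf{Q}$ to those of $\mathbf{Q}'$ is an unnecessary hypothesis not present in the statement: the uniqueness clause of the preceding theorem makes no reference to a choice of quasi-projections, so once $\Ra\A=\Ra\B^{\sharp}=\mathbf{Q}$ and both algebras lie in the right class, they are isomorphic over $\mathbf{Q}$ regardless.

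A smaller point: the generation claim $\Sg^{\A}(Q)=\A$ in your first paragraph is not needed (the corollary does not assert the lift is unique), and your derivation of it from $\Ra\Sg^{\A}(Q)=\Ra\A$ via uniqueness would require knowing in advance that $\Sg^{\A}(Q)$ itself satisfies the $\Ra S$ property, which you have not established.
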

The previous theorem says that $\Ra$ as a functor establishes an equivalence between ${\QRA}$ 
and a reflective subcategory of $\Lf_{\omega}.$
We say that $\A$ is the $\omega$ dilation of ${\bf Q}$.
Now we are ready for:

\begin{theorem} $\QRA$ has $SUPAP$.
\end{theorem}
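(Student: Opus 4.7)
The plan is to transport the proof of the analogous cylindric result (Theorem \ref{SUP}) across the equivalence established in the preceding theorems, which identifies $\QRA$ with the reflective subcategory $\bold M = \{\A \in \Lf_\omega : \A = \Sg^{\A}\Ra\A\}$ of $\Lf_\omega$ via the $\Ra$ functor, and which crucially shows that $\Ra$ commutes with the formation of generated subalgebras: $\Ra\Sg^{\A}X = \Sg^{\Ra\A}X$ for $X \subseteq \Ra\A$. This last property, the $\Ra S$ property, is the algebraic counterpart of the neat--subalgebra commutation used in Theorem \ref{SUP}, and everything will hinge on it.

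First I would start with $\QRA$-monomorphisms $f : \bold Q_0 \to \bold Q_1$ and $g: \bold Q_0 \to \bold Q_2$ and lift them to $\CA_\omega$-monomorphisms $\bar f : \A_0 \to \A_1$ and $\bar g: \A_0 \to \A_2$ between the $\omega$-dilations $\A_i \in \bold M$ granted by the preceding neat embedding theorem for $\QRA$; the uniqueness-of-dilations corollary gives precisely that such lifts exist and fix the base.

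Next, I would apply the fact that $\bold M$ has $SUPAP$ (this is the cylindric-side input: it follows from Theorem \ref{SUP}, or from Theorem \ref{main} applied to $\bold M$, since free algebras enjoy the strong interpolation property). This yields $\D \in \bold M$ and $\CA_\omega$-monomorphisms $m: \A_1 \to \D$ and $n: \A_2 \to \D$ with $m \circ \bar f = n \circ \bar g$, satisfying the super-amalgamation clause at the cylindric level. Taking $\Ra$ of the whole diagram then produces $\Ra\D \in \QRA$ together with $m\upharpoonright \bold Q_1$ and $n\upharpoonright \bold Q_2$, amalgamating $\bold Q_1$ and $\bold Q_2$ over $\bold Q_0$.

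The hard part is transferring the super-property from $\bold M$ to $\QRA$. Suppose $m(a) \leq n(b)$ with $a \in \bold Q_1$ and $b \in \bold Q_2$. The cylindric $SUPAP$ delivers some $z \in A_0$ with $a \leq \bar f(z)$ and $\bar g(z) \leq b$, but a priori $z$ need not be $2$-dimensional, hence need not lie in $\bold Q_0$. Letting $\Gamma = \Delta z \setminus \{i,j\}$ where $\{i,j\}$ are the two relation-algebraic indices, $\Gamma$ is finite and $c_{(\Gamma)}z \in \Ra \A_0$; normality of cylindrifiers preserves the two inequalities because $a$ and $b$ are themselves $2$-dimensional, so $a \leq \bar f(c_{(\Gamma)}z)$ and $\bar g(c_{(\Gamma)}z) \leq b$. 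The main obstacle is to bring this element all the way down to $\bold Q_0$, and it is here that the $\Ra S$ property is decisive: since $\A_0 = \Sg^{\A_0}\bold Q_0$, we have $\Ra\A_0 = \Ra\Sg^{\A_0}\bold Q_0 = \Sg^{\bold Q_0}\bold Q_0 = \bold Q_0$, so $c_{(\Gamma)}z \in \bold Q_0$ as required. Without the commutation of $\Ra$ with $\Sg$ one would only land in some relation-algebra reduct strictly larger than $\bold Q_0$, and the super-clause in $\QRA$ would fail; this is precisely why the hypothesis of quasi-projections, which made the $\Ra S$ property available in the preceding theorem, cannot be weakened.
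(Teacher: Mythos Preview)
Your proposal is correct and follows essentially the same route as the paper: lift the $\QRA$ span to $\omega$-dimensional cylindric dilations via the established equivalence, superamalgamate there (the paper invokes $\Lf_\omega$ having $SUPAP$ rather than your $\bold M$, but either suffices since the dilations lie in both), take $\Ra$ of the result, and for the super-clause cylindrify the interpolant $z$ down to the two relation-algebraic indices and use the $\Ra S$ property to conclude $c_{(\Gamma)}z$ already lives in $\bold Q_0$. The paper's write-up unpacks the lifting step more explicitly through $e_A, e_B, e_C$ and the $UNEP$, whereas you absorb this into the uniqueness-of-dilations corollary, but the content is identical.
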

\begin{proof}  We form the unique dilatons of the given algebras required to be superamalgamated. 
These are locally finite so we can find a superamalgam $\D$. Then $\Ra\D$ will be required superamalgam; it contains quasiprojections because the base algebras 
does.
Let $\A,\B\in \QRA$. Let $f:\C\to \A$ and $g:\C\to \B$ be injective homomorphisms .
Then there exist $\A^+, \B^+, \C^+\in \CA_{\alpha+\omega}$, $e_A:\A\to \Ra{\alpha}\A^+$ 
$e_B:\B\to  \Ra\B^+$ and $e_C:\C\to \Ra\C^+$.
We can assume, without loss,  that $\Sg^{\A^+}e_A(A)=\A^+$ and similarly for $\B^+$ and $\C^+$.
Let $f(C)^+=\Sg^{\A^+}e_A(f(C))$ and $g(C)^+=\Sg^{\B^+}e_B(g(C)).$
Since $\C$ has $UNEP$, there exist $\bar{f}:\C^+\to f(C)^+$ and $\bar{g}:\C^+\to g(C)^+$ such that 
$(e_A\upharpoonright f(C))\circ f=\bar{f}\circ e_C$ and $(e_B\upharpoonright g(C))\circ g=\bar{g}\circ e_C$. Both $\bar{f}$ and $\bar{g}$ are 
monomorphisms.
Now $Lf_{\omega}$ has $SUPAP$, hence there is a $\D^+$ in $K$ and $k:\A^+\to \D^+$ and $h:\B^+\to \D^+$ such that
$k\circ \bar{f}=h\circ \bar{g}$. $k$ and $h$ are also monomorphisms. Then $k\circ e_A:\A\to \Ra\D^+$ and
$h\circ e_B:\B\to \Ra\D^+$ are one to one and
$k\circ e_A \circ f=h\circ e_B\circ g$.
Let $\D=\Ra\D^+$. Then we obtained $\D\in \QRA$ 
and $m:\A\to \D$ $n:\B\to \D$
such that $m\circ f=n\circ g$.
Here $m=k\circ e_A$ and $n=h\circ e_B$. 
Denote $k$ by $m^+$ and $h$ by $n^+$.
Now suppose that $\C$ has $NS$. We further want to show that if $m(a) \leq n(b)$, 
for $a\in A$ and $b\in B$, then there exists $t \in C$ 
such that $ a \leq f(t)$ and $g(t) \leq b$.
So let $a$ and $b$ be as indicated. 
We have  $(m^+ \circ e_A)(a) \leq (n^+ \circ e_B)(b),$ so
$m^+ ( e_A(a)) \leq n^+ ( e_B(b)).$
Since $K$ has $SUPAP$, there exist $z \in C^+$ such that $e_A(a) \leq \bar{f}(z)$ and
$\bar{g}(z) \leq e_B(b)$.
Let $\Gamma = \Delta z \sim \alpha$ and $z' =
{\sf c}_{(\Gamma)}z$. (Note that $\Gamma$ is finite.) So, we obtain that 
$e_A({\sf c}_{(\Gamma)}a) \leq \bar{f}({\sf c}_{(\Gamma)}z)~~ \textrm{and} ~~ \bar{g}({\sf c}_{(\Gamma)}z) \leq
e_B({\sf c}_{(\Gamma)}b).$ It follows that $e_A(a) \leq \bar{f}(z')~~\textrm{and} ~~ \bar{g}(z') \leq e_B(b).$ Now by hypothesis
$$z' \in \Ra\C^+ = \Sg^{\Ra\C^+} (e_C(C)) = e_C(C).$$ 
So, there exists $t \in C$ with $ z' = e_C(t)$. Then we get
$e_A(a) \leq \bar{f}(e_C(t))$ and $\bar{g}(e_C(t)) \leq e_B(b).$ It follows that $e_A(a) \leq (e_A \circ f)(t)$ and 
$(e_B \circ g)(t) \leq
e_B(b).$ Hence, $ a \leq f(t)$ and $g(t) \leq b.$
We are done.
\end{proof} 

%\end{proof}
One can prove the theorem using the dimension restricted free algebra $B=\Fr_1^{\rho}\CA_{\omega}$, where $\rho(0)=2$.
This corresponds to a countable first order language with a sequence of variables of order type $\omega$ and one binary relation.
The idea is that $\Fr_1\QRA\cong  \Ra\Fr_1^{\rho}\CA_{\omega}$. So let 
$a, b\in \Fr_1\QRA$ be such that $a\leq b$. Then there exists $y\in \Sg^{\B}\{x\}$ were $x$ is the free generator
of both, such that
$a\leq y\leq b$.

But we need to show that pairing functions can be defined in $\Ra\Fr_{1}\CA_{\omega}$
We have one binary relation $E$ in our langauge; for convenience, 
we write $x\in y$ instead of $E(x,y)$, to remind ourselves that we are actually working in the language
of set theory. 
We define certain formulas culminating in formulating the axioms of a finitely undecidable theory, better known as Robinson's arithmetic 
in our language. These formulas are taken from N\'emeti \cite{Nemeti}. (This is not the only way to define quasi-projections)
We need to define, the quasi projections. Quoting Andr\'eka and N\'emeti in \cite{AN}, we do this by 'brute force'.

$$x=\{y\}=:y\in x\land (\forall z)(z\in x\implies z=y)$$
$$\{x\}\in y=:\exists z(z=\{x\}\land z\in y)$$
$$x=\{\{y\}\}=:\exists z(z=\{y\}\land x=\{z\})$$
$$x\in \cup y:=\exists z(x\in z\land z\in y)$$
$$pair(x)=:\exists y[\{y\}\in x\land (\forall z)(\{z\}\in x\to z=y)]\land \forall zy[(z
\in \cup x\land \{z\}\notin x\land$$
$$y\in \cup x\land \{y\}\notin x\to z=y]\land \forall z\in x\exists y
(y\in z).$$
Now we define the pairing functions:
$$p_0(x,y)=:pair(x)\land \{y\}\in x$$
$$p_1(x,y)=:pair(x)\land [x=\{\{y\}\}\lor (\{y\}\notin x\land y\in \cup x)].$$
$p_0(x,y)$ and $p_1(x,y)$ are defined.

\subsection{ Pairing functions in N\'emetis directed $\CA$s}

We recall the definition of what is called weakly higher order cylindric algebras, or directed cylindric algebras invented by N\'emeti 
and further studied by S\'agi and Simon. 
Weakly higher order cylindric algebras are natural expansions of cylindric algebras. 
They have extra operations that correspond to a certain kind of bounded existential 
quantification along a binary relation $R$. The relation $R$ is best thought of as the `element of relation' in a model of some set theory.
It is an abstraction of the membership relation. These cylindric-like algebras 
are the cylindric counterpart of quasi-projective relation algebras, introduced by Tarski. These algebras were 
studied by many authors
including Andr\'eka, Givant, N\'emeti, Maddux, S\'agi, Simon, and others. The reference \cite{Andras} is recommended for other references in the topic.
It also has reincarnations in Computer Science literature 
under the name of Fork algebras.
We start by recalling the concrete versions of directed cylindric algebras:

\begin{definition}(P--structures and extensional structures.) \\
Let $U$ be a set and let $R$ be a binary relation on $U$. The structure
$\langle U; R \rangle$ is defined to be a P--structure\footnote{``P'' stands for ``pairing'' or ``pairable''.} iff for every
elements $a,b \in U$ there exists an element $c \in U$ such that $R(d,c)$ is
equivalent with $d=a$ or $d=b$ (where $d \in U$ is arbitrary) , that is, \\
\\
\centerline{ $\langle U; R \rangle \models (\forall x,y)(\exists z)(\forall w)( R(w,z) \Leftrightarrow (w=x$ or $w=y))$.} \\
\\
The structure $\langle U; R \rangle $ is defined to be a \underline{weak P--structure} iff \\
\\
\centerline{ $ \langle U; R \rangle \models (\forall x,y)(\exists z)(R(x,z) $ and $ R(y,z))$.} \\
\\
The structure $\langle U; R \rangle$ is defined to be {extensional}
iff every two points $a,b \in U$ coincide whenever they have the same
``$R$--children'', that is, \\
\\
\centerline{ $\langle U; R \rangle \models (\forall x,y)(((\forall z) R(z,x) \Leftrightarrow R(z,y)) \Rightarrow x=y) $.}
\end{definition}

\noindent
We will see that if $\langle U; R \rangle$ is a P--structure then one can
``code'' pairs of elements of $U$ by a single element of $U$ and whenever
$\langle U; R \rangle$ is extensional then this coding is ``unique''. In fact,
in $\RCA_{3}^{\uparrow}$ (see the definition below) one can define terms similar
to quasi--projections and, as with the class of $\QRA$'s, one can equivalently
formalize many theories of first order logic as equational theories of certain
$\RCA_{3}^{\uparrow}$'s. Therefore $\RCA_{3}^{\uparrow}$ is in our main interest.
$\RCA_{\alpha}^{\uparrow}$ for bigger $\alpha$'s behave in the same way, an
explanation of this can be found in \cite{Sagi} and can be deduced from our proof, which shows that $\RCA_{3}^{\uparrow}$ has implicitly $\omega$ 
extra dimensions.
\begin{definition}
\label{canyildef}
(${\sf Cs}^{\uparrow}_{\alpha}$, $\RCA^{\uparrow}_{\alpha}$.) \\
Let $\alpha$ be an ordinal. Let $U$ be a set and let $R$ be a binary relation on $U$
such that $\langle U; R \rangle$ is a weak P--structure.
% pairing and extensional.
Then the
{full w--directed cylindric set algebra} of dimension $\alpha$ with base
structure $\langle U; R \rangle$ is the algebra: \\
\\
\centerline{$\langle {\cal P}({}^{\alpha}U); \cap, -, C_{i}^{\uparrow(R)}, C_{i}^{\downarrow(R)}, D_{i,j}^{U} \rangle_{i,j \in \alpha}$,} \\
\\
where $\cap$ and $-$ are set theoretical intersection and complementation (w.r.t. ${}^{\alpha}U$),
respectively, $D^{U}_{i,j} = \{ s \in {}^{\alpha}U: s_{i}=s_{j} \}$ and
$ C_{i}^{\uparrow(R)}, C_{i}^{\downarrow(R)}$ are defined as follows. For every
$X \in {\cal P}({}^{\alpha}U)$: \\
\\
\indent $ C_{i}^{\uparrow(R)}(X) = \{ s \in {}^{\alpha}U: (\exists z \in X)( R(z_{i},s_{i})$ and $(\forall j \in \alpha)(j \not=i \Rightarrow s_{j}=z_{j})) \},$ \\
\indent $ C_{i}^{\downarrow(R)}(X) = \{ s \in {}^{\alpha}U: (\exists z \in X)( R(s_{i},z_{i})$ and $(\forall j \in \alpha)(j \not=i \Rightarrow s_{j}=z_{j})) \}.$ \\
\\
The class of {w--directed cylindric set algebras} of dimension $\alpha$
and the class of {directed cylindric set algebras} of dimension $\alpha$
are defined as follows. \\
\\
\centerline{$ w-{\sf Cs}^{\uparrow}_{\alpha} ={\bf S} \{ {\cal A}: \ {\cal A}$ is a full
w--directed cylindric set algebra of dimension $\alpha$} \\
\centerline{ \indent \indent \indent \indent with base structure $\langle U; R \rangle$,
for some weak P--structure $\langle U; R \rangle \}$.} \\
\\
\centerline{$ {\sf Cs}^{\uparrow}_{\alpha} ={\bf S} \{ {\cal A}: \ {\cal A}$ is a full
w--directed cylindric set algebra of dimension $\alpha$} \\
\centerline{ \indent \indent \indent \indent with base structure $\langle U; R \rangle$,
for some extensional P--structure $\langle U; R \rangle \}$.} \\
\\
% We say, that a $Cs^{\uparrow}_{\alpha}$ type algebra is \underline{$Cs^{\uparrow}_{\alpha}$--like},
% iff it is a $Cs^{\uparrow}_{\alpha}$ except that its base structure is pairing
% or extensional. \\
% \\
The class $\RCA^{\uparrow}_{\alpha}$ of {representable directed cylindric algebras} of
dimension $\alpha$ is defined to be $\RCA^{\uparrow}_{\alpha} = {\bf SP}{\sf Cs}^{\uparrow}_{\alpha}$.
\end{definition}

The main result of Sagi in \cite{Sagi} is a direct proof for the following: \\
\\
\begin{theorem}\label{rep}
{\em $\RCA^{\uparrow}_{\alpha}$ is a finitely axiomatizable
variety whenever $\alpha \geq 3$ and $\alpha$ is finite}
% \footnote{ $RCA^{\uparrow}_{\alpha}$ is a variety for arbitrary $\alpha$'s as well, but here we don't prove this.}
\end{theorem}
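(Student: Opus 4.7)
The plan is to mimic, in the directed cylindric setting, Simon's argument for $\QRA$s reproduced in the preceding sections, exploiting the fact that the pairing relation $R$ plays exactly the role of the quasi-projections $p,q$ in a $\QRA$. First I would write down a short finite equational schema over the signature of $3$-dimensional cylindric algebras augmented by $C_i^{\uparrow}$ and $C_i^{\downarrow}$: the usual $\CA_3$ axioms, equations saying that $C_i^{\uparrow}$ and $C_i^{\downarrow}$ are conjugated normal additive operators commuting with $c_j$ for $j\neq i$ (so that the relation $R$ they encode is a binary relation in the base), and a single pairing equation expressing the weak-P property $(\forall x,y)(\exists z)(R(x,z)\land R(y,z))$ in equational form via $C_i^{\downarrow}$. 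Call the resulting variety $V$; by construction $\RCA_3^{\uparrow}\subseteq V$, and the content of the theorem is the reverse inclusion.

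To get the reverse inclusion I would build, for every $\A\in V$, definable analogues of the quasi-projection terms $p,q$, $\pi_i^{(n)}$, $\epsilon^{(n)}$, $t^{(n)}$ used in Section~8, together with the ``successor'' term $suc$ and its partial inverse $pred$. The pairing axiom is exactly what allows this: given $R$-children one codes ordered pairs, and the extensionality/weak-P equations let one define functional terms playing the role of the $p,q$ in a $\QRA$. I would then prove, by induction on $n\geq 3$, that the subsets $\B_n\subseteq A$ cut out by the appropriate $t^{(n)}$'s carry a natural $\CA_n$ structure and that $suc\colon \B_n\to\Nr_n\B_{n+1}$ is an isomorphism satisfying the commuting-with-subalgebras property
\[ suc(\Sg^{\B_n}X)=\Nr_n\Sg^{\B_{n+1}}suc(X),\]
in complete analogy with the theorem following Definition~\ref{RA}. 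Forming the ultraproduct $\C=\prod_{n\geq 3}\B_n^+/G$ as in Section~8 then yields a locally finite $\omega$-dimensional cylindric algebra into which $\A$ neatly embeds through the diagonal map $x\mapsto (x,suc(x),suc^2(x),\ldots)/G$.

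Once $\A$ is neatly embedded in an $\Lf_\omega$ algebra $\C$, representability of $\A$ follows from the classical Henkin neat embedding theorem: $\Lf_\omega\subseteq \RCA_\omega$, and a representation of $\C$ restricts to a directed cylindric set representation of $\A$ because the defining equations for $C_i^{\uparrow}$ and $C_i^{\downarrow}$ force the interpretation of $R$ to be a genuine binary relation on the base set satisfying the weak-P property (and extensionality if one works in the variant corresponding to $\Cs^{\uparrow}$). Thus $V\subseteq \RCA_3^{\uparrow}$, finishing the proof, and since $V$ was defined by finitely many equations the variety is finitely axiomatizable.

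The main obstacle I anticipate is verifying that the terms playing the role of quasi-projections genuinely satisfy, as equational consequences of the short schema, all the identities needed for $suc$ to land inside $\Nr_n\B_{n+1}$ and for $pred$ to invert it on cylinder-closed elements; this is the delicate equational bookkeeping that makes the $\QRA$ argument in \cite{Andras} work, and it has to be redone here with $R$ in place of $p,q$. A secondary subtlety is to ensure that the same finite schema works uniformly for every finite $\alpha\geq 3$, which is why I would formulate everything at dimension $3$ and then invoke the fact, noted just after Definition~\ref{canyildef}, that higher finite dimensions behave identically because the extra dimensions are already coded implicitly by the pairing.
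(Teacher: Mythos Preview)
The paper does not prove this theorem at all: it is simply quoted as ``the main result of Sagi in \cite{Sagi}'' and immediately followed by a reference to the axiom list on p.~868 of that paper. So there is no proof here to compare your proposal against; the paper treats it as an imported black box and then uses it. What the paper does do, in the paragraphs after the statement, is something different and weaker: it shows that the $\Ra$ reduct of a directed $\CA_3$ is a $\QRA$ (the quasi-projections being the terms $P,Q$ built from the directed cylindrifiers), and later that $\QRA$ and $\CA_3^{\uparrow}$ are categorically equivalent. That route, if pushed, would give representability via the already-known representability of $\QRA$'s, not via your Simon-style neat-embedding-into-$\Lf_\omega$ argument. The paper also remarks that Sagi's own proof is ``similar to Maddux's proof of representation of $\QRA$s'', i.e.\ a direct step-by-step construction rather than the Simon neat-embedding method you are imitating.

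Your proposal has a genuine gap at the point where you write ``a representation of $\C$ restricts to a directed cylindric set representation of $\A$''. The tower $\B_n$ and the ultraproduct $\C$ you build are \emph{ordinary} cylindric algebras; the directed cylindrifiers $C_i^{\uparrow},C_i^{\downarrow}$ are not in their signature. Representing $\C\in\Lf_\omega$ as a cylindric set algebra gives you a base set $U$ and turns the element $C_j^{\uparrow}d_{ij}$ into some binary relation $R$ on $U$, but nothing in the neat-embedding argument guarantees that the \emph{abstract} operation $C_i^{\uparrow}$ on $\A$ coincides with the \emph{concrete} operation $C_i^{\uparrow(R)}$ computed in the set algebra. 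In the $\QRA$ case this problem does not arise because composition and converse are term-definable in the $\CA$ language, so representing $\C$ automatically represents $\Ra\C$; the directed cylindrifiers, by contrast, are genuinely new operators, not $\CA_3$-terms. Closing this gap is exactly where the work lies (one must either add enough equations relating $C_i^{\uparrow}$ to ordinary cylindrifiers and the element coding $R$, or bypass the issue by going through the $\QRA$ equivalence the paper sets up afterwards), and your sketch does not address it.
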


$\CA^{\uparrow}_3$ denotes the variety of directed cylindric algebras of dimension $3$ 
as defined in \cite{Sagi} definition 3.9. In \cite{Sagi}, it is proved that
$\CA^{\uparrow}_3=\RCA^{\uparrow}_3.$ A set of axioms is formulated on p. 868 in \cite{Sagi}. 
Let $\A\in \CA^{\uparrow}_3$. 
Then we have quasi-projections 
$p,q$ defined on $\A$ as defined in \cite{Sagi} p. 878, 879. We recall their definition, which is a little bit complicated because 
they are defined as formulas in the corresponding second 
order logic.
Let $\cal L$ denote the untyped logic corresponding to directed $\CA_3$'s as  defined p.876-877 in \cite{Sagi}. It has only $3$ variables. 
There is a correspondance between formulas (or formual schemes)  in this language and $\CA^{\uparrow}_3$ terms. 
This is completely analgous to the corresponance between $\RCA_n$ terms and first order formulas containing only $n$ variables.
For example $v_i=v_j$ corresponds to $d_{ij}$, $\exists^{\uparrow} v_i(v_i=v_j)$ correspond to ${\sf c}^{\uparrow}_i d_{ij}$.
In \cite{Sagi} the following formulas (terms) are defined:

\begin{definition} Let $i,j,k \in 3$ distinct elements. 
We define variable--free $RCA^{\uparrow}_{3}$ terms as follows:
\begin{tabbing}
\indent \= $ v_{i} = \{ \{ v_{j} \}_{R} \}_{R}$ \ \ \= is \indent \= $\exists v_{k}( v_{k} = \{ v_{j} \}_{R} \wedge v_{i} = \{ v_{k} \}_{R})$, \kill
\> $ v_{i} \in_R v_{j}$ \indent \> is \> $\exists^{\uparrow}v_{j}(v_{i}=v_{j})$, \\
\> $ v_{i} = \{ v_{j} \}_{R}$ \> is \> $\forall v_{k}( v_{k} \in_R v_{j} \Leftrightarrow v_{k}=v_{j}) $, \\
\> $ \{ v_{i} \}_{R} \in_R v_{j}$ \> is \> $\exists v_{k}( v_{k} \in_R v_{j} \wedge v_{k} = \{ v_{i} \}_{R})$, \\
\> $ v_{i} = \{ \{ v_{j} \}_{R} \}_{R}$ \> is \> $\exists v_{k}( v_{k} = \{ v_{j} \}_{R} \wedge v_{i} = \{ v_{k} \}_{R})$ ,\\
\> $ v_{i} \in_R \cup v_{j}$ \> is \> $\exists v_{k}(v_{i} \in_R v_{k} \wedge v_{k} \in_R v_{j})$.
\end{tabbing}
\end{definition}

Therefore $pair_{i}$ (a pairing function) can be defined as follows: \\

\indent $\exists v_{j} \forall v_{k}( \{ v_{k} \}_{R} \in_R v_{i} \Leftrightarrow v_{j} = v_{k}) \ \wedge $ \\
\indent $\forall v_{j} \exists v_{k}( v_{j} \in_R v_{i} \Rightarrow v_{k} \in_R v_{j} ) \ \wedge $ \\
\indent $\forall v_{j} \forall v_{k}( v_{j} \in_R \cup v_{i} \ \wedge \ \{ v_{j} \} \not\in_{R} v_{i} \ \wedge \ v_{k} \in_R \cup v_{i} \ \wedge \ \{ v_{k} \} \not\in_{R} v_{i} \Rightarrow v_{j} = v_{k} )$. \\

It is clear that this is a term built up of diagonal elements and directed cylindrifications. 
The first quasi-projection  $v_{i} = P(v_{j})$ can be chosen as: \\
\\
\indent $pair_{j} \ \wedge \ \forall^{\downarrow} v_{j} \exists^{\downarrow} v_{j} (v_{i} = v_{j})$. \\
\\
and the second quasiprojection  $v_{i} = Q(v_{j})$ can be chosen as: \\
\\
\centerline{$pair_{j} \ \wedge \ (( \forall v_{i} \forall v_{k} ( v_{i} \in_R v_{j} \ \wedge \ v_{k} \in_R v_{j}  \Rightarrow v_{i} = v_{k} )) \Rightarrow v_{i} = P(v_{j})) \ \wedge $} \\
\centerline{ $(\exists v_{i} \exists v_{k}( v_{i} \in_R v_{j} \ \wedge \ v_{k} \in_R v_{j} \ \wedge \  v_{i} \not= v_{k}) \Rightarrow (v_{i} \not= P(v_{j}) \ \wedge \ \exists^{\downarrow} v_{j} \exists^{\downarrow} v_{j}(v_{i} = v_{j})))$.}

\begin{theorem}
Let $\B$ be the relation algebra reduct of $\A$; then $\B$ is a relation algebra, and the 
variable free terms corresponding to the formulas  $v_i=P(v_j)$ and $v_j=Q(v_j)$ 
call  them $p$ and $q$, respectively, are quasi-projections.
\end{theorem}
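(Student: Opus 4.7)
The plan is to proceed in three stages: first verify that $\B=\Ra\A$ lies in $\RA$, then show $p,q\in B$ are functional, and finally show that the pairing condition $\breve{p};q=1$ holds. For the first stage, I would begin by noting that the axiomatization of $\CA_3^{\uparrow}$ given in \cite{Sagi} explicitly includes all the $\CA_3$ axioms, so the cylindric reduct $\Rd_{ca}\A$ is a bona fide $\CA_3$. Consequently, Definition \ref{RA}, applied with $n=3$, furnishes a relation algebra $\Ra\Rd_{ca}\A=\B$ with universe $\Nr_{1,2}\A$, composition $x;y=c_0(s_0^{2}x\cdot s_0^{1}y)$, converse $\breve{x}=s_2^{0}s_2^{1}s_0^{2}x$, and unit $1'=d_{1,2}$. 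This uses only the $\CA_3$ structure and is a standard calculation verifying the Tarski--Maddux axioms (associativity and de Morgan's triangle laws).

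For stages two and three the strategy is to invoke Theorem \ref{rep} and reason semantically. By that theorem, $\A$ is isomorphic to a subdirect product of full directed cylindric set algebras over extensional P--structures $\langle U;R\rangle$, and the operations $c_i^{\uparrow}, c_i^{\downarrow}, d_{ij}$ act in the concrete manner described in Definition \ref{canyildef}. Thus it suffices to verify the quasi-projection equations in the set algebra over $\langle U;R\rangle$. I would next unravel the variable-free terms $p$ and $q$ corresponding to $v_i=P(v_j)$ and $v_i=Q(v_j)$, checking that their denotations lie in $\Nr_{1,2}\A$; this is where careful bookkeeping is needed, since one must verify that all bound occurrences of the ``auxiliary'' variable $v_k$ are genuinely quantified out, so that the dimension set of the resulting element is contained in $\{1,2\}$. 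This is guaranteed by the way the definitions cascade: each outermost connective of $pair_j$ already binds $v_k$, and the surrounding biconditionals of $P$ and $Q$ reintroduce only $v_i,v_j$ as free variables.

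Given the concrete representation, the formula $pair_j$ defines the set of ``pair-codes'' $c\in U$, and for each such $c$, $P(c)$ is (by construction) the unique element $a\in U$ with $\{a\}\in_R c$, while $Q(c)$ is the (possibly equal) unique ``other component'' extracted via the formula. Hence both $p$ and $q$ denote functional binary relations on $U$; functionality translates immediately into $\breve{p};p\le 1'$ and $\breve{q};q\le 1'$ at the level of $\B$. For the crucial pairing identity $\breve{p};q=1$, I would argue directly from the weak P--structure clause: given any $a,b\in U$ one can find $c\in U$ with $R(a,c)\wedge R(b,c)$, and the defining clauses of $pair_j$ together with extensionality pin down a $c$ coding the ordered pair $(a,b)$; then $P(c)=a$ and $Q(c)=b$, which is exactly the witness needed for the pair $(a,b)$ to belong to $\breve{p};q$. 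Since $(a,b)$ was arbitrary, $\breve{p};q=1$.

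The main obstacle I expect is the verification in stage two that the complicated nested terms really land in $\Nr_{1,2}\A$, and that the translation between the three-variable syntax $\mathcal L$ and the $\CA_3^{\uparrow}$ term algebra is faithful (analogous to the classical correspondence between $\RCA_n$ terms and $n$--variable first-order formulas). The substantive content, however, is minimal once one has the set-algebra picture; the proof is essentially a semantic read-off using the representation theorem, with extensionality of $\langle U;R\rangle$ ensuring uniqueness of the coded components and the weak P--structure property ensuring surjectivity of the coding map, which are respectively what functionality and $\breve{p};q=1$ demand.
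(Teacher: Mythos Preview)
Your overall strategy---reduce to set algebras via Theorem~\ref{rep} and verify the quasi-projection identities semantically---is exactly the paper's second (and preferred) route, and stages two and three are fine. The paper simply records the meanings of $P$ and $Q$ in a set algebra over an extensional P--structure, observes that they are functional and jointly surjective in the obvious way, and then transfers the resulting equations back to $\CA_3^{\uparrow}$ using $\CA_3^{\uparrow}=\RCA_3^{\uparrow}$. Your more detailed unpacking (extensionality gives uniqueness, the P--structure clause gives the witness for $\breve{p};q=1$) is just a fleshed-out version of the same argument.

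There is, however, a genuine error in stage one. You claim that $\Ra\Rd_{ca}\A\in\RA$ ``uses only the $\CA_3$ structure and is a standard calculation verifying the Tarski--Maddux axioms (associativity \ldots).'' This is false: for an arbitrary $\C\in\CA_3$ the reduct $\Ra\C$ is in general only a semi-associative (Maddux's $\SA$) or weakly associative relation algebra, not an $\RA$; associativity of $;$ is precisely the axiom that requires a fourth variable, and it is a classical fact that $\Ra\CA_3\not\subseteq\RA$ while $\Ra\CA_n\subseteq\RA$ only for $n\geq 4$. So you cannot get associativity from the bare $\CA_3$ reduct. The fix is immediate within your own framework: invoke Theorem~\ref{rep} already at stage one. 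In a directed cylindric \emph{set} algebra the induced composition on $\Nr_{1,2}$ is genuine relational composition, which is associative, and the remaining $\RA$ axioms follow the same way; since equations transfer from the generating set algebras to the whole variety, $\B\in\RA$. Alternatively, one may first carry out stages two and three (these only need the $\SA$ structure), obtain the quasi-projections, and then use them to manufacture the missing fourth dimension \`a la Simon to derive associativity---but the semantic route is shorter and is what the paper has in mind.
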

\begin{proof} 
One proof is very tedious, though routine. One translates the functions as variable free terms in the language of $\CA_3$ and use 
the definition of composition and converse in the $\RA$ reduct, to verify that they are quasi-projections.
Else one can look at their meanings on set algebras, which we recall from Sagi \cite{Sagi}.
Given a cylindric set algebra $\cal A$ with base $U$ and accessibility relation $R$
$$(v_i=P(v_j))^A=\{s\in {}^3U: (\exists a,b\in U)(s_j=(a,b)_R, s_i=a\}$$
$$(v_i=Q(v_j)^A=\{s\in {}^3U: (\exists a,b\in U)(s_j=(a,b)_R, s_i=b\}.$$
First $P$ and $Q$ are functions, so they are functional elements. 
Then it is clear that in this set algebras that $P$ and $Q$ are quasi-projections. 
Since $\RCA^{\uparrow}_3$ is the variety generated by set algebras, they have the same meaning in the class $\CA^{\uparrow}_3.$
\end{proof}

Now we can turn the class around. Given a $\QRA$ one can define a directed $\CA_n$, for every finite $n\geq 2$.
This definition is given by N\'emeti and Simon in \cite{NS}. 
It is vey similar to Simon's definition above (defining $\CA$ reducts in a $\QRA$, 
except that directed cylindrifiers along a relation $R$ are 
implemented.

\begin{theorem} The concrete category $\QRA$ 
with morphisms injective homomorphisms, and that of $\CA^{\uparrow}$  with morphisms also injective homomorphisms are equivalent.
in particular $\CA^{\uparrow}$ of dimension $3$ is equivalent to $\CA^{\uparrow}$ for $n\geq 3$.
\end{theorem}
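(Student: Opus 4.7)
The plan is to exhibit two functors $F:\QRA\to\CA^{\uparrow}$ and $G:\CA^{\uparrow}\to\QRA$ (say with $\CA^{\uparrow}$ taken in dimension $3$) and verify that $F\circ G$ and $G\circ F$ are naturally isomorphic to the respective identity functors, where morphisms on both sides are restricted to injective homomorphisms. The first functor $F$ is the one implicit in the Andr\'eka--N\'emeti--Simon construction recalled just before the theorem: given $\bold Q\in\QRA$ with fixed quasi-projections $p,q$, one uses the variable-free terms $\epsilon^{(n)},\pi_i^{(n)},\xi_i^{(n)},t^{(n)},c_i^{(n)},d_{ij}^{(n)}$ together with the directed cylindrifier terms of N\'emeti--Simon \cite{NS} to define $F(\bold Q)\in\CA^{\uparrow}_3$ internally in $\bold Q$. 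On morphisms, $F(h)$ is simply the restriction of $h$ to the carrier of $F(\bold Q)$; since all operations of $F(\bold Q)$ are term-definable in the $\RA$ language, $F(h)$ is an injective $\CA^{\uparrow}_3$-homomorphism, and functoriality is immediate.

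The second functor $G$ is the $\RA$-reduct functor from the preceding theorem: given $\A\in\CA^{\uparrow}_3$, the relation algebra reduct is a $\QRA$ with quasi-projections $p,q$ coming from the formulas $v_i=P(v_j)$ and $v_i=Q(v_j)$. On morphisms, $G(h)$ is the restriction of $h$ to the $2$-dimensional elements; injectivity is inherited, and since composition, converse, and the quasi-projections are all $\CA^{\uparrow}_3$-terms, $G(h)$ is a $\QRA$-homomorphism. Here I would use the $\Ra S$ property established earlier (commutativity of forming $\Ra$-reducts with forming subalgebras, guaranteed by the pair $suc,pred$) to ensure that no elements are lost or gained in a non-canonical way under either direction.

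The heart of the plan is to construct the two natural isomorphisms. For $G\circ F\cong 1_{\QRA}$: starting from $\bold Q$, one builds $F(\bold Q)$ inside $\bold Q$ using the terms above, and then takes its $\RA$-reduct. I would show that the natural inclusion $\bold Q\hookrightarrow G(F(\bold Q))$ is surjective by verifying that every element of $\bold Q$ can be expressed as a $\CA^{\uparrow}_3$-term in the generators, which is exactly the content of Andr\'eka--N\'emeti's treatment of quasi-projections plus the N\'emeti--Simon definition of $\uparrow$-cylindrifiers; the quasi-projections in $\bold Q$ reappear literally as $P$ and $Q$ in $G(F(\bold Q))$, which pins down the isomorphism. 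For $F\circ G\cong 1_{\CA^{\uparrow}}$: starting from $\A\in\CA^{\uparrow}_3$, the quasi-projections of $G(\A)$ are precisely the $\A$-images of the formulas $v_i=P(v_j),\ v_i=Q(v_j)$, so reconstructing $F(G(\A))$ recovers the original directed cylindrifications and diagonals on the nose, by the translation between $\CA^{\uparrow}_3$-terms and $\RA$-terms in a $\QRA$. Naturality in both cases is forced because every step is performed by a fixed variable-free term.

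The main obstacle, I expect, is the bookkeeping in the second natural isomorphism: one must check that the abstract $\uparrow$-cylindrifiers $C_i^{\uparrow(R)}, C_i^{\downarrow(R)}$ reconstructed from the $\RA$-reduct via the N\'emeti--Simon recipe agree with the original operations on $\A$, and that the reshuffling of indices used when passing from ${\B}_n$ to ${\C}_n$ (and back through $suc,pred$) fits together with the internal coding of $P$ and $Q$ as variable-free terms; this amounts to a nontrivial but routine term-rewriting verification, carried out inside the axioms of $\CA^{\uparrow}_3$ listed in \cite{Sagi}. Once the equivalence $\QRA\simeq\CA^{\uparrow}_3$ is in hand, the ``in particular'' clause is immediate: for any $n\geq 3$ the same two functors (with $F$ now producing an $n$-dimensional directed algebra by the $n$-variable analogues of the terms, exactly as in Simon's treatment for $\QRA\to\CA_n$) give an equivalence $\QRA\simeq\CA^{\uparrow}_n$, and composing one such equivalence with the inverse of another yields $\CA^{\uparrow}_3\simeq\CA^{\uparrow}_n$.
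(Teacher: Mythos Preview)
Your plan follows the same architecture as the paper's proof: build functors in both directions (the N\'emeti--Simon construction $\QRA\to\CA^{\uparrow}_3$ and the $\Ra$-reduct $\CA^{\uparrow}_3\to\QRA$), take morphisms to restrictions, and argue they are mutually inverse up to natural isomorphism; the ``in particular'' clause then follows by composing equivalences. So the overall strategy is correct and matches the paper.

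There is, however, one point where your plan overstates what is true and the paper is explicitly more careful. You write that ``the quasi-projections in $\bold Q$ reappear literally as $P$ and $Q$ in $G(F(\bold Q))$.'' The paper says the opposite: after the round trip $\QRA\to\CA^{\uparrow}\to\QRA$ one \emph{need not} land on the same algebra with the same quasi-projections; rather, the new quasi-projections one extracts via the Sagi formulas $v_i=P(v_j)$, $v_i=Q(v_j)$ are only \emph{definable} from the original $p,q$ (they are built from the directed-cylindrifier structure, which in turn was built from $p,q$). This definability is what furnishes the natural isomorphism, not literal identity. So in carrying out your plan you should not try to prove that $p,q$ reappear on the nose---that will fail---but instead exhibit an isomorphism of relation algebras using that the two pairs of projections are mutually term-definable. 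A related minor issue: your ``natural inclusion $\bold Q\hookrightarrow G(F(\bold Q))$'' has the containment the wrong way around (since $F(\bold Q)$ is carved out inside $\bold Q$, its $\Ra$-reduct sits inside $\bold Q$, not the reverse), and the surjectivity argument you describe is accordingly phrased backwards; this is easily repaired once you drop the ``literally'' claim.
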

\begin{proof} Given $\A$ in $\QRA$ we can associte a directed $\CA_3$, homomorphism are restrictions and vice versa; these are inverse Functors.
However, when we pass from an $\QRA$ to a $\CA^{\uparrow}$ and then take the $\QRA$ reduct, 
we may not get back exactly to the $\QRA$ we started off with,
but the new quasi projections are definable from the old ones. 
Via this equivalence, we readily  conclude that $\RCA_3\to \RCA_n$ are also equivalent.
\end{proof}


\begin{thebibliography}{100}

\bibitem{Andreka} Andr\'eka, H., {\it Complexity of equations valid in algebras of relations}. 
Annals of Pure and Applied logic, {\bf 89} (1997),  p.149 - 209.

\bibitem{AN} Reducing first order logic to $Df_3$ free algebras In \cite{1}p. 1-15


%\bibitem{Andreka}

\bibitem{1} H. Andreka, M. Ferenczi, I. Nemeti (editors) {\it Cylindric-like algebras and algebraic logic} Bolyai Society, Mathematical Studies, Springer
(2013).

\bibitem{ANT} Andr\'eka, H., N\'emeti, I., Sayed Ahmed, T., 
{\it Omitting types for finite variable fragments and complete representations of algebras.}
Journal of Symbolic Logic {\bf 73}(1) (2008) p.65-89

\bibitem{DM} Daigneault, A., and Monk,J.D., 
{\it Representation Theory for Polyadic algebras}. 
Fund. Math. {\bf 52}(1963) p.151-176.

\bibitem{Fer1} Ferenczi, M., {\it On representation of neatly embeddable cylindric algebras}
Journal of Applied Non-classical Logics, {\bf 10}(3-4) (2000)  


\bibitem{Fer2} Ferenczi, M., {\it Finitary polyadic algebras from cylindric algebras.} Studia Logica {\bf 87}(1)(2007) p.1-11

\bibitem{Fer3} Ferenczi, M., {\it On cylindric algebras satisfying the merry-go-round properties}
Logic Journal of IGPL, {\bf 15}(2) (2007), p. 183-199

\bibitem{Fer4} Ferenczi, M., {\it On the representability of neatly
embeddable CA's by cylindric set algebras}, to appear

\bibitem{Fer5} Ferenczi, M., {\it On conservative extensions in logics with
infinitary predicates}, to appear



\bibitem{Fer} M. Ferenczi {\it A new representation theory, representing cylindric like algebras by relativized set algebras} In \cite{1}p. 135-162

\bibitem{MS} J. Madarasz, T. Sayed Ahmed {\it Amalgmation, interpolation and epimorphisms in algebraic logic } in \cite{1}p.91-104

\bibitem{AUU} Mad\'arasz J. and Sayed Ahmed T.,
{\it Amalgamation, interpolation and epimorphisms.} 
Algebra Universalis {\bf 56} (2) (2007) p. 179-210.

\bibitem{MSone} Mad\'arasz J. and Sayed Ahmed T.
{\it Neat reducts and amalgamation in retrospect, a survey of results and some
methods. Part 1: Results on neat reducts} Logic Journal of IGPL  
{\bf 17}(4)(2009) p.429-483

\bibitem{MStwo} Mad\'arasz J. and Sayed Ahmed T.,
{\it Neat reducts and amalgamation in retrospect, a survey of results and some
methods. Part 2: Results on amalgamation} Logic Journal of IGPL {\bf 17}(6) (2009)p. 755-802  

\bibitem{Mad} Madar\'asz, J.
{\it Interpolation and Amalgamation;  Pushing the Limits. Part I}
Studia Logica, {\bf 61}, (1998) p. 316-345.

%\bibitem{Mad98} Madar\'asz, J. {\it interpolation and Amalgamation; 
%Pushing the Limits. Part II} Studia Logica


\bibitem{Mak} Maksimova, L. 
{\it Amalgamation and interpolation in normal modal logics}.
Studia Logica {\bf 50}(1991) p.457-471. 

\bibitem{Marx} Marx {\it Algebraic relativization and arrow logic} Ph d Dissertation. University of 
Amsterdam 1995.

\bibitem{N} I.N\'emeti {\it Free algebras and decidability in Algebraic Logic} Hu dissertation with the Hungarian Academy of Sciences (1986)
\bibitem{NS} I. Nemeti, a Simon{ \it Weakly higher order cylindric algebras and finite axiomtization of the representables }Studia Logica  91 (2005) 53-63
\bibitem{Sain} I. Sain {\it On the search of a finitazle algebraisation of first order logic} Logic Journal of $IGPL$, 8 (2000) 495-589
\bibitem {Sagi} G. Sagi {A completeness theorem for higher order logics } Journal of symbolic Logic(65) (2000) p.857-884
\bibitem{Andras} A. Simon {\it Connections between quasi-projective relation algebras and cylindric algebras} {\it Algebra Universalis} (2007)
p. 233-301

%\bibitem{Sain}
%\bibitem{Andras}

%\bibitem{Sain2}
%\bibitem{amal}
%\bibitem{B} Becker H {\it Polish group actions, dichotomies and generalized elementary embeddings} Journal of the Americal Mathematical Soceity, 11(2) 397-449 1998.

%\bibitem{M80} Miller, A. {\it Covering $^{\omega}2$ with $\omega_1$ disjoint closed sets.} 
%The Kleene Symposuim (proceedings, Madison, Wisconsin, 1978). Studies
%in Logic and the Foundation of Mathematics, vol 101, North-Holland,
%Amsterdam, 1980 pp. 415--421.

%\bibitem{Miller1} Miller A. {\it A characterization of the least cardinal for which 
%the Baire Category Theorem 
%fails}. Proceedings of the Americal Mathematical Society, {\bf 86}(3) (1982) p. 498--502.
%\bibitem{Bulletin} T.Sayed Ahmed {\it A sufficient and necessary condition for omitting types}
%Bulletin section of logic {\bf 34}(1)(2005)  23-28
\bibitem{HMT1}L. Henkin, D. Monk, A. Tarski {\it Cylindric algebras, part 1} 1970

\bibitem{HMT2}L. Henkin, D. Monk, A. Tarski {\it Cylindric algebras, part 2} 1985


\bibitem{HH} Hirsch and Hodkinson {\it Relation algebras by Games}  Studies in Logic and the Foundations of mathematics, North Holand, 2002.

\bibitem{cat} Herrlich H, Strecker G. {\it Category theory} Allyn and Bacon, Inc, Boston (1973)

\bibitem{P} Pigozzi,D. 
{\it Amalgamation, congruence extension, and interpolation properties in algebras.} 
Algebra Universalis. 
{\bf 1}(1971) p.269-349. 

\bibitem{ST} Sain,I. Thompson,R,
{\it Strictly finite schema axiomatization of quasi-polyadic algebras}. In
\cite{AMN} p.539-571.


\bibitem{Sain} Sain, I. {\it Searching  for a finitizable algebraization of first order logic}. 
8, Logic Journal of IGPL. Oxford University, Press (2000) no 4, p.495--589.

\bibitem{Sain2} I. Sain Gyuris V {\it Finite schematizable Algebraic Logic} Logic journal of $IGPL$ 5(5) (1997) 600-751

\bibitem{Sagi} G. Sagi {\it Polyadic algebras} In \cite{1}p. 376-392

%\bibitem{c}

%\begin{thebibliography}{100}

\bibitem{r} R. Hirsch {\it Relation algebra reducts of cylindric algebras and complete representations}. Journal of Symbolic Logic
(72) (2007) p. 673-703

\bibitem{FM} T. Sayed Ahmed {\it The class of 2 dimensional neat reducts of polyadic algebras is not elementary}. 
Fundementa Mathematica (172) (2002) p.61-81

\bibitem{MLQ} T. Sayed Ahmed {\it A model theoretic solution to a problem of Tarski} Mathematical Logic Quarterly (48) (2002)
p.343-355
%\bibitem{t}

%\end{thebibliography}

%\end{document}

%\bibitem{ex}

\bibitem{AU} Sayed Ahmed, T. {\it On amalgamation of reducts of polyadic algebras.} 
Algebra Universalis, {\bf 51} (2004),  301--359

\bibitem{Bulletin}  Sayed Ahmed,T., {\it Algebraic Logic, where does it stand today?}
Bulletin of Symbolic Logic. {\bf 11}(4)(2005), p.465-516.

\bibitem{super} Sayed Ahmed , T. {\it The class of polyadic algebras has the superamalgamation property}
Mathematical Logic Quarterly {\bf 56}(1)(2010)p.103-112 

\bibitem{neat} Sayed Ahmed T. {\it On neat embeddings of cylindric algebras} Mathematical Logic quarterly

\bibitem{amal} T. Sayed Ahmed {\it Classes of algebras without the amalgamation property} Logic Journal of IGPL, 192 (2011) p.87-2011.

\bibitem{ex} Sayed Ahmed T. {\it On finite axiomatizability of expansions of cylindric algebras.} 
Journal of Algebra, Number Theory, Advances and Applications, {\bf 1}(2010),  p.19-40.

\bibitem {amal} Sayed Ahmed T {\it Classes of algebras without the amalgamation property.} 
Logic Journal of IGPL, {\bf 1}(2011), p.87-104.

\bibitem {c} Sayed Ahmed T. {\it On the complexity of axiomatizations of the class of representable quasi-polyadic equality algebras.} 
Mathematical Logic Quarterly, {\bf 4}(2011), p. 384-394.

\bibitem{ES} Sayed Ahmed T. {\it Epimorphisms are not surjective, even in simple algebras.} 
Logic Journal of IGPL, {\bf 1}(2012), p.22-26.

\bibitem{typeless} Sayed Ahmed T {\it Three interpolation theorems for typeless logics} Logic Journal of IGPL 20(6), 1001-1037 (2012).

\bibitem{OTT} Sayed Ahmed T., {\it Completions, complete representations and omiting types} In \cite{1} p. 205-222

\bibitem{Sayed}T. Sayed Ahmed {\it Neat reducts and Neat Embeddings in Cylindric Algebras} in \cite{1},  p. 105-134

\bibitem{SL} Sayed Ahmed, T. and  N\'emeti I, {\it  On neat reducts of algebras of logic.}
Studia Logica, {\bf 62} (2) (2001), p.229-262.

\bibitem {IGPL} Sayed Ahmed, T., and Samir B.,
{\it A Neat embedding theorem for expansions of cylindric algebras.}
Logic journal of IGPL {\bf 15} (2007) p. 41-51.

%\bibitem {Mad} Mad\'arasz J.,{\it On epimorphisms in varieties of cylindric algebras.} Submitted. 
%Also available at :http://ftp.math-inst.hu/pub/algebraic-logic/madarasz-es.pdf
  
%\bibitem {MLM} Marx M, Polos L, Masuch M, editors 
%{\it Arrow logic and Multi-modal logic} 1996 Studies in Logic Language and Information.
%CSLI Publications, Centre for the Study of Language and Information. 
 

\end{thebibliography}
\end{document}